\newcommand{\liuhao}{\fontsize{8pt}{\baselineskip}\selectfont}
\newtheorem{thm}{Theorem}[section]
\newtheorem{lem}[thm]{Lemma}
\newtheorem{defn}[thm]{Definition}
\numberwithin{equation}{section}\allowdisplaybreaks
\def\leq{\leqslant}
\def\geq{\geqslant}
\begin{document}
	\title{ {\bf \Large 
			Local well-posedness for the quasilinear  Schr\"odinger equations via the generalized energy method }}
	\author
	{
		{ 
			{	Jie Shao$^{1}$\footnote{Corresponding
					author;
					Email address:
					shaojiehn@foxmail.com. } }   \ \	Yi Zhou $^{1}$\footnote{
				Email address:
				yizhou@fudan.edu.cn } } \\
		{\liuhao $^{1}$  School of Mathematical Sciences, Fudan University, Shanghai 200433, China.}\\
		\date{}
	}
	\date{}
	\maketitle

	\begin{minipage}{13.5cm}
		\footnotesize \bf Abstract. \rm  \quad 
	 We study the   Cauchy problem of   quasilinear Schr\"odinger equations, for which Kenig et al.  (Invent Math, 2004;  Adv
		Math, 2006)  obtained large data local well-posedness     by  pseudo-differential techniques and viscosity methods, while   Marzuola et al. (Adv Math, 2012;
		Kyoto J Math, 2014; Arch  Ration Mech Anal, 2021) and Ben et al. (Arch  Ration Mech Anal, 2024) improved the results  by  dispersive arguments.  	In this paper, we introduce  a generalized energy method that combines momentum and energy estimates to close the bounds,  thereby obtaining our results through viscosity methods.  If the data is small, the proof relies mainly on integration by parts and Sobolev embeddings, much like the classical local existence theory for semilinear Schrödinger equations. For large   data, the framework remains applicable with the incorporation of certain pseudo-differential tools. In the case of quadratic interactions, we establish low regularity local well-posedness for both small and large   data  in the same function spaces as in works of Kenig et al. For cubic interactions with small initial data, we recover the low regularity results obtained by Marzuola et al. (Kyoto J Math, 2014).

		\vspace{10pt}
		
		\bf 2020 Mathematics Subject Classifications. \rm   35Q55; ~35A01. \\

		\bf Key words and phrases.		
		\rm ~Quasilinear  Schr\"odinger equations; Local well-posedness; Generalized energy method; Momentum type estimates

	\end{minipage}
	
	\section{Introduction} \label{s1}			
We consider the initial value problem (IVP) for general quasilinear Schr\"odinger equations 
	\begin{equation}	 \label{qNLS}
	\begin{cases}
			\sqrt{-1}\phi_t+\partial_i\left( g^{ij}\left(\phi,\overline{\phi}\right) \partial_j \phi\right)=F\left(\phi,\overline{\phi},\nabla\phi, \nabla \overline{\phi}\right),~~~\phi: \mathbb{R}^d \times \mathbb{R}^d\rightarrow \mathbb{C}^m\\
			\phi(0,x)=\phi_0(x),
	\end{cases}
	\end{equation}
	where  $ \sqrt{-1} $  stands for the unit imaginary number, and
	\begin{align*}
			&g: \mathbb{C}^m \times \mathbb{C}^m   \rightarrow \mathbb{R}^{d \times d}, \qquad
			F: \mathbb{C}^m \times \mathbb{C}^m \times\left(\mathbb{C}^m\right)^d \times\left(\mathbb{C}^m\right)^d \rightarrow \mathbb{C}^m
	\end{align*}
are smooth functions  satisfying  for $y, z \in \mathbb{C}^m \times\left(\mathbb{C}^m\right)^d$, 
\begin{align} \label{qgF}
	g^{ij}(y,\bar{y})-g_0^{ij}=h^{ij}(y, \bar{y})=O\left(|y|\right), \quad|F(y, \bar{y},z,\bar{z})| = O\left(|y|^2+|z|^2\right) \text{   near   }(y, z)=(0,0)
\end{align}
in the quadratic interaction problem and
\begin{align} \label{cgF}
	g^{ij}(y,\bar{y})-g_0^{ij}=h^{ij}(y,\bar{y})=O\left(|y|^2\right), \quad |F(y,\bar{y}, z,\bar{z})|=O\left(|y|^3+|z|^3\right)
	\text{  near  }(y, z)=(0,0)
\end{align}
in the cubic interaction problem.  Assume 
$g^{ij}  $ is symmetric,  i.e. $ 	g^{ij}(y,\bar{y})= 	g^{ji}(y,\bar{y}) $ and suppose the  $ \left(g_0^{ij}\right) $ is a real constant matrix which can be ultrahyperbolic
\begin{align} \label{g0}
	\left(g_0^{ij}\right)=  \left(\begin{array}{cc}
	I_l &0\\
		0 & -I_{d-l}	
	\end{array}\right),
\end{align}
or can be elliptic 
\begin{align} \label{ellip_g}
\left(g_0^{ij}\right) = \left(	\delta_{i j}\right)  =I_d,
\end{align}
 where $1\leq  l  \leq d-1$ is an integer,   $ I_l $ is $ l\times  l $  identity matrix, and 
\begin{align*}
	\delta_{i j}= \begin{cases}0 & \text { if } i \neq j \\ 1 & \text { if } i=j\end{cases}
\end{align*}
is Kronecker delta.
The  divergence form \eqref{qNLS} is equivalent to the  nondivergence form
	\begin{equation}	 \label{qNLS0}
	\begin{cases}
		\sqrt{-1}\phi_t+g^{ij}\left(\phi,\overline{\phi},\nabla\phi, \nabla \overline{\phi} \right) \partial_{ij} \phi=F\left(\phi,\overline{\phi},\nabla\phi, \nabla \overline{\phi}\right),~~~\phi: \mathbb{R}^d \times \mathbb{R}^d\rightarrow \mathbb{C}^m\\
		\phi(0,x)=\phi_0(x).
	\end{cases}
\end{equation}
In reality,  if $ \phi $ solves \eqref{qNLS0}, then   $ (\phi, \nabla \phi)  $ solves \eqref{qNLS}.

The equations \eqref{qNLS} are   natural generalizations of linear and semilinear Schr\"odinger equation and  arise in some physical fields.   A notable example is the equation studied in Bouard-Hayashi-Saut \cite{de1997global}
\begin{align*}
	\partial_t u=\sqrt{-1} \Delta u-2 \sqrt{-1} u h^{\prime}\left(|u|^2\right) \Delta\left(h\left(|u|^2\right)\right)+\sqrt{-1} u g\left(|u|^2\right),
\end{align*}
where $h$ and $g$ are given functions, and it  models the self-channeling of a high power, ultra-short laser pulse in matter.  Another important case is   the skew mean curvature flow   in \cite{song2021local,huang2022local,li2021global} which  describes the evolution of a codimension two submanifold along its binormal direction. Under a suitable gauge, the skew mean curvature flow in  Huang-Tataru  \cite{huang2022local} can be transformed into a \eqref{qNLS0} type equation.      For further details on the physical background of \eqref{qNLS}, we refer to \cite{kenig2004cauchy, marzuola2012quasilinear, linares2014introduction}.

One main challenge in analyzing \eqref{qNLS} stems from the ``loss of derivatives" when computing its energy estimates.
As elaborated in Linares-Ponce \cite{linares2014introduction},   the $ L^2 $ well-posedness does not always hold even for the following linear IVP
\begin{equation} \label{lmd}
	\begin{cases}
		\partial_t u=\sqrt{-1} A u+\vec{b}(x) \cdot \nabla u+d(x) u  \\
		u(x, 0)=u_0(x).
	\end{cases}
\end{equation}
  Takeuchi \cite{takeuchi1974necessary}  proved the $ \sup _{l \in \mathbb{R}}\left|\int_0^l \mathrm{Im} b(s) d s\right|<\infty $ is sufficient for $ L^2 $ well-posedness of \eqref{lmd} if $ n=1, A=\partial_{xx} $, while Mizohata \cite{mizohata3cauchy} demonstrated that
\begin{align} \label{mcon}
	\sup _{\widehat{\xi} \in \mathbb{S}^{d-1}} \sup _{\substack{x \in \mathbb{R}^d \\ l \in \mathbb{R}}}\left|\int_0^l \mathrm{Im}  b_j(x+s \widehat{\xi}) d s\right|<\infty
\end{align}
	is necessary for the $L^2$ local well-posedness  of \eqref{lmd}  when  $ n\geq 1, A=\Delta $.  When $ A $ is elliptic variable coefficient, Ichinose  \cite{wataru1987cauchy} extended the results of \cite{mizohata3cauchy} and  obtained the Mizohata type condition  which associates to bicharacteristic flow. A direct observation of the difficulty presented by the term $\vec{b}(x) \cdot \nabla u$ is that the integral $ \int_0^t \int_{\mathbb{R}^d} \mathrm{Im} (\vec{b}(x) \cdot \nabla u \overline{u})  \mathrm{d} x  \mathrm{d} s $ cannot be bounded directly by $ \| u \|_{L^2(\mathbb{R}^d)} $, thus requiring additional techniques to establish the well-posedness of \eqref{lmd}.

The research history for \eqref{qNLS} has been  comprehensively introduced by \cite{kenig2004cauchy,kenig2006general,marzuola2012quasilinear,marzuola2021quasilinear,linares2014introduction,ifrim2023global}  and  we recall here for reader's convenience.  Kenig-Ponce-Vega  \cite{kenig1993small}  obtained small data local well-posedness  of ultrahyperbolic semilinear derivative Schr\"odinger equationin by utilizing the  smoothing effects.  Hayashi-Ozawa \cite{hayashi1994remarks} removed the smallness assumption on initial data in \cite{kenig1993small}  for one dimensional case and  introduced  a change of variables which can eliminate the bad terms that cannot be handled by integration.  For elliptic  semilinear derivative Schr\"odinger of general dimensions,  Chihara \cite{Chihara} removed the restriction of  initial data size by  utilizing the ellipticity to diagonalize the system and    applied   a transformation to eliminate
the  first order terms, which involved pseudo-differential
techniques and results including those in Doi \cite{Doi1994} for dimension $ d\geq 2 $. By avoiding the diagonalization procedure in \cite{Chihara}, Kenig et al. \cite{kenig1998smoothing} extended the results of  Chihara \cite{Chihara} to the ultrahyperbolic case.

 In the celebrated work  \cite{kenig2004cauchy}, Kenig, Ponce, Vega first obtained  large data local well-posedness for  for
\begin{equation} \label{km}
 \begin{cases}
		\partial_t u=-\sqrt{-1}  a_{j k}(x) \partial_{x_k x_j} u +\vec{b}_1(x) \cdot \nabla u+\vec{b}_2(x) \cdot \nabla \bar{u} \\
		\qquad~~+c_1(x) u+c_2(x) \bar{u}+f \\
		u(x, 0)=u_0(x)
	\end{cases} 
\end{equation}
in  high regularity spaces $ H^s(\mathbb{R}^d)\cap L^2(\langle x\rangle^N \mathrm{d}x) $, when  $ a_{j k}(x)= a_{j k}(t,x,u,\overline{u},\nabla u,\nabla \overline{u}) $ is symmetric elliptic, non-trapping, and thus \eqref{km}  is quasilinear. Kenig-Ponce-Vega-Rolvung \cite{kenig2005variable,kenig2006general} extended these results to semilinear and quasilinear versions of \eqref{km}  respectively, in the case where $a_{jk}(x)$ is ultrahyperbolic, non-trapping  and non-degenerate.
 It is natural to add some decay to the function space $ H^s(\mathbb{R}^d) $, since the potential $ \vec{b}_1(x),$  $\vec{b}_2(x) $ in general do not satisfy the  Mizohata  condition. In the elliptic quasilinear case,  \cite{kenig2004cauchy} rewrited the original equation into a system and diagonalize the first order terms following the idea of  Chihara \cite{Chihara}. Then by pseudo-differential techniques including the lemma of  Doi \cite{Doi1996Remarks}
and sharp Garding's inequality etc., \cite{kenig2004cauchy} eliminated  the first order terms which caused the loss of derivatives  and obtained the key smoothing effect estimates. In the ultrahyperbolic case, where the diagonalization method of Chihara cannot be used, \cite{kenig2005variable,kenig2006general} investigated the the bicharacteristic flow associated to the symbol of  $ \partial_{x_k}( a_{j k}(x) \partial_{ x_j}) $ by introducing a  class of non-standard symbols and proved
that the bicharacteristic flow is globally defined, which allowed them to establish the estimates needed for the smoothing effect.

 In \cite{marzuola2012quasilinear}, Marzuola, Metcalfe and Tataru introduced an purely dispersive approach, distinct from the methods in \cite{kenig2004cauchy, kenig2005variable, kenig2006general},  and established  the  local well-posedness of  \eqref{qNLS} with  small initial data and  quadratic interactions in the low regularity space $ l^1H^s(\mathbb{R}^d),s>\frac d2 +2$  for $ t\in[0,1]$.   The advantage of using frequency-localized spaces such as $l^1H^s$ lies in their ability to effectively recover the Mizohata condition for initial data with a suitable large $s$, since the Schr\"odinger waves at frequency $ 2^j $ travels with speed $ 2^j $  and the evolution at frequency $ 2^j $  in $ l^1H^s $
 will not be different from the corresponding evolution in $ H^s $.   \cite{marzuola2012quasilinear} also brought in spaces $ l^1X^s,l^1Y^s $ which  are associated with the  time-adapted Morrey-Campanato space  $ X $  satisfying $ X=Y^* $, and derived the multilinear estimates  and the local smoothing estimates of the   linearized \eqref{qNLS0} on norms concerning $  X, Y$,  then the uniform bounds of  linearized \eqref{qNLS0} by the initial data in $ l^1H^s $  are obtained and   the final result follows by iteration. By a similar argument, Marzuola-Metcalfe-Tataru \cite{marzuola2014quasilinear}   obtained the  local well-posedness of  \eqref{qNLS} with small data and cubic  interactions in $ H^s(\mathbb{R}^d),s>\frac {d+3}2 $  for  $ t\in[0,1]$. The addition of decay into the function spaces could be removed, since the integrability of linearized \eqref{qNLS0} with cubic nonlinearities in the function spaces can be obtained easily  from energy estimates.

 For elliptic \eqref{qNLS} with large  initial data, non-trapping
 condition and  $ g_0^{ij}   $ as \eqref{ellip_g}, Marzuola-Metcalfe-Tataru \cite{marzuola2021quasilinear} obtained local low-regularity well-posedness for both quadratic and cubic interactions. The function space  and regularity of the solution  corresponding to the quadratic and cubic interaction problem  coincide with those in \cite{marzuola2012quasilinear} and \cite{marzuola2021quasilinear}, respectively.  To our knowledge, \cite{marzuola2021quasilinear} is the first work that obtained low-regularity well-posedness for quasilinear Schrödinger equations with large initial data. Consequently, its results represent a significant improvement upon  works by Kenig et al. \cite{kenig2004cauchy,kenig2005variable,kenig2006general} and Marzuola et al. \cite{marzuola2012quasilinear,marzuola2014quasilinear}. 
 For an outline of the proof for the main results in \cite{marzuola2021quasilinear}, we refer to Section 3 of \cite{marzuola2021quasilinear}.  More recently,  Ben-Mitchell \cite{pineau2024low} extended the results of \cite{marzuola2021quasilinear}  to the ultrahyperbolic setting   by pseudo-differential calculus for classical symbols based on a spatial truncation technique to close the energy estimates for \eqref{qNLS}, which is inspired by Jeong-Oh \cite{jeong2024well-posedness}.

There have also been recent advances in  the global well-posedness theory for \eqref{qNLS} with small initial data.
Ifrim-Tataru \cite{ifrim2025global} studied the one dimensional cubic quasilinear Schr\"odinger equations and obtained several  brilliant results, including proving the global in time wellposedenss in $ H^s (\mathbb{R}), s>1 $  for  one dimensional cubic defocusing \eqref{qNLS}, which is phase rotation symmetry and is
conservative, if the initial data is small. Here the  $ s>1 $  is the almost optimal regularity and  is lower than those of the previous works. 
The \cite{ifrim2025global} is the sequel work after Ifrim-Tataru \cite{ifrim2023global,ifrim2024long} which 
investigated  classic   and  general   cubic semilinear Schr\"odinger equations  in one dimension. Later, Ifrim-Tataru \cite{ifrim-Tataru} extended the results of  \cite{ifrim2025global} to higher dimensions. 
 In a different direction, Lai-Shao-Zhou \cite{lai2025global} obtained the small initial data global well-posedness of skew mean curvature flow in critical Besov spaces for $ d\geq 5 $, and its proof method can provide a new perspective to study quasilinear Schr\"odinger equations.

Momentum-type estimates have been widely used in the study of semilinear Schrödinger equations and continue to play a central role (see, e.g., Dodson–Murphy \cite{Dodson} and Colliander–Keel–Staffilani –Takaoka–Tao \cite{Tao}). Recently, the second author  of the present paper developed a div-curl type lemma that combines momentum and energy identities. This approach yields bounds beyond those provided by standard energy estimates and has shown significant potential for applications to quasilinear dispersive equations, such as Schrödinger map flow, wave maps, time-like extremal hypersurfaces, and skew mean curvature flow (see e.g. \cite{lai2025global,LZ24,wang2212global,WZ3,wang2023proof,zhou20221+}).

Inspired by previous works, we introduce the  generalized energy method in this paper to study \eqref{qNLS} for both  quadratic  and cubic interaction problem. For the small data case, the main idea is that   the $ |\alpha| $ order  momentum type equalities  can be used to obtain the momentum type etsimates and get some $ |\alpha|+1 $ space time norms with  weight. After  the type analysis of fractional integration by parts formula, this process will produce  some  $ |\alpha|+\frac12 $ order  terms that are to be bounded and have no time integration, the   $ |\alpha|+1 $ order good terms  can be utilized to control  the nonlinear terms $ \mathrm{Im} (\overline{\phi}_{\alpha} \partial^{\alpha} F) $  in the  $ |\alpha| $ order  energy estimates and the terms $ \mathrm{Im} (D \overline{\phi}_{\alpha}\cdot \partial^{\alpha} F) $ in the $ |\alpha|+\frac 12 $ order  energy estimates, if $ |\alpha| $ is suitable large.  Consequently, one can derive both the   $ |\alpha| $ order   and   the $ |\alpha|+\frac 12 $ order  energy estimates and this in turn  can bound the bad  $ |\alpha|+\frac12 $ order bad  terms in  the momentum type estimates. In short,  by combining the energy estimates and the momentum type estimates, we can close the bounds which combining with the artificial viscosity method will lead to the desired results.  
 
  The weight we choose  for small data quadratic interaction problem is $ \frac {x_k}{\langle x_k \rangle},$ where $ \langle x\rangle =1+\left|x\right|$. In view of  $ \frac{\mathrm{d} }{\mathrm{d} x_k} \big(\frac {x_k}{\langle x_k \rangle}\big) =\frac {1}{\langle x_k \rangle^2}$,  this weight multiplying  momentum type equalities and integrating on space and time will produce a good term as $ \int_0^t     \int_{\mathbb{R}^{d}}  |\frac{\partial_k \phi_{\alpha }  } {\langle x_k\rangle} |^2  \mathrm{d} x \mathrm{d} s $.   We remark that the weight $  \frac {x_k}{\langle x_k \rangle} $  can be replaced by
$ \int_{-\infty}^{x_k}\frac{1}{(1+|y_k|^2)^{\frac {1+\varsigma}{2}}} \mathrm{d} y_k, \varsigma>0. $
For the cubic interaction problem with small initial data, we select  the weight $ \int_{-\infty}^{x_k} \|\Lambda_k^{\frac 12}\phi_{\beta}(y_k,\cdot)\|^2_{L^2(\mathbb{R}^{d-1})} \mathrm{d} y_k$ which  can help to produce the good terms $ \int_0^t\int_{\mathbb{R}}  \|\Lambda_k^{\frac 12}\phi_{\beta}(x_k,\cdot)\|^2_{L^2(\mathbb{R}^{d-1})}   \int_{\mathbb{R}^{d-1}} \left|\partial_k \phi_{\alpha } \right|^2 \mathrm{d} \hat{x}_k  \mathrm{d} x_k \mathrm{d} s$. Here,
  the operator $\Lambda_k^{\frac{1}{2}} = \left(1 - \sum_{i=1, i \neq k}^d \partial_{ii} \right)^{\frac{1}{4}}$ will allow us to reduce the required regularity by half an order in the final results.

The main results for  small data problems are  as follows.
\begin{thm} \label{thm2}
$ 	(a) $ Suppose that    $ s>\frac {d}2+3, d \geq 1  $,  \eqref{qNLS} is  quadratic interaction problem and
\begin{align*}
	   \|\phi_0\|_{H^s(\mathbb{R}^d)} +\left\| \langle x\rangle^N \phi_0 \right\|_{L^2(\mathbb{R}^d)}\ll 1
\end{align*}  
  for   some integer $ N $ that depends only on $ d, s $.   Then  the ultrahyperbolic equation \eqref{qNLS} under condition   \eqref{g0} is locally wellposed in $   H^{s} (\mathbb{R}^d) \cap L^2(\langle x\rangle^N \mathrm{d}x) $ for $ t\in [0,1] $.\\
$ (b) $ The same results holds for \eqref{qNLS0} with $ s>\frac{d}{2}+4, d\geq 1. $
\end{thm}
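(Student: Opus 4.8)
The plan is to combine an artificial viscosity (parabolic) regularization with the generalized energy method sketched in the introduction and then pass to the limit; I describe the scheme for part $(a)$, the nondivergence case $(b)$ being handled identically once the regularity threshold is raised by one.

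\textbf{Step 1 (regularization; solvability for fixed $\epsilon$).} For $\epsilon\in(0,1)$ replace \eqref{qNLS} by
\[
\sqrt{-1}\,\phi^\epsilon_t+\partial_i\big(g^{ij}(\phi^\epsilon,\overline{\phi^\epsilon})\,\phi^\epsilon_j\big)+\sqrt{-1}\,\epsilon\,\Delta^2\phi^\epsilon=F\big(\phi^\epsilon,\overline{\phi^\epsilon},\nabla\phi^\epsilon,\nabla\overline{\phi^\epsilon}\big),\qquad \phi^\epsilon(0)=\phi_0 .
\]
Solved for $\phi^\epsilon_t$ this is a fourth order parabolic equation with dissipative principal part $-\epsilon\Delta^2$, against which the second order term $\partial_i(g^{ij}\phi^\epsilon_j)$ and the nonlinearity $F$ are of strictly lower order; a contraction mapping argument in $C([0,T_\epsilon];H^s(\mathbb{R}^d)\cap L^2(\langle x\rangle^N\mathrm{d}x))$ using the smoothing of the $\Delta^2$-heat semigroup then gives a unique local solution, which persists as long as $\|\phi^\epsilon\|_{H^s}+\|\langle x\rangle^N\phi^\epsilon\|_{L^2}$ stays finite.

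\textbf{Step 2 ($\epsilon$-uniform estimates — the generalized energy method).} On a common interval I run simultaneously, for every multi-index $\alpha$ with $|\alpha|\le s$: (i) the order $|\alpha|$ \emph{energy estimate} from $\int_0^t\!\!\int\mathrm{Im}\,\big(\partial_\alpha(\text{eq})\cdot\overline{\phi^\epsilon_\alpha}\big)$, whose only dangerous term is $\int_0^t\!\!\int\mathrm{Im}\,(\overline{\phi^\epsilon_\alpha}\,\partial_\alpha F)$; (ii) the order $|\alpha|+\tfrac12$ energy estimate (apply $D^{1/2}\partial_\alpha$), with worst term $\int_0^t\!\!\int\mathrm{Im}\,(D\overline{\phi^\epsilon_\alpha}\, F_z\nabla\phi^\epsilon_\alpha)$; (iii) the order $|\alpha|$ \emph{momentum-type estimate}, got by multiplying the order $|\alpha|$ momentum identity by $\frac{x_k}{\langle x_k\rangle}$ ($\langle x\rangle=1+|x|$) and integrating over $\mathbb{R}^d\times[0,t]$ — since $\partial_{x_k}\!\big(\frac{x_k}{\langle x_k\rangle}\big)=\langle x_k\rangle^{-2}$ this produces the good smoothing term $\int_0^t\!\!\int\big|\tfrac{\partial_k\phi^\epsilon_\alpha}{\langle x_k\rangle}\big|^2$ at the price of the time-integration-free boundary term $\int_{\mathbb{R}^d}\frac{x_k}{\langle x_k\rangle}\mathrm{Im}(\phi^\epsilon_\alpha\partial_k\overline{\phi^\epsilon_\alpha})$; and (iv) the propagation of $\|\langle x\rangle^N\phi^\epsilon\|_{L^2}$ and of the intermediate weighted norms, where the commutator $[\langle x\rangle^{2N},\partial_i(g^{ij}\partial_j)]$ is lower order and absorbable. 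The $\epsilon\Delta^2$ term contributes only dissipative (favorable) terms to (i), (ii), (iv) and a harmless lower order piece to (iii), so every bound obtained will be uniform in $\epsilon$.

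\textbf{Step 2 (continued: closing) and Step 3 (limit, uniqueness, continuous dependence).} The mechanism is: since $F=O(|y|^2+|z|^2)$ and the data are small, $\|F_z\langle x_k\rangle^2\|_{L^\infty}$ is controlled by a fixed $H^{|\alpha_0|}$-norm with $|\alpha_0|<s$ (this is where $s>\tfrac d2+3$ enters, through Sobolev embedding), so the good smoothing terms of (iii) dominate the worst term of (ii); the other nonlinear terms of (i), (ii) are handled by Sobolev and commutator estimates against $\|\phi^\epsilon\|_{H^{|\alpha|}}$; and the boundary terms of (iii) are, after the fractional-integration-by-parts bookkeeping, of order exactly $|\alpha|+\tfrac12$, hence absorbed via Lemma \ref{leminter1} by the energy estimate (ii). Collecting (i)--(iv) yields one closed differential inequality for a combined quantity; smallness of the data makes the implied constant admissible, so the quantity stays bounded on $[0,1]$ and in particular $T_\epsilon\ge1$. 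These $\epsilon$-uniform bounds give, by weak-$*$ compactness and the weighted Rellich theorem (the weighted $L^2$ control yields compactness in $H^{s'}$ globally, $s'<s$), a limit $\phi$ with $\phi^\epsilon\to\phi$ in $C([0,1];H^{s'})$ after extracting a subsequence, enough to pass to the limit in every quasilinear and semilinear term; thus $\phi$ solves \eqref{qNLS}. Uniqueness follows from an $L^2$ (or $H^{1/2}$) estimate for the difference of two solutions, the discrepancy $\partial_i\big((g^{ij}(\phi)-g^{ij}(\psi))\psi_j\big)$ being absorbed again by a momentum-type good term; a Bona--Smith type argument (mollify the data, combine the uniform bounds at two regularity levels with the difference estimate, then let the mollification parameter go to $0$) upgrades this to continuity of the flow map into $H^s\cap L^2(\langle x\rangle^N\mathrm{d}x)$ and to $\phi\in C([0,1];H^s\cap L^2(\langle x\rangle^N\mathrm{d}x))$. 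For $(b)$ the same argument applies to \eqref{qNLS0}; the only difference is that differentiating $g^{ij}(\phi,\overline\phi,\nabla\phi,\nabla\overline\phi)\phi_{ij}$ produces first order terms costing one extra derivative, whence $s>\tfrac d2+4$.

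\textbf{Main obstacle.} The crux is Step 2: arranging (i), (ii), (iii) into a genuinely closed system rather than a vacuous estimate of the shape $X\le X+\cdots$. One must verify that the smoothing gain $\langle x_k\rangle^{-2}$ supplied by the weight $\frac{x_k}{\langle x_k\rangle}$ exactly matches the weight $\langle x_k\rangle^{2}$ needed to put $F_z$ in $L^\infty$, and that the half-derivative boundary terms generated by fractional integration by parts in the momentum identity are of order precisely $|\alpha|+\tfrac12$, so that the $(|\alpha|+\tfrac12)$-energy estimate — and not the quantity it was designed to control — absorbs them; the fractional-integration-by-parts "type analysis" itself is the most delicate technical piece.
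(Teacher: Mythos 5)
Your proposal is correct and follows essentially the same route as the paper: artificial viscosity regularization, the combined $|\alpha|$ and $|\alpha|+\tfrac12$ order energy estimates, the momentum-type estimate with weight $\tfrac{x_k}{\langle x_k\rangle}$ producing the good term $\int_0^t\!\!\int|\langle x_k\rangle^{-1}\partial_k\phi_\alpha|^2$, the interpolation Lemma \ref{leminter1} to absorb the half-derivative boundary term, propagation of the weighted norms, and a difference estimate of the same momentum/energy type for uniqueness and continuity. You also correctly locate the crux — avoiding a circular $X\le X+\cdots$ bound by matching the $\langle x_k\rangle^{-2}$ gain against the $\langle x_k\rangle^{2}$ weight on $F_z$ — which is exactly the point the paper emphasizes.
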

\begin{thm} \label{thm3}
	$ 	(a) $   Suppose that $  s>\frac {d+3}2, d \geq 2,   $   \eqref{qNLS} is  cubic interaction problem and $ \|\phi_0\|_{H^s(\mathbb{R}^d)}\ll 1 $. Then the ultrahyperbolic equation \eqref{qNLS} under condition   \eqref{g0}  is locally wellposed in $ H^s(\mathbb{R}^d) $  for $t\in  [0,1] $.\\
	$ (b) $ The same results holds for \eqref{qNLS0} with $ s>\frac{d+5}{2}, d\geq 2. $
\end{thm}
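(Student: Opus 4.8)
We outline the argument for Theorem \ref{thm3} (the cubic interaction problem); Theorem \ref{thm2} is obtained by the same scheme with the scalar weight $x_k/\langle x_k\rangle$ in place of the weight introduced below and with the extra norm $\|\langle x\rangle^N\phi\|_{L^2}$ propagated alongside (as sketched in the Introduction and Appendix \ref{appb}). It suffices to treat part $(a)$: if $\phi$ solves the nondivergence equation \eqref{qNLS0}, then $\Phi=(\phi,\nabla\phi)$ solves a system of divergence type \eqref{qNLS} with a nonlinearity of the same (cubic) structure, so part $(a)$ at regularity $s-1$ yields part $(b)$ at regularity $s$, which accounts for the shift from $\frac{d+3}{2}$ to $\frac{d+5}{2}$. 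For the regularization, given $\varepsilon\in(0,1]$ we replace \eqref{qNLS} by the dissipative equation
\begin{equation*}
\sqrt{-1}\,\phi^\varepsilon_t+\partial_i\!\left(g^{ij}(\phi^\varepsilon,\overline{\phi^\varepsilon})\,\phi^\varepsilon_j\right)+\sqrt{-1}\,\varepsilon\,\Delta^2\phi^\varepsilon=F\!\left(\phi^\varepsilon,\overline{\phi^\varepsilon},\nabla\phi^\varepsilon,\nabla\overline{\phi^\varepsilon}\right),\qquad \phi^\varepsilon(0)=\phi_0,
\end{equation*}
whose principal symbol has real part $-\varepsilon|\xi|^4$, hence is parabolic and dominates the second order quasilinear part at high frequency; by Duhamel's formula and a contraction in $C([0,T_\varepsilon];H^s)$ using the smoothing of the linear dissipative semigroup it has a unique solution, smooth for $t>0$, which legitimates all the integrations by parts below.

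The core is a priori bounds uniform in $\varepsilon$. Fix a multi-index $\alpha$ with $|\alpha|$ the top integer order under $s$, write $\phi_\alpha=\partial_\alpha\phi^\varepsilon$, $\phi_\beta=\partial_\beta\phi^\varepsilon$, and let $D$ denote a fractional derivative of order $\tfrac12$ (respectively the one-directional $\Lambda_k^{1/2}$). We run three coupled estimates. \textbf{(i)} The $|\alpha|$-order energy estimate: apply $\partial_\alpha$, pair with $\overline{\phi_\alpha}$, take imaginary parts and integrate; the self-adjoint operator $\partial_i(g^{ij}\partial_j)$ drops out because $g^{ij}=g^{ji}$ is real, the viscosity gives the nonnegative $\varepsilon\|\Delta\phi_\alpha\|_{L^2}^2$, and one is left with a commutator $[\partial_\alpha,\partial_i(g^{ij}\partial_j)]$ contribution and the dangerous $\mathrm{Im}\!\int\overline{\phi_\alpha}\,\partial_\alpha F$, whose top piece is $\mathrm{Im}\!\int\overline{\phi_\alpha}\,F_z\,\nabla\phi_\alpha$. \textbf{(ii)} The $|\alpha|+\tfrac12$-order energy estimate: the same with $D\phi_\alpha$ in place of $\phi_\alpha$, leaving $\mathrm{Im}\!\int D\overline{\phi_\alpha}\cdot F_z\,\nabla\phi_\alpha$ and lower order. \textbf{(iii)} The momentum type estimate: for each $k$, integrate the conservation law for the $x_k$-momentum density $\mathrm{Im}(\overline{\phi_\alpha}\,\partial_k\phi_\alpha)$ over the variables $\hat x_k\in\mathbb{R}^{d-1}$, giving a one-dimensional identity $\partial_t\!\int_{\mathbb{R}^{d-1}}\mathrm{Im}(\overline{\phi_\alpha}\partial_k\phi_\alpha)\,\mathrm{d}\hat x_k=\partial_k\!\int_{\mathbb{R}^{d-1}}\mathrm{Re}(\partial_k\phi_\alpha\,g^{kj}\overline{\phi_{\alpha j}})\,\mathrm{d}\hat x_k+(\text{source})$, then multiply by the weight
\begin{equation*}
w_k(x_k)=\int_{-\infty}^{x_k}\big\|\Lambda_k^{1/2}\phi_\beta(y_k,\cdot)\big\|_{L^2(\mathbb{R}^{d-1})}^2\,\mathrm{d}y_k,\qquad |\beta|\le s_3-1,
\end{equation*}
and integrate over $\mathbb{R}_{x_k}\times[0,t]$. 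Since $w_k'(x_k)=\|\Lambda_k^{1/2}\phi_\beta(x_k,\cdot)\|_{L^2(\mathbb{R}^{d-1})}^2\ge0$, integration by parts in $x_k$ produces the good spacetime term $G_k:=\int_0^t\!\int_{\mathbb{R}}w_k'(x_k)\big(\int_{\mathbb{R}^{d-1}}|\partial_k\phi_\alpha|^2\,\mathrm{d}\hat x_k\big)\,\mathrm{d}x_k\,\mathrm{d}s$, at the price of the bad term $\int_0^t\!\int_{\mathbb{R}}\partial_t\!\big(\int_{\mathbb{R}^{d-1}}\mathrm{Im}(\phi_\alpha\partial_k\overline{\phi_\alpha})\,\mathrm{d}\hat x_k\big)w_k\,\mathrm{d}x_k\,\mathrm{d}s$ and viscosity remainders.

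To close the loop: because $F$ is cubic, $F_z$ is quadratic, so the $(d-1)$-dimensional Sobolev embedding gives $\|F_z(x_k,\cdot)\|_{L^\infty(\mathbb{R}^{d-1})}\lesssim\sum_{|\beta|\le s_3-1}\|\Lambda_k^{1/2}\phi_\beta(x_k,\cdot)\|_{L^2(\mathbb{R}^{d-1})}^2$ — here $s>\frac{d+3}{2}$ is exactly what makes the half-derivative $\Lambda_k^{1/2}$ suffice — hence each top nonlinear term of (i)--(ii), e.g. $\int_0^t\!\int F_z\,\mathrm{Im}(\overline{\phi_\alpha}\partial_k\phi_\alpha)$, is dominated by $\delta\,G_k+\delta^{-1}\,\sup_{x_k}\|\phi_\alpha(x_k,\cdot)\|_{L^2(\mathbb{R}^{d-1})}^2\,\|w_k'\|_{L^1(\mathbb{R})}$, in which the last two factors are controlled by the $|\alpha|+\tfrac12$-order energy (via the trace/interpolation inequality of Lemma \ref{leminter1}) times a small, data-dependent constant. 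The bad term of (iii) is integrated by parts in $t$: its time-boundary contribution is $\lesssim\|\phi^\varepsilon\|_{H^{|\alpha|+1/2}}^2\,\|\Lambda_k^{1/2}\phi_\beta\|_{L^2(\mathbb{R}^d)}^2$ (again by Lemma \ref{leminter1}), and the remaining time integral is handled through the good terms and the key Lemma \ref{lemhalve}. Adding (i)$+$(ii)$+\sum_k$(iii) with $\delta$ small and using $\|\phi_0\|_{H^s}\ll1$, all dangerous terms get absorbed into a small multiple of the energies and the good terms, leaving a closed inequality $E_\varepsilon(t)+(\text{good terms})\lesssim E_\varepsilon(0)+\int_0^t P(E_\varepsilon)\,\mathrm{d}s$ with $P(0)=0$, where $E_\varepsilon(t)$ is the sum of the (suitably localized) $|\alpha|$- and $(|\alpha|+\tfrac12)$-order energies and dominates $\|\phi^\varepsilon(t)\|_{H^s}^2$. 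A bootstrap argument gives $E_\varepsilon(t)\lesssim\|\phi_0\|_{H^s}^2$ on $[0,1]$, uniformly in $\varepsilon$ (so $T_\varepsilon>1$); the whole estimate uses only integration by parts and Sobolev embeddings, and is insensitive to whether $g_0$ is elliptic or ultrahyperbolic. Passing $\varepsilon\to0$ by Aubin--Lions (the equation giving uniform control of $\partial_t\phi^\varepsilon$ in a lower-order space) yields $\phi\in L^\infty([0,1];H^s)\cap C([0,1];H^{s'})$ for $s'<s$ solving \eqref{qNLS}; persistence $\phi\in C([0,1];H^s)$, uniqueness, and continuous dependence on $\phi_0$ follow from a Bona--Smith argument together with the low-order ($L^2$, respectively $H^1$) version of the same energy/momentum estimates applied to differences of solutions.

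The main obstacle is the coupling just described: estimate (ii) needs the good term $G_k$ of (iii) to absorb $\mathrm{Im}(D\overline{\phi_\alpha}\cdot F_z\nabla\phi_\alpha)$, while (iii) needs the $|\alpha|+\tfrac12$-order energy of (ii) to control its bad time-boundary term, so the two must be closed simultaneously; one must check that the absorbing constants are strictly less than $1$ (using smallness) and not merely $\le1$ — precisely the ``$X\le X+\cdots$'' trap warned about in the Introduction — and choose the weight $w_k$ so that $w_k'$ is exactly the quantity appearing in the Sobolev bound for $\|F_z\|_{L^\infty(\mathbb{R}^{d-1})}$. The heaviest technical layer is the fractional bookkeeping behind (ii)--(iii): the fractional integration-by-parts identities, the commutators of $\Lambda_k^{1/2}$ with multiplication and with the weight $w_k$, and Lemma \ref{lemhalve}, all of which must be arranged so that the half-derivative is spent exactly where it is needed.
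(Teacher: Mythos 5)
Your proposal is correct and follows essentially the same route as the paper: the same $\varepsilon\Delta^2$ viscosity regularization, the same coupling of the $|\alpha|$- and $(|\alpha|+\tfrac12)$-order energy estimates with the momentum-type estimate weighted by $\int_{-\infty}^{x_k}\|\Lambda_k^{1/2}\phi_\beta(y_k,\cdot)\|_{L^2(\mathbb{R}^{d-1})}^2\,\mathrm{d}y_k$, the same use of the cubic structure to bound $\|F_z(x_k,\cdot)\|_{L^\infty(\mathbb{R}^{d-1})}$ by $w_k'$, and Lemma \ref{lemhalve} to halve the derivative in the boundary-in-time terms, followed by a bootstrap and difference estimates for uniqueness. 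The only minor deviations are cosmetic: in the cubic case the paper controls the no-time-integration terms via Lemma \ref{lemhalve} together with Minkowski and the one-dimensional embedding $H^{1/4}\hookrightarrow L^4$ rather than Lemma \ref{leminter1} (which belongs to the quadratic argument), and it passes to the limit and proves continuity by the scheme of Kenig--Ponce--Vega rather than by an explicit Aubin--Lions/Bona--Smith framing.
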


The large  data problem presents much greater  challenges than the small   data case, primarily for two reasons.  On the one hand, it is necessary to estimate the non-trapping metric associated with  \( g(\phi, \overline{\phi}) \).  On the other hand, in the absence of small initial data, the norms and weighted norms of the unknown functions become large. For example, in \eqref{km}, the norm of \( \vec{b}_1(x) \) in the term \(\vec{b}_1(x) \cdot \nabla u \)  is not small, which leads to effects that are more adverse than in the case of small initial values. To address this, we choose a weight of the form \( e^{M \frac{x_k}{\langle x_k\rangle} } \), and the proof strategy is outlined as follows. First, the relevant nonlinear terms of \( g(\phi, \overline{\phi}) \) can be controlled using  weighted norms $ \|\langle x\rangle ^N \phi \|_{L^2(\mathbb{R}^d)} $ and together with the non-trapping metric, whose precise definition will be given later. Meanwhile, in the momentum estimates, the term \( \vec{b}_1(x) \) in nonlinear term of form  \( \vec{b}_1(x) \cdot \nabla u \) can be controlled by \( M \) in \( M e^{Mx} \). However, for energy estimates, \( M \) actually does not play a role. Inspired by Kenig-Vega-Ponce \cite{kenig2004cauchy},  we use pseudo-differential operator techniques to eliminate the \( \vec{b}_{1k}(x) \cdot \partial_k u \) in the main direction \( x_k \), while the terms \(\vec{b}_{1j}(x) \cdot \partial_j u,j \neq k \) in other directions, need to be made small by using the \( \epsilon \), where \( \epsilon \) is radius of a covering on the sphere. At the same time for  the large coverings  number of \( \epsilon \)-covering, we can use the smallness of time to control.  Finally, the bounds for the norms of \( \phi \) can be obtained via the bootstrap argument.

The analysis of the large  data problem fundamentally relies on the non-trapping condition. Indeed, if this condition fails severely (for example, if all geodesics are periodic), ill-posedness can occur  even in  the semilinear settings, as shown by Chihara \cite{chihara2002initial}. We now state the definition of the non-trapping condition, adapted from Marzuola–Metcalfe–Tataru \cite{marzuola2021quasilinear}.
\begin{defn} \label{def_nontrap}
	We say that the metric $g\left(\phi_0, \overline{\phi}_0\right)$ is non-trapping if all nontrivial bicharacteristics for $\Delta_{g\left(\phi_0, \overline{\phi}_0\right)}$ escape to spatial infinity at both ends. More precisely, for a ball $ B:=B_R $ of radius $  R \gg 1 $ which is large enough, any geodesic for the metric $g\left(\phi_0, \overline{\phi}_0\right)$ that exits the ball $B$ will escape to spatial infinity without re-entering $B / 2$.
\end{defn}
Under the non-trapping assumption on \( g(\phi_0, \overline{\phi}_0) \), we obtain the following theorem for \eqref{qNLS} with large initial data and quadratic interactions.
\begin{thm} \label{thm_bigdata}
	$ 	(a) $  Let $d \geq 1$ and $s > \frac{d}{2} + \frac{7}{2}$. Suppose $\phi_0 \in x^N L^2 \cap H^s$ is a non-trapping initial datum for equation \eqref{qNLS} with quadratic interactions, where the integer $N$ depends  on $d$ and $s$. Then there exists  $T = T(\phi_0) > 0$  sufficiently small such that the elliptic equation \eqref{qNLS} under condition \eqref{ellip_g} is locally well-posed in $x^N L^2 \cap H^s(\mathbb{R}^d)$ on the time interval $I = [0, T]$.\\
	$ 	(b)  $The same result holds for the \eqref{qNLS0} with $s>\frac{d}{2}+\frac 92$.
\end{thm}

 It is worth noting that Theorem \ref{thm3} yields the same results as Marzuola–Metcalfe–Tataru \cite{marzuola2014quasilinear} for \(d \geq 2\). Meanwhile, the function spaces in Theorem \ref{thm2} align with those in Kenig et al. \cite{kenig2004cauchy,kenig2005variable,kenig2006general}, but differ from the \(l^1H^s\) spaces used in Marzuola–Metcalfe–Tataru \cite{marzuola2012quasilinear}, although Theorem \ref{thm2} requires one order higher regularity than the one in the latter.
 In the proofs of Theorems \ref{thm2} and \ref{thm3}, one may observe no essential difference between the elliptic and ultrahyperbolic cases. However, in the case of Theorem \ref{thm_bigdata}, the result is only established for the elliptic setting, since the method in Section \ref{s_big} relies on a finite number covering, which does not readily extend to the ultrahyperbolic case.
 Note that the regularity  in Theorem \ref{thm_bigdata} is half an order higher than that in Theorem \ref{thm2}, since   the term $\partial_i\left( h^{ij}(\phi,\overline{\phi}) \partial_j P_{\lambda} v \right)$ in equation \eqref{eq_w} will bring additional loss of derivatives when we drive the smallness of the $w$.

 The paper is organized as follows.  Section \ref{s2} provides the necessary preliminaries for the main results. Subsection \ref{s2.1} introduces the basic notations, Subsection \ref{s2.2} states the necessary preliminary lemmas, and Subsection \ref{s2.3} derives the momentum balance equalities. Sections \ref{s3} and \ref{s4} are devoted to the proofs of Theorems \ref{thm2} and \ref{thm3}, respectively, both addressing the small initial data case. Section \ref{s_big} contains the proof of Theorem \ref{thm_bigdata}, which deals with large initial data and quadratic interactions. Finally, the proofs of some technical lemmas are provided in Appendix \ref{appa}.

\section{Notations and Preliminaries}  \label{s2}
\subsection{Notations} \label{s2.1}

Denote
\begin{align*}
	\phi_\alpha:=\partial^\alpha \phi=\partial_{x_1}^{\alpha_1}\ldots\partial_{x_d}^{\alpha_d} \phi
\end{align*}
and the vector $ \alpha=(\alpha_1,\ldots,\alpha_d) \in \mathbb{N}^d_0$ is a $ d $-dimensional multi-index of order $ |\alpha| $, where the component $ \alpha_i $ are non-negative integers, and $ |\alpha|=\sum_{i=1}^d \alpha_i $. 
The  notation $ \phi_\beta, \phi_\gamma $ and vector $ \beta, \gamma$    are defined similarly.  As an example,  for   $ e_i=( 0,\ldots,0,1,0\ldots,0) $,  there is $ \phi_{e_i}=\partial^{e_i} \phi=\partial_{x_i} \phi $.

Denote $\Delta^2=\sum_{i,j=1}^d \partial_{ii}\partial_{jj},  \langle x \rangle:= 1+|x|, x\in \mathbb{R}^d  $, $ \partial_k:=\partial_{x_k}, \partial_{ij}:=\partial_{x_i}\partial_{x_j} $,  $ J:= (1-\sum_{i=1}^d \partial_{ii}  )^{\frac12}=\left(1-\Delta \right)^{\frac12} $, $ \Lambda_k=\widehat{J}_k=  (1-\sum_{i=1,i\neq k}^d \partial_{ii} )^{\frac12} $, $ D=(-\Delta)^{\frac 12} $ and  $ D_k:=(-\partial_{kk})^{\frac 12} $ for $ 1\leq k\leq d. $  Define $ \hat{x}_k:= (x_1,\dots,$  $x_{k-1},x_{k+1},\dots,x_d ) $ and it is indispensable to present momentum type equalities in Subsection \ref{s2.3}.


$ \mathcal{F} u (\xi)= \hat{u}(\xi) $ stands for the   Fourier transform of function $ u $ and $ \mathrm{sgn} (x) $ stands for the sign function of $ x, x\in \mathbb{R} $.
$L^p(\mathbb{R}^d), H^s(\mathbb{R}^d), W^{k,p}(\mathbb{R}^d), \dot{B}^s_{p,q}(\mathbb{R}^d),$  $ \text{BMO}(\mathbb{R}^d) $ are standard Lebesgue space,  Sobolv space, homogeneous Besov space and  the  space of functions of Bounded Mean Oscillation respectively. We refer to \cite{bahouri2011fourier, chen2023well} for precise definitions and norms of these spaces.  The notation $a \lesssim b$ is used to denote the estimate $a \leq C b$ for some positive constant $C$, and $ a\sim b $ to denote $ a \lesssim b \lesssim a $.

 For  a smooth function $\varphi (\xi)$  supported in $ |\xi|\leq 2$ and equalling to 1 in $|\xi|\leq 1$, we define the Littlewood-Paley operator $P_\lambda $  for $ \lambda\in \mathbb{N}$ by
\begin{align}
	&\widehat{P_{\leq \lambda} u}(\xi):=\varphi\left(2^{-\lambda} \xi\right) \widehat{u}(\xi),\quad P_\lambda=P_{\leq \lambda}-P_{\leq \lambda-1}.\nonumber
\end{align}

\subsection{ Preliminaries}  \label{s2.2}
The following commutator estimates  lemma, whose  proof is given in  Appendix \ref{appa}, is an extension of Theorem 1.2 in Fefferman et al. \cite{fefferman2014higher}.
	\begin{lem}  \label{lemc1}
		Given real number $ \mathscr{S}, s $ satisfying $ \mathscr{S}>d / 2,s\leq 1,s+l= 1$, there is a constant $c=c(d, s,\mathscr{S},l)$ such that, for all $u, g$ with $u \in H^s\left(\mathbb{R}^d\right) $ and $\nabla g  \in H^{\mathscr{S}}\left(\mathbb{R}^d\right)$, the following inequalities hold:
		\begin{align} 
			\left\|D^s\left(g \partial_j u \right)-g \partial_j \left(D^s u\right)\right\|_{L^2} \leqslant c\|\nabla g\|_{H^{\mathscr{S}}}\|u\|_{H^s},\label{lemc1_1} \\
				\left\|D^s\left(g D^l u \right)-g D^{s+l} u\right\|_{L^2} \leqslant c\|\nabla g\|_{H^{\mathscr{S}}}\|u\|_{L^2}. \label{lemc1_2}
		\end{align}	
	\end{lem}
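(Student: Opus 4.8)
By a routine density argument it suffices to prove \eqref{lemc1_1} and \eqref{lemc1_2} for Schwartz $u$ and $g$. The first point is that the two estimates are really one: since $\partial_j$ and $D^l$ commute with $D^s$, and $s+l=1$ gives $D^sD^l=D^{s+l}=D$, both \eqref{lemc1_1} and \eqref{lemc1_2} are special cases of the single commutator bound
\begin{align*}
\norm{[D^s,g]\,w}_{L^2}\lesssim \norm{\nabla g}_{H^{\mathscr S}}\,\norm{D^{s-1}w}_{L^2},\qquad [D^s,g]\,w:=D^s(gw)-g\,D^sw,
\end{align*}
applied once with $w=\partial_j u$, where $\norm{D^{s-1}\partial_j u}_{L^2}\lesssim\norm{u}_{H^s}$, and once with $w=D^l u$, where $\norm{D^{s-1}D^l u}_{L^2}=\norm{u}_{L^2}$ because $s+l-1=0$. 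Thus the whole lemma reduces to this estimate, which I would establish by a Littlewood--Paley/Bony paraproduct analysis, extending Theorem 1.2 of Fefferman et al.\ \cite{fefferman2014higher} from Bessel to Riesz potentials.

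The plan is to decompose $gw=T_gw+T_wg+R(g,w)$ and treat the three pieces separately. For the low--high paraproduct the commutator is genuine, $[D^s,T_g]w=\sum_k[D^s,S_{k-1}g]\,\Delta_kw$, and the decisive ingredient is the per-block estimate obtained by Taylor expanding the symbol $|\xi|^s$ on the Fourier support of $S_{k-1}g\,\Delta_kw$, where $|\xi|\sim 2^k$:
\begin{align*}
\norm{[D^s,S_{k-1}g]\,\Delta_kw}_{L^2}\lesssim 2^{(s-1)k}\,\norm{\nabla g}_{L^\infty}\,\norm{\Delta_kw}_{L^2}.
\end{align*}
These pieces are frequency localized near $2^k$ with bounded overlap, so summing in $\ell^2$ and using $\norm{\nabla g}_{L^\infty}\lesssim\norm{\nabla g}_{H^{\mathscr S}}$ (Sobolev embedding, valid for $\mathscr S>d/2$) controls this contribution by $\norm{\nabla g}_{H^{\mathscr S}}\,\norm{D^{s-1}w}_{L^2}$.

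For the high--low paraproduct $T_wg$ and the diagonal remainder $R(g,w)$ no cancellation is needed: each output frequency is at most the frequency $2^k$ of the block $\Delta_kg$ that enters, so after applying $D^s$ and Bernstein one is reduced to summing products of a norm of $\Delta_kg$ against a norm of $S_{k-1}w$ or $\tilde\Delta_kw$. Using that $\Delta_kg$ satisfies $\norm{\Delta_kg}\lesssim 2^{-k}\norm{\Delta_k\nabla g}$, together with $\nabla g\in H^{\mathscr S}$ and $w\in\dot H^{s-1}$, a direct computation of the exponents shows that each such sum converges: the high-frequency decay rate works out to $2^{(d/2-\mathscr S)k}$ and is summable precisely because $\mathscr S>d/2$, while the low frequencies $k<0$ are handled separately, using $\nabla g\in L^2$ and the vanishing at $\xi=0$ of the symbols of $\partial_j$, $D^l$ and $D^{s-1}$. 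The hypothesis $s\le1$ is what keeps the worst output frequency harmless — morally, that $D^sg$ is no rougher than $\nabla g$; if $s>1$ the right-hand side would have to control more than one derivative of $g$.

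The step I expect to be the real obstacle is this last one in the case $w=D^l u$ relevant to \eqref{lemc1_2}: there $w$ only lives in the negative-regularity space $\dot H^{s-1}$, so the crude Kato--Ponce term $\norm{D^sg}_{L^\infty}\,\norm{w}_{L^2}$ is unavailable, and one must retain the full paraproduct structure to exploit that in the high--low and remainder pieces the output frequency never exceeds the frequency of $g$. The careful bookkeeping that makes all these frequency sums converge — in particular at low frequencies, where only $\nabla g$ and not $g$ is under control, so additive constants in $g$ must be tracked and then discarded — is where the work lies. Once it is carried out the constant $c$ depends only on $d,s,\mathscr S,l$, and re-specializing $w=\partial_j u$ and $w=D^l u$ recovers \eqref{lemc1_1} and \eqref{lemc1_2}.
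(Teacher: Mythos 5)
Your proposal is correct in outline but follows a genuinely different route from the paper. Your opening reduction --- that both \eqref{lemc1_1} and \eqref{lemc1_2} are instances of the single bound $\norm{[D^s,g]w}_{L^2}\lesssim\norm{\nabla g}_{H^{\mathscr S}}\norm{D^{s-1}w}_{L^2}$ with $w=\partial_j u$ and $w=D^l u$ --- is exactly the structure the paper exploits, but the paper then proves this bound by a half-page Fourier-side computation rather than by paraproducts: it writes the commutator as the convolution $\int(|\xi|^s-|\eta|^s)\hat g(\xi-\eta)\,\eta_j\hat u(\eta)\,d\eta$, invokes the elementary pointwise inequality $\bigl||\xi|^s-|\eta|^s\bigr|\lesssim|\xi-\eta|/|\eta|^{1-s}$ (which is where $s\le1$ enters, proved by a short case analysis on $s\le\tfrac12$ versus $s>\tfrac12$), and finishes with Young's convolution inequality $L^1*L^2\to L^2$ together with Cauchy--Schwarz to get $\norm{|\cdot|\,\hat g}_{L^1}\lesssim\norm{\nabla g}_{H^{\mathscr S}}$, which is where $\mathscr S>d/2$ enters. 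Your Littlewood--Paley/Bony argument puts the same two hypotheses to work at the analogous points (the per-block commutator gain of $2^{(s-1)k}$, and the $2^{(d/2-\mathscr S)k}$ summability of the high--low and remainder pieces), and your exponent bookkeeping checks out, but it is considerably heavier machinery, and the high--low/remainder sums and the low-frequency treatment of $g$ are only sketched where the paper's argument is complete. What your approach buys in exchange is robustness: the paraproduct structure would let one weaken $\norm{\nabla g}_{L^\infty}$ to a BMO-type control or move to $L^p$, none of which is needed here.
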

Observe that a special case of \eqref{lemc1_2} is 
\begin{align}
	\left\| \left[ g D^{\frac12}, D^{\frac12} \right] \overline{\phi}_{\alpha k }\right\|_{L^2} \lesssim  \|\nabla g\|_{H^{\mathscr{S}}} \|\phi_{\alpha k }\|_{L^{2}}^2. \label{lemc1_3}  
\end{align}	

The following lemma is a corollary of Theorem 2 of Calder{\'o}n \cite{calderon1965commutators}; see also Lemma 4.2 in \cite{chen2023well}.
\begin{lem} \label{lemc2}
	Assume $\varphi \in C^{\infty}, \nabla \varphi \in L^{\infty}$, $1<p<\infty$. Then,
	\begin{align}
		\left\|[D, \varphi] f\right\|_{L^{p}( \mathbb{R}^d)  } \lesssim\|\nabla \varphi\|_{L^{\infty}( \mathbb{R}^d)} \left\|f\right\|_{L^{p}( \mathbb{R}^d)  }. \label{lemc2_1}
	\end{align}	
\end{lem}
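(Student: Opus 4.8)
The plan is to reduce \eqref{lemc2_1} to the $L^p$-boundedness of Riesz transforms together with Calderón's commutator theorem. The first step is to realize $D=(-\Delta)^{1/2}$ through first-order operators built from Riesz transforms: since $|\xi|=\sum_{j=1}^{d}\frac{\xi_j}{|\xi|}\,\xi_j$, one has the operator identity $D=\sum_{j=1}^{d}c_j\,R_j\partial_j$, where $R_j$ denotes the $j$-th Riesz transform and the $c_j$ are absolute constants. It then suffices to prove the estimate for Schwartz $f$ and extend it by density, which is legitimate because $1<p<\infty$ makes Schwartz functions dense in $L^p(\mathbb{R}^d)$ and the Riesz transforms are bounded on $L^p(\mathbb{R}^d)$; the constant in \eqref{lemc2_1} will be seen to depend on $\varphi$ only through $\|\nabla\varphi\|_{L^\infty}$, so the $C^\infty$ hypothesis is used only to make the manipulations below literal.

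The second step is a Leibniz-rule splitting. Writing $\partial_j(\varphi f)=(\partial_j\varphi)f+\varphi\,\partial_j f$ gives, for each $j$,
\begin{align*}
	[R_j\partial_j,\varphi]f = R_j\partial_j(\varphi f)-\varphi R_j\partial_j f = R_j\big((\partial_j\varphi)f\big)+[R_j,\varphi]\,\partial_j f ,
\end{align*}
hence $[D,\varphi]f=\sum_{j}c_j R_j\big((\partial_j\varphi)f\big)+\sum_j c_j [R_j,\varphi]\,\partial_j f$. The first sum is harmless: $R_j$ is bounded on $L^p$, and $\|(\partial_j\varphi)f\|_{L^p}\le\|\nabla\varphi\|_{L^\infty}\|f\|_{L^p}$ by Hölder, so this contribution is $\lesssim\|\nabla\varphi\|_{L^\infty}\|f\|_{L^p}$.

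The third step treats $[R_j,\varphi]\partial_j f$ at the level of kernels. Since $[R_j,\varphi]$ has kernel $K_j(x-y)\bigl(\varphi(y)-\varphi(x)\bigr)$ with $K_j$ the odd Riesz kernel (homogeneous of degree $-d$), integrating by parts in $y_j$ against $\partial_{y_j}f$ produces two terms: one is $-R_j\big((\partial_j\varphi)f\big)$, again a Calderón--Zygmund operator applied to $(\partial_j\varphi)f$ and hence $\lesssim\|\nabla\varphi\|_{L^\infty}\|f\|_{L^p}$; the other has kernel $(\partial_jK_j)(x-y)\bigl(\varphi(x)-\varphi(y)\bigr)$. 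Using $\varphi(x)-\varphi(y)=\int_0^1\nabla\varphi\bigl(y+\tau(x-y)\bigr)\cdot(x-y)\,d\tau$ one recognizes this as a first Calderón commutator: the $(x-y)$ factor cancels the non-integrable part of the degree $-(d+1)$ kernel $\partial_jK_j$ on spheres, leaving an operator whose $L^p$-bound, with operator norm controlled by $\|\nabla\varphi\|_{L^\infty}$, is precisely Theorem 2 of Calderón \cite{calderon1965commutators} (see also Lemma 4.2 of \cite{chen2023well}). Adding up the pieces yields \eqref{lemc2_1}.

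The main obstacle is exactly this last input, the boundedness of the Calderón commutator — the one genuinely deep ingredient; everything else is the Leibniz rule, $L^p$-continuity of singular integrals and Hölder's inequality. A minor technical point is to justify the integration by parts and the kernel computations rigorously: one should work first with $f$ Schwartz and with principal-value/truncated versions of $R_j$, keep track that every constant depends on $\varphi$ only via $\|\nabla\varphi\|_{L^\infty}$, and invoke the density argument only at the very end.
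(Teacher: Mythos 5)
Your argument is correct and rests on exactly the same external input as the paper, which offers no proof of this lemma at all: it simply records it as a corollary of Theorem 2 of Calder\'on \cite{calderon1965commutators} (see also Lemma 4.2 of \cite{chen2023well}). Your reduction via $D=\sum_j R_j\partial_j$, the Leibniz splitting, and the integration by parts turning $[R_j,\varphi]\partial_j$ into a first Calder\'on commutator is the standard way to make that citation precise; the only imprecision is the side remark that the factor $x-y$ ``cancels the non-integrable part'' of $\partial_jK_j$ --- it only lowers the kernel to critical degree $-d$, and it is the antisymmetry of the full kernel $(\partial_jK_j)(x-y)\left(\varphi(x)-\varphi(y)\right)$ (together with Calder\'on's theorem) that makes the principal value bounded.
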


The two items in the following commutator estimates lemma are the  Remark 1.3 and Theorem 1.9 of \cite{li2019kato}  respectively.
\begin{lem}  \label{lemc3}
$ (i) $	If $0< s<1 $ and $ 1<p<\infty $, then
	\begin{align}  \label{lemc3_1}
		\left\|D^s(f g)-g D^s f\right\|_{L^p(\mathbb{R}^d)} \lesssim\|f\|_{L^p(\mathbb{R}^d)}\left\|D^s g\right\|_{\mathrm{BMO}(\mathbb{R}^d)} \lesssim\|f\|_{L^p(\mathbb{R}^d)}\left\|D^s g\right\|_{L^\infty(\mathbb{R}^d)},
	\end{align}
where $ D $ is defined as the one in Subsection \ref{s2.1}\\
$ (ii) $ Let $1<p<\infty$,  $1<p_1, p_2  \leq \infty$ satisfy $1 / p_1+1 / p_2   =1 / p$,  and $0<s \leq 1$,  $f, g \in \mathcal{S}\left(\mathbb{R}^d\right)$,  then
\begin{align}  \label{lemc3_2}
	\left\|J^s(f g)-f J^s g\right\|_{L^p} \lesssim_{s, p_1, p_2, p, d}\left\|J^{s-1} \nabla f\right\|_{L^{p_1}}\|g\|_{L^{p_2}} .
\end{align}

\end{lem}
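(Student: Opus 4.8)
Both estimates are Kato--Ponce / fractional Leibniz commutator bounds, so the shortest route is to quote Remark 1.3 and Theorem 1.9 of \cite{li2019kato}; below I sketch the Littlewood--Paley argument one would run to prove them directly. Throughout, $P_j$ will denote the Littlewood--Paley projection onto frequencies $|\xi|\sim 2^j$, $P_{<j}=\sum_{k<j}P_k$, and I will use Bony's paraproduct decomposition $uv=T_uv+T_vu+R(u,v)$ with $T_uv=\sum_j P_{<j-1}u\,P_jv$ and $R(u,v)=\sum_{|j-k|\le1}P_ju\,P_kv$.

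For part (ii) the guiding principle is that $f$ should enter $[J^s,f]g=J^s(fg)-fJ^sg$ only through $\nabla f$, since the commutator annihilates constant $f$. After the paraproduct split, the resonant term and the term in which $f$ carries the high frequency lose no derivative and will be controlled by standard paraproduct/Bernstein bounds combined with H\"older along $1/p_1+1/p_2=1/p$, the gradient of $f$ being produced by a discrete summation by parts in the frequency index. The delicate piece is $\sum_j[J^s,P_{<j-1}f]P_jg$: since $\langle\xi\rangle^s$ is a smooth Mikhlin symbol, a first-order Taylor expansion of $\langle\xi\rangle^s$ about the centre of the $j$-th dyadic annulus will recast this as a Coifman--Meyer-type bilinear operator of symbol order $s-1$ applied to $(\nabla P_{<j-1}f,\,P_jg)$; summing in $j$ and invoking the Coifman--Meyer bilinear multiplier theorem (valid for $1<p<\infty$, $1<p_1,p_2\le\infty$, $1/p_1+1/p_2=1/p$) will give the bound $\|J^{s-1}\nabla f\|_{L^{p_1}}\|g\|_{L^{p_2}}$.

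For part (i) the second inequality is merely the embedding $\|\cdot\|_{\mathrm{BMO}}\lesssim\|\cdot\|_{L^\infty}$, so only the first needs work. For $0<s<1$ I would use the pointwise identity
\[
  [D^s,g]f(x)=D^s(gf)(x)-g(x)D^sf(x)=c_{d,s}\int_{\mathbb{R}^d}\frac{\bigl(g(x)-g(y)\bigr)f(y)}{|x-y|^{d+s}}\,dy ,
\]
which exhibits $[D^s,g]$ as the integral operator with kernel $K(x,y)=(g(x)-g(y))|x-y|^{-d-s}$. The hypothesis $D^sg\in\mathrm{BMO}$ is precisely what makes $(g(x)-g(y))|x-y|^{-s}$ behave, in the relevant averaged sense, like a Calder\'on-commutator symbol, so one expects $[D^s,g]$ to be a Calder\'on--Zygmund operator on $L^p(\mathbb{R}^d)$, $1<p<\infty$, with norm $\lesssim\|D^sg\|_{\mathrm{BMO}}$; to make this rigorous I would either verify the hypotheses of the $T(1)$ theorem, or (as in \cite{li2019kato}) decompose $g$ into Littlewood--Paley blocks and estimate via square functions and the Fefferman--Stein maximal inequality, the $\mathrm{BMO}$ norm entering through John--Nirenberg applied to the blocks of $D^sg$. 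Lemma \ref{lemc2} is the $s=1$ model of this mechanism and can be used as a template.

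The main obstacle in both parts is the borderline nature of the right-hand sides: the endpoint $p_1=\infty$ in (ii) and the $\mathrm{BMO}$ (rather than $L^\infty$) norm in (i) rule out a naive H\"older estimate on the undecomposed product and force the full strength of the Coifman--Meyer bilinear multiplier theorem (resp. a Calder\'on--Zygmund / $T(1)$ argument) together with a careful summation of the Littlewood--Paley pieces; keeping the derivative count on $f$ pinned at exactly $s-1$ above $\nabla f$ while distributing the remaining smoothing is where the bookkeeping is heaviest.
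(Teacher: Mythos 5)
Your proposal matches the paper exactly: the paper offers no proof of this lemma, simply quoting Remark 1.3 and Theorem 1.9 of \cite{li2019kato}, which is precisely your primary route. Your supplementary Littlewood--Paley/Coifman--Meyer sketch is a reasonable outline of how those results are proved in the cited reference, but it goes beyond what the paper itself does.
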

The \eqref{lemc3_2} immediately gives 
\begin{align}  \label{lemc3_3}
	\left\|J^s(f g)-f J^s g\right\|_{L^p} \lesssim_{s, p_1, p_2, p, d}\left\|J^{s}  f\right\|_{L^{p_1}}\|g\|_{L^{p_2}},
\end{align}
if $ p_1 \neq \infty. $

The following lemma for  $ \mathrm{BMO} $ norm  is Theorem 1.48 of \cite{bahouri2011fourier} and will be used in Section \ref{s4}.
\begin{lem}   \label{lem4}
For all functions $u \in L_{l o c}^1\left(\mathbb{R}^d\right) \cap \dot{H}^{\frac{d}{2}}\left(\mathbb{R}^d\right)$, there exists a constant $C$ such that
\begin{align} \label{lem4_1}
		\|u\|_{\mathrm{BMO}(\mathbb{R}^d)} \leq C\|u\|_{\dot{H}^{\frac{d}{2}}(\mathbb{R}^d)}.
\end{align}
\end{lem}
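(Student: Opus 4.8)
The plan is to verify the defining inequality of $\mathrm{BMO}$ directly, i.e. to show that
$\frac{1}{|B|}\int_B |u-u_B|\,dx \lesssim \|u\|_{\dot H^{d/2}(\mathbb R^d)}$
uniformly over all balls $B=B(x_0,r)$, where $u_B$ is the average of $u$ over $B$; taking the supremum over $B$ then gives \eqref{lem4_1}. Since the left-hand side is invariant under subtracting a constant from $u$, it suffices to bound $\frac{1}{|B|}\int_B |u-c|\,dx$ for a convenient constant $c=c(B)$. I would first prove the estimate for $u$ whose Fourier transform is supported in a fixed annulus $\{c_1\le|\xi|\le c_2\}$, so that only finitely many Littlewood--Paley pieces are nonzero and all sums below are manifestly finite, and then pass to general $u\in L^1_{loc}\cap\dot H^{d/2}$ by a density/limiting argument, remembering that both sides are understood modulo constants.

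Fix $B=B(x_0,r)$ and write $u=\sum_{j\in\mathbb Z}\Delta_j u$ in the homogeneous Littlewood--Paley decomposition. Split $u=u_\flat+u_\sharp$ with $u_\flat=\sum_{2^j<1/r}\Delta_j u$ and $u_\sharp=\sum_{2^j\ge 1/r}\Delta_j u$, and take $c=(u_\flat)_B$, so that $\frac{1}{|B|}\int_B|u-c|\,dx\le \frac{1}{|B|}\int_B|u_\sharp|\,dx+\frac{1}{|B|}\int_B|u_\flat-(u_\flat)_B|\,dx$. For the high-frequency part, Cauchy--Schwarz and almost orthogonality give $\frac{1}{|B|}\int_B|u_\sharp|\,dx\le |B|^{-1/2}\|u_\sharp\|_{L^2}\lesssim |B|^{-1/2}\big(\sum_{2^j\ge 1/r}\|\Delta_j u\|_{L^2}^2\big)^{1/2}$; since $2^{jd}\|\Delta_j u\|_{L^2}^2\lesssim\|u\|_{\dot H^{d/2}}^2$ for each $j$, the sum is $\lesssim\sum_{2^j\ge 1/r}2^{-jd}\|u\|_{\dot H^{d/2}}^2\lesssim r^d\|u\|_{\dot H^{d/2}}^2$, and the factor $r^{-d/2}$ from $|B|^{-1/2}$ cancels $r^{d/2}$, leaving $\lesssim\|u\|_{\dot H^{d/2}}$. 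For the low-frequency part, the mean value inequality gives $|u_\flat(x)-(u_\flat)_B|\le 2r\|\nabla u_\flat\|_{L^\infty}$ for $x\in B$, and by Bernstein's inequality together with $2^{jd/2}\|\Delta_j u\|_{L^2}\lesssim\|u\|_{\dot H^{d/2}}$ one gets $\|\nabla u_\flat\|_{L^\infty}\lesssim\sum_{2^j<1/r}2^j\|\Delta_j u\|_{L^\infty}\lesssim\sum_{2^j<1/r}2^{j}2^{jd/2}\|\Delta_j u\|_{L^2}\lesssim\sum_{2^j<1/r}2^j\|u\|_{\dot H^{d/2}}\lesssim r^{-1}\|u\|_{\dot H^{d/2}}$, so $\frac{1}{|B|}\int_B|u_\flat-(u_\flat)_B|\,dx\lesssim r\cdot r^{-1}\|u\|_{\dot H^{d/2}}=\|u\|_{\dot H^{d/2}}$. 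Adding the two contributions finishes the estimate.

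There is no essential obstacle here; the bound is dictated by scaling, the crux being that the two geometric series $\sum_{2^j\ge 1/r}2^{-jd}\sim r^d$ and $\sum_{2^j<1/r}2^j\sim r^{-1}$ converge with exactly the powers of $r$ needed to absorb $|B|^{-1/2}$ and $r$ respectively, which is precisely the criticality of the exponent $d/2$. The only points that require some care are the homogeneous-space technicalities, namely working modulo constants and justifying both the Littlewood--Paley decomposition and the passage from band-limited functions to general $u\in L^1_{loc}\cap\dot H^{d/2}$; these I would handle via the annulus-supported reduction mentioned above. An alternative would be to use the Fefferman--Stein Carleson-measure characterization of $\mathrm{BMO}$ and reduce to $\int_0^\infty\|Q_t u\|_{L^2}^2\,\frac{dt}{t}\approx\|u\|_{\dot H^{0}}^2$ together with $\|Q_t u\|_{L^2}^2\lesssim t^d\|u\|_{\dot H^{d/2}}^2$, which yields the same bound after integrating $t^{d-1}$ over $(0,r_B)$; the direct argument above is shorter, and since the lemma is quoted from \cite{bahouri2011fourier} a reference suffices in the paper itself.
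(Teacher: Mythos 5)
Your proof is correct. The paper itself gives no argument for this lemma --- it simply quotes Theorem 1.48 of \cite{bahouri2011fourier} --- and your Littlewood--Paley argument (splitting $u$ at frequency $1/r$ for each ball $B(x_0,r)$, bounding the high frequencies by Cauchy--Schwarz and $2^{jd}\|\Delta_j u\|_{L^2}^2\lesssim\|u\|_{\dot H^{d/2}}^2$, and the low frequencies by the mean value inequality plus Bernstein) is essentially the standard proof given in that reference, with the scaling of the two geometric series correctly identified as the heart of the matter.
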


The following Kato-Ponce inequalities are useful to deal with the nonlinear terms. 
\begin{lem} \cite{grafakos2014kato}
	Let $\frac{1}{2}<r<\infty, \quad 1<p_{1}, p_{2}, q_{1}, q_{2} \leq \infty$ satisfy $\frac{1}{r}=\frac{1}{p_{1}}+\frac{1}{q_{1}}=\frac{1}{p_{2}}+\frac{1}{q_{2}} .$
	Given $s>\max \left(0, \frac{d}{r}-d\right)$ or $s \in 2 \mathbb{N}$, there exists $C=C\left(d, s, r, p_{1}, q_{1}, p_{2}, q_{2}\right)<\infty$
	such that for all $f, g \in \mathcal{S}\left(\mathbb{R}^{d}\right)$, we have
	\begin{align}
		\left\|D^{s}(f g)\right\|_{L^{r}\left(\mathbb{R}^{d}\right)} \lesssim &\left\|D^{s} f\right\|_{L^{p_{1}}\left(\mathbb{R}^{d}\right)}\|g\|_{L^{q_{1}}\left(\mathbb{R}^{d}\right)}+\|f\|_{L^{p_{2}\left(\mathbb{R}^{d}\right)}}\left\|D^{s} g\right\|_{L^{q_{2}}\left(\mathbb{R}^{d}\right)}, \label{nonlinear1}\\
			\left\|J^{s}(f g)\right\|_{L^{r}\left(\mathbb{R}^{d}\right)} \lesssim &\left\|J^{s} f\right\|_{L^{p_{1}}\left(\mathbb{R}^{d}\right)}\|g\|_{L^{q_{1}}\left(\mathbb{R}^{d}\right)}+\|f\|_{L^{p_{2}\left(\mathbb{R}^{d}\right)}}\left\|J^{s} g\right\|_{L^{q_{2}}\left(\mathbb{R}^{d}\right)}. \label{nonlinear2}
	\end{align}
\end{lem}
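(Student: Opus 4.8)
The plan is to prove both inequalities by a Littlewood--Paley (Bony paraproduct) decomposition, reducing the inhomogeneous estimate \eqref{nonlinear2} to the homogeneous one \eqref{nonlinear1}. For that reduction one observes that $J^{s}(fg)$ admits exactly the same paraproduct analysis as $D^{s}(fg)$, with the multiplier $|\xi|^{s}$ replaced by the multiplier $\langle\xi\rangle^{s}$ (which is only \emph{smoother} at the origin, hence no harder), while the low-frequency part is absorbed by the trivial Hölder bound $\|fg\|_{L^{r}}\le\|f\|_{L^{p_{i}}}\|g\|_{L^{q_{i}}}$; so it suffices to treat $D^{s}$. Fix a homogeneous dyadic partition with blocks $\Delta_{j}$, $j\in\mathbb{Z}$, and $S_{j}=\sum_{j'\le j}\Delta_{j'}$, and write Bony's decomposition
\begin{align}
fg=T_{f}g+T_{g}f+R(f,g),\qquad T_{f}g=\sum_{j}S_{j-4}f\,\Delta_{j}g,\qquad R(f,g)=\sum_{|j-k|\le 4}\Delta_{j}f\,\Delta_{k}g .
\end{align}

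For the low--high paraproduct $T_{f}g$, the summand $S_{j-4}f\,\Delta_{j}g$ has Fourier support in an annulus of size $\sim 2^{j}$, so $\Delta_{j'}D^{s}(T_{f}g)$ vanishes unless $|j'-j|\le C$ and is comparable to $2^{js}$ times a harmless average of $S_{j-4}f\,\Delta_{j}g$. Applying the square-function characterization of $L^{r}$ together with the Fefferman--Stein vector-valued maximal inequality, pulling the factor $S_{j-4}f$ out through $\sup_{j}|S_{j-4}f|\lesssim Mf$, and using Hölder with $\tfrac1r=\tfrac1{p_{2}}+\tfrac1{q_{2}}$ gives
\begin{align}
\|D^{s}(T_{f}g)\|_{L^{r}}\lesssim\bigl\|\sup_{j}|S_{j-4}f|\bigr\|_{L^{p_{2}}}\Bigl\|\bigl(\textstyle\sum_{j}2^{2js}|\Delta_{j}g|^{2}\bigr)^{1/2}\Bigr\|_{L^{q_{2}}}\lesssim\|f\|_{L^{p_{2}}}\|D^{s}g\|_{L^{q_{2}}} .
\end{align}
The high--low paraproduct $T_{g}f$ is symmetric and produces $\|g\|_{L^{q_{1}}}\|D^{s}f\|_{L^{p_{1}}}$. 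For the resonant term, group the sum as $R(f,g)=\sum_{k}\Delta_{k}f\,\widetilde\Delta_{k}g$; the $k$-th summand has Fourier support in a \emph{ball} of radius $\sim 2^{k}$, so only frequencies $2^{j}\lesssim 2^{k}$ survive after $\Delta_{j}D^{s}$, and the factor $2^{js}$ with $j\le k+O(1)$ — together with the hypothesis $s>0$ (respectively $s>\tfrac{d}{r}-d$ in the quasi-Banach range, which is exactly what makes the relevant Bernstein-type summation converge) — renders the geometric series in $k-j$ summable. Hölder with $\tfrac1r=\tfrac1{p_{1}}+\tfrac1{q_{1}}$ and the square-function estimate then yield $\|D^{s}R(f,g)\|_{L^{r}}\lesssim\|D^{s}f\|_{L^{p_{1}}}\|g\|_{L^{q_{1}}}$ and, by the dual grouping, also $\lesssim\|f\|_{L^{p_{2}}}\|D^{s}g\|_{L^{q_{2}}}$. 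Adding the three contributions gives \eqref{nonlinear1}, and then \eqref{nonlinear2} as explained above.

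The main obstacle is the endpoint bookkeeping. The square-function characterization of $L^{r}$ and the maximal-function bound both break down when $r=\infty$ or when one of $p_{1},p_{2},q_{1},q_{2}$ equals $\infty$; there one must replace the $\ell^{2}$ square function by an $\ell^{1}$-in-frequency estimate — legitimate precisely because $s>0$ supplies the decay $2^{js}$ needed to sum — and bound the $L^{\infty}$ factor by its frequency-truncated supremum rather than by a maximal function. The second delicate point is the quasi-Banach regime $\tfrac12<r<1$, where $L^{r}$ is not normed: the Littlewood--Paley machinery must be used in its Triebel--Lizorkin/Hardy-space form, and the sharp threshold $s>\tfrac{d}{r}-d$ is forced exactly by the convergence of the high--high sum. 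Finally, under the alternative hypothesis $s\in 2\mathbb{N}$ one can avoid the paraproduct subtleties altogether: then $D^{s}=(-\Delta)^{s/2}$ is a genuine differential operator, so one expands $D^{s}(fg)$ by the classical Leibniz rule and applies Hölder term by term. (The whole scheme is the one of Coifman--Meyer and Grafakos--Oh \cite{grafakos2014kato}.)
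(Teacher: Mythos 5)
The paper does not prove this lemma at all: it is imported verbatim from Grafakos--Oh \cite{grafakos2014kato} as a black box, so there is no internal proof to compare your argument against. Your sketch is the standard Coifman--Meyer/paraproduct proof, which is essentially the argument of the cited reference, and the structure is sound: Bony decomposition, square-function plus Fefferman--Stein for the two paraproducts, the threshold $s>\max(0,\tfrac{d}{r}-d)$ entering only through the summability of the high--high (resonant) piece, the reduction of $J^{s}$ to $D^{s}$ via the smoothness of $\langle\xi\rangle^{s}$ at the origin plus H\"older for the low-frequency block, and the separate elementary Leibniz argument when $s\in 2\mathbb{N}$. You also correctly flag the two places where the naive version breaks (exponents equal to $\infty$, and the quasi-Banach range $\tfrac12<r<1$ requiring the Triebel--Lizorkin/Hardy-space formulation); a fully rigorous write-up would have to carry out those endpoint modifications in detail rather than gesture at them, but as a proof outline it is faithful to the source the paper relies on. For the purposes of this paper nothing more is needed, since the authors only use the lemma as a quoted tool.
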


\subsection{Momentum balance equalities}  \label{s2.3}
For simplicity, we will prove Theorems \ref{thm2} and \ref{thm3}  under the assumption that the regularity index \( s \) satisfies   \( s - \frac12 \in \mathbb{N}\), namely $s - \frac12$ is an integer. The general case for  real \( s \) can be treated via the Littlewood-Paley decomposition method as  in Section \ref{s_big}. In this subsection, we derive several momentum balance equalities, which will serve as a preparation for the proofs of Theorems \ref{thm2} and \ref{thm3}.

Take derivatives on \eqref{qNLS} for $ \alpha $ times and apply the Leibniz rule  to derive
  	 \begin{align} \label{qNLS1}
  	\sqrt{-1}\phi_{\alpha t}+\partial_i\left( g^{ij} \partial_j \phi_{\alpha }\right)=\mathcal{N},
  \end{align}
	where 
	\begin{align*}
	&	\mathcal{N}:= F_z\nabla \phi_\alpha+F_{\overline{z}}\nabla \overline{\phi}_\alpha+\partial h^{ij}\partial_{ij } \phi_{\alpha -1}+G, \\
	&	\partial_j \phi_{\alpha }=\partial_j \phi_{\alpha } ,\quad \partial h^{ij}\partial_{ij } \phi_{\alpha -1}:=\sum_{n=1}^{d} \partial^{e_n}  h^{ij}\partial_{ij } \phi_{\alpha -e_n}, 
		~~ \alpha_n -1\geq0.\nonumber
	\end{align*}
 	 $F_z$, $F_{\bar{z}}$ are the usual complex derivatives satisfying
	 \begin{align*} 
	 	&F_z=\frac{1}{2}\left(\frac{\partial F}{\partial z_1}-\sqrt{-1} \frac{\partial F}{\partial z_2}\right), \quad  F_{\bar{z}}=\frac{1}{2}\left(\frac{\partial F}{\partial z_1}+\sqrt{-1}\frac{\partial F}{\partial z_2}\right) \\
	 	& \text { and } z=z_1+\sqrt{-1} z_2, \qquad z_1,z_2 \text{ are real},
	 \end{align*}	 
 and  $ G $  are the remainder terms whose  highest order derivatives of $ \phi $   are  less than $ |\alpha|  $, including $ F_y  \phi_\alpha,F_{\bar{y}}  \overline{\phi}_\alpha.$ 
	  Here and throughout,  $  g^{ij}, h^{ij} $  are abbreviations for  $  g^{ij}\left(\phi,\overline{\phi}\right),  h^{ij}\left(\phi,\overline{\phi}\right) $  respectively.
	 
	 Multiplying \eqref{qNLS1} by $ \partial_k \overline{\phi}_\alpha $ and taking the real part infers that
\begin{align}  \label{cons1_1}
	&\mathrm{Re} \left(\sqrt{-1}\phi_{\alpha t}\partial_k \overline{\phi}_\alpha \right)+	\mathrm{Re} \left( \partial_i\left( g^{ij} \partial_j \phi_{\alpha }\right)\partial_k \overline{\phi}_\alpha \right)=	\mathrm{Re} \left(\mathcal{N} \partial_k \overline{\phi}_\alpha\right).
\end{align}
 Direct calculation shows
	\begin{align} \label{cons1_2}
	&\mathrm{Re}\left(\partial_k \overline{\phi}_\alpha	\partial_i\left( g^{ij} \partial_j \phi_{\alpha }\right)\right)\\
	=&\,- \mathrm{Re}\left(\partial_{ki} \overline{\phi}_\alpha	 g^{ij} \partial_j \phi_{\alpha }\right)+\mathrm{Re} \partial_i\left(\partial_k \overline{\phi}_\alpha	 g^{ij} \partial_j \phi_{\alpha }\right). \nonumber
\end{align}
By  the  symmetry of  $ g^{ij} $, one has
	\begin{align*}
		2\mathrm{Re}\left(\partial_{ki} \phi_{\alpha }  g^{ij} \partial_j \overline{\phi}_{\alpha }\right)=&\,\partial_k \mathrm{Re}\left(\partial_{i} \phi_{\alpha }  g^{ij} \partial_j \overline{\phi}_{\alpha }\right)-\mathrm{Re}\left(\partial_{i} \phi_{\alpha }  \partial_k h^{ij} \partial_j \overline{\phi}_{\alpha }\right)
\end{align*}
which combining the identity  $ 	\overline{\sqrt{-1}\partial_k \phi_{\alpha t}}=	-\sqrt{-1}~\partial_k\overline{\phi}_{\alpha t} $ gives
	\begin{align} \label{cons1_3}
	&\mathrm{Re}\left( \sqrt{-1}\phi_{\alpha t} \partial_k \overline{\phi}_\alpha\right)=\partial_t \mathrm{Re}\left( \sqrt{-1}\phi_{\alpha } \partial_k \overline{\phi}_\alpha \right)-\mathrm{Re}\left( \phi_{\alpha } \sqrt{-1} \partial_{kt} \overline{\phi}_\alpha\right)\nonumber\\
	=&\,-\partial_t \mathrm{Im}\left( \phi_{\alpha } \partial_k \overline{\phi}_\alpha \right)-\mathrm{Re}\left( \phi_{\alpha }  \partial_{ik}\left( g^{ij} \overline{\phi}_{\alpha j}\right)\right)+\mathrm{Re}\left( \phi_{\alpha } \partial_k\overline{\mathcal{N}} \right)\\
	=&\,-\partial_t \mathrm{Im}\left(\phi_{\alpha } \partial_k \overline{\phi}_\alpha \right)-\mathrm{Re}\partial_i\left( \phi_{\alpha }  \partial_{k}\left( g^{ij} \overline{\phi}_{\alpha j}\right)\right)+\mathrm{Re}\left(\partial_i \phi_{\alpha }  \partial_{k}\left( g^{ij} \overline{\phi}_{\alpha j}\right)\right)+\mathrm{Re}\left( \phi_{\alpha } \partial_k\overline{\mathcal{N}} \right)  \nonumber\\
	=&\,-\partial_t \mathrm{Im}\left( \phi_{\alpha } \partial_k \overline{\phi}_\alpha \right)-\mathrm{Re}\partial_{ik}\left( \phi_{\alpha }  \partial_{i}\left( g^{ij} \overline{\phi}_{\alpha j}\right)\right)+ \mathrm{Re}\partial_{i}\left(\partial_k \phi_{\alpha }  g^{ij} \overline{\phi}_{\alpha j}\right)\nonumber\\
	&+ \mathrm{Re}\partial_{k}\left(\partial_i \phi_{\alpha }  g^{ij} \overline{\phi}_{\alpha j}\right)-\mathrm{Re}\left(\partial_{ki} \phi_{\alpha }  g^{ij} \overline{\phi}_{\alpha j}\right)+\mathrm{Re}\left( \phi_{\alpha } \partial_k\overline{\mathcal{N}} \right), \nonumber\\
		=&\,-\partial_t \mathrm{Im}\left( \phi_{\alpha } \partial_k \overline{\phi}_\alpha \right)-\mathrm{Re}\partial_{ik}\left( \phi_{\alpha }  \partial_{i}\left( g^{ij} \overline{\phi}_{\alpha j}\right)\right)+ \mathrm{Re}\partial_{i}\left(\partial_k \phi_{\alpha }  g^{ij} \overline{\phi}_{\alpha j}\right)\nonumber\\
	&+\mathrm{Re}\left(\partial_{ki} \phi_{\alpha }  g^{ij} \overline{\phi}_{\alpha j}\right)+ \mathrm{Re}\left(\partial_{i} \phi_{\alpha }  \partial_k h^{ij} \partial_j \overline{\phi}_{\alpha }\right)+\mathrm{Re}\left( \phi_{\alpha } \partial_k\overline{\mathcal{N}} \right), \nonumber
\end{align}
where  \eqref{qNLS1} is used in the second line.

Combining \eqref{cons1_1}, \eqref{cons1_2} and \eqref{cons1_3} allows us to establish the following  momentum balance equalities
	\begin{align} \label{consm1}
	&-\partial_t \mathrm{Im}\left( \phi_{\alpha } \partial_k \overline{\phi}_\alpha \right)-\mathrm{Re}\partial_k\partial_i\left( \phi_{\alpha }   g^{ij} \overline{\phi}_{\alpha j}\right)\nonumber\\
	&+ 2\mathrm{Re}\partial_{i}\left(\partial_k \phi_{\alpha }  g^{ij} \overline{\phi}_{\alpha j}\right)+\mathrm{Re}\left(\partial_{i} \phi_{\alpha }  \partial_k h^{ij} \partial_j \overline{\phi}_{\alpha }\right) \\
	=&\,-\mathrm{Re}\left( \phi_{\alpha } \partial_k\overline{\mathcal{N}} \right)+\mathrm{Re} \left(\mathcal{N} \partial_k \overline{\phi}_\alpha\right)\nonumber\\
	=&\,-\partial_k\mathrm{Re}\left( \overline{\phi }_{\alpha } \mathcal{N}\right)+2\mathrm{Re} \left(\mathcal{N} \partial_k \overline{\phi}_\alpha\right) \nonumber.
\end{align}
Denote $ \hat{x}_k:=\left(x_1,\dots,x_{k-1},x_{k+1},\dots,x_d\right) $. For \( d \geq 1 \), integrating \eqref{consm1} over \( \mathbb{R}^{d-1} \) with respect to \( \hat{x}_k \) yields
	\begin{align} \label{consm2}
	&	-\partial_t \int_{\mathbb{R}^{d-1}}  \mathrm{Im}\left( \phi_{\alpha } \partial_k \overline{\phi}_\alpha \right) \mathrm{d} \hat{x}_k-\partial_{kk} \int_{\mathbb{R}^{d-1}} \mathrm{Re}\left( \phi_{\alpha }   g^{kj} \overline{\phi}_{\alpha j}\right) \mathrm{d} \hat{x}_k\nonumber\\
	&+2 \partial_k \int_{\mathbb{R}^{d-1}} \mathrm{Re}\left(\partial_k \phi_{\alpha }  g^{kj} \overline{\phi}_{\alpha j}\right) \mathrm{d} \hat{x}_k + \int_{\mathbb{R}^{d-1}}  \mathrm{Re}\left(\partial_{i} \phi_{\alpha }  \partial_k h^{ij} \partial_j \overline{\phi}_{\alpha }\right)  \mathrm{d} \hat{x}_k \\
	=&\,- \partial_k\int_{\mathbb{R}^{d-1}}   \mathrm{Re}\left( \overline{\phi }_{\alpha } \mathcal{N}\right)\mathrm{d} \hat{x}_k+2\int_{\mathbb{R}^{d-1}} \mathrm{Re} \left(\mathcal{N} \partial_k \overline{\phi}_\alpha\right)   \mathrm{d} \hat{x}_k \nonumber,
\end{align}
where the case \( d = 1 \) is interpreted as having no integration, with the formal identification \( \int_{\mathbb{R}^{0}} f  \mathrm{d} \hat{x}_k = f \).

	In the mean time, for the   approximation equation of \eqref{qNLS}
	\begin{align*}
		\sqrt{-1}\phi_t+	\varepsilon\sqrt{-1}\Delta^2 \phi+\partial_i\left( g^{ij}\left(\phi,\overline{\phi}\right) \partial_j \phi\right)=F\left(\phi,\overline{\phi},\nabla\phi, \nabla \overline{\phi}\right),
	\end{align*}
we can use the same procedure to drive momentum balance equalities analogous to \eqref{consm2}:
\begin{align} \label{aconsm}
	&	-\partial_t \int_{\mathbb{R}^{d-1}}  \mathrm{Im}\left( \phi_{\alpha } \partial_k \overline{\phi}_\alpha \right) \mathrm{d} \hat{x}_k-\partial_{kk} \int_{\mathbb{R}^{d-1}} \mathrm{Re}\left( \phi_{\alpha }   g^{kj} \overline{\phi}_{\alpha j}\right) \mathrm{d} \hat{x}_k\nonumber\\
	&+2 \partial_k \int_{\mathbb{R}^{d-1}} \mathrm{Re}\left(\partial_k \phi_{\alpha }  g^{kj} \overline{\phi}_{\alpha j}\right) \mathrm{d} \hat{x}_k + \int_{\mathbb{R}^{d-1}}  \mathrm{Re}\left(\partial_{i} \phi_{\alpha }  \partial_k h^{ij} \partial_j \overline{\phi}_{\alpha }\right)  \mathrm{d} \hat{x}_k \\
	=&\,- \partial_k\int_{\mathbb{R}^{d-1}}   \mathrm{Re}\left( \overline{\phi }_{\alpha } \mathcal{N}\right)\mathrm{d} \hat{x}_k+2\int_{\mathbb{R}^{d-1}} \mathrm{Re} \left(\mathcal{N} \partial_k \overline{\phi}_\alpha\right)   \mathrm{d} \hat{x}_k,\nonumber\\
		&-\sum_{i \neq k} \varepsilon \partial_k \partial_k\partial_k  \int_{\mathbb{R}^{d-1}} \mathrm{Im}\left(\partial_{kk} \phi_{\alpha }  \overline{\phi}_\alpha\right)  \mathrm{d} \hat{x}_k+2 \varepsilon \partial_{k}\partial_{k}\int_{\mathbb{R}^{d-1}} \mathrm{Im}\left(\partial_{kj} \phi_{\alpha } \partial_j \overline{\phi}_\alpha\right) \mathrm{d} \hat{x}_k \nonumber\\
	&\,	+2 \varepsilon \partial_{k}\partial_{k}\int_{\mathbb{R}^{d-1}} \mathrm{Im}\left(\partial_{kk} \phi_{\alpha } \partial_k \overline{\phi}_\alpha\right) \mathrm{d} \hat{x}_k-2\varepsilon\mathrm{Im}\int_{\mathbb{R}^{d-1}} \left(\partial_{i}\partial_{jj} \phi_\alpha\partial_i\partial_k \overline{\phi}_\alpha\right)  \mathrm{d} \hat{x}_k,\nonumber
\end{align} 
since  the term $ 	\varepsilon\sqrt{-1}\Delta^2 \phi_\alpha $ can be computed just as the nonlinear term $ \mathcal{N} $:
\begin{align*}
&-	\varepsilon\mathrm{Re} \sqrt{-1}	\left(-\partial_k\left( \overline{\phi }_{\alpha }\Delta^2 \phi_\alpha \right)+2  \Delta^2 \phi_\alpha\partial_k \overline{\phi}_\alpha \right)\\
=&\,\varepsilon\mathrm{Im}	\left(-\partial_k \partial_i \left( \overline{\phi }_{\alpha }\partial_{i}\partial_{jj} \phi_\alpha \right)+\partial_k  \left( \partial_i\overline{\phi }_{\alpha }\partial_{i}\partial_{jj} \phi_\alpha \right)+2\partial_i \left(\partial_{i}\partial_{jj} \phi_\alpha\partial_k \overline{\phi}_\alpha\right)-2 \left(\partial_{i}\partial_{jj} \phi_\alpha\partial_i\partial_k \overline{\phi}_\alpha\right)\right)\\
=& \, \sum_{i \neq k \text{ or } j \neq k}\varepsilon\mathrm{Im}	\left(-\partial_k \partial_i\partial_{j} \left( \overline{\phi }_{\alpha }\partial_{i}\partial_{j} \phi_\alpha \right)+\partial_k \partial_i \left( \partial_{j}\overline{\phi }_{\alpha }\partial_{i}\partial_{j} \phi_\alpha \right)+\partial_k  \partial_{j}\left( \partial_i\overline{\phi }_{\alpha }\partial_{i}\partial_{j} \phi_\alpha \right) \right)\\
&\,+\sum_{i \neq k \text{ or } j \neq k}\varepsilon\mathrm{Im}	\left( 2 \partial_i \partial_{j} \left(\partial_{i}\partial_{j} \phi_\alpha\partial_k \overline{\phi}_\alpha\right)-2\partial_i \left(\partial_{i}\partial_{j} \phi_\alpha\partial_k \partial_{j}\overline{\phi}_\alpha\right)  \right)\\
&\,+\varepsilon\mathrm{Im}	\left(-\partial_k \partial_k\partial_{k} \left( \overline{\phi }_{\alpha }\partial_{k}\partial_{k} \phi_\alpha \right)+\partial_k \partial_k \left( \partial_{j}\overline{\phi }_{\alpha }\partial_{k}\partial_{j} \phi_\alpha \right)+\partial_k  \partial_{k}\left( \partial_i\overline{\phi }_{\alpha }\partial_{i}\partial_{k} \phi_\alpha \right) \right)\\
&\,+\varepsilon\mathrm{Im}	\left( 2 \partial_k \partial_{k} \left(\partial_{k}\partial_{k} \phi_\alpha\partial_k \overline{\phi}_\alpha\right)-2 \left(\partial_{i}\partial_{jj} \phi_\alpha\partial_i\partial_k \overline{\phi}_\alpha\right) \right)
\end{align*}
where $ \Delta^2=\sum_{i,j=1}^d \partial_{ii}\partial_{jj}. $

\section{Small data and	quadratic interactions}  \label{s3}
To make the proof more concise,  we first introduce two norms  for $\phi $:
\begin{align*}
	&X(t)=X_{s_1} (t) :=\sum_{k=1}^d \sum_{|\alpha|= s_1}  \int_0^t     \int_{\mathbb{R}^{d}} \left|\frac{\partial_k \phi_{\alpha }  } {\langle x_k\rangle}\right|^2  \mathrm{d} x \mathrm{d} s,  \nonumber\\
	&Y(t)=Y_{s_1} (t) :=	\| \phi\|_{H^{ s_1+\frac12 }(\mathbb{R}^d)}^2 + \sum_{k=1}^d\sum_{|\beta| \leq s_1-\frac32}\left\| \langle x_k\rangle^2 \phi_{\beta}\left(t,\cdot\right)\right\|_{L^2(\mathbb{R}^d)}^2,
\end{align*}	
where $ \langle x \rangle:=  1+|x| , x\in \mathbb{R}^d $, as defined in the Introduction \ref{s1}, and $ s_1 >\frac d2+\frac 52$ is a positive constant. In particular for scalar $ x_k\in \mathbb{R} $, there is 
\begin{align*}
	&\frac{\mathrm{d}}{ \mathrm{d} x_k}\left( \frac{x_k}{\langle x_k \rangle}\right)=\frac{\langle x_k \rangle-x_k\cdot \mathrm{sgn}\left(x_k\right)}{\langle x_k \rangle^2}=\frac{1}{\langle x_k \rangle^2}.
\end{align*}

The following estimates concerning 
 $ g^{ij} $, $ F $  and their derivatives by  the  norms of $ \phi $, will be frequently used in the subsequent analysis. Recall that for quadratic nonlinearities \eqref{qgF}:
\begin{align*}
	g^{ij}(y,\bar{y})-g_0^{ij}=h^{ij}(y, \bar{y})=O\left(|y|\right), \quad|F(y, \bar{y},z,\bar{z})| = O\left(|y|^2+|z|^2\right) \text{   near   }(y, z)=(0,0).
\end{align*}
Using Taylor expansion and techniques analogous to those in Chapter 7 of \cite{li2017nonlinear}, we may neglect higher-order error terms to obtain 
\begin{align*}
	\left| F\left(\phi,\overline{\phi},\nabla\phi, \nabla \overline{\phi}\right)\right|&\sim |\phi|^2+ |\nabla\phi|^2+  \text{higher degree power terms} \sim |\phi|^2+|\nabla\phi|^2,\\
	\left| F_z\right|+\left| F_{\overline{z}}\right|& \sim |\phi|+|\nabla\phi|,\\
\left|	g^{ij} (\phi,\bar{\phi})\right|&\sim  g^{ij}_0+  |\phi| +\text{higher degree power terms} \\
	& \sim  g^{ij}_0+|\phi|,
\end{align*}
if $ |\phi|+|\nabla \phi| $ is small enough. 
It follows that
\begin{align} 
		\|g^{ij}\|_{L^\infty(\mathbb{R}^d)}\leq&\, 1+\|h^{ij}\|_{L^\infty(\mathbb{R}^d)}\nonumber\\
	\lesssim &\,1+\|\phi\|_{H^{s_1-\frac 52}(\mathbb{R}^d)},\nonumber\\	
		\left\|\partial_{i}g^{ij}\right\|_{L^\infty(\mathbb{R}^{d})}
		=&\,\left\|	\partial_{i} h^{ij}  \right\|_{L^\infty(\mathbb{R}^{d})}  
	\lesssim \|  \phi \|_{H^{s_1-\frac 32}(\mathbb{R}^{d})}, \label{qg}\\
	\|F_z\|_{L^\infty(\mathbb{R}^{d})} +\|F_{\overline{z}} \|_{L^\infty(\mathbb{R}^{d})}
	\lesssim &\,\|\phi \|_{H^{s_1-\frac 52}(\mathbb{R}^{d})}+\|\nabla\phi \|_{H^{s_1-\frac 52}(\mathbb{R}^{d})}\nonumber\\
	\lesssim &\,\|\phi \|_{H^{s_1-\frac 32}(\mathbb{R}^{d})} 	\label{qF}
\end{align}
for  $\frac{d}{2}+\frac 52<  s_1. $ Moreover,  $ x_i^2 F_z, x_i^2  \partial_k h^{ij} $ can be similarly  bounded  by the weighted norms of $ \phi $:
\begin{align}
	\left\|\langle x_i\rangle^2  F_z   \right\|_{L^\infty(\mathbb{R}^d)}&+\left\| \langle x_i\rangle^2 F_{\overline{z}}   \right\|_{L^\infty(\mathbb{R}^d)}+\left\| \langle x_i\rangle^2\partial_k h^{ij}   \right\|_{L^\infty(\mathbb{R}^d)} \nonumber\\
	\lesssim &\, \left\| \langle x_i\rangle^2\partial_k \phi   \right\|_{L^\infty(\mathbb{R}^d)}+ \left\| \langle x_i\rangle^2 \phi  \right\|_{L^\infty(\mathbb{R}^d)} \label{qFx2}\\
	\lesssim &\,   \left\| \langle x_i\rangle^2\langle \nabla \rangle \phi  \right\|_{H^{s_1-\frac32}(\mathbb{R}^d)} \nonumber\\
	\lesssim &\,\sum_{|\beta| \leq s_1-\frac 32}\left\| \langle x_i\rangle^2 \phi_{\beta}\right\|_{L^2(\mathbb{R}^d)} \nonumber\\
	\lesssim& \,Y(t)^{\frac 12}, \nonumber
\end{align}
where $ \langle \nabla \rangle =1+\nabla $ and $ \beta=(\beta_1,\dots,\beta_d) \in \mathbb{N}^d_0 $.

\subsection{Momentum type estimates} \label{s3.1}
In this subsection we derive the Momentum type estimates for $ X(t) $.  Multiplying the \eqref{consm2} by the weight $ \dfrac{x_k}{\langle x_k\rangle} $ and integrating it on $ [0,t]\times\mathbb{R} $ with respect to $t$ and  $ x_k $ respectively indicate
\begin{align} \label{qener_mc1}
	&-	\int_{\mathbb{R}}  \int_{\mathbb{R}^{d-1}} \frac{x_k}{\langle x_k\rangle}   \mathrm{Im}\left( \phi_{\alpha } \partial_k \overline{\phi}_\alpha \right)(t) \mathrm{d} \hat{x}_k \mathrm{d} x_k+	\left.\int_{\mathbb{R}}  \int_{\mathbb{R}^{d-1}} \frac{x_k}{\langle x_k\rangle}   \mathrm{Im}\left( \phi_{\alpha } \partial_k \overline{\phi}_\alpha \right) \mathrm{d} \hat{x}_k \mathrm{d} x_k\right|_{s=0}\nonumber\\
	&- \int_0^t \int_{\mathbb{R}}   	\frac{x_k}{\langle x_k\rangle}   \partial_{kk} \int_{\mathbb{R}^{d-1}} \mathrm{Re}\left( \phi_{\alpha }  g^{kj} \overline{\phi}_{\alpha j}\right) \mathrm{d} \hat{x}_k   \mathrm{d} x_k \mathrm{d} s\nonumber \\
	&+2  \int_0^t\int_{\mathbb{R}}    	\frac{x_k}{\langle x_k\rangle}   \partial_k \int_{\mathbb{R}^{d-1}} \mathrm{Re}\left(\partial_k \phi_{\alpha } g^{kj} \overline{\phi}_{\alpha j}\right) \mathrm{d} \hat{x}_k  \mathrm{d} x_k \mathrm{d} s \\
	&+ \int_0^t	\int_{\mathbb{R}}\frac{x_k}{\langle x_k\rangle}   \int_{\mathbb{R}^{d-1}}  \mathrm{Re}\left(\partial_{i} \phi_{\alpha }  \partial_k h^{ij} \partial_j \overline{\phi}_{\alpha }\right) \mathrm{d} \hat{x}_k  \mathrm{d} x_k \mathrm{d} s \nonumber\\
	=&\,- \int_0^t	\int_{\mathbb{R}}\frac{x_k}{\langle x_k\rangle}   \partial_k\int_{\mathbb{R}^{d-1}}   \mathrm{Re}\left( \overline{\phi }_{\alpha } \mathcal{N}\right) \mathrm{d} \hat{x}_k  \mathrm{d} x_k \mathrm{d} s+2\int_0^t	\int_{\mathbb{R}}\frac{x_k}{\langle x_k\rangle}  \int_{\mathbb{R}^{d-1}}  \mathrm{Re} \left(\mathcal{N} \partial_k \overline{\phi}_\alpha\right)   \mathrm{d} \hat{x}_k  \mathrm{d} x_k \mathrm{d} s\nonumber.
\end{align}
As in the definition of $ X(t) $, we fix the absolute value of $ \alpha $ as $|\alpha| = s_1$ in this subsection.
 One can derive weighted space-time norm  $ X(t) $ from
\begin{align} \label{qener_mc2}
	&-\int_0^t\int_{\mathbb{R}}    	\frac{x_k}{\langle x_k\rangle}   \partial_k \int_{\mathbb{R}^{d-1}} \mathrm{Re}\left(\partial_k \phi_{\alpha } g^{kj} \overline{\phi}_{\alpha j}\right) \mathrm{d} \hat{x}_k  \mathrm{d} x_k \mathrm{d} s\nonumber\\
	=&\,g^{kk}_0\int_0^t\int_{\mathbb{R}}    	\frac{x_k}{\langle x_k\rangle}   \partial_k \int_{\mathbb{R}^{d-1}} \mathrm{Re}\left(\partial_k \phi_{\alpha }   \overline{\phi}_{\alpha k}\right) \mathrm{d} \hat{x}_k  \mathrm{d} x_k \mathrm{d} s\\
	&+\int_0^t\int_{\mathbb{R}}    	\frac{x_k}{\langle x_k\rangle}   \partial_k \int_{\mathbb{R}^{d-1}} \mathrm{Re}\left(\partial_k \phi_{\alpha } h^{kj} \overline{\phi}_{\alpha j}\right) \mathrm{d} \hat{x}_k  \mathrm{d} x_k \mathrm{d} s,\nonumber
\end{align}
where 
 $ g^{ij}=g^{ij}_0+h^{ij}, g^{ij}_0=0 $, if $ i\neq j $.  A direct computation yields
\begin{align} \label{qener_mc2_1}
	&-g^{kk}_0\int_0^t\int_{\mathbb{R}}  	\frac{x_k}{\langle x_k\rangle}   \partial_k \int_{\mathbb{R}^{d-1}} \mathrm{Re}\left(\partial_k \phi_{\alpha }  \overline{\phi}_{\alpha k}\right) \mathrm{d} \hat{x}_k  \mathrm{d} x_k \mathrm{d} \nonumber\\
	=&\, g^{kk}_0\int_0^t\int_{\mathbb{R}}      \frac{ 1 }{\langle x_k\rangle^2}   \int_{\mathbb{R}^{d-1}} \left|\partial_k \phi_{\alpha }  \right|^2\mathrm{d} \hat{x}_k  \mathrm{d} x_k \mathrm{d} s\\
	=&\, g^{kk}_0\int_0^t     \int_{\mathbb{R}^{d}} \left|\frac{\partial_k \phi_{\alpha }  } {\langle x_k\rangle}\right|^2  \mathrm{d} x \mathrm{d} s,\nonumber
\end{align}
which is the part of $ X(t). $
Meanwhile, the second term on the right-hand side of \eqref{qener_mc2} can be bounded as follows:
\begin{align*} 
	&\int_0^t\int_{\mathbb{R}}    	\frac{x_k}{\langle x_k\rangle}   \partial_k \int_{\mathbb{R}^{d-1}} \mathrm{Re}\left(\partial_k \phi_{\alpha } h^{kj} \overline{\phi}_{\alpha j}\right) \mathrm{d} \hat{x}_k  \mathrm{d} x_k \mathrm{d} s\nonumber\\
	=&\,-\int_0^t	\int_{\mathbb{R}}\frac{1}{\langle x_k\rangle^2}   \int_{\mathbb{R}^{d-1}}  \mathrm{Re}\left(\partial_{i} \phi_{\alpha }   h^{ij} \partial_j \overline{\phi}_{\alpha }\right) \mathrm{d} \hat{x}_k  \mathrm{d} x_k \mathrm{d} s\\
	\lesssim &\,\left\|  h^{ij}  \langle x_i\rangle^2 \right\|_{L^\infty(\mathbb{R}^d)}^{\frac{1}{2}}  \left\| h^{ij}  \langle x_j\rangle^2 \right\|_{L^\infty(\mathbb{R}^d)}^{\frac{1}{2}} \left(\int_0^t     \int_{\mathbb{R}^{d}} \left|\frac{\partial_i \phi_{\alpha }  } {\langle x_i\rangle}\right|^2  \mathrm{d} x \mathrm{d} s\right)^{\frac 12}   \left(\int_0^t     \int_{\mathbb{R}^{d}} \left|\frac{\partial_j \phi_{\alpha }  } {\langle x_j\rangle}\right|^2  \mathrm{d} x \mathrm{d} s\right)^{\frac 12}\nonumber\\
	\lesssim &\,\sum_{i=1}^d\sum_{|\beta| \leq s_1-\frac 32}\left\| \langle x_i\rangle^2 \phi_{\beta}\right\|_{L^2(\mathbb{R}^d)} \cdot X(t)\nonumber\\
	\lesssim &\,Y(t)^{\frac 12} X(t)\\
	\lesssim &\,X(t)^{\frac 32}+Y(t)^{\frac 32}.
\end{align*}

For the first two terms  in \eqref{qener_mc1} that do not involve time integration,  we estimate
\begin{align}  \label{qener_mc4}
	&\left|\int_{\mathbb{R}}  \int_{\mathbb{R}^{d-1}}  \frac{x_k}{\langle x_k\rangle}  \mathrm{Im}\left( \phi_{\alpha } \partial_k \overline{\phi}_\alpha \right) \mathrm{d} \hat{x}_k \mathrm{d} x_k\right| \nonumber\\
	=&\,\left|\mathrm{Im}\int_{\mathbb{R}^d}   D_k^{\frac 12}\left( \frac{x_k}{\langle x_k\rangle}    \phi_{\alpha } \right)D_k^{-\frac 12}\left(\partial_k \overline{\phi}_\alpha \right) \mathrm{d} x\right| \\
	\lesssim &\,\left\|  \frac{x_k}{\langle x_k\rangle}   D_k^{\frac 12}    \phi_\alpha \right\|_{L^2} \left\|   D_k^{-1} \partial_k \left(D_k^{\frac 12}\phi_{\alpha}\right)\right\|_{L^2}  + \left\|  \left[D_k^{\frac 12},\frac{x_k}{\langle x_k\rangle}   \right]   \phi_\alpha \right\|_{L^2} \left\|   D_k^{-1} \partial_k \left(D_k^{\frac 12}\phi_{\alpha}\right)\right\|_{L^2} \nonumber\\
	\lesssim &\, \left\|   D_k^{\frac 12}\phi_{\alpha}\right\|_{L^2}^2+  \left\|   \phi_{\alpha}\right\|_{L^2}\left\|   D_k^{\frac 12}\phi_{\alpha}\right\|_{L^2}\nonumber\\
	\lesssim &\, \left\|\phi\right\|_{H^{s_1+\frac 12}(\mathbb{R}^d)}^2\nonumber\\ \lesssim&\, Y(t), \nonumber
\end{align}
where the commutator term is estimated as
\begin{align} \label{comxk}
	\left\|  \left[D_k^{\frac 12},\frac{x_k}{\langle x_k\rangle}   \right]   \phi_\alpha \right\|_{L^2(\mathbb{R}^d)} \lesssim &\, \left\|  D_k^{\frac 12}\left(\frac{x_k}{\langle x_k\rangle}  \right)  \right\|_{\mathrm{BMO}(\mathbb{R}_{x_k})}  \left\|     \phi_\alpha \right\|_{L^2(\mathbb{R}^d)} \\
	\lesssim &\,\left\|  \frac{x_k}{\langle x_k\rangle}   \right\|_{\mathrm{BMO}(\mathbb{R}_{x_k})}^{\frac 12 }  \left\| \frac{\mathrm{d}}{\mathrm{d} x_k}\left( \frac{x_k}{\langle x_k\rangle}  \right) \right\|_{\mathrm{BMO}(\mathbb{R}_{x_k})}^{\frac 12 }  \left\|     \phi_\alpha \right\|_{L^2(\mathbb{R}^d)} \nonumber\\
	\lesssim &\, \left\|     \phi_\alpha \right\|_{L^2(\mathbb{R}^d)}.  \nonumber
\end{align}
This relies on the following interpolation lemma, whose proof is provided in Appendix \ref{appa} (see also Lemma A.10 in \cite{tao2006nonlinear}).
\begin{lem} \label{leminter1}.
	Let $ f, \nabla f \in \mathrm{BMO}(\mathbb{R}^d ) $, then
	\begin{align*} 
		\left\| D^{\frac12} f \right\|_{\mathrm{BMO}(\mathbb{R}^d)} \lesssim 	\left\|   f \right\|_{\mathrm{BMO}(\mathbb{R}^d)}^{\frac 12 }  	\left\| \nabla f \right\|_{\mathrm{BMO}(\mathbb{R}^d)} ^{\frac 12 }.
	\end{align*}
\end{lem}

Integration by parts shows
\begin{align*}  
	-&\,\int_0^t \int_{\mathbb{R}}   	\frac{x_k}{\langle x_k\rangle}   \partial_{kk} \int_{\mathbb{R}^{d-1}} \mathrm{Re}\left( \phi_{\alpha }  g^{kj} \overline{\phi}_{\alpha j}\right) \mathrm{d} \hat{x}_k   \mathrm{d} x_k \mathrm{d} s\nonumber\\
	=&\,\int_0^t \int_{\mathbb{R}}   	\frac{1}{\langle x_k\rangle^2}   \partial_{k} \int_{\mathbb{R}^{d-1}} \mathrm{Re}\left( \phi_{\alpha }  g^{kj} \overline{\phi}_{\alpha j}\right) \mathrm{d} \hat{x}_k   \mathrm{d} x_k \mathrm{d} s\\
	=&\,2\int_0^t \int_{\mathbb{R}}   	\frac{\mathrm{sgn}(x_k)}{\langle x_k\rangle^3}  \int_{\mathbb{R}^{d-1}} \mathrm{Re}\left( \phi_{\alpha }  g^{kj} \overline{\phi}_{\alpha j}\right) \mathrm{d} \hat{x}_k   \mathrm{d} x_k \mathrm{d} s\nonumber\\
	=&\, 2g^{kk}_0\int_0^t \int_{\mathbb{R}}   	\frac{\mathrm{sgn}(x_k)}{\langle x_k\rangle^3}   \int_{\mathbb{R}^{d-1}} \mathrm{Re}\left( \phi_{\alpha }  \overline{\phi}_{\alpha k}\right) \mathrm{d} \hat{x}_k   \mathrm{d} x_k \mathrm{d} s+2\int_0^t \int_{\mathbb{R}}   	\frac{\mathrm{sgn}(x_k)}{\langle x_k\rangle^3} \int_{\mathbb{R}^{d-1}} \mathrm{Re}\left( \phi_{\alpha }   h^{kj} \overline{\phi}_{\alpha j}\right) \mathrm{d} \hat{x}_k   \mathrm{d} x_k \mathrm{d} s\nonumber
\end{align*}
and it is straightforward to verify that
\begin{align*} 
	&\left|	g^{kk}_0\int_0^t \int_{\mathbb{R}}   		\frac{\mathrm{sgn}(x_k)}{\langle x_k\rangle^3} \int_{\mathbb{R}^{d-1}} \mathrm{Re}\left( \phi_{\alpha }   h^{kj} \overline{\phi}_{\alpha j}\right) \mathrm{d} \hat{x}_k   \mathrm{d} x_k \mathrm{d} s\right|\nonumber\\
	\lesssim &\, \left\|   h^{kj}  \langle x_j\rangle \right\|_{L^\infty(\mathbb{R}^d)}  \left(\int_0^t     \int_{\mathbb{R}^{d}} \left|   \phi_{\alpha }   \right|^2  \mathrm{d} x \mathrm{d} s\right)^{\frac 12}   \left(\int_0^t     \int_{\mathbb{R}^{d}} \left|\frac{\partial_j \phi_{\alpha }  } {\langle x_j\rangle}\right|^2  \mathrm{d} x \mathrm{d} s\right)^{\frac 12}\\
		\lesssim &\,t^{\frac12}\cdot\sum_{i=1}^d\sum_{|\beta| \leq s_1-\frac 32}\left\| \langle x_i\rangle \phi_{\beta}\right\|_{L^2(\mathbb{R}^d)} \cdot \left\|     \phi  \right\|_{H^{s_1}(\mathbb{R}^d)} \cdot X(t)^{\frac 12} \\
		\lesssim &\, t^{\frac12}Y(t) X(t)^{\frac 12} \\
		\lesssim &\, X(t)^{\frac 32}+t^{\frac32}Y(t)^{\frac 32}
\end{align*}
and
\begin{align}
	&\left|\int_0^t \int_{\mathbb{R}}   	\frac{\mathrm{sgn}(x_k)}{\langle x_k\rangle^3}   \int_{\mathbb{R}^{d-1}} \mathrm{Re}\left( \phi_{\alpha }  \overline{\phi}_{\alpha k }\right) \mathrm{d} \hat{x}_k   \mathrm{d} x_k \mathrm{d} s\right|\nonumber\\
	\leq &\,\left(\int_0^t     \int_{\mathbb{R}^{d}} \left|   \phi_{\alpha }   \right|^2  \mathrm{d} x \mathrm{d} s\right)^{\frac 12}   \left(\int_0^t     \int_{\mathbb{R}^{d}} \left|\frac{\partial_k \phi_{\alpha }  } {\langle x_k\rangle}\right|^2  \mathrm{d} x \mathrm{d} s\right)^{\frac 12}\label{X_absorbed}\\
	\leq &\, Ct^{\frac12}\cdot  \left\| \phi     \right\|_{H^{s_1}(\mathbb{R}^d)} \cdot X(t)^{\frac 12}\nonumber\\
	\leq &\, Ct^{\frac12}Y(t)^{\frac 12}  X(t)^{\frac 12} \nonumber\\
	\leq &\, \frac 1{100d} X(t)+ C t  Y(t). \nonumber
\end{align}

On the other hand, the nonlinear terms can be controlled by   \eqref{qFx2} and the norm $ X(t) $. Specifically, we have
\begin{align}  \label{qener_mc5}
	&\left|\int_0^t	\int_{\mathbb{R}}\frac{x_k}{\langle x_k\rangle}   \int_{\mathbb{R}^{d-1}}  \mathrm{Re}\left(\partial_{i} \phi_{\alpha }  \partial_k h^{ij} \partial_j \overline{\phi}_{\alpha }\right) \mathrm{d} \hat{x}_k  \mathrm{d} x_k \mathrm{d} s\right|\\
	\lesssim &\,\left\| \partial_k h^{ij}  \langle x_i\rangle^2 \right\|_{L^\infty(\mathbb{R}^d)}^{\frac{1}{2}}  \left\| \partial_k h^{ij}  \langle x_j\rangle^2 \right\|_{L^\infty(\mathbb{R}^d)}^{\frac{1}{2}} \left(\int_0^t     \int_{\mathbb{R}^{d}} \left|\frac{\partial_i \phi_{\alpha }  } {\langle x_i\rangle}\right|^2  \mathrm{d} x \mathrm{d} s\right)^{\frac 12}   \left(\int_0^t     \int_{\mathbb{R}^{d}} \left|\frac{\partial_j \phi_{\alpha }  } {\langle x_j\rangle}\right|^2  \mathrm{d} x \mathrm{d} s\right)^{\frac 12}\nonumber\\
		\lesssim &\, Y(t)^{\frac 12} X(t) \nonumber\\
	\lesssim &\,  X(t)^{\frac 32}+Y(t)^{\frac 32}.\nonumber
\end{align}
Now consider the term
\begin{align}  \label{qener_mc60}
	\int_0^t	\int_{\mathbb{R}}\frac{x_k}{\langle x_k\rangle}  \int_{\mathbb{R}^{d-1}}  \mathrm{Re} \left(\mathcal{N} \partial_k \overline{\phi}_\alpha\right)   \mathrm{d} \hat{x}_k  \mathrm{d} x_k \mathrm{d} s.
\end{align}
The part involving $ F_z\nabla \phi_\alpha+F_{\overline{z}}\nabla \overline{\phi}_\alpha+\partial h^{ij}\partial_{ij }  \phi_{\alpha -1}$ can be handled similarly to \eqref{qener_mc5}. 

The part involving $ G $ is estimated in a manner analogous to \eqref{ccons3_c4}:
\begin{align*} 
		&\left|\int_0^t	\int_{\mathbb{R}}\frac{x_k}{\langle x_k\rangle}  \int_{\mathbb{R}^{d-1}}  \mathrm{Re} \left(G \partial_k \overline{\phi}_\alpha\right)   \mathrm{d} \hat{x}_k  \mathrm{d} x_k \mathrm{d} s\right|\nonumber\\
		=&\, \left|\mathrm{Re} \int_0^t	\int_{\mathbb{R}^d}  D_k^{\frac 12} \left(\frac{x_k}{\langle x_k\rangle} G\right)   D_k^{-\frac 12}\partial_k \overline{\phi}_\alpha      \mathrm{d} x \mathrm{d} s\right|\\
		\lesssim &\,t\cdot \left\|  \frac{x_k}{\langle x_k\rangle}   D_k^{\frac 12}   G \right\|_{L^2} \left\|   D_k^{\frac 12}\phi_{\alpha} \right\|_{L^2}  + t\cdot \left\|  \left[D_k^{\frac 12},\frac{x_k}{\langle x_k\rangle}   \right]   G \right\|_{L^2} \left\|     D_k^{\frac 12}\phi_{\alpha} \right\|_{L^2}\nonumber\\
		\lesssim &\, t\cdot\left( \left\|    G \right\|_{L^2}+\left\|    D_k^{\frac 12}   G \right\|_{L^2}\right)\|\phi\|_{H^{s_1+\frac 12}(\mathbb{R}^{d})},\nonumber
\end{align*}
where 
  \eqref{comxk} is used.  In view of the asymptotic relation $ |F| \sim |\phi|^2+|\nabla \phi|^2 $,     the most challenging terms in  $ \partial^\alpha F $ are those of the form  $ \partial^\alpha |\nabla \phi|^2 $, such as $ \partial^\alpha(\partial_{l} \phi\partial_{j} \phi) $ for some  integer $1\leq  l,j \leq d  $. For simplicity, we restrict our attention to estimating  $ \partial^\alpha(\partial_{l} \phi\partial_{j} \phi ) $ in $ G $.  Applying the Leibniz rule, we write
\begin{align*} 
	\partial^\alpha(\partial_{l} \phi\partial_{j} \phi )=\sum_{\alpha^1,\alpha^2} C_{\alpha^1\alpha^2}  \partial^{\alpha^1}  \partial_{l}\phi \cdot   \partial^{\alpha^2}\partial_{j} \phi,  
\end{align*}
where $ C_{\alpha^1\alpha^2}  $ are constants and  $ \alpha^i \in \mathbb{R}^{d} $  are multi-indices satisfying $ |\alpha^1|+|\alpha^2|=|\alpha| $, $ |\alpha^i|\leq |\alpha|-1,i=1,2 $.  
Using the Kato-Ponce inequality \eqref{nonlinear1}, Hölder's inequality and the fractional Sobolev embedding, we obtain
\begin{align} \label{qG}
&\max \left\{	\left\| D_k^{\frac 12} 	\partial^\alpha(\partial_{l} \phi\partial_{j} \phi ) \right\|_{L^2(\mathbb{R}^d)},\left\|     G \right\|_{L^2}, \left\|  D_k^{\frac 12}   G \right\|_{L^2} \right \}\\
	\lesssim &\,\left\|  \phi\right\|_{W^{p_1,|\alpha^1|+\frac32}(\mathbb{R}^d)} \left\|  \phi\right\|_{W^{p_2,|\alpha^2|+1}(\mathbb{R}^d)} \nonumber\\
	\lesssim &\,\|\phi\|_{H^{|\alpha|+\frac12}}^2\nonumber\\
	\lesssim &\, Y(t) \nonumber
\end{align}
provided the exponents satisfy
\begin{align}   
	&	\sum_{i=1}^2 \frac{1}{p_i}=\frac 12, \qquad
	\frac{1}{p_1}\geq \frac 12-\frac{|\alpha|-|\alpha^1|-\frac12}{d}, \nonumber\\
	&\frac{1}{p_2}\geq \frac 12-\frac{|\alpha|-|\alpha^2|-1}{d},\nonumber
\end{align}
which in turn requires $ s_1=|\alpha|\geq \frac{d}{2}+\frac32. $

Finally, the last remaining nonlinear term can be treated similarly to  \eqref{qener_mc60}:
\begin{align*}
	&\left|\int_0^t	\int_{\mathbb{R}}\frac{x_k}{\langle x_k\rangle}   \partial_k\int_{\mathbb{R}^{d-1}}   \mathrm{Re}\left( \overline{\phi }_{\alpha } \mathcal{N}\right) \mathrm{d} \hat{x}_k  \mathrm{d} x_k \mathrm{d} s\right|\\
	=&\,\left| -\int_0^t	\int_{\mathbb{R}}\frac{1}{\langle x_k\rangle^2}    \int_{\mathbb{R}^{d-1}}   \mathrm{Re}\left( \overline{\phi }_{\alpha } \mathcal{N}\right) \mathrm{d} \hat{x}_k  \mathrm{d} x_k \mathrm{d} s\right|\nonumber\\
	\lesssim &\,\sum_{i=1}^d\sum_{|\beta| \leq s_1-\frac12}\left\| \langle x_i\rangle \phi_{\beta}\right\|_{L^2(\mathbb{R}^d)} \cdot X(t)+ t\cdot\|\phi\|^3_{H^{s_1+\frac 12}(\mathbb{R}^{d})}\nonumber\\
	\lesssim & \, X(t)^{\frac 32}+ (1+t^{\frac32}) Y(t)^{\frac 32}.
\end{align*}

As  $  g^{kk}_0  $ equal to $ 1 $ or $ -1 $, $ k=1,\dots ,d $,  we can take the sign in \eqref{qener_mc1} such that
\begin{align*}
 g^{kk}_0\int_0^t     \int_{\mathbb{R}^{d}} \left|\frac{\partial_k \phi_{\alpha }  } {\langle x_k\rangle}\right|^2  \mathrm{d} x \mathrm{d} s
\end{align*}
in \eqref{qener_mc2_1} is positive. Then 
taking sum of \eqref{qener_mc1} over $ k $ from $ 1 $ to $ d $ and collecting  all the term above, we can obtain
\begin{align}   \label{qenerM}
	X(t) &=\sum_{k=1}^d \sum_{|\alpha|= s_1}  \int_0^t     \int_{\mathbb{R}^{d}} \left|\frac{\partial_k \phi_{\alpha }  } {\langle x_k\rangle}\right|^2  \mathrm{d} x \mathrm{d} s \\
&\lesssim (1+t)Y(t)+Y(0) + (1+t+t^{\frac 32}) Y(t)^{\frac 32} + X(t)^{\frac 32}, \nonumber
\end{align}
where  the $ \frac 1{100d} X(t) $ in \eqref{X_absorbed}  has been absorbed by the left hand side of \eqref{qenerM}.

\subsection{Energy  estimates and  proof of the main result} \label{s3.2}

Our task now is to  derive the energy  estimates.	
Multiplying \eqref{qNLS1}, namely 
\begin{align*}
	\sqrt{-1}\phi_{\alpha t}+\partial_i\left( g^{ij} \partial_j \phi_{\alpha }\right)=F_z\nabla \phi_\alpha+F_{\overline{z}}\nabla \overline{\phi}_\alpha+\partial g^{ij}\partial_{ij } \phi_{\alpha -1}+G
\end{align*}
by
$ \overline{\phi}_{\alpha } $ and taking the imaginary part verifies
\begin{align*}  
	&\frac 12\frac{\partial}{\partial  t}\left(  \left| \phi_{\alpha } \right|^2 \right)=\mathrm{Re}\left(   \partial_t \phi_{\alpha }  \overline{\phi}_{\alpha } \right)	\nonumber\\
	=&\,-  \mathrm{Im}\partial_i\left(  g^{ij} \partial_j \phi_{\alpha } \overline{\phi}_{\alpha }\right)+\mathrm{Im} \left(  \left(F_z\nabla \phi_{\alpha}+F_{\overline{z}}\nabla \overline{\phi}_\alpha+\partial h^{ij}\partial_{ij } \phi_{\alpha -1}+G\right)\overline{\phi}_{\alpha }\right),
\end{align*}
which integrating on $  [0,t]\times\mathbb{R}^d  $  yields
 \begin{align*}  
 	  \| \phi_\alpha\|_{L^{ 2 }(\mathbb{R}^d)}^2\lesssim &\, 	 \| \partial_\alpha\phi_0\|_{L^{ 2}(\mathbb{R}^d)}^2\nonumber\\
 	 &+ \left|\int_0^t\int_{\mathbb{R}^d} \left| \mathrm{Im} \left(  \left(F_z\nabla \phi_\alpha+F_{\overline{z}}\nabla \overline{\phi}_\alpha+\partial g^{ij}\partial_{ij } \phi_{\alpha -1}+G\right)\overline{\phi}_{\alpha }\right)\right| \mathrm{d} x\mathrm{d} s \right|.
 \end{align*}
If $ |\alpha|<s_1 $,  we may estimate the right hand side in a manner similar to tbose for \eqref{qG}, giving
\begin{align}\label{qener1}
 \sum_{|\alpha|< s_1} \| \phi_\alpha\|_{L^{ 2 }(\mathbb{R}^d)}^2 \lesssim&\, \sum_{|\alpha|< s_1} \| \partial_\alpha\phi_0\|_{L^{ 2}(\mathbb{R}^d)}^2+t\cdot\|\phi\|^3_{H^{s_1+\frac 12}(\mathbb{R}^{d})}\\
 \lesssim &\, Y(0) +t Y(t)^{\frac 32}. \nonumber
\end{align}
For the case  $ |\alpha|=s_1 $, we have
\begin{align} \label{qener2}
 \| \phi_\alpha\|_{L^{ 2 }(\mathbb{R}^d)}^2	\lesssim &\,	\| \phi_0\|_{H^{s_1 }(\mathbb{R}^d)}^2+\sum_{|\beta| \leq s_1-\frac 32}\left\| \langle x_j\rangle^2 \phi_{\beta}\right\|_{L^2(\mathbb{R}^d)} \cdot X(t)+t\cdot\|\phi\|^3_{H^{s_1+\frac 12}(\mathbb{R}^{d})}, \\
	\vspace{-25pt}
	\lesssim &\, \| \phi_0\|_{H^{s_1 }(\mathbb{R}^d)}^2+  (1+t) Y(t)^{\frac 32} +  X(t)^{\frac 32},\nonumber
\end{align}
where  the terms involving $ F_z\nabla \phi_\alpha+F_{\overline{z}}\nabla \overline{\phi}_\alpha+\partial h^{ij}\partial_{ij }  \phi_{\alpha -1}$ are estimated as
\begin{align*} 
	&	\int_0^t \int_{\mathbb{R}^d}      F_z\nabla \phi_\alpha  \overline{\phi}_{\alpha } \mathrm{d} x \mathrm{d} s\\
	\lesssim &\,\sum_{j=1}^d \left\| \langle x_j\rangle\left(|\phi|+| \nabla\phi|\right)  \right\|_{L^\infty(\mathbb{R}^d)}  \left(\int_0^t     \int_{\mathbb{R}^{d}} \left|   \phi_{\alpha }   \right|^2  \mathrm{d} x \mathrm{d} s\right)^{\frac 12}   \left(\int_0^t     \int_{\mathbb{R}^{d}} \left|\frac{\partial_j \phi_{\alpha }  } {\langle x_j\rangle}\right|^2  \mathrm{d} x \mathrm{d} s\right)^{\frac 12}\nonumber\\
	\lesssim &\,\sum_{j=1}^d\sum_{|\beta| \leq s_1-\frac 32}\left\| \langle x_j\rangle^2 \phi_{\beta}\right\|_{L^2(\mathbb{R}^d)} \cdot X(t),\nonumber
\end{align*}
while
the term   involving $ G $ can be bounded in the same way as \eqref{qG}.

Next we   estimate  $ \left\| D_k^{\frac 12} \overline{\phi}_{\alpha } \right\|_{L^2} $ for $\frac d2+\frac 52< s_1=|\alpha| $. Multiplying \eqref{qNLS1} by
$D_k \overline{\phi}_{\alpha } $ and taking the imaginary part yields	
\begin{align}  \label{qecons2_1}
	&\mathrm{Im} \left(  \left(F_z\nabla \phi_\alpha+F_{\overline{z}}\nabla \overline{\phi}_\alpha+\partial g^{ij}\partial_{ij } \phi_{\alpha -1}+G\right)D_k\overline{\phi}_{\alpha } \right)\nonumber\\
	=&\,\mathrm{Re}\left( \partial_t\phi_{\alpha } D_k\overline{\phi}_{\alpha }  \right)+ \mathrm{Im} \left(  \partial_i\left(   g^{ij} \partial_j \phi_{\alpha }\right)D_k \overline{\phi}_{\alpha }\right) \nonumber\\
	=&\, \mathrm{Re}\left( \partial_t\phi_{\alpha } D_k\overline{\phi}_{\alpha }  \right)+ \mathrm{Im} \partial_i\left(   g^{ij} \partial_j \phi_{\alpha }D_k \overline{\phi}_{\alpha }\right)- \mathrm{Im} \left(   \partial_j \phi_{\alpha } g^{ij} D_k \overline{\phi}_{\alpha i }\right) 
\end{align}	
and 
\begin{align*} 
	&\mathrm{Im} \left(   \partial_j \phi_{\alpha }  g^{ij}  D_k  \overline{\phi}_{\alpha i }\right)\nonumber\\
	=&\,g^{ii}_0\sum_{i=1}^d\mathrm{Im} \left(   \phi_{\alpha i}     D_k  \overline{\phi}_{\alpha i }\right)+\mathrm{Im} \left(   \partial_j \phi_{\alpha }  h^{ij}  D_k  \overline{\phi}_{\alpha i }\right)
\end{align*}
in view of $ g^{ij}=g^{ij}_0+h^{ij} $, where $ g^{ij}_0 $ is defined as in \eqref{g0}.
Clearly,   one has
\begin{align*} 
	&g^{ii}_0 \int_{\mathbb{R}^d}	\mathrm{Im} \left(  \phi_{\alpha i}     D_k  \overline{\phi}_{\alpha i }\right) \mathrm{d} x = g^{ii}_0 \int_{\mathbb{R}^d}\mathrm{Im} \left( D_k^{\frac 12}\overline{\phi}_{\alpha i }  D_k^{\frac 12} \phi_{\alpha i}\right) \mathrm{d} x =0	,
\end{align*}
and
\begin{align*} 
	&	\mathrm{Im} \left(   \partial_j \phi_{\alpha }  h^{ij}  D_k  \overline{\phi}_{\alpha i }\right)\nonumber\\
	=&\,\mathrm{Im} \left(   \frac{\partial_j \phi_{\alpha }  } {\langle x_j\rangle}  \langle x_j\rangle \langle x_i\rangle h^{ij}  D_k \left(\frac{\overline{\phi}_{\alpha i } } {\langle x_i\rangle}\right)\right)+\mathrm{Im} \left(   \frac{\partial_j \phi_{\alpha }  } {\langle x_j\rangle}  \langle x_j\rangle h^{kj} \left[ D_k ,\langle x_k\rangle\right]\frac{\overline{\phi}_{\alpha k } } {\langle x_k\rangle} \right). 
\end{align*}
Moreover,
\begin{align*}
	&\mathrm{Im} \left(   \frac{\partial_j \phi_{\alpha }  } {\langle x_j\rangle}  \langle x_j\rangle \langle x_i\rangle h^{ij}  D_k \left(\frac{\overline{\phi}_{\alpha i } } {\langle x_i\rangle}\right)\right)\nonumber\\
	=&\, \mathrm{Im} \left(    \frac{\partial_j \phi_{\alpha }  } {\langle x_j\rangle}  \left[ \langle x_j\rangle \langle x_i\rangle h^{ij}   D_k^{\frac12}, D_k^{\frac12} \right] \frac{\overline{\phi}_{\alpha i } } {\langle x_i\rangle}\right)\\
	&+\mathrm{Im} \left(   \frac{\partial_j \phi_{\alpha }  } {\langle x_j\rangle} D_k^{\frac12}\left ( \langle x_j\rangle\langle x_i\rangle h^{ij} D_k^{\frac12}   \left (\frac{\overline{\phi}_{\alpha i } } {\langle x_i\rangle}\right)\right)\right).\nonumber
\end{align*}
From \eqref{lemc2_1}, it follows that
\begin{align*} 
	&\left|\int_0^t\int_{\mathbb{R}^d}   \mathrm{Im} \left(   \frac{\partial_j \phi_{\alpha }  } {\langle x_j\rangle}  \langle x_j\rangle h^{kj} \left[ D_k ,\langle x_k\rangle\right]\frac{\overline{\phi}_{\alpha k } } {\langle x_k\rangle} \right)\mathrm{d} x \mathrm{d} s\right|\\
	\lesssim &\,\int_0^t\left\| \frac{\partial_j \phi_{\alpha }  } {\langle x_j\rangle}\right \|_{L^2}  \left\|    \left[ D_k ,\langle x_k\rangle\right]\frac{\overline{\phi}_{\alpha k } } {\langle x_k\rangle}\right\|_{L^2} \left\|  h^{kj}  \langle x_j\rangle \right\|_{L^\infty} \mathrm{d} s\nonumber\\
	\lesssim &\,\int_0^t \left\| \frac{\partial_j \phi_{\alpha }  } {\langle x_j\rangle}\right \|_{L^2}  \left\|     \frac{\overline{\phi}_{\alpha k } } {\langle x_k\rangle}\right\|_{L^2} \left\|   h^{kj}  \langle x_j\rangle \right\|_{L^\infty} \mathrm{d} s \nonumber\\
	\lesssim &\,\sum_{|\beta| \leq s_1-\frac 32}\left\| \langle x_j\rangle \phi_{\beta}\right\|_{L^2(\mathbb{R}^d)} \cdot X(t)\nonumber
\end{align*}
and from \eqref{lemc1_3}, we obtain
\begin{align*} 
	&\left|\int_0^t \int_{\mathbb{R}^d}   \mathrm{Im}\left(    \frac{\partial_j \phi_{\alpha }  } {\langle x_j\rangle}  \left[ \langle x_j\rangle \langle x_i\rangle h^{ij}   D_k^{\frac12}, D_k^{\frac12} \right] \frac{\overline{\phi}_{\alpha i } } {\langle x_i\rangle}\right)\mathrm{d} x \mathrm{d} s\right|\\
	\lesssim &\,\int_0^t\int_{\mathbb{R}^{d-1}}\left\| \frac{\partial_j \phi_{\alpha }  } {\langle x_j\rangle}\right \|_{L^2(\mathbb{R}_{x_k})}  \left\|    \left[ \langle x_j\rangle \langle x_i\rangle h^{ij}   D_k^{\frac12}, D_k^{\frac12} \right] \frac{\overline{\phi}_{\alpha i } } {\langle x_i\rangle} \right\|_{L^2(\mathbb{R}_{x_k})} \mathrm{d} \hat{x}_k\mathrm{d} s\nonumber\\
	\lesssim &\, \int_0^t \int_{\mathbb{R}^{d-1}}\left\| \frac{\partial_j \phi_{\alpha }  } {\langle x_j\rangle}\right \|_{L^2(\mathbb{R}_{x_k})}  \left\| \frac{\partial_i \phi_{\alpha }  } {\langle x_i\rangle}\right \|_{L^2(\mathbb{R}_{x_k})} \left\|\partial_k \left( \langle x_j\rangle \langle x_i\rangle h^{ij}  \right) \right\|_{  H^{\mathscr{S}_1} (\mathbb{R}_{x_k}) } \mathrm{d} \hat{x}_k\mathrm{d} s \nonumber\\
	\lesssim &\,  \int_0^t  \left\| \frac{\partial_j \phi_{\alpha }  } {\langle x_j\rangle}\right \|_{L^2(\mathbb{R}^d)}  \left\| \frac{\partial_i \phi_{\alpha }  } {\langle x_i\rangle}\right \|_{L^2(\mathbb{R}^d)}  \left\|\partial_k \left( \langle x_j\rangle \langle x_i\rangle h^{ij}  \right) \right\|_{L^\infty \left(\mathbb{R}^{d-1}; (H^{\mathscr{S}_1} (\mathbb{R}_{x_k}))\right)}  \mathrm{d} s \nonumber\\
	\lesssim &\, \sum_{j=1}^d\sum_{|\beta| \leq s_1-\frac 32}\left\| \langle x_j\rangle^2 \phi_{\beta}\right\|_{L^2(\mathbb{R}^d)} \cdot X(t)\nonumber
\end{align*}
for $ \frac 12<\mathscr{S}_1 $,
where
\begin{align*}
	\left\|\partial_k \left( \langle x_j\rangle \langle x_i\rangle h^{ij}  \right) \right\|_{L^\infty \left(\mathbb{R}^{d-1}; (H^{\mathscr{S}} (\mathbb{R}_{x_k}))\right)} \lesssim &\,	\left\|\partial_k \left( \langle x_j\rangle \langle x_i\rangle h^{ij}  \right) \right\|_{  H^{\mathscr{S}+\mathscr{S}_2} (\mathbb{R}^d)}\\
	\lesssim &\, \left\|   \langle x_j\rangle \langle x_i\rangle \phi  \right\|_{  H^{\mathscr{S}_1+\mathscr{S}_2+1} (\mathbb{R}^d)}\\
	\lesssim &\, \sum_{j=1}^d\sum_{|\beta| \leq s_1-\frac 32}\left\| \langle x_j\rangle^2 \phi_{\beta}\right\|_{L^2(\mathbb{R}^d)},
\end{align*}
for $ \frac{d}{2}+\frac 52<s_1 $, $ \frac{d-1} 2<\mathscr{S}_2 $, and  $s_1-\frac 32=\mathscr{S}_1+\mathscr{S}_2+1 $.
By the symmetry of $ g^{ij} $, we also have
\begin{align*}
	& \int_{\mathbb{R}^d}\mathrm{Im} \left(   \frac{\partial_j \phi_{\alpha }  } {\langle x_j\rangle} D_k^{\frac12}\left ( \langle x_j\rangle\langle x_i\rangle h^{ij} D_k^{\frac12}   \left (\frac{\overline{\phi}_{\alpha i } } {\langle x_i\rangle}\right)\right)\right) \mathrm{d} x \\
	=&\,  \int_{\mathbb{R}^d}\mathrm{Im} \left(  \langle x_j\rangle D_k^{\frac12} \left(\frac{\partial_j \phi_{\alpha }  } {\langle x_j\rangle}\right) \cdot h^{ij} \cdot  \langle x_i\rangle D_k^{\frac12}   \left (\frac{\overline{\phi}_{\alpha i } } {\langle x_i\rangle}\right)\right)  \mathrm{d} x =0.	
	\end{align*}

It is straightforward to  use \eqref{lemc2_1}  to  show that
\begin{align*}
	\left\| \frac{D_k \phi_{\alpha }  } {\langle x_k\rangle}\right\|_{L^2(\mathbb{R}^d)} \lesssim &\,	\left\| \left[D_k,\frac{1 } {\langle x_k\rangle} \right] \phi_{\alpha } \right\|_{L^2(\mathbb{R}^d)}+	\left\| D_k\bigg( \frac{ \phi_{\alpha }  } {\langle x_k\rangle}\bigg)\right\|_{L^2(\mathbb{R}^d)}\\
	\lesssim &\,\|\phi\|_{H^{|\alpha|}}+ \left\| \partial_k\bigg( \frac{ \phi_{\alpha }  } {\langle x_k\rangle}\bigg)\right\|_{L^2(\mathbb{R}^d)}\\
	\lesssim &\,\|\phi\|_{H^{s_1}}+ \left\|  \frac{ \partial_k\phi_{\alpha }  } {\langle x_k\rangle} \right\|_{L^2(\mathbb{R}^d)}.
\end{align*}
A similar analysis as in \eqref{qener2} then gives
	\begin{align*}
	& \int_0^t \int_{\mathbb{R}^d} \left|  \left(F_z\nabla \phi_\alpha+F_{\overline{z}}\nabla \overline{\phi}_\alpha+\partial h^{ij}\partial_{ij } \phi_{\alpha -1}+G\right)D_k\overline{\phi}_{\alpha } \right| \mathrm{d} x \mathrm{d} s\\
	\lesssim &\,  \sum_{j=1}^d\sum_{|\beta| \leq s_1-\frac 32}\left\| \langle x_j\rangle^2 \phi_{\beta}\right\|_{L^2(\mathbb{R}^d)} \cdot \left(X(t)+\|\phi\|^2_{H^{s_1}(\mathbb{R}^{d})}\right)+t\cdot\|\phi\|^3_{H^{s_1+\frac 12}(\mathbb{R}^{d})}.
\end{align*}
Integrating \eqref{qecons2_1} on  $ [0,t] \times \mathbb{R}^d   $ nd combining the above estimates, we derive
\begin{align} \label{qener3_1}
	\| D_k^{\frac 12} \phi_\alpha\|_{L^2}^2 \leq&\, 	\| D_k^{\frac 12} \partial^{\alpha}\phi_0\|_{L^2}^2+\left|\int_{\mathbb{R}^d}    \mathrm{Im} \left(   \partial_j \phi_{\alpha }  g^{ij}  D_k  \overline{\phi}_{\alpha i }\right) \mathrm{d} x\right|\nonumber\\
	&+\int_0^t\int_{\mathbb{R}^d} \left| \mathrm{Im} \left[  \left(F_z\nabla \phi_\alpha+F_{\overline{z}}\nabla \overline{\phi}_\alpha+\partial g^{ij}\partial_{ij } \phi_{\alpha -1}+G\right)\overline{\phi}_{\alpha }\right]\right| \mathrm{d} x\mathrm{d} s\nonumber\\
	\lesssim &\,	\| D_k^{\frac 12}\partial^{\alpha}\phi_0\|_{L^2}+\sum_{j=1}^d \sum_{|\beta| \leq s_1-\frac12}\left\| \langle x_j\rangle^2 \phi_{\beta}\right\|_{L^2(\mathbb{R}^d)} \cdot\left(X(t)+\|\phi\|^2_{H^{s_1}(\mathbb{R}^{d})}\right)+t\cdot\|\phi\|^3_{H^{s_1+\frac 12}(\mathbb{R}^{d})} \nonumber\\
	\lesssim & \,Y(0) +  (1+t) Y(t)^{\frac 32} +  X(t)^{\frac 32}.
\end{align}
Summing \eqref{qener3_1} over $ k=1,\dots,  d $,  and combining with \eqref{qener1} and \eqref{qener2}, we conclude that
\begin{align}  \label{qener3}
		\| \phi\|_{H^{ s_1+\frac12 }(\mathbb{R}^d)}^2\lesssim &\,  Y(0)+	 (1+t) Y(t)^{\frac 32} +  X(t)^{\frac 32}.
\end{align}

Now we estimate $ \sum_{j=1}^d\sum_{|\beta| \leq s_1-\frac 32} \left\| \langle x_i\rangle^2 \phi_{\beta}\right\|_{L^2(\mathbb{R}^d)}^2  .$  Similar to  \eqref{qNLS1},  differentiating  \eqref{qNLS} for $ \beta $ times shows
\begin{align}  \label{qNLSb}
	\sqrt{-1}\phi_{\beta t}+\partial_i\left( g^{ij} \phi_{\beta j}\right)=\mathcal{N}_\beta,
\end{align}
where
\begin{align*}
	&\mathcal{N}_\beta:=F_z\nabla \phi_\beta+F_{\overline{z}}\nabla \overline{\phi}_\beta+\partial h^{ij}\partial_{ij } \phi_{\beta -1}+G_\beta.
\end{align*}
Multiplying the above by $ x_k^2 $ gives
\begin{align}   \label{qNLSx2}
	&	\sqrt{-1}x_k^2\phi_{\beta t}+\sum_{i,j}\partial_i\left( g^{ij}   \partial_j \left(x_k^2 \phi_{\beta }\right)\right)\\
	=&\,4 x_k \partial_k  \phi_\beta +2 x_k h^{kj} \phi_{\beta j}+2 x_k h^{ik} \phi_{\beta i}+2 g^{kk} \phi_{\beta } +2 x_k \partial_i h^{ik} \phi_{\beta } +x^2_k \cdot\mathcal{N}_\beta,\nonumber
\end{align}
since
\begin{align*}
	&	\partial_i\left( g^{ij} \phi_{\beta j}\right) \cdot x^2_k= \partial_i\left(  x^2_k g^{ij} \phi_{\beta j}\right)  -2 g^{kk}_0x_k \partial_k  \phi_\beta -2 x_k h^{kj} \phi_{\beta j}\\ 
	=&\,\partial_i\left( g^{ij} \partial_j\left(\phi_{\beta }x^2_k\right)\right)   -4 g^{kk}_0x_k \partial_k  \phi_\beta -2 x_k h^{kj} \phi_{\beta j}-2 x_k h^{ik} \phi_{\beta i}-2 g^{kk} \phi_{\beta } -2 x_k \partial_i h^{ik} \phi_{\beta }, 
\end{align*}
owing to $ g^{ij}=g^{ij}_0+h^{ij} $. 
Multiplying the resulting equation by $ x_k^2 \overline{\phi}_\beta $ and taking the imaginary part, we obtain
\begin{align*}   
	&\frac 12\frac{d}{d t}\left(  \left| x_k^2\phi_{\beta} \right|^2 \right)+ \mathrm{Im} \left(\partial_i\left( g^{ij} \partial_j\left(\phi_{\beta }x^2_k\right)\right)x_k^2\overline{\phi}_{\beta}\right)	 \\
	=&\, \frac 12\frac{d}{d t}\left(  \left| x_k^2\phi_{\beta} \right|^2 \right)+ \mathrm{Im}  \partial_i\left( g^{ij} \partial_j\left(\phi_{\beta }x^2_k\right) x_k^2\overline{\phi}_{\beta}\right)	\nonumber\\
	=&\,\mathrm{Im} \left\{  \left(4 x_k \partial_k  \phi_\beta +2 x_k h^{kj} \phi_{\beta j}+2 x_k h^{ik} \phi_{\beta i}+2 g^{kk} \phi_{\beta } +2 x_k \partial_i h^{ik} \phi_{\beta } +x^2_k \cdot\mathcal{N}_\beta  \right) x_k^2\overline{\phi}_{\beta}\right\}.\nonumber
\end{align*}	
Integrating over $ [0,t]\times \mathbb{R}^d $ leads to
\begin{align} \label{qenxc2}
		&\left\|  x_k^2 \phi_{\beta}\left(t,\cdot\right)\right\|_{L^2(\mathbb{R}^d)}^2  \lesssim\left. \left\|  x_k^2 \phi_{\beta}\right\|_{L^2(\mathbb{R}^d)}^2 \right|_{s=0}\\ 
		&+  t\cdot\left\|  x_k^2 \phi_{\beta}\right\|_{L^2(\mathbb{R}^d)}  \left\|  x_k \partial_k  \phi_{\beta}\right\|_{L^2(\mathbb{R}^d)}+t\cdot\left\|  x_k^2 \phi_{\beta}\right\|_{L^2(\mathbb{R}^d)}  \left\|   x^2_k \cdot\mathcal{N}_\beta    \right\|_{L^2(\mathbb{R}^d)} \nonumber\\
	&+t\cdot\left\|  x_k^2 \phi_{\beta}\right\|_{L^2(\mathbb{R}^d)} \left\|   2 x_k h^{kj} \phi_{\beta j}+2 x_k h^{ik} \phi_{\beta i}+2 g^{kk} \phi_{\beta } +2 x_k \partial_i h^{ik} \phi_{\beta }   \right\|_{L^2(\mathbb{R}^d)}. \nonumber 
\end{align}
Integration by parts and H\"older's inequality give
\begin{align}\label{qenxc20}
	\left\| \langle x_k\rangle \partial_k  \phi_{\beta}\right\|_{L^2(\mathbb{R}^d)}^2
	=&\,- \int_{\mathbb{R}^{d}} \left(2  \langle x_k\rangle   \mathrm{sgn}(x_k) \partial_k  \phi_{\beta} +\langle x_k\rangle^{2 } \partial_{kk } \phi_{\beta} \right) \overline{\phi}_\beta\mathrm{d} x\\
	\lesssim &\, \left\|  \langle x_k\rangle^{2 }  \phi_{\beta}\right\|_{L^2(\mathbb{R}^d)} \left( \left\|  \partial_k  \phi_{\beta}\right\|_{L^2(\mathbb{R}^d)} +\left\|  \partial_k^2  \phi_{\beta}\right\|_{L^2(\mathbb{R}^d)} \right).\nonumber
\end{align}
By Sobolev   and H\"older, 
\begin{align} \label{qenxc3}
	&\left\|  x^2_k\cdot\left( F_z\nabla \phi_\beta+F_{\overline{z}}\nabla \overline{\phi}_\beta+\partial h^{ij}\partial_{ij } \phi_{\beta -1} \right) \right\|_{L^2(\mathbb{R}^d)}\\
	 \lesssim &\,\left\|\langle x_k\rangle^2\left(\phi+ \nabla \phi\right)\right\|_{L^\infty (\mathbb{R}^d)} \left\| \nabla\phi_\beta\right\|_{L^{2}}\nonumber \\
	\lesssim &\, \sum_{i=1}^d\sum_{|\beta| \leq s_1-\frac 32}\left\| \langle x_i\rangle^2 \phi_{\beta}\right\|_{L^2(\mathbb{R}^d)} \cdot \|\phi\|_{H^{s_1+\frac 12}(\mathbb{R}^{d})},\nonumber
\end{align}
and
\begin{align}
		\left\| x^2_k\cdot G_\beta \right\|_{L^2(\mathbb{R}^d)}\lesssim &\,\left\|  x^2_k \phi \right\|_{W^{p^\prime_1, |\beta^1|+1}} \left\|  \phi \right\|_{W^{p^\prime_2,|\beta^2|+1}}\\
		\lesssim &\,\sum_{i=1}^d\sum_{|\beta| \leq s_1-\frac 32}\left\| \langle x_i\rangle^2 \phi_{\beta}\right\|_{L^2(\mathbb{R}^d)} \cdot \|\phi\|_{H^{s_1+\frac 12}(\mathbb{R}^{d})},\nonumber
\end{align}
provided 
\begin{align*}
		& \left|\beta^i\right|\leq \left|\beta\right|-1,~~i=1,2, \quad  \left|\beta^1\right|+\left|\beta^2\right|=\left|\beta^1+\beta^2\right|= \left|\beta\right|, \\
	&	\sum_{i=1}^2 \frac{1}{p^\prime_i}=\frac 12,\qquad
	\frac{1}{p_1^\prime}\geq \frac 12-\frac{|\beta|-|\beta^1|-\frac12}{d}, \nonumber\\
	&\frac{1}{p_2^\prime}\geq \frac 12-\frac{|\alpha|-|\beta^2|-\frac 12}{d},\nonumber
\end{align*}
which requires $  \frac d2+ \frac 32\leq s_1 =|\alpha|. $
The 
\begin{align*}
	\left\|   2 x_k h^{kj} \phi_{\beta j}+2 x_k h^{ik} \phi_{\beta i}+2 g^{kk} \phi_{\beta } +2 x_k \partial_i h^{ik} \phi_{\beta }   \right\|_{L^2(\mathbb{R}^d)}
\end{align*}
in the remainder terms  are bounded directly:
\begin{align}
	\left\|  g^{kk} \phi_{\beta }  \right\|_{L^2(\mathbb{R}^d)}\lesssim &\, \left(1+\left\|   h^{kk}    \right\|_{L^\infty(\mathbb{R}^d)}\right) 	\left\|    \phi_{\beta }  \right\|_{L^2(\mathbb{R}^d)} , \label{qenxc5}\\
	\left\|    2 x_k h^{kj} \phi_{\beta j}+2 x_k h^{ik} \phi_{\beta i}    \right\|_{L^2(\mathbb{R}^d)}
	\lesssim &\,\sum_{j=1}^d \left\|   h^{kj}  \langle x_k\rangle \right\|_{L^\infty(\mathbb{R}^d)} \left\|    \phi_{\beta j}    \right\|_{L^2(\mathbb{R}^d)} \label{qenxc6}\\
	\lesssim &\,\sum_{i=1}^d\sum_{|\beta| \leq s_1-\frac 32}\left\| \langle x_i\rangle \phi_{\beta}\right\|_{L^2(\mathbb{R}^d)} \left\| \phi\right\|_{H^{s_1+\frac 12}},\nonumber\\
	\left\|     x_k \partial_i h^{ik} \phi_{\beta }   \right\|_{L^2(\mathbb{R}^d)} \lesssim &\,\left\|     x_k   \phi_{\beta }   \right\|_{L^2(\mathbb{R}^d)}\left\|      \partial_i h^{ik}    \right\|_{L^\infty(\mathbb{R}^d)}\label{qenxc7}\\
	\lesssim &\,    \left\|    \langle x_k \rangle  \phi_{\beta}\left(t,\cdot\right)\right\|_{L^2(\mathbb{R}^d)}  \left\| \phi\right\|_{H^{s_1+\frac 12}}.\nonumber
\end{align}
Summing \eqref{qenxc2} over $ |\beta| $ from $ 0 $ to $ s_1-\frac 32 $ and over $ k =1,\dots,  d $ respectively and combining  it with \eqref{qenxc20}-\eqref{qenxc7}, we obtain
\begin{align} \label{qenx2}
		& \sum_{k=1}^d\sum_{|\beta| \leq s_1-\frac32}\left\|  x_k^2 \phi_{\beta}\left(t,\cdot\right)\right\|_{L^2(\mathbb{R}^d)}^2  \lesssim \sum_{k=1}^d \sum_{|\beta| \leq s_1-\frac32}  \left\|  x_k^2 \partial^{\beta}\phi_0\right\|_{L^2(\mathbb{R}^d)}^2  \nonumber\\
		&+t \cdot  \sum_{k=1}^d \sum_{|\beta| \leq s_1-\frac32}  \left\|  \langle x_k\rangle^2 \phi_{\beta}\right\|_{L^2(\mathbb{R}^d)}^{\frac 32}  \left\|  \phi\right\|_{H^{|\beta|+2}(\mathbb{R}^d)}^{\frac 12}\nonumber\\
		&+t \cdot  	\sum_{i,k=1}^d\sum_{|\beta| \leq s_1-\frac32}\left\| \langle x_k\rangle^2 \phi_{\beta}\right\|_{L^2(\mathbb{R}^d)}\left\| \langle x_i\rangle^2 \phi_{\beta}\right\|_{L^2(\mathbb{R}^d)} \cdot \|\phi\|_{H^{s_1+\frac 12}(\mathbb{R}^{d})}  \nonumber\\
	&+t\cdot \sum_{k=1}^d  \sum_{|\beta| \leq s_1-\frac32}\left\| \langle x_k\rangle^2 \phi_{\beta}\right\|_{L^2(\mathbb{R}^d)}\left(1+ \left\| \phi\right\|_{H^{s_1+\frac 12}} \right) 	\left\|    \phi_{\beta }  \right\|_{L^2(\mathbb{R}^d)}  	\nonumber\\
	\leq &\, C Y(0)+ \frac {1}{100} \sum_{k=1}^d \sum_{|\beta| \leq s_1-\frac32}  \left\|  \langle x_k\rangle^2 \phi_{\beta}\right\|_{L^2(\mathbb{R}^d)}^{2}   +Ct^4 \left\|  \phi\right\|_{H^{|\beta|+2}(\mathbb{R}^d)} ^2 \\
	&+ tY(t)^{\frac 32}+Ct^2 \left\| \phi_{\beta}\right\|_{L^2(\mathbb{R}^d)}^2. \nonumber
\end{align}
If $ \frac d2+\frac 52\leq s_1 $ and $ |\beta| \leq s_1-\frac32 $,  then  \eqref{qener3} implies
\begin{align}\label{qenbkk}
&	\max \left\{\left\|    \phi_{\beta}\right\|_{L^2(\mathbb{R}^d)}, \left\|   \phi\right\|_{H^{|\beta|+2}(\mathbb{R}^d)}\right\}  	\lesssim	 \|\phi\|_{H^{s_1+\frac 12}(\mathbb{R}^{d})}\nonumber\\
&\lesssim	Y(0)+	 (1+t) Y(t)^{\frac 32} +  X(t)^{\frac 32}.
\end{align}
This combining with \eqref{qenx2} and \eqref{qener1} gives
\begin{align} \label{qenx2_2}
&	\sum_{k=1}^d  \sum_{|\beta| \leq s_1-\frac32}\left\| \langle x_k\rangle^2 \phi_{\beta}\right\|_{L^2(\mathbb{R}^d)} ^2\lesssim  (1+t^2+t^4) Y(0) +(t+t^3+t^5)  Y(t)^{\frac 32} +(t^2+t^4)  X(t)^{\frac 32}, 
\end{align}
where the $  \frac {1}{100} \sum_{k=1}^d \sum_{|\beta| \leq s_1-\frac32}  \left\|  \langle x_k\rangle^2 \phi_{\beta}\right\|_{L^2(\mathbb{R}^d)}^{2}  $ in \eqref{qenx2} is absorbed by the left hand side of  \eqref{qenx2} and \eqref{qener1}.

Finally,  combining \eqref{qenerM}, \eqref{qener3},  \eqref{qenx2_2}, we establish
\begin{align} \label{qener}
		Y(t) +X(t)\lesssim &\, \left(1+ t^4\right)	Y(0)  +	\left(1+ t^5 \right)\left(X(t)^{\frac 32} +Y(t)^{\frac 32}\right)  
	\end{align} 
for $ \frac d2+\frac 52< s_1 .  $    
 Corresponding to $ Y(0) $,   the initial data $ \phi_0 $ can be  restricted in $ H^{s_1+\frac12} (\mathbb{R}^d) \cap  L^2(\langle x\rangle^N \mathrm{d}x)$ for some positive integer $ N $ thanks to the following lemma, whose proof is  \ref{lemx} is evident and provided in Appendix \ref{appa} for reader's convenience.
\begin{lem} \label{lemx} If $ N_1 $ is a  positive integer and the  components of vector $ \gamma \in \mathbb{N}^d_0 $, then
	\begin{align} 
		\int_{\mathbb{R}^d} |x|^{2N_1}|\phi_\gamma|^2 \mathrm{d} x&\lesssim_{N,\gamma} \left\|\phi\right\|_{H^{|\gamma|+1}(\mathbb{R}^d)}^{\mathscr{S}_1} \left\||x|^N \phi\right\|_{L^2(\mathbb{R}^d)}^{\mathscr{S}_2}, \label{lemx_1}  \\
		\int_{\mathbb{R}^d} |x|^{2N_1}|J^s\phi|^2 \mathrm{d} x&\lesssim_{N,s}\left\|\phi\right\|_{H^{s+1}(\mathbb{R}^d)}^{\mathscr{S}_3} \left\||x|^{N} \phi\right\|_{L^2(\mathbb{R}^d)}^{\mathscr{S}_4} \label{lemx_2}
	\end{align}
	for constants  $ \mathscr{S}_i>0, i=1,2  $ and  large enough integer $N$, where $ \sum_{i=3}^4\mathscr{S}_i=\sum_{i=1}^2\mathscr{S}_i=2 $ and  $N, \mathscr{S}_i  $   depend on $ N_1 $ and $ \gamma $. 
\end{lem}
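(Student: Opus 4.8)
\emph{Plan.} Both \eqref{lemx_1} and \eqref{lemx_2} are the squared form of a single weighted interpolation inequality, which I will prove and then specialize. Writing $P=\partial_\gamma$, $\sigma=|\gamma|$ for \eqref{lemx_1} and $P=J^{s}$, $\sigma=s$ for \eqref{lemx_2}, and setting $B:=\|\phi\|_{H^{\sigma+1}(\mathbb{R}^{d})}$, the target is
\[
\big\|\langle x\rangle^{N}P\phi\big\|_{L^{2}(\mathbb{R}^{d})}\ \lesssim\ B^{\,1-\theta}\,\big\|\langle x\rangle^{M}\phi\big\|_{L^{2}(\mathbb{R}^{d})}^{\,\theta},\qquad \theta:=\frac{N}{M},
\]
for any integer $M\ge N(\sigma+1)$, so that $\theta\in(0,\tfrac{1}{\sigma+1}]$. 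Squaring this, using $|x|\le\langle x\rangle$ on the left and $\langle x\rangle^{M}\lesssim 1+|x|^{M}$ on the right (which replaces $\|\langle x\rangle^{M}\phi\|_{L^{2}}$ by $\|\phi\|_{L^{2}}+\||x|^{M}\phi\|_{L^{2}}$ at the cost of an extra term that is a power of $B$ alone), gives \eqref{lemx_1} with $\mathscr{S}_{2}=2\theta$, $\mathscr{S}_{1}=2-2\theta$ and \eqref{lemx_2} with $\mathscr{S}_{4}=2\theta$, $\mathscr{S}_{3}=2-2\theta$; positivity of the $\mathscr{S}_{i}$ and $\sum\mathscr{S}_{i}=2$ are then automatic, and the dependence of $M$, hence of the $\mathscr{S}_{i}$, on $N$ and on $\sigma$ is precisely the requirement $M\ge N(\sigma+1)$.

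\emph{How I would prove the target.} By complex interpolation. On the strip $\{0\le\mathrm{Re}\,z\le1\}$ introduce the analytic family
\[
T_{z}:=\langle x\rangle^{Mz}\,J^{(\sigma+1)(1-z)},
\]
whose weight and multiplier depend holomorphically on $z$ with polynomial growth in $\mathrm{Im}\,z$. On $\mathrm{Re}\,z=0$ the weight $\langle x\rangle^{iMt}$ is unimodular and $J^{(\sigma+1)(1-it)}=J^{\sigma+1}J^{-i(\sigma+1)t}$ with $J^{-i(\sigma+1)t}$ unitary on $L^{2}$, so $\|T_{it}\|_{H^{\sigma+1}\to L^{2}}=1$; on $\mathrm{Re}\,z=1$, commuting $\langle x\rangle^{M}$ past the order-zero operator $J^{-i(\sigma+1)t}$ by symbol calculus (the commutator has order $-1$, weight $\langle x\rangle^{M-1}$, and norm $\lesssim(1+|t|)^{C}$) gives $\|T_{1+it}\|_{L^{2}(\langle x\rangle^{2M}dx)\to L^{2}}\lesssim(1+|t|)^{C}$. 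Stein's interpolation theorem (polynomial growth on the lines being admissible) then yields $\|T_{\theta}\phi\|_{L^{2}}\lesssim B^{1-\theta}\|\langle x\rangle^{M}\phi\|_{L^{2}}^{\theta}$, i.e. the target with $P$ replaced by $J^{(\sigma+1)(1-\theta)}$. Since $M\ge N(\sigma+1)$ forces $(\sigma+1)(1-\theta)\ge\sigma$, the operator $P\,J^{-(\sigma+1)(1-\theta)}$ is pseudodifferential of order $\le0$, hence $L^{2}$-bounded, and the decomposition
\[
\langle x\rangle^{N}P\phi=\big(PJ^{-(\sigma+1)(1-\theta)}\big)\big(\langle x\rangle^{N}J^{(\sigma+1)(1-\theta)}\phi\big)+\big[\langle x\rangle^{N},\,PJ^{-(\sigma+1)(1-\theta)}\big]J^{(\sigma+1)(1-\theta)}\phi
\]
reduces the target to the displayed bound plus a commutator term of strictly smaller weight and order, which is absorbed by an outer induction on $N$ (the base case $N=0$ being trivial). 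For part \eqref{lemx_1} one may instead stay elementary: with $M$ even so that $\langle x\rangle^{M}$ is a polynomial, induct on $(|\gamma|,N)$ via the Leibniz expansion $\langle x\rangle^{N}\partial_{\gamma}\phi=\partial_{\gamma}(\langle x\rangle^{N}\phi)-\sum_{0<\beta\le\gamma}\binom{\gamma}{\beta}\,(\partial_{\beta}\langle x\rangle^{N})\,\partial_{\gamma-\beta}\phi$ (the summands having smaller multi-index and weight), the Sobolev interpolation $\|\langle x\rangle^{N}\phi\|_{\dot H^{|\gamma|}}\lesssim\|\langle x\rangle^{N}\phi\|_{L^{2}}^{1/(|\gamma|+1)}\|\langle x\rangle^{N}\phi\|_{\dot H^{|\gamma|+1}}^{|\gamma|/(|\gamma|+1)}$, and the base-case H\"older bound $\int|x|^{2N}|\phi|^{2}\le\big(\int|x|^{2M}|\phi|^{2}\big)^{N/M}\|\phi\|_{L^{2}}^{2-2N/M}$.

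\emph{Main obstacle.} The mechanism is short; the delicate point is the accounting of the two interpolation exponents, which have to stay nonnegative and to sum to the correct power. This is what forces $\theta=N/M\le\tfrac{1}{\sigma+1}$, i.e. $M\ge N(\sigma+1)$ — geometrically, the pair ``$N$ weights, $|\gamma|$ (resp.\ $s$) derivatives'' must lie in the convex hull of ``$0$ weights, $\sigma+1$ derivatives'' and ``$M$ weights, $0$ derivatives'' — and this is exactly why $M$ and the $\mathscr{S}_{i}$ must be allowed to depend on $N$ and on the order of the derivative. The only remaining technical check, in the pseudodifferential route, is that the two boundary estimates for $T_{z}$ grow at most polynomially in $\mathrm{Im}\,z$, which is routine symbol calculus for the non-differential multiplier $J^{is}$; one must also keep track that the passages from $\langle x\rangle^{M}$ to $|x|^{M}$ and the commutator remainders contribute only lower-order corrections (powers of $B$, or terms $B^{1-\theta'}\|\langle x\rangle^{M}\phi\|_{L^{2}}^{\theta'}$ with $\theta'\le\theta$), which are harmless for the stated conclusion.
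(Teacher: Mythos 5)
Your main route is correct in outline but genuinely different from --- and much heavier than --- the paper's. The interpolation inequality you take as the target, $\||x|^{\theta M}J^{(1-\theta)(\sigma+1)}\phi\|_{L^2}\lesssim\|J^{\sigma+1}\phi\|_{L^2}^{1-\theta}\||x|^M\phi\|_{L^2}^{\theta}$, is a known result provable exactly by the Stein/three-lines argument you sketch (unimodular weight and unitary $J^{it}$ on one boundary line, polynomially growing commutators of the weight with the imaginary-order operator on the other), and your reduction of $\langle x\rangle^{N}P$ to it via an order-$\le 0$ pseudodifferential factor plus commutators of strictly smaller weight and order is standard bookkeeping. What this buys is a uniform treatment of $J^{s}$ for non-integer $s$, where the paper merely asserts that \eqref{lemx_2} ``follows immediately'' from \eqref{lemx_1}; the price is Stein interpolation and symbol calculus for a five-line lemma. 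The paper instead proves \eqref{lemx_1} elementarily: integrate by parts so that the \emph{entire} weight lands on the low-derivative factor and the top-order derivative appears with \emph{no} weight, e.g. $\int|x_k|^{2N}|\phi_{x_k}|^2=-\int 2N|x_k|^{2N-1}\mathrm{sgn}(x_k)\,\phi\,\overline{\phi}_{x_k}-\int|x_k|^{2N}\phi\,\overline{\phi}_{x_kx_k}$, then apply H\"older with three exponents and induct on $|\gamma|$.

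Two caveats. First, your ``elementary alternative'' for \eqref{lemx_1} does not close as written: the step $\|\langle x\rangle^{N}\phi\|_{\dot H^{|\gamma|}}\lesssim\|\langle x\rangle^{N}\phi\|_{L^2}^{1/(|\gamma|+1)}\|\langle x\rangle^{N}\phi\|_{\dot H^{|\gamma|+1}}^{|\gamma|/(|\gamma|+1)}$ leaves the \emph{weighted} top-order norm $\|\langle x\rangle^{N}\phi\|_{\dot H^{|\gamma|+1}}$ on the right, which is not controlled by the unweighted $\|\phi\|_{H^{|\gamma|+1}}$ and is not covered by an induction on $(|\gamma|,N)$, since it carries one more derivative at the same weight. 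The cure is precisely the paper's integration by parts, which splits weight and derivatives asymmetrically between the two factors rather than evenly. Second, your passage from $\langle x\rangle^{M}$ to $|x|^{M}$ produces an additive term that is a pure power of $\|\phi\|_{H^{\sigma+1}}$ with no factor of the weighted norm, so the final bound is not literally the single product in the statement; this is harmless in the applications (and the paper's own proof likewise yields a sum of two products with different exponent pairs rather than a single one), but it should be acknowledged.
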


If $ t\leq 1 $ and $ 	Y(0)   $	 is small enough, or $ \phi_0 \in H^{s_1+\frac12} (\mathbb{R}^d) \cap  L^2(\langle x\rangle^N \mathrm{d}x) $ is small, where $  \frac d2 +\frac 52<s_1 $, $ N $ depends only on $ d, s_1 $, then by \eqref{qener}, Lemma \ref{lemx} and a standard bootstrap argument,  we arrive at
\begin{align} \label{qbound1}
	Y(t) +X(t) \leq C\left(	Y(0)\right)
\end{align}
for $ t\in[0,1] $.

\begin{proof}[\textbf{Proof of Theorem \ref{thm2}}]
	We may apply \eqref{qbound1}  to obtain the desired results through the standard artificial viscosity method; see Section 9.4 in \cite{linares2014introduction}. 
	Consider the approximate equation	
	\begin{equation}	 \label{aqNLS}
		\begin{cases}
			\sqrt{-1}\phi_t+	\varepsilon\sqrt{-1}\Delta^2 \phi+\partial_i\left( g^{ij}\left(\phi,\overline{\phi}\right) \partial_j \phi\right)=F\left(\phi,\overline{\phi},\nabla\phi, \nabla \overline{\phi}\right)	,~~~\phi: t \times \mathbb{R}^d\rightarrow \mathbb{C}^m\\
			\phi(0,x)=\phi_0(x).
		\end{cases}
	\end{equation}
	Differentiating \eqref{aqNLS} with respect to the multi-index $\alpha$ yields
	\begin{align} \label{aqNLS1}
		\sqrt{-1}\phi_{\alpha t}+	\varepsilon\sqrt{-1}\Delta^2 \phi_\alpha+\partial_i\left( g^{ij} \partial_j \phi_{\alpha }\right)=F_z\nabla \phi_\alpha+F_{\overline{z}}\nabla \overline{\phi}_\alpha+\partial h^{ij}\partial_{ij } \phi_{\alpha -1}+G.
	\end{align}
Multiplying \eqref{aqNLS1} by $\overline{\phi}{\alpha}$ and $D_k \overline{\phi}{\alpha}$, respectively, and taking the imaginary part, the same procedure as in the derivation of \eqref{qener1}, \eqref{qener2}, and \eqref{qener3} gives
\begin{align*}
	&	\| \phi\|_{H^{s_1 }(\mathbb{R}^d)}^2+\varepsilon\sum_{i,j=1}^d\int_0^t\left\| \partial_{ij}\phi\right\|_{H^{s_1}(\mathbb{R}^d)}^2 \mathrm{d} s\\
	\lesssim &\, 		\| \phi_0\|_{H^{s_1 }(\mathbb{R}^d)}^2+\sum_{|\alpha|=0}^{s_1}\int_0^t\int_{\mathbb{R}^d} \left| \mathrm{Im} \left(  \left(F_z\nabla \phi_\alpha+F_{\overline{z}}\nabla \overline{\phi}_\alpha+\partial h^{ij}\partial_{ij } \phi_{\alpha -1}+G\right)\overline{\phi}_{\alpha }\right)\right| \mathrm{d} x\mathrm{d} s\\
	\lesssim &\,\| \phi_0\|_{H^{s_1 }(\mathbb{R}^d)}^2+ (1+t)\cdot(Y(t)^{\frac 32}+X(t)^{\frac 32} ) \nonumber
\end{align*}
and 
\begin{align} \label{aqener1}
	\|\phi\|_{H^{s_1+\frac12}(\mathbb{R}^{d})}^2+\varepsilon\sum_{i,j=1}^d\int_0^t\left\| \partial_{ij}\phi\right\|_{H^{s_1+\frac12}(\mathbb{R}^d)}^2 \mathrm{d} s\\
	\lesssim 	\|\phi_0\|_{H^{s_1+\frac12}(\mathbb{R}^{d})}^2+  \left(1+t\right) \cdot\left(Y(t)^{\frac 32}+ X(t)^{\frac 32} \right),\nonumber
\end{align}
where $ X(t), Y(t)  $ are defined as before. 

 Following the approach in \eqref{qNLSx2}, we differentiate \eqref{aqNLS} with respect to $\beta$ and multiply by $x_k^2$ to obtain
\begin{align*}   
	&	\sqrt{-1}x_k^2\phi_{\beta t}+\sum_{i,j}\partial_i\left( g^{ij}   \partial_j \left(x_k^2 \phi_{\beta }\right)\right)+	\varepsilon\sqrt{-1}\Delta^2 \left(x_k^2\phi_\beta\right)\\
	-&\,8\varepsilon \sqrt{-1} x_k \Delta\partial_k \phi_\beta -8\varepsilon \sqrt{-1} \partial_{kk} \phi_\beta-4\varepsilon \sqrt{-1}   \Delta\phi_\beta \nonumber\\
	=&\,4 x_k \partial_k  \phi_\beta +2 x_k h^{kj} \phi_{\beta j}+2 x_k h^{ik} \phi_{\beta i}+2 g^{kk} \phi_{\beta } +2 x_k \partial_i h^{ik} \phi_{\beta } +x^2_k \cdot\mathcal{N}_\beta,\nonumber
\end{align*}
which multiplying  by $ x_k^2 \overline{\phi}_\beta $, integrating on $[0,t]\times \mathbb{R}^d $, taking the imaginary part, summing   over $ |\beta| \leq s_1-\frac 32 $ and over $ k =1,\dots,  d $   show
\begin{align} \label{aqenx2}
		& \sum_{k=1}^d\sum_{|\beta| \leq s_1-\frac32}\left\|  x_k^2 \phi_{\beta}\left(t,\cdot\right)\right\|_{L^2(\mathbb{R}^d)}^2 +\varepsilon\sum_{i,j=1}^d\sum_{|\beta| \leq s_1-\frac32}\int_0^t \left\| \partial_{ij}\left( x_k^2
	\phi_\beta\right)\right\|_{L^2(\mathbb{R}^d)}\mathrm{d}s 
	\\
	\leq& \,C\varepsilon t^{\frac 12} \left\| \langle x_k\rangle \langle \nabla \rangle   \phi_{\beta}\right\|_{L^2(\mathbb{R}^d)}\Big(\sum_{|\beta| \leq s_1-\frac32}\int_0^t \left\| \partial_{jj}\left( x_k^2
	\phi_\beta\right)\right\|_{L^2(\mathbb{R}^d)}\mathrm{d}s \Big)^{\frac12}\nonumber\\
	&+C Y(0)+ \frac {1}{100} \sum_{k=1}^d \sum_{|\beta| \leq s_1-\frac32}  \left\|  \langle x_k\rangle^2 \phi_{\beta}\right\|_{L^2(\mathbb{R}^d)}^{2}   +Ct^4 \left\|  \phi\right\|_{H^{|\beta|+2}(\mathbb{R}^d)} ^2 \nonumber\\
	&+C tY(t)^{\frac 32}+Ct^2 \left\| \phi_{\beta}\right\|_{L^2(\mathbb{R}^d)}^2\nonumber \\
	\leq &\,C Y(0) +\frac {1}{50} \sum_{k=1}^d \sum_{|\beta| \leq s_1-\frac32}  \left\|  \langle x_k\rangle^2 \phi_{\beta}\right\|_{L^2(\mathbb{R}^d)}^{2}+\frac{1}4\sum_{|\beta| \leq s_1-\frac32}\int_0^t \left\| \partial_{jj}\left( x_k^2
	\phi_\beta\right)\right\|_{L^2(\mathbb{R}^d)}\mathrm{d}s\nonumber\\
	&\,+ CtY(t)^{\frac 32}+Ct^2 \left\| \phi_{\beta}\right\|_{L^2(\mathbb{R}^d)}^2+C(1+t^4)\left\|  \phi\right\|_{H^{|\beta|+2}(\mathbb{R}^d)} ^2  \nonumber\\
	\leq & \,	\frac {1}{50} \sum_{k=1}^d \sum_{|\beta| \leq s_1-\frac32}  \left\|  \langle x_k\rangle^2 \phi_{\beta}\right\|_{L^2(\mathbb{R}^d)}^{2}+\frac{1}4\sum_{|\beta| \leq s_1-\frac32}\int_0^t \left\| \partial_{jj}\left( x_k^2
	\phi_\beta\right)\right\|_{L^2(\mathbb{R}^d)}\mathrm{d}s\nonumber\\
	& (1+t^2+t^4) Y(0) +(t+t^3+t^5)  Y(t)^{\frac 32} +(t^2+t^4)  X(t)^{\frac 32}
\end{align}
analogous to \eqref{qenx2}, where we used \eqref{qenbkk} and
 $$ 	\frac {1}{50} \sum_{k=1}^d \sum_{|\beta| \leq s_1-\frac32}  \left\|  \langle x_k\rangle^2 \phi_{\beta}\right\|_{L^2(\mathbb{R}^d)}^{2}+\frac{1}4\sum_{|\beta| \leq s_1-\frac32}\int_0^t \left\| \partial_{jj}\left( x_k^2
\phi_\beta\right)\right\|_{L^2(\mathbb{R}^d)}\mathrm{d}s$$ can be absorbed by the left hand side of \eqref{aqenx2}. 

At the same time,  one can  utilize \eqref{leminter1} following the analysis of  \eqref{qener_mc4} to derive 
\begin{align}    \label{aqm1}
	&\sum_{i=1}^d\varepsilon\left|\int_0^t\int_{\mathbb{R}}     \frac{x_k}{\langle x_k\rangle}    \mathrm{Im}\int_{\mathbb{R}^{d-1}} \left(\partial_{i}\partial_{jj} \phi_\alpha\partial_i\partial_k \overline{\phi}_\alpha\right) \mathrm{d} \hat{x}_k\right|\\
	\lesssim&\, \varepsilon\left|\mathrm{Im}\int_0^t\int_{\mathbb{R}^d}   D_k^{\frac 12}\left( \frac{x_k}{\langle x_k\rangle}    \partial_{kk} \overline{\phi}_\alpha \right)D_k^{-\frac 12}\left(\partial_k \partial_{jj}\phi_{\alpha } \right) \mathrm{d} x\mathrm{d}s\right|\nonumber\\
	&+\varepsilon\int_0^t\left\|   D_i^{\frac 12} \partial_{jj}\phi_{\alpha}\right\|_{L^2} \left\|   D_i^{\frac 12} \partial_{ik}\phi_{\alpha}\right\|_{L^2} \mathrm{d} s\nonumber\\
	\lesssim &\,\varepsilon  \sum_{i,j=1}^d\int_0^t\left\| \partial_{ij}\phi\right\|_{H^{s_1+\frac 12}(\mathbb{R}^d)}^2 \mathrm{d} s.\nonumber
\end{align}
While other terms involving $\varepsilon$ in \eqref{aconsm} can be bounded similarly. Combining  \eqref{aconsm}, \eqref{qenerM} and \eqref{aqener1} and \eqref{aqm1}, we obtain the momentum type estimate
\begin{align} \label{aqenerM}
		X(t)&	\leq C_1\varepsilon  \sum_{i,j=1}^d\int_0^t\left\| \partial_{ij}\phi\right\|_{H^{s_1+\frac 12}(\mathbb{R}^d)}^2 \mathrm{d} s\\
		&+ C (1+t)Y(t)+Y(0) + C(1+t+t^{\frac 32}) Y(t)^{\frac 32} + X(t)^{\frac 32}
		.\nonumber
\end{align}

In summary, we can multiply \eqref{aqener1} by $ 2C_1 $, and sum it  with \eqref{aqener1} and \eqref{aqenx2} to establish
\begin{align*} 
	&X(t)+Y(t) +\varepsilon  \sum_{i,j=1}^d\int_0^t\left\| \partial_{ij}\phi\right\|_{H^{s_1+\frac 12}(\mathbb{R}^d)}^2 \mathrm{d} s+\varepsilon\sum_{i,j=1}^d\sum_{|\beta| \leq s_1-\frac32}\int_0^t \left\| \partial_{ij}\left( x_k^2
	\phi_\beta\right)\right\|_{L^2(\mathbb{R}^d)}\mathrm{d}s \\
	\lesssim 	&\,\left(1+ t^4\right)	Y(0)  +	\left(1+ t^5 \right)\left(X(t)^{\frac 32} +Y(t)^{\frac 32}\right),  \nonumber
\end{align*}
which asserts that if  $  \frac d2 +3<s_1+\frac 12=s $ and 
$ \phi_0 \in H^{s} (\mathbb{R}^d) \cap L^2(\langle x\rangle^N \mathrm{d}x) $  small enough for some integer $ N $, then the bootstrap argument shows
\begin{align} \label{aqbound1}
&X(t)+Y(t) +\varepsilon  \sum_{i,j=1}^d\int_0^t\left\| \partial_{ij}\phi\right\|_{H^{s_1+\frac 12}(\mathbb{R}^d)}^2 \mathrm{d} s\nonumber\\
&+\varepsilon\sum_{i,j=1}^d\sum_{|\beta| \leq s_1-\frac32}\int_0^t \left\| \partial_{ij}\left( x_k^2
\phi_\beta\right)\right\|_{L^2(\mathbb{R}^d)}\mathrm{d}s  \leq C\left(	Y(0)\right)
\end{align}
for $ t\in[0,1] $.
 
 	By  the same procedure in \cite{kenig2004cauchy}, we know that when  $0\leq t\leq 1,  \frac{d}{2}+3<s $ and $ \phi_0 \in H^{s} (\mathbb{R}^d) \cap L^2(\langle x\rangle^N \mathrm{d}x) $  small enough,  the \eqref{aqNLS} has a solution  in $ H^{s} (\mathbb{R}^d) \cap L^2(\langle x\rangle^N \mathrm{d}x)$  for $ t \in [0,1] $ and then by \eqref{aqbound1} we  can let $ \varepsilon\rightarrow 0 $ in \eqref{aqNLS} to obtain the solution of \eqref{qNLS}. 
 	
 		Now we prove uniqueness and continuity. Consider the following two IVP with different initial data
 	\begin{align}	 \label{qN1}
 		\begin{cases}
 			\sqrt{-1}\partial_t \phi_1+\partial_i\left( g^{ij}\left(\phi_1,\overline{\phi}_1\right) \partial_j\phi_1\right)=F\left(\phi_1,\overline{\phi}_1,\nabla\phi_1, \nabla \overline{\phi}_1\right),\\
 			\phi_1(0,x)=\phi_{10}(x),
 		\end{cases}\\
 		\label{qN2}
 		\begin{cases}
 			\sqrt{-1}\partial_t \phi_2+\partial_i\left( g^{ij}\left(\phi_2,\overline{\phi}_2\right) \partial_j\phi_2\right)=F\left(\phi_2,\overline{\phi}_2,\nabla\phi_2, \nabla \overline{\phi}_2\right),\\
 			\phi_2(0,x)=\phi_{20}(x)
 		\end{cases}
 	\end{align}
 	with solutions $ \phi_1, \phi_2 $ satisfying
 	\begin{align} \label{sol1-2}
 		&\max\left\{ \| \langle x_i \rangle^2  \partial_j \phi_l\|_{L^\infty},\| \langle x_i \rangle  \partial_{ij} \phi_l\|_{L^\infty} \right \}\lesssim  Y (\phi_l)(t)\lesssim Y (\phi_l)(0)\ll 1,~~l=1,2.
 	\end{align}
 Subtracting \eqref{qN2} from \eqref{qN1} gives
 	\begin{align} \label{qNLS1-2}
 		\left\{\begin{array}{l}
 			\sqrt{-1}\partial_t (\phi_1-\phi_2)+\partial_i\left( g^{ij}\left(\phi_1,\overline{\phi}_1\right) \partial_j(\phi_1-\phi_2)\right)=  N_{uni},\\
 			\phi_1(0,x)-	\phi_2(0,x)=\phi_{10}(x)-\phi_{20}(x),
 		\end{array}\right.
 	\end{align}
 	where
 	$$ N_{uni}:= F\left(\phi_1,\overline{\phi}_1,\nabla\phi_1, \nabla \overline{\phi}_1\right)-F\left(\phi_2,\overline{\phi}_2,\nabla\phi_2, \nabla \overline{\phi}_2\right)-\partial_i\left( (g^{ij}\left(\phi_1,\overline{\phi}_1\right)-g^{ij}\left(\phi_2,\overline{\phi}_2\right)) \partial_j\phi_2\right). $$
 	Let $ v=\phi_1-\phi_2, \phi_0=\phi_{10}(x)-\phi_{20}(x) $. Following the procedure in Subsection \ref{s2.3}, we multiply \eqref{qNLS1-2} by $\partial_k \overline{v}$, integrate over $\mathbb{R}^{d-1}$, and obtain
 	\begin{align} \label{consm1-2}
 		&	-\partial_t \int_{\mathbb{R}^{d-1}}  \mathrm{Im}\left(v \partial_k \overline{v} \right) \mathrm{d} \hat{x}_k-\partial_{kk} \int_{\mathbb{R}^{d-1}} \mathrm{Re}\left( v   g^{kj} \left(\phi_1,\overline{\phi}_1\right) \overline{v}_{ j}\right) \mathrm{d} \hat{x}\nonumber\\
 		&+2 \partial_k \int_{\mathbb{R}^{d-1}} \mathrm{Re}\left(\partial_k v  g^{kj} \left(\phi_1,\overline{\phi}_1\right) \overline{v}_{  j}\right) \mathrm{d} \hat{x} + \int_{\mathbb{R}^{d-1}}  \mathrm{Re}\left(\partial_{i} v \partial_k h^{ij} \left(\phi_1,\overline{\phi}_1\right) \partial_j \overline{v} \right)  \mathrm{d} \hat{x} \\
 		=&\,- \partial_k\int_{\mathbb{R}^{d-1}}   \mathrm{Re}\left( \overline{v}  N_{uni} \right)\mathrm{d} \hat{x}+2\int_{\mathbb{R}^{d-1}} \mathrm{Re} \left( N_{uni} \partial_k \overline{ v} \right)   \mathrm{d} \hat{x}. \nonumber
 	\end{align}
 	Multiplying \eqref{consm1-2} by $ \dfrac{x_k}{\langle x_k\rangle} $ and  integrating on $ [0,t]\times\mathbb{R} $ gives
 	\begin{align} \label{consm1-2_1}
 		\int_0^t\int_{ \mathbb{R}}   \frac{x_k}{\langle x_k\rangle}  \cdot\eqref{consm1-2}   \mathrm{d} x_k \mathrm{d} s.
 	\end{align}
 	Taking the good sign and summing over $ k $, we can get the good term
 	$ 	X_v(t):= \sum_{k=1}^d   \int_0^t     \int_{\mathbb{R}^{d}} \left|\frac{\partial_k v  } {\langle x_k\rangle}\right|^2  \mathrm{d} x \mathrm{d} s  $.
 	Most  terms in \eqref{consm1-2_1} handled as in Subsection \ref{s3.1}, except those involving $N_{\text{uni}}$. The mean value theorem gives
 	\begin{align*}
 		&g^{ij}\left(\phi_1,\overline{\phi}_1\right)-g^{ij}\left(\phi_2,\overline{\phi}_2\right)=g^{ij}\left(\phi_1,\overline{\phi}_1\right)-g^{ij}\left(\phi_2,\overline{\phi}_1\right)+g^{ij}\left(\phi_2,\overline{\phi}_1\right)-g^{ij}\left(\phi_2,\overline{\phi}_2\right)\\
 		=&\,h^{ij}_y (\phi_1- \phi_2)
 		+ h^{ij}_{\overline{y}} (\overline{\phi}_1-\overline{\phi}_2),
 	\end{align*}
 	where 
 	\begin{align} \label{mean_h}
 		h^{ij}_y= h^{ij}_y\left(\theta_1\phi_1+(1-\theta_1)\phi_2,\overline{\phi}_1\right),&\quad h^{ij}_{\overline{y}}=h^{ij}_{\overline{y}}\left(\phi_2,\theta_2\overline{\phi}_1+(1-\theta_2)\overline{\phi}_2\right)\\ 
 		\theta_1, \theta_2 \in (0,1),\qquad&\| h^{ij}_y\|_{L^\infty} +\| h^{ij}_{\overline{y}}\|_{L^\infty}  \leq C. \nonumber
 	\end{align}
 	Similarly, for $ \theta_i \in (0,1),i=3,\dots,6, $ there is
 	\begin{align*}
 		&F\left(\phi_1,\overline{\phi}_1,\nabla\phi_1, \nabla \overline{\phi}_1\right)-F\left(\phi_2,\overline{\phi}_2,\nabla\phi_2, \nabla \overline{\phi}_2\right)\\
 		=&\,F_y \left(\theta_3\phi_1+(1-\theta_3)\phi_2,\overline{\phi}_1,\nabla\phi_1, \nabla \overline{\phi}_1\right) (\phi_1-\phi_2)\\
 		&+\dots+ F_{\overline{z}} \left(\phi_2,\overline{\phi}_2,\nabla\phi_2,\theta_6 \nabla\phi_1+(1-\theta_6)\nabla \overline{\phi}_2\right) (\nabla\phi_1-\nabla\phi_2).
 	\end{align*}
 	Thus,
 	\begin{align} \label{momterm1}
 		&\int_0^t \int_{\mathbb{R}}\frac{x_k}{\langle x_k\rangle} 	\int_{\mathbb{R}^{d-1}} \mathrm{Re} \left(\partial_i\left( (g^{ij}\left(\phi_1,\overline{\phi}_1\right)-g^{ij}\left(\phi_2,\overline{\phi}_2\right)) \partial_j\phi_2\right)\partial_k \overline{v} \right)   \mathrm{d} \hat{x}  \mathrm{d} x_k \mathrm{d}s \\
 		=&\, \mathrm{Re}  \int_0^t \int_{\mathbb{R}}\frac{x_k}{\langle x_k\rangle} 	\int_{\mathbb{R}^{d-1}} \partial_i\left( h^{ij}_y  \left(\phi_1 -  \phi_2 \right)+h^{ij}_{\overline{y}} (\overline{\phi}_1-\overline{\phi}_2)\right) \partial_j\phi_2\partial_k \overline{v}  \mathrm{d} \hat{x}  \mathrm{d} x_k \mathrm{d}s \nonumber\\
 		&+ \mathrm{Re}\int_0^t \int_{\mathbb{R}}\frac{x_k}{\langle x_k\rangle} 	\int_{\mathbb{R}^{d-1}}   \left( h^{ij}_y  \left(\phi_1 -  \phi_2 \right)+h^{ij}_{\overline{y}} (\overline{\phi}_1-\overline{\phi}_2)\right)\partial_{ij}\phi_2\partial_k \overline{v}   \mathrm{d} \hat{x}  \mathrm{d} x_k \mathrm{d}s\nonumber \\
 		=&\,\mathrm{II}_1+\mathrm{II}_2\nonumber
 	\end{align}
 	and  it follows from H\"older and Sobolev that
 	\begin{align*}
 		&	\mathrm{II}_1 \lesssim  \sum_j \| \langle x_i \rangle^2  \partial_j \phi_2\|_{L^\infty} \cdot (	X_v(t)+ t^{\frac12} \|v\|_{L^2}\cdot	X_v^{\frac12} (t) ),\\
 		& \mathrm{II}_2 \lesssim \sum_j \| \langle x_i \rangle  \partial_{ij} \phi_2\|_{L^\infty} \cdot t^{\frac12} \|v\|_{L^2}\cdot	X_v^{\frac12}(t) .
 	\end{align*} 
 	Similarly,
 	\begin{align*}
 		&\int_0^t \int_{\mathbb{R}}\frac{x_k}{\langle x_k\rangle} 	\int_{\mathbb{R}^{d-1}} \mathrm{Re} \left( \left(F\left(\phi_1,\overline{\phi}_1,\nabla\phi_1, \nabla \overline{\phi}_1\right)-F\left(\phi_2,\overline{\phi}_2,\nabla\phi_2, \nabla \overline{\phi}_2\right)\right)\partial_k \overline{v} \right)   \mathrm{d} \hat{x}  \mathrm{d} x_k \mathrm{d}s \nonumber\\
 		\lesssim &\,  \sum_j \| \langle x_i \rangle^2  \partial_j \phi_2\|_{L^\infty} \cdot (	X_v(t)+ t^{\frac12} \|v\|_{L^2}\cdot	X_v^{\frac12} (t) ).
 	\end{align*}
 The first term on the right-hand side of \eqref{consm1-2} is easier to bound than the second. 
 Combining these estimates, we obtain
 	\begin{align} \label{qunimon1}
 		X_v(t) \lesssim \|\phi_0\|_{H^{\frac12}}+(1+t) \cdot\|v(t)\|_{H^{\frac12}}+ \sum_j \| \langle x_i \rangle^2  \partial_j \phi_2\|_{L^\infty} \cdot (	X_v(t)+ t^{\frac12} \|v\|_{L^2}\cdot	X_v^{\frac12} (t) ).
 	\end{align}

 	For energy estimates, multiplying \eqref{qNLS1-2} by $ \overline{\phi}_1-\overline{\phi}_2, $ $ D_k(\overline{\phi}_1-\overline{\phi}_2)  $ respectively,  taking the imaginary part and  integrating on $[0,t]\times \mathbb{R}^d $ gives
 	\begin{align} \label{quniener}
 		&\max \left\{\left\|   v\right\|_{L^2(\mathbb{R}^d)}, \left\|   D^{\frac 12} v\right\|_{L^{ 2}(\mathbb{R}^d)}\right\}  	\lesssim	 \|v\|_{H^{ \frac 12}(\mathbb{R}^{d})}\\
 		\lesssim &\,  \|\phi_0\|_{H^{\frac12}}+ \sum_j \| \langle x_i \rangle^2  \partial_j \phi_2\|_{L^\infty} \cdot (	X_v(t)+ t^{\frac12} \|v\|_{L^2}\cdot	X_v^{\frac12} (t) ).\nonumber
 	\end{align}
 	While the analysis of other terms is  almost the same as Subsection  \ref{s3.2}, the term involving
 	\begin{align*} 
 		\partial_i\left( (g^{ij}\left(\phi_1,\overline{\phi}_1\right)-g^{ij}\left(\phi_2,\overline{\phi}_2\right)) \partial_j\phi_2\right)
 	\end{align*}
 	can be bounded in the same way as \eqref{momterm1}. 
 	%
 	Combining  \eqref{qunimon1}, \eqref{quniener} and using the smallness in \eqref{sol1-2}, we can obtain estimates analogous to \eqref{qbound1} for  $ v $ and subsequently establish the uniqueness and continuity.  	
\end{proof}

\section{Large data and	quadratic interactions}  \label{s_big}
Before presenting the proof, we first introduce some necessary notations and preliminary lemmas. We decompose the function \( \phi \) as
\[
\phi = \sum_{m=1}^K \sum_{\lambda} P_{\lambda,\varpi_m} \phi, \quad    P_{\lambda, \varpi_m} \phi = P_\lambda \chi_m(D) \phi,
\]
where \( P_\lambda \) is the standard Littlewood-Paley projection operator,   \( \varpi_m \in \mathbb{S}^{d-1} \), and \( K = C \epsilon^{-(d-1)} \) with \( \epsilon > 0 \) being a small constant to be determined later. In general, for any \( \epsilon > 0 \), the sphere \( \mathbb{S}^{d-1} \) can be covered by \( K \) balls of radius \( \epsilon \) centered at \( \varpi_m \).
\( \chi_m(D) \) is defined via the Fourier transform by \( \mathcal{F}(\chi_m(D)\phi)(\xi) = \chi_m(\xi) \widehat{\phi}(\xi) \), where the support of \( \chi_m(\xi) \) satisfies
\[
\mathrm{Supp}(\chi_m) \subset \left\{ \xi \;\big|\; \epsilon |\varpi_m \cdot \xi| \gtrsim |\varpi_m \times \xi| \right\}
\]
and the cross product \( \varpi_m \times \xi \) is defined component-wise as
\[
(\varpi_m \times \xi)_{ij} = \varpi_{mi} \xi_j - \varpi_{mj} \xi_i.
\]
We can observe that the following approximate orthogonality relation holds for the \( L^2 \)-norms of \( P_\lambda \phi \):  
\begin{align}
\left\| P_\lambda \phi \right\|_{L^{2}(\mathbb{R}^d)}^2 \sim \sum_{m=1}^K \left\| P_{\lambda, \varpi_m} \phi \right\|_{L^{2}(\mathbb{R}^d)}^2. \label{orth_relat}
\end{align}

Set $ s_2 >\frac{d}{2}+3$   and write the nonlinear term $ F(\phi, \overline{\phi}, \nabla \phi, \nabla \overline{\phi}) $ in \eqref{qNLS} into
\begin{align*}
	F(\phi, \overline{\phi}, \nabla \phi, \nabla \overline{\phi})=  &\,F(\phi, \overline{\phi}, P_{\leq 1} \nabla \phi,  P_{\leq 1} \nabla \overline{\phi})\\
	&+\sum_{ \mu}  \left(F(\phi, \overline{\phi}, P_{\leq \mu} \nabla \phi,  P_{\leq \mu}  \nabla \overline{\phi})-F(\phi, \overline{\phi}, P_{\leq  \mu/2} \nabla \phi,  P_{\leq  \mu/2}  \nabla \overline{\phi})\right)\\
	=&\, F(\phi, \overline{\phi}, P_{\leq 1} \nabla \phi,  P_{\leq 1} \nabla \overline{\phi})+\sum_{\mu} a \cdot P_\mu \nabla \phi + \sum_{\mu} b\cdot P_\mu \nabla\overline{ \phi },
\end{align*}
where $ \mu=2^n, n\geq 2, n \in \mathbb{N} $. By Hadamard's formula, we have
\begin{align} 
	a:= \int_0^1   F_z (\phi, \overline{ \phi }, s  P_{\leq  \mu}  \nabla \phi +(1-s) P_{\leq  \mu/2}  \nabla \phi,  s  P_{\leq  \mu}  \nabla \overline{\phi}+(1-s) P_{\leq  \mu/2}  \nabla \overline{\phi}) \mathrm{d} s,\label{a}\\
	b:= \int_0^1  F_{\overline{z}} (\phi, \overline{ \phi }, s  P_{\leq  \mu}  \nabla \phi +(1-s) P_{\leq  \mu/2}  \nabla \phi,  s  P_{\leq  \mu}  \nabla \overline{\phi}+(1-s) P_{\leq  \mu/2}  \nabla \overline{\phi}) \mathrm{d} s. \label{b}
\end{align}
Applying $ P_{\lambda, \varpi_m}   $ to $ F $, we obtain
\begin{align*}
	P_{\lambda, \varpi_m}   F(\phi, \overline{\phi}, \nabla \phi, \nabla \overline{\phi})=  &  P_{\lambda, \varpi_m}  F(\phi, \overline{\phi}, P_{\leq 1} \nabla \phi,  P_{\leq 1} \nabla \overline{\phi}) \\
	&+ \sum_{\mu` } [  P_{\lambda, \varpi_m},a  ]  P_\mu \nabla \phi + \sum_{\mu }  [P_{\lambda, \varpi_m},b ]  P_\mu \nabla\overline{ \phi }\\
	&+\sum_{\mu\sim \lambda} a \cdot  P_{\lambda, \varpi_m}   P_\mu \nabla \phi + \sum_{\mu\sim \lambda} b\cdot  P_{\lambda, \varpi_m}   P_\mu \nabla\overline{ \phi }.
\end{align*}
Then,
applying $  P_{\lambda, \varpi_m}   $ to \eqref{qNLS}  yields
\begin{align} \label{PqNLS}
	\sqrt{-1}  P_{\lambda, \varpi_m} \phi_t+\partial_i\left( g^{ij}\left(\phi,\overline{\phi}\right)   \partial_j (P_{\lambda, \varpi_m} \phi)\right)=\mathcal{N}_1,
\end{align}
where 
\begin{align}
	\mathcal{N}_1:=&\sum_{\mu\sim \lambda} a \cdot  P_{\lambda, \varpi_m}   P_\mu \nabla \phi + \sum_{\mu\sim \lambda} b\cdot  P_{\lambda, \varpi_m}   P_\mu \nabla\overline{ \phi }+\mathcal{R}, \\
	\mathcal{R}:=& \,P_{\lambda, \varpi_m}  F(\phi, \overline{\phi}, P_{\leq 1} \nabla \phi,  P_{\leq 1} \nabla \overline{\phi}) + \sum_{\mu\sim \lambda} [  P_{\lambda, \varpi_m},a  ]  P_\mu \nabla \phi + \sum_{\mu}  [P_{\lambda, \varpi_m},b ]  P_\mu \nabla\overline{ \phi } \nonumber\\
	&-\partial_i\left(\left[P_{\lambda, \varpi_m}, h^{ij}\left(\phi,\overline{\phi}\right) \right]  \partial_j  \phi \right).  \label{remainderterm}
\end{align}

Now let $ \phi=v+w $, where $ v $ satisfies   
\begin{align} \label{eq_v}
	\begin{cases}	\sqrt{-1}v_t+\Delta  v=0,\\
		v(0,x)=\phi_0(x).
	\end{cases}
\end{align}
The function  $v$ is then given explicitly by
\begin{align} \label{sol_v}
	v(t,x)=& e^{\sqrt{-1} t \Delta }    \phi_0=\left(e^{- 	\sqrt{-1} t |\xi|^2 } \widehat{   \phi_0}\right)^{\vee}(x).
\end{align}
As will be shown in Subsection \ref{s_big.3}, the remainder term  $w$ remains small in a suitable sense.

As outlined in the Introduction \ref{s1}, we will prove the main result via bootstrap arguments. We first introduce some notations and settings related to the initial data and time $t$.
\begin{align}
	&  \left\| \phi_0\right\|_{H^{s_2}} +1 = M_0, \label{ini1}\\
	&   \max\{\left\| \langle x\rangle^{N}\phi_0  \right\|_{L^{2}(\mathbb{R}^d)}, \left\| \langle x\rangle^2 \phi_0  \right\|_{H^{s_2-\frac 32}(\mathbb{R}^d)},\left\| \phi_0\right\|_{H^{s_2+\frac 12}} \}+1= M_1, \label{ini2}
\end{align}
where $ s_2< N $, $ 1\ll M_0 \leq M_1  $ and $ s_2 >\frac{d}{2}+3 $.
We fix a time $t > 0$ and constants $M$, $\epsilon$ such that
\begin{align}
	0<	t\ll \epsilon \ll M^{-1} \ll M_1^{-1}. \label{def_M}
\end{align}
Specifically, we can set
\begin{align}
	 t \leq C \min \{\varepsilon^{4(d-1)}, 
 e^{-48M}\},\label{def_t1}\\
 M^2\leq  Ce^{\frac M2}, \epsilon \leq Ce^{-6M},\label{def_t2}
\end{align}
where  the $ C $ is a positive constant  related to the estimates below, but is independent of the initial data. By \eqref{def_M} and \eqref{def_t2}, we have
\begin{align*}
     M_1^4\leq C e^M, \epsilon \leq CM_1^{-12}.
\end{align*}

We now state the bootstrap assumptions:
\begin{align}
	\left\| \phi (t,\cdot) \right\|_{H^{s_2}} &\leq M_0^2, \label{bstrap_ass1}\\
	\max \left\{\left\|\langle x\rangle^2 w(t,\cdot) \right\|_{H^{s_2-3}},	\left\| w(t,\cdot) \right\|_{H^{s_2-1}} \right\}&\leq t^{\frac 18}, \label{bstrap_ass2}\\
	\sum_{\lambda} \lambda^{2s_2}\sup_{\tau}\left\|  e^{M \frac{\varpi_m\cdot x}{2\langle \varpi_m\cdot x\rangle} }  \frac{\varpi_m \cdot\nabla P_{\lambda, \varpi_m}  \phi^{\tau}}{\langle \varpi_m\cdot x\rangle} \right\|_{L^2_{t,x}}^2 &\leq e^{4M}, \text{  for  } m =1,\dots ,K, \label{bstrap_ass3}\\
	\sum_{\lambda} \lambda^{2s_2+1}\sup_{\tau} \left\| P_{\lambda, \varpi_m}  \phi^\tau\right\|_{L^2(\mathbb{R}^d)}^2 &\leq M_1^4,\label{bstrap_ass4}\\
	\left\|\phi\right\|_{H^{s_2+\frac 12}}^2  &\leq  \epsilon^{-(d+1)}.\label{bstrap_ass5}
\end{align}
Our goal is to improve these to the following bounds:
\begin{align}
	\left\| \phi (t,\cdot) \right\|_{H^{s_2}} &\lesssim M_0, \label{bstrap_re1}\\
	\max \left\{\left\|\langle x\rangle^2 w(t,\cdot) \right\|_{H^{s_2-3}},	\left\| w(t,\cdot) \right\|_{H^{s_2-1}} \right\} &\lesssim t^{\frac 14}, \label{bstrap_re2}\\
	\sum_{\lambda} \lambda^{2s_2}\sup_{\tau}\left\|  e^{M \frac{\varpi_m\cdot x}{2\langle \varpi_m\cdot x\rangle} }  \frac{\varpi_m \cdot\nabla P_{\lambda, \varpi_m}  \phi^{\tau}}{\langle \varpi_m\cdot x\rangle} \right\|_{L^2_{t,x}}^2 &\leq e^{2M}, \text{  for  } m =1,\dots ,K,\label{bstrap_re3}\\
	\sum_{\lambda} \lambda^{2s_2+1} \sup_{\tau}\left\| P_{\lambda, \varpi_m}  \phi^\tau\right\|_{L^2(\mathbb{R}^d)}^2 &\lesssim M_1^2,\label{bstrap_re4}\\
	\left\|\phi\right\|_{H^{s_2+\frac 12}}^2  &\lesssim  \epsilon^{-d} \label{bstrap_re5}.
\end{align}
We also establish the following weighted estimates for $\phi$.
\begin{lem}
	Suppose we have \eqref{bstrap_ass1}. Then for the solution of \eqref{qNLS} with initial data satisfying \eqref{ini1} and \eqref{ini2}, the following estimates hold for sufficiently small $t$: 
	\begin{align}
		\left\|\langle x\rangle^{2}\phi\right\|_{H^{s_2-\frac 32}(\mathbb{R}^d)} \lesssim &\, M_1^2,\label{big_w_est1}\\
		\left\|  \langle x\rangle^2\phi (t,\cdot) \right\|_{L^\infty}\lesssim &\, M_1^2,\label{big_w_est2}\\
		\left\|  \langle x\rangle^2 \partial_k\phi (t,\cdot) \right\|_{L^\infty}\lesssim &\, M_1^2, \label{big_w_est3}
	\end{align}
	where $ k=1,2,\dots, d; s_2>\frac d2+3. $
 
 Moreover, for $a,h$ defined as \eqref{a}, \eqref{cgF} respectively, we have
 \begin{align}
     \left\|  \langle x\rangle^2 h  (t,\cdot) \right\|_{L^\infty}\lesssim &\, M_1^2, \label{big_w_est_h}\\
\left\|  \langle x\rangle^2 a  (t,\cdot) \right\|_{L^\infty}\lesssim &\, M_1^2. \label{big_w_est_a}
 \end{align}
\end{lem}
\begin{proof} Observe that
	\begin{align*}
		\left\|  \langle x\rangle^2 \partial_k\phi (t,\cdot) \right\|_{L^\infty} \lesssim  	\left\|\partial_k(  \langle x\rangle^2 \phi (t,\cdot) )\right\|_{L^\infty} +	\left\|  \langle x\rangle \phi (t,\cdot) \right\|_{L^\infty}.
	\end{align*}
	Thus, \eqref{big_w_est2} and \eqref{big_w_est3} follow directly from \eqref{big_w_est1} and the Sobolev embedding.  Meanwhile, it is evident that the \eqref{big_w_est_h} and \eqref{big_w_est_a} follows from \eqref{big_w_est1} and Lemma \ref{Lem_phi_infty}.

		We will prove
	\begin{align}
		\sum_{\lambda \geq 1} \left\| \langle x\rangle^{2} \lambda^{s_2 - 2} P_\lambda \phi \right\|_{L^{2}(\mathbb{R}^d)} 
		\lesssim M_1^2. \label{big_w_est4}
	\end{align}
	 in Subsection \ref{s_big.2}. Assuming \eqref{big_w_est4} holds, we deduce
	\begin{align*}
		\eqref{big_w_est1} \lesssim & \sum_{\lambda\geq 1} \left\| \lambda^{s_2-2}P_\lambda (\langle x\rangle^{2}\phi)\right\|_{L^{2}(\mathbb{R}^d)}\\
		\lesssim&\, \sum_{\lambda\geq 1} \left\| \langle x\rangle^{2}  \lambda^{s_2-2}P_\lambda\phi\right\|_{L^{2}(\mathbb{R}^d)}+ \sum_{\lambda\geq 1} \left\| \lambda^{s_2-2}[P_\lambda,\langle x\rangle^{2}]\phi)\right\|_{L^{2}(\mathbb{R}^d)}\\
		\lesssim &\,M_1^2 +\sum_{\lambda\geq 1} \left\|\langle x\rangle \lambda^{s_2-3}P_\lambda^\prime \phi\right\|_{L^{2}(\mathbb{R}^d)}\\
		\lesssim&\,M_1^2+\left(\sum_{\lambda\geq 1} \left\|\langle x\rangle^2 \lambda^{s_2-2}P_\lambda^\prime \phi\right\|_{L^{2}(\mathbb{R}^d)}^2\right)^{\frac 12} \left(\sum_{\lambda\geq 1} \left\|  \lambda^{s_2-4}P_\lambda^\prime \phi\right\|_{L^{2}(\mathbb{R}^d)}^2\right)^{\frac 12}\\
		\lesssim & \,M_1^2+M_1 \left(\sum_{\lambda\geq 1} \left\|  \lambda^{s_2}P_\lambda^\prime \phi\right\|_{L^{2}(\mathbb{R}^d)}^2\right)^{\frac 12}\\
		\lesssim &\,M_1^2.
	\end{align*}	
\end{proof}

Without loss of generality, we restrict our attention to the case \( \varpi_m = e_1=(1, 0, \dots, 0) \), so that the operator under consideration becomes \( P_{\lambda, e_1} \phi \). For the general $ \varpi_m $, we can apply a rotation \(O_m\) satisfying \(O_m \varpi_m = (1, 0, \dots, 0)=e_1\) and work in the new coordinates $ O_m x$. With a little abuse of notation, we sometimes write  $  P_{\lambda, e_1}  \phi$ as $ \psi $  in the subsequent pages for notational convenience.

We now present several preliminary lemmas. The first is a weighted Bernstein-type estimate for \(P_{\lambda, e_1} \phi\), where the operator \(\chi_1(D)\) enables us to convert derivatives of \(P_{\lambda, e_1} \phi\) in the norm from \(\partial_j\) for \(j \neq 1\) to \(\partial_1\). The proof of Lemma \ref{lem_transf_k} is deferred to Appendix \ref{appa} for the reader’s convenience, 
\begin{lem}  \label{lem_transf_k}
	Let $   \psi =  P_{\lambda, e_1}  \phi$, $ \lambda \in \mathbb{N}$, $ j=2,\dots ,d  $,   and  $ h(x), \psi(x), x\in \mathbb{R}^d $ be smooth function, where $ \chi_m(D) $ is defined as before. Then we have
	\begin{align}
		\sup_{\tau} \int_{\mathbb{R}^{d}} \left| h(x) \right|^2 \left| \partial_j \psi^\tau    \right|^2   \mathrm{d} x  	\leq c_1 \epsilon^2	\sup_{\tau}   \int_{\mathbb{R}^{d}} \left| h(x) \right|^2 \left| \partial_1 \psi^\tau    \right|^2   \mathrm{d} x,   \label{est_transf_k}
	\end{align}
	where $ c_1 $ is a positive constant  independent of \(\epsilon\).

	Moreover, the following estimates hold:
	\begin{align}
		\sup_{\tau} \int_{\mathbb{R}^{d}} \left| h(x) \right|^2 \left| q(x,D) \psi^\tau    \right|^2   \mathrm{d} x  	\lesssim	\sup_{\tau}   \int_{\mathbb{R}^{d}} \left| h(x) \right|^2 \left|  \psi^\tau    \right|^2   \mathrm{d} x,  \label{est_transf_0_order_P}\\
		\sup_{\tau} \int_{\mathbb{R}^{d}} \left| h(x) \right|^2 \left| \lambda \psi^\tau    \right|^2   \mathrm{d} x  	\sim	\sup_{\tau}   \int_{\mathbb{R}^{d}} \left| h(x) \right|^2 \left|  \partial_k \psi^\tau    \right|^2   \mathrm{d} x,  \label{est_transf_k2}
	\end{align}
	where   $ q(x,D) $ is a zeroth order pseudo-differential operator.
\end{lem}

The next two lemmas are useful for handling compositions and commutators respectively.
\begin{lem} \label{Lem_phi_infty} \cite{taylor1996partial}
	Let $F$ be smooth, and assume $F(0)=0$. Then, for $u \in H^k \cap L^{\infty}$,
	\begin{align} \label{big_a_inf}
		\|F(u)\|_{H^k} \leq C \left(\|u\|_{L^{\infty}}\right)\left(1+\|u\|_{H^k}\right).
	\end{align}
\end{lem}
\begin{lem} \cite{lai2025global}\label{lem_com}
	Suppose  $ G,f,g $ are smooth functions. Then,
	\begin{align}
		\sup_{\tau, \tau^\prime} \Big\| G\big[P_\rho,f^\tau\big] g^{\tau^\prime}\Big\|_{L^p} \lesssim \rho^{-1} 	\sup_{\tau, \tau^\prime} \Big\| G\big|\nabla f^\tau\big| g^{\tau^\prime}\Big\|_{L^p}= \rho^{-1} 	\sup_{\tau, \tau^\prime} \Big\| G^\tau\big|\nabla f\big| g^{\tau^\prime}\Big\|_{L^p}. 
	\end{align}
\end{lem}

\subsection{Momentum type estimates} \label{s_big.1}
In this subsection, we prove \eqref{bstrap_re3}. As previously mentioned, we will only consider the case $\varpi_m=e_1=(1, 0, \dots, 0) $ in detail. This restriction is justified by the fact that  for any $m=1,\dots, K,$ the case for  $\varpi_m$ can be established in the same manner via a rotation.

As \eqref{consm2}, we have following momentum balance equalities  for \eqref{PqNLS} with $\varpi_m=e_1$:
\begin{align} \label{big_consm2}
	&	-\partial_t \int_{\mathbb{R}^{d-1}}  \mathrm{Im}\left( \psi \partial_k \overline{\psi}\right) \mathrm{d} \hat{x}_k-\partial_{kk} \int_{\mathbb{R}^{d-1}} \mathrm{Re}\left( \psi   g^{kj}(\phi,\overline{\phi}) \partial_j \overline{\psi}\right) \mathrm{d} \hat{x}_k\nonumber\\
	&+2 \partial_k \int_{\mathbb{R}^{d-1}} \mathrm{Re}\left(\partial_k \psi  g^{kj}(\phi,\overline{\phi}) \partial_j \overline{\psi}\right) \mathrm{d} \hat{x}_k + \int_{\mathbb{R}^{d-1}}  \mathrm{Re}\left(\partial_{i} \psi  \partial_k h^{ij}(\phi,\overline{\phi}) \partial_j \overline{\psi}\right)  \mathrm{d} \hat{x}_k \\
	&=- \partial_k\int_{\mathbb{R}^{d-1}}   \mathrm{Re}\left(  \overline{\psi} \mathcal{N}_1\right)\mathrm{d} \hat{x}_k+2\int_{\mathbb{R}^{d-1}} \mathrm{Re} \left(\mathcal{N}_1 \partial_k  \overline{\psi}\right)   \mathrm{d} \hat{x}_k \nonumber,
\end{align}
where we write $P_{\lambda,e_1} \phi $ as $  \psi $ for convenience.

For the large initial data problem, we choose the weight $e^{M \frac{x_1}{\langle x_1\rangle}}$, with $M$ defined in \eqref{def_M}. 
 Multiplying  \eqref{big_consm2} with  $ e^{M \frac{x_1}{\langle x_1\rangle} }$, integrating it on $ [0,t]\times\mathbb{R}_{x_1} $, then multiplying it by $ \lambda^{2 s_2} $ and summing it over $ \lambda $,  we have
\begin{align} 
	&- \sum_{\lambda} \lambda^{ 2s_2}	\left(  \int_{\mathbb{R}^{d}} e^{M \frac{x_1}{\langle x_1\rangle} }   \mathrm{Im}\left( \psi \partial_1 \overline{\psi}\right)(t)   \mathrm{d} x -\int_{\mathbb{R}^d}   e^{M \frac{x_1}{\langle x_1\rangle} }   \mathrm{Im}\left( \psi_0 \partial_1 \overline{\psi}_0\right)(t)   \mathrm{d} x  \right)\label{big_me_t1}\\
	&- \sum_{\lambda} \lambda^{ 2s_2}\int_0^t \int_{\mathbb{R}}   	e^{M \frac{x_1}{\langle x_1\rangle} }   \partial_{11} \int_{\mathbb{R}^{d-1}} \mathrm{Re}\left( \psi  g^{1j} (\phi,\overline{\phi})\partial_j \overline{\psi}\right) \mathrm{d} \hat{x}_1   \mathrm{d} x_1 \mathrm{d} s\label{big_me_t2}\\
	&+2 \sum_{\lambda} \lambda^{ 2s_2}  \int_0^t\int_{\mathbb{R}}    	e^{M \frac{x_1}{\langle x_1\rangle} }   \partial_1 \int_{\mathbb{R}^{d-1}} \mathrm{Re}\left(\partial_1 \psi g^{1j} (\phi,\overline{\phi})\partial_j \overline{\psi}\right) \mathrm{d} \hat{x}_1  \mathrm{d} x_1 \mathrm{d} s \label{big_me_t3} \\
	&+\sum_{\lambda} \lambda^{ 2s_2} \int_0^t	\int_{\mathbb{R}}e^{M \frac{x_1}{\langle x_1\rangle} }   \int_{\mathbb{R}^{d-1}}  \mathrm{Re}\left(\partial_{i} \psi  \partial_1 h^{1j}(\phi,\overline{\phi}) \partial_j \overline{\psi }\right) \mathrm{d} \hat{x}_1  \mathrm{d} x_1 \mathrm{d} s \label{big_me_t4}\\
	=&- \sum_{\lambda} \lambda^{ 2s_2}\int_0^t	\int_{\mathbb{R}}e^{M \frac{x_1}{\langle x_1\rangle} }   \partial_1\int_{\mathbb{R}^{d-1}}   \mathrm{Re}\left(  \overline{\psi} \mathcal{N}_1\right) \mathrm{d} \hat{x}_1  \mathrm{d} x_1 \mathrm{d} s\label{big_me_t5}\\
	&+2\sum_{\lambda} \lambda^{ 2s_2}\int_0^t	\int_{\mathbb{R}}e^{M \frac{x_1}{\langle x_1\rangle} }  \int_{\mathbb{R}^{d-1}}  \mathrm{Re} \left(\mathcal{N}_1 \partial_1  \overline{\psi}\right)   \mathrm{d} \hat{x}_1  \mathrm{d} x_1 \mathrm{d} s\label{big_me_t6},
\end{align}
where $  \psi=P_{\lambda,e_1}  \phi  $ and $ \psi_0 =P_{\lambda, e_1}  \phi_0. $

Through integration by parts, we can derive the good terms from  \eqref{big_me_t3}:
\begin{align}
	-  \eqref{big_me_t3}	 =&\,  2 \sum_{\lambda} \lambda^{ 2s_2}\int_0^t\int_{\mathbb{R}}    \frac{ M	e^{M \frac{x_1}{\langle x_1\rangle} } }{\langle x_1\rangle^2}   \int_{\mathbb{R}^{d-1}} \mathrm{Re}\left(\partial_1 \psi g^{1j} (\phi,\overline{\phi})\partial_j \overline{\psi}\right) \mathrm{d} \hat{x}_1  \mathrm{d} x_1 \mathrm{d} s\nonumber \\
	=&  \,2 M\sum_{\lambda} \lambda^{ 2s_2}\int_0^t    \int_{\mathbb{R}^{d}} \left|	e^{M \frac{x_1}{2\langle x_1\rangle} }\cdot \frac{\partial_1 \psi }{\langle x_1\rangle}\right|^2   \mathrm{d} x  \mathrm{d} s  \label{big_me_t3_t1}\\
	&+2 \sum_{\lambda} \lambda^{ 2s_2}\int_0^t\int_{\mathbb{R}}    \frac{ M	e^{M \frac{x_1}{\langle x_1\rangle} } }{\langle x_1\rangle^2}    \int_{\mathbb{R}^{d-1}} \mathrm{Re}\left(\partial_1 \psi h^{1j} (\phi,\overline{\phi})\partial_j \overline{\psi}\right) \mathrm{d} \hat{x}_1  \mathrm{d} x_1 \mathrm{d} s.  \label{big_me_t3_t2}
\end{align}
Both \eqref{big_me_t3_t2} and \eqref{big_me_t4} can be estimated by the non-trapping condition;  we will therefore only analyze \eqref{big_me_t4}.
The triangle inequality gives 
\begin{align}
	\eqref{big_me_t4} \leq & \sum_{\lambda} \lambda^{ 2s_2} \int_0^t\int_{\mathbb{R}^d}  e^{M \frac{x_1}{\langle x_1\rangle} }   |\partial_i \psi| \left|\partial_1 h^{ij} (v+w,\overline{v}+\overline{w})-\partial_1  h^{ij} (v,\overline{v})\right| |\partial_j\overline{\psi } |  \mathrm{d} x  \mathrm{d} s \label{big_me_t4_1}\\
	&+\sum_{\lambda} \lambda^{ 2s_2}\int_0^t\int_{\mathbb{R}^d}     e^{M \frac{x_1}{\langle x_1\rangle} }  |\partial_i \psi| \left|\partial_1 h^{ij} (v ,\overline{v} )-\partial_1 h^{ij} (\phi_0,\overline{\phi}_0)\right| |\partial_j\overline{\psi}|  \mathrm{d} x  \mathrm{d} s  \label{big_me_t4_2}\\
	&+\sum_{\lambda} \lambda^{ 2s_2} \int_0^t\int_{\mathbb{R}^d}   e^{M \frac{x_1}{\langle x_1\rangle} }  |\partial_i \psi| \left| \partial_1 h^{ij} (\phi_0,\overline{\phi}_0)\right| |\partial_j\overline{\psi}|  \mathrm{d} x  \mathrm{d} s.  \label{big_me_t4_3}
\end{align}
Let $h(x) = \frac{ e^{M \frac{x_1}{2\langle x_1\rangle} } }{\langle x_1\rangle}$ in Lemma \ref{lem_transf_k}. Using \eqref{est_transf_k} and the mean value theorem, we estimate
\begin{align*}
	\eqref{big_me_t4_1} \leq & \sum_{\lambda} \lambda^{ 2s_2}\int_0^t\int_{\mathbb{R}^d}   e^{M \frac{x_1}{\langle x_1\rangle} }    |\partial_i \psi| \left|\partial_1 h^{ij}_y w+\partial_1 h^{ij}_{\overline{y}}\overline{w}\right| |\partial_j \overline{\psi} |  \mathrm{d} x  \mathrm{d} s\\
	\leq &\sum_{\lambda} \lambda^{ 2s_2} \left(\left\|\partial_1 h^{ij}_y\right\|_{L^\infty}+\left\|\partial_1 h^{ij}_{\overline{y}}\right\|_{L^\infty}\right)     \left\|\langle x_1\rangle^2 w\right\|_{L^\infty}\left\| e^{M \frac{x_1}{2\langle x_1\rangle} } \frac{\partial_i \psi  }{\langle x_1\rangle }  \right\|_{L^2_{t,x}} \left\| e^{M \frac{x_1}{2\langle x_1\rangle} } \frac{\partial_j \psi  }{\langle x_1\rangle }  \right\|_{L^2_{t,x}}\\
	\leq &\, C\left\|\langle x_1\rangle^2 w\right\|_{H^{s_2-3}}\sum_{\lambda} \lambda^{ 2s_2}  \sup_{\tau} \left\| e^{M \frac{x_1}{2\langle x_1\rangle} } \frac{\partial_1 \psi^\tau }{\langle x_1\rangle }  \right\|_{L^2_{t,x}}^2\\
	\leq & \, Ct^{\frac 18}  \sum_{\lambda} \lambda^{ 2s_2}\sup_{\tau}\left\| e^{M \frac{x_1}{2\langle x_1\rangle} } \frac{\partial_1 \psi^\tau }{\langle x_1\rangle }  \right\|_{L^2_{t,x}}^2,
\end{align*}
where $ C $ is concerning with $ c_1 $ in Lemma \ref{lem_transf_k}, $h^{ij}_y, h^{ij}_{\overline{y}}$ 
 are defined similarly as \eqref{mean_h}, and
 $$ \left(\left\|\partial_1 h^{ij}_y\right\|_{L^\infty}+\left\|\partial_1 h^{ij}_{\overline{y}}\right\|_{L^\infty}\right) \leq C, $$ since  $ h^{ij} $ is smooth function.

Similarly, applying Lemma \ref{lem_transf_k}, the mean value theorem, and \eqref{sol_v}, we obtain
\begin{align*}
	\eqref{big_me_t4_2} \leq &\sum_{\lambda} \lambda^{ 2s_2}\int_0^t\int_{\mathbb{R}^d}   e^{M \frac{x_1}{\langle x_1\rangle} }       |\partial_i \psi| \left| \partial_1 h^{ij}_y (v-\phi_0)+\partial_1 h^{ij}_{\overline{y}}(\overline{v}-\overline{\phi}_0)\right| | \partial_j\overline{w} |  \mathrm{d} x  \mathrm{d} s\\
	\leq & \sum_{\lambda} \lambda^{ 2s_2}\int_0^t\int_{\mathbb{R}^d} e^{M \frac{x_1}{\langle x_1\rangle} }     |\partial_i \psi| \left|\partial_1h^{ij}_y ( e^{\sqrt{-1} t\Delta } \phi_0-\phi_0)+\partial_1 h^{ij}_{\overline{y}}( e^{-\sqrt{-1} t\Delta }\overline{\phi}_0 -\overline{\phi}_0)\right| |\partial_j\overline{w} |  \mathrm{d} x  \mathrm{d} s\\
	\leq & \,   \sum_{\lambda} \lambda^{ 2s_2} \left\|(e^{\sqrt{-1} t\Delta } \phi_0-\phi_0)\langle x_1\rangle^2\right\|_{L^{\infty}}\left\| e^{M \frac{x_1}{2\langle x_1\rangle} } \frac{\partial_i \psi  }{\langle x_1\rangle }  \right\|_{L^2_{t,x}} \left\| e^{M \frac{x_1}{2\langle x_1\rangle} } \frac{\partial_j \psi  }{\langle x_1\rangle }  \right\|_{L^2_{t,x}}\\
	\leq &\, C     \left\|(e^{\sqrt{-1} t\Delta } \phi_0-\phi_0)\langle x_1\rangle^2\right\|_{L^{\infty}}   \sum_{\lambda} \lambda^{ 2s_2}\sup_{\tau} \left\| e^{M \frac{x_1}{2\langle x_1\rangle} } \frac{\partial_1 \psi^\tau }{\langle x_1\rangle }  \right\|_{L^2_{t,x}}^2.
\end{align*}

Again, by the mean value theorem 
\begin{align*}
	\left\| x_1^2 (e^{\sqrt{-1} t\Delta } \phi_0 -\phi_0 ) \right\|_{L^{\infty}} \lesssim &\,\left\|x_1^2 (e^{\sqrt{-1} t\Delta } \phi_0 -e^{0 \cdot\sqrt{-1} \Delta }\phi_0) \right\|_{H^{s_2-3}} \\
	\lesssim &\,  \left\|\langle \xi \rangle^{s_2-3} \partial_{\xi_1}^2\left((e^{-\sqrt{-1} t|\xi|^2 } -e^{0\sqrt{-1} |\xi|^2 })\widehat{\phi_0  }  \right) \right\|_{L^{2}} \\
	\lesssim &\,  \left\|\langle \xi \rangle^{s_2-3} \left( \partial_{\xi_1}^2(e^{-\sqrt{-1} t|\xi|^2 } )\cdot\widehat{\phi_0  }  \right) \right\|_{L^{2}}+  \left\|\langle \xi \rangle^{s_2-3}  \left( \partial_{\xi_1}(e^{-\sqrt{-1} t|\xi|^2 } )\cdot  \partial_{\xi_1}\widehat{\phi_0  }  \right)\right\|_{L^{2}} \\ 
	&\,+\left\|\langle \xi \rangle^{s_2-3} \left((e^{-\sqrt{-1} t|\xi|^2 } -e^{0\sqrt{-1} |\xi|^2 })\partial_{\xi_1}^2\widehat{\phi_0  }  \right) \right\|_{L^{2}}  \\
	\lesssim & \,(t+t^2) \left\|\langle \xi \rangle^{s_2-3}  \langle \xi_1 \rangle^2   e^{-\sqrt{-1} \vartheta_1 t|\xi|^2 }   \widehat{\phi_0  } \right\|_{L^2}+t \left\|\langle \xi \rangle^{s_2}   \xi_1 e^{-\sqrt{-1}   t|\xi|^2 }   \partial_{\xi_1} \widehat{\phi_0  } \right\|_{L^2} \\
	&\,+t \left\|\langle \xi \rangle^{s_2-3}  |\xi|^2 e^{-\sqrt{-1} \vartheta_1 t|\xi|^2 }   \partial_{\xi_1}^2\widehat{\phi_0  } \right\|_{L^2}\\
	\lesssim&\, (t+t^2) \left\| \langle x_1 \rangle^2  \phi_0\right\|_{H^{s_2-1}},
\end{align*}
where $0< \vartheta_1<1, s_2>d/2+3 $.
Similarly, one has
\begin{align*}
	\left\|e^{\sqrt{-1} t\Delta } \phi_0-\phi_0\right\|_{L^{\infty}} \lesssim  t \left\|\phi_0\right\|_{H^{s_2-1}}.
\end{align*}
Combining the above two estimates yields
\begin{align}
	\left\| \langle x_1\rangle^2  (e^{\sqrt{-1} t\Delta } \phi_0 -\phi_0 ) \right\|_{L^{\infty}} \lesssim & \,  (t+t^2) \left\| \langle x_1 \rangle^2  \phi_0\right\|_{H^{s_2-1}}.
\end{align}

Now we consider \eqref{big_me_t4_3}. The H\"older's inequality and Lemma \ref{lem_transf_k} show
\begin{align*}
	\eqref{big_me_t4_3} \leq &   \left\|\langle x_1\rangle^2  \partial_1 h^{ij} (\phi_0,\overline{\phi}_0)\right\|_{L^\infty} \sum_{\lambda} \lambda^{ 2s_2}\left\|e^{M \frac{x_1}{2\langle x_1\rangle} }  \frac{\partial_i \psi }{\langle x_1\rangle }  \right\|_{L^2_{t,x}}^2\\
	\leq & C\left\|\langle x_1\rangle^2  \partial_1 h^{ij} (\phi_0,\overline{\phi}_0)\right\|_{L^\infty} \sum_{\lambda} \lambda^{ 2s_2} \sup_\tau\left\|e^{M \frac{x_1}{2\langle x_1\rangle} }  \frac{\partial_1 \psi^\tau }{\langle x_1\rangle }  \right\|_{L^2_{t,x}}^2
\end{align*}
Recall the definition of \ref{def_nontrap}. 
For the part of outside  $ B/2 $, we have 
\begin{align*}
	\left\| \langle x_1\rangle^2 \partial_1  h^{ij} (\phi_0,\overline{\phi}_0)\right\|_{L^{\infty}((B/2)_{out})} \lesssim & \, \left( \left\|\partial_y h^{ij}\right\|_{L^\infty}+ \left\|\partial_{\overline{y}} h^{ij}\right\|_{L^\infty}\right) \left\| \langle x_1\rangle^2 \partial_1  \phi_0\right\|_{L^{\infty}((B/2)_{out})} 	\\
 \lesssim&\,\left\| \langle x_1\rangle^2\phi_0\right\|_{H^{s_2-3}((B/2)_{out})}\\
	\lesssim  &\left\| \langle x_1\rangle^2 \phi_0\right\|_{L^{2}((B/2)_{out})}^{\vartheta_2} \left\| \langle x_1\rangle^2 \phi_0\right\|_{H^{s_2-2}((B/2)_{out})}^{1-\vartheta_2}\\
	\lesssim  & \,\frac{1}{R^{ \vartheta_2}}\left\| \langle x \rangle^3\phi_0\right\|_{L^{2}((B/2)_{out})}^{\vartheta_2}  \left\| \langle x_1\rangle^2\phi_0\right\|_{H^{s_2-2}((B/2)_{out})}^{1-\vartheta_2}\\
	\lesssim & \,\frac{M_1}{R^{\vartheta_2}}\\
	\lesssim & \,\eta,
\end{align*}
where $ \eta  $  is small enough and we set $ R= \big(\frac{M_1}{\eta}\big)^{\frac{1}{  \vartheta_2}} $, $ 1<\vartheta_2<1$. Since bicharacteristics for $\Delta_{g\left(\psi_0, \overline{\psi}_0\right)}$  cannot  re-enter $B / 2$, we can freely assume that
\begin{align*}
	\left\| \chi_{R/2}  \partial_1 h^{ij} (\phi_0,\overline{\phi}_0)  \right\|_{L^\infty (\mathbb{R}^d)} \lesssim \eta .
\end{align*}
Then,
\begin{align*}
	\left\| \chi_{R/2} \langle x_1\rangle^2   \partial_1 h^{ij} (\phi_0,\overline{\phi}_0)  \right\|_{L^\infty (\mathbb{R}^d)} \lesssim& \,  	\left\| \chi_{R/2} \langle x_1\rangle^3   \partial_1 h^{ij} (\phi_0,\overline{\phi}_0)  \right\|_{L^\infty (\mathbb{R}^d)}^{\frac 23}\left\| \chi_{R/2}   \partial_1 h^{ij} (\phi_0,\overline{\phi}_0)  \right\|_{L^\infty (\mathbb{R}^d)}^{\frac 13} \\
	\lesssim&  \,	\left\| \langle x_1\rangle^3   \partial_1 h^{ij} (\phi_0,\overline{\phi}_0)  \right\|_{L^\infty (\mathbb{R}^d)}^{\frac 23 } \left\| \chi_{R/2}   \partial_1 h^{ij} (\phi_0,\overline{\phi}_0)  \right\|_{L^\infty (\mathbb{R}^d)}^{\frac 13} \\
	\lesssim&  \,\eta^{\frac 13} 	\left\| \langle x_1\rangle^3   \partial_1 h^{ij} (\phi_0,\overline{\phi}_0)  \right\|_{H^{s_2-3} (\mathbb{R}^d)}^{\frac 23 }\\
	\lesssim&  \,\eta^{\frac 13} \left( \left\|\partial_y h^{ij}\right\|_{L^\infty}+ \left\|\partial_{\overline{y}} h^{ij}\right\|_{L^\infty}\right) \left\| \langle x_1\rangle^N   \phi_0  \right\|_{L^{2} (\mathbb{R}^d)}^{\mathscr{S}_1}	\left\|  \phi_0  \right\|_{H^{s_2-2} (\mathbb{R}^d)}^{\mathscr{S}_2}\\
	\lesssim & \, \eta^{\frac 13}  M_1^{\frac 23 },
\end{align*}
where $ \mathscr{S}_1+\mathscr{S}_2=\frac 23 $.

Combining the estimates of \eqref{big_me_t4_1}-\eqref{big_me_t4_3}, we have
\begin{align*}
	\eqref{big_me_t4} \lesssim &\,  \left(\eta+  \eta^{\frac 13}  M_1^{\frac 23 }\right)   \sum_{\lambda} \lambda^{ 2s_2} \sup_{\tau} \left\| e^{M \frac{x_1}{2\langle x_1\rangle} } \frac{\partial_1 \psi^\tau }{\langle x_1\rangle }  \right\|_{L^2_{t,x}}^2\nonumber\\
	&\,+  \left( t^{\frac 18}+(t+t^2) \left\| \langle x_1 \rangle^2  \phi_0\right\|_{H^{s_2+2}}\right)   \sum_{\lambda} \lambda^{ 2s_2} \sup_{\tau} \left\| e^{M \frac{x_1}{2\langle x_1\rangle} } \frac{\partial_1 \psi^\tau }{\langle x_1\rangle }  \right\|_{L^2_{t,x}}^2\\
	\lesssim &  \left( t^{\frac 18}+ \eta+  \eta^{\frac 13}  M_1^{\frac 23 }\right) \sum_{\lambda} \lambda^{ 2s_2}\sup_{\tau} \left\| e^{M \frac{x_1}{2\langle x_1\rangle} } \frac{\partial_1 \psi^\tau }{\langle x_k\rangle }  \right\|_{L^2_{t,x}}^2. 
\end{align*}

Since 
\begin{align*}
	\partial_1^2 (e^{M \frac{x_1}{\langle x_1\rangle} } )=& \,	\partial_1 \left( \frac{ M	e^{M \frac{x_1}{\langle x_1\rangle} } }{\langle x_1\rangle^2}  \right) \\
	=&  \, \frac{ M^2	e^{M \frac{x_1}{\langle x_1\rangle} } }{\langle x_1\rangle^4} +e^{M \frac{x_1}{\langle x_1\rangle} } \cdot M \frac{\mathrm{sgn}(x_1)}{\langle x_1\rangle^3}, 
\end{align*}
we can use integration by parts, Lemma \ref{lem_transf_k}, \eqref{bstrap_ass1}-\eqref{bstrap_ass5} to obtain
\begin{align*}  
	\eqref{big_me_t2}	=&\, 2 \sum_{\lambda} \lambda^{ 2s_2}\int_0^t \int_{\mathbb{R}}   	\left(\frac{ M^2	e^{M \frac{x_1}{\langle x_1\rangle} } }{\langle x_1\rangle^4} +e^{M \frac{x_1}{\langle x_1\rangle} } \cdot M \frac{\mathrm{sgn}(x_1)}{\langle x_1\rangle^3} \right) \int_{\mathbb{R}^{d-1}} \mathrm{Re}\left( \psi  \overline{\psi}_{\alpha k}\right) \mathrm{d} \hat{x}_1   \mathrm{d} x_1 \mathrm{d} s \\
	&+2\int_0^t\sum_{\lambda} \lambda^{ 2s_2} \int_{\mathbb{R}}    \left(\frac{ M^2	e^{M \frac{x_1}{\langle x_1\rangle} } }{\langle x_1\rangle^4} +e^{M \frac{x_1}{\langle x_1\rangle} } \cdot M\frac{\mathrm{sgn}(x_1)}{\langle x_1\rangle^3} \right) \int_{\mathbb{R}^{d-1}} \mathrm{Re}\left( \psi   h^{1j} (\phi,\overline{\phi}) \partial_j \overline{\psi}\right) \mathrm{d} \hat{x}_1   \mathrm{d} x_1 \mathrm{d} s\\
	\lesssim&\,  M^2 e^{\frac {M}{2}}\left(1+\left\|h^{kj} (\phi,\overline{\phi}) \right\|_{L^{\infty}(\mathbb{R}^d)}\right)\left(  \sum_{\lambda} \lambda^{ 2s_2}\int_0^t     \int_{\mathbb{R}^{d}} \left|   \psi   \right|^2  \mathrm{d} x \mathrm{d} s\right)^{\frac 12}  \\
	&\,\cdot \left(  \sum_{\lambda} \lambda^{ 2s_2} \int_0^t     \int_{\mathbb{R}^{d}} \left|e^{M \frac{x_1}{2\langle x_1\rangle} } \frac{\partial_j \psi  } {\langle x_1\rangle}\right|^2  \mathrm{d} x \mathrm{d} s\right)^{\frac 12}\\
	\lesssim&\, t^{\frac12} M^2  e^{\frac {M}{2}} \cdot  \left(1+\left\|\phi \right\|_{L^{\infty}(\mathbb{R}^d)}\right)\left\|    \phi\right\|_{H^{s_2}(\mathbb{R}^d)} \cdot  \left( \sup_{\tau}   \sum_{\lambda} \lambda^{ 2s_2} \left\| e^{M \frac{x_1}{2\langle x_1\rangle} } \frac{\partial_1 \psi^\tau }{\langle x_1\rangle }  \right\|_{L^2_{t,x}}^2\right)^{\frac 12}\nonumber \\
	\lesssim&\, t^{\frac12} M^2  e^{\frac {M}{2}} \cdot  \left(1+\left\|    \phi\right\|_{H^{s_2-\frac 52}(\mathbb{R}^d)}\right)\left\|    \phi\right\|_{H^{s_2}(\mathbb{R}^d)} \cdot  \left( \sup_{\tau}   \sum_{\lambda} \lambda^{ 2s_2} \left\| e^{M \frac{x_1}{2\langle x_1\rangle} } \frac{\partial_1 \psi^\tau }{\langle x_1\rangle }  \right\|_{L^2_{t,x}}^2\right)^{\frac 12}\nonumber \\
	\lesssim&\,  t^{\frac12} e^{M}  M_0^4  e^{2M}\\
	\lesssim&\, e^{2M},
\end{align*}
where $  \psi=P_{\lambda,e_1}  \phi  $ and we used  \eqref{def_t1}, \eqref{def_t2}.

The  \eqref{big_me_t1} can be estimated directly by H\"older's inequality
\begin{align*}
	\eqref{big_me_t1}
	\lesssim &  \,e^M   \left\| P_{\lambda, e_1 } \phi   \right\|_{L^{2}(\mathbb{R}^d)} \left\|\partial_1  P_{\lambda, e_1 } \phi   \right\|_{L^{2}(\mathbb{R}^d)}\\
	\lesssim &\, e^M   (\lambda^{\frac 12} \left\| \psi\right\|_{L^{2}(\mathbb{R}^d)})^2,
\end{align*}
where $ \psi = P_{\lambda, e_1} \phi 
$. Thus by \eqref{bstrap_ass4} and \eqref{def_t2}, we have 
\begin{align*}
	\sum_{\lambda} \lambda^{2s_4} e^M   (\lambda^{\frac 12} \left\| \psi\right\|_{L^{2}(\mathbb{R}^d)})^2=  &	\, e^M    \sum_{\lambda} \lambda^{2s_4+1}   \left\|  P_{\lambda,e_1   }\phi \right\|_{L^{2}(\mathbb{R}^d)}^2\\
	\lesssim &\, e^{M } M_1^4 \\
	\lesssim & \,e^{2M}.
\end{align*}

Now we turn to the nonlinear terms \eqref{big_me_t5} and \eqref{big_me_t6}.
Integration by parts gives
\begin{align*}
	\eqref{big_me_t5}=	& \sum_{\lambda} \lambda^{ 2s_2} \int_0^t	\int_{\mathbb{R}}e^{M \frac{x_1}{\langle x_1\rangle} }   M \frac{1}{\langle x_1 \rangle^2}\int_{\mathbb{R}^{d-1}}   \mathrm{Re}\left(  \overline{\psi} \mathcal{N}_1\right) \mathrm{d} \hat{x}_1  \mathrm{d} x_1 \mathrm{d} s\\
	\lesssim & M e^{\frac M2}	t^{\frac12} \left\|\phi\right\|_{H^{s_2}(\mathbb{R}^d)} \left( \sum_{\lambda} \lambda^{ 2s_2} \int_0^t \int_{\mathbb{R}^{d}}( e^{M \frac{x_1}{2\langle x_1\rangle} }    \frac{1}{\langle x_1 \rangle}\mathcal{N}_1)^2 \mathrm{d} x \mathrm{d} s \right)^{\frac 12},
\end{align*}
where  $\left( \sum_{\lambda} \lambda^{ 2s_2} \int_0^t \int_{\mathbb{R}^{d}}( e^{M \frac{x_1}{2\langle x_1\rangle} }    \frac{1}{\langle x_1 \rangle}\mathcal{N}_1)^2 \mathrm{d} x \mathrm{d} s \right)^{\frac 12} $ can be bounded similarly as   \eqref{big_me_t6}. Therefore, it suffices to analyze \eqref{big_me_t6}. Recalling the definition of $ \mathcal{N}_1 $ in \eqref{PqNLS}, we have
\begin{align}
	\eqref{big_me_t6}=&\, 2\sum_{\lambda} \lambda^{ 2s_2}\int_0^t	\int_{\mathbb{R}}e^{M \frac{x_1}{\langle x_1\rangle} }    \int_{\mathbb{R}^{d-1}}  \mathrm{Re} \left( \sum_{\mu\sim \lambda} a \cdot  P_{\lambda, e_1}   P_\mu \nabla \phi  \cdot\partial_1  \overline{\psi}\right)   \mathrm{d} \hat{x}_1  \mathrm{d} x_1 \mathrm{d} s \label{big_me_t6_t1}\\
	&+ 2\sum_{\lambda} \lambda^{ 2s_2}\int_0^t	\int_{\mathbb{R}}e^{M \frac{x_1}{\langle x_1\rangle} }    \int_{\mathbb{R}^{d-1}}  \mathrm{Re} \left(  \sum_{\mu\sim \lambda} b\cdot  P_{\lambda, e_1}   P_\mu \nabla\overline{ \phi }\cdot\partial_1  \overline{\psi}\right)   \mathrm{d} \hat{x}_1  \mathrm{d} x_1 \mathrm{d} s \label{big_me_t6_t1_1}\\
	&+2\sum_{\lambda} \lambda^{ 2s_2}\int_0^t	\int_{\mathbb{R}}e^{M \frac{x_1}{\langle x_1\rangle} }   \int_{\mathbb{R}^{d-1}}  \mathrm{Re} \left( \mathcal{R}\cdot\partial_1  \overline{\psi}\right)   \mathrm{d} \hat{x}_1  \mathrm{d} x_1 \mathrm{d} s. \label{big_me_t6_t3}
\end{align}
\eqref{big_me_t6_t1} and \eqref{big_me_t6_t1_1} can be estimated in the same manner and  we will only estimate \eqref{big_me_t6_t1}. 
The Lemma \ref{lem_transf_k}, \eqref{big_w_est_a} type inequality and H\"older's inequality show 
\begin{align*}
	\eqref{big_me_t6_t1} \lesssim &\sum_{\mu\sim \lambda} \sum_{\lambda} \lambda^{ 2s_2}  \left\|a \langle x_1 \rangle^2\right\|_{L^{\infty}(\mathbb{R}^d)} \left\|e^{M \frac{x_1}{2\langle x_1\rangle} }  \frac{  P_{\lambda, e_1}  P_\mu \nabla \phi }{\langle x_1 \rangle } \right\|_{L^2_{t,x}}\left\|e^{M \frac{x_1}{2\langle x_1\rangle} }  \frac{\partial_1 \psi }{\langle x_1\rangle }  \right\|_{L^2_{t,x}}\\
	\lesssim &   \,M_1^2   \sup_{\tau} \sum_{\lambda} \lambda^{ 2s_2}\left\|e^{M \frac{x_1}{2\langle x_1\rangle} }  \frac{\partial_1 \psi^\tau  }{\langle x_1\rangle }  \right\|_{L^2_{t,x}}^2\\
	\lesssim &  \,M_1^2    \sup_{\tau}\sum_{\lambda} \lambda^{ 2s_2}\left\|e^{M \frac{x_1}{2\langle x_1\rangle} }  \frac{\partial_1 \psi^\tau  }{\langle x_1\rangle }  \right\|_{L^2_{t,x}}^2 \\
	\lesssim & \,M_1^2   \sup_{\tau}\sum_{\lambda} \lambda^{ 2s_2}\left\|e^{M \frac{x_1}{2\langle x_1\rangle} }  \frac{\partial_1 \psi^\tau  }{\langle x_1\rangle }  \right\|_{L^2_{t,x}}^2,
\end{align*}
where the  finite sum $\sum_{\mu\sim \lambda}  $ would not make a difference.

As for \eqref{big_me_t6_t3}, we use \eqref{remainderterm} to write it	 into
\begin{align}
	\eqref{big_me_t6_t3} = 
	&\,2\sum_{\lambda} \lambda^{ 2s_2}\int_0^t	   \int_{\mathbb{R}^{d}}  
 e^{M \frac{x_1}{\langle x_1\rangle} } \mathrm{Re} \left(  \left(\sum_{\mu\sim \lambda} [  P_{\lambda, e_1},a  ]  P_\mu \nabla \phi\right)\partial_1  \overline{\psi}\right)      \mathrm{d} x  \mathrm{d} s\label{Rem_t2}\\
	&+2\sum_{\lambda} \lambda^{ 2s_2}\int_0^t	\int_{\mathbb{R}^{d}}    e^{M \frac{x_1}{\langle x_1\rangle} }\mathrm{Re} \left(  \left( \sum_{\mu}  [P_{\lambda, e_1},b ]  P_\mu \nabla\overline{ \phi }\right)\partial_1  \overline{\psi}\right)      \mathrm{d} x \mathrm{d} s \label{Rem_t3}\\
	&+2\sum_{\lambda} \lambda^{ 2s_2}\int_0^t	\int_{\mathbb{R}^{d}}    e^{M \frac{x_1}{\langle x_1\rangle} }\mathrm{Re} \left(   \partial_i\left(\left[P_{\lambda, e_1}, h^{ij}\left(\phi,\overline{\phi}\right) \right]  \partial_j  \phi \right)\partial_1  \overline{\psi}\right)     \mathrm{d} x  \mathrm{d} s \label{Rem_t4}\\
	&+ 2\sum_{\lambda} \lambda^{ 2s_2}\int_0^t	\int_{\mathbb{R}^{d}}    e^{M \frac{x_1}{\langle x_1\rangle} }\mathrm{Re} \left( P_{\lambda, e_1}  F(\phi, \overline{\phi}, P_{\leq 1} \nabla \phi,  P_{\leq 1} \nabla \overline{\phi})\cdot\partial_1  \overline{\psi}\right)     \mathrm{d} x  \mathrm{d} s. \label{Rem_t1}
\end{align}
\eqref{Rem_t2} and \eqref{Rem_t3} can be handled similarly and we can restrict our attention to \eqref{Rem_t2}.
By commutator estimates in Lemma \ref{lem_com}, \eqref{def_t1}, \eqref{def_t2} and assumption \eqref{bstrap_ass1}-\eqref{bstrap_ass5}, we obtain
\begin{align*}
	\eqref{Rem_t2} \lesssim & \,t^{\frac 12} e^{\frac M2} \left\| \langle x_1\rangle\nabla a \right\|_{L^{\infty}(\mathbb{R}^d)} \left(\sum_{\lambda} \lambda^{ 2s_2} \sup_\tau \left\|  P_\lambda \phi^{\tau }\right\|_{L^{2}(\mathbb{R}^d)}^2\right )^{\frac 12} \left(\sum_{\lambda} \lambda^{ 2s_2}\sup_\tau \left\| e^{M \frac{x_1}{2\langle x_1\rangle} } \frac{\partial_k \psi^{\tau_1} }{\langle x_1\rangle }  \right\|_{L^2_{t,x}}^2\right)^{\frac 12}\\
 \lesssim & \,t^{\frac 12} e^{\frac M2} \left(\left\|   \partial_y a \right\|_{L^{\infty}}+\left\|   \partial_{\overline{y}} a \right\|_{L^{\infty} }+\left\|   \partial_z a \right\|_{L^{\infty}}+\left\|   \partial_{\overline{z}} a \right\|_{L^{\infty} }\right) \left\| \langle x_1\rangle \langle\nabla\rangle^2 \phi \right\|_{L^{\infty}(\mathbb{R}^d)}   \\
  &\cdot\left(\sup_\tau \sum_{\lambda} \lambda^{ 2s_2} \left\|  P_\lambda \phi^\tau\right\|_{L^{2}(\mathbb{R}^d)}^2\right)^{\frac 12} \left(\sum_{\lambda} \lambda^{ 2s_2}\sup_\tau \left\| e^{M \frac{x_1}{2\langle x_1\rangle} } \frac{\partial_k \psi^\tau }{\langle x_1\rangle }  \right\|_{L^2_{t,x}}^2\right)^{\frac 12}\\
   \lesssim & \,t^{\frac 12} e^{\frac M2}  \left\| \langle x_1\rangle  \phi \right\|_{H^{s_2-1}(\mathbb{R}^d)}   \\
  &\cdot\left(\sup_\tau \sum_{\lambda} \lambda^{ 2s_2} \left\|  P_\lambda \phi^\tau\right\|_{L^{2}(\mathbb{R}^d)}^2\right)^{\frac 12} \left(\sum_{\lambda} \lambda^{ 2s_2}\sup_\tau \left\| e^{M \frac{x_1}{2\langle x_1\rangle} } \frac{\partial_k \psi^\tau }{\langle x_1\rangle }  \right\|_{L^2_{t,x}}^2\right)^{\frac 12}\\
	\lesssim& \,t^{\frac 12} e^{\frac M2}  \left\| \langle x_1\rangle^2 \phi  \right\|_{H^{s_2 -2}(\mathbb{R}^d)}^{\frac 12} \left\|  \phi \right\|_{H^{s_2  } (\mathbb{R}^d)}^{\frac 12}\\
 &\cdot\left(\sup_\tau \sum_{\lambda} \lambda^{ 2s_2} \left\|  P_\lambda \phi^\tau\right\|_{L^{2}(\mathbb{R}^d)}^2\right)^{\frac 12} \left(\sum_{\lambda} \lambda^{ 2s_2}\sup_\tau \left\| e^{M \frac{x_1}{2\langle x_1\rangle} } \frac{\partial_k \psi^\tau }{\langle x_1\rangle }  \right\|_{L^2_{t,x}}^2\right)^{\frac 12}\\
	\lesssim&\, t^{\frac 12} e^{\frac M2}  M_1^4 e^{2M}\\
	\lesssim&\, e^{2M},
\end{align*}
where by \eqref{a}, we know $a$ is smooth.
As regards \eqref{Rem_t4}, we have
\begin{align}
	\eqref{Rem_t4} =&\,2\sum_{\lambda} \lambda^{ 2s_2}\int_0^t	\int_{\mathbb{R}}e^{M \frac{x_1}{\langle x_1\rangle} }   \int_{\mathbb{R}^{d-1}}  \mathrm{Re} \left(   \left(\left[P_{\lambda, e_1},\partial_i h^{ij}\left(\phi,\overline{\phi}\right) \right]  \partial_j  \phi \right)\partial_1  \overline{\psi}\right)   \mathrm{d} \hat{x}_1  \mathrm{d} x_1 \mathrm{d} s\label{Rem_t4_t1}\\
	&+2\sum_{\lambda} \lambda^{ 2s_2}\int_0^t	\int_{\mathbb{R}}e^{M \frac{x_1}{\langle x_1\rangle} }   \int_{\mathbb{R}^{d-1}}  \mathrm{Re} \left(   \left(\left[P_{\lambda, e_1}, h^{ij}\left(\phi,\overline{\phi}\right) \right]  \partial_i\partial_j  \phi \right)\partial_1  \overline{\psi}\right)   \mathrm{d} \hat{x}_1  \mathrm{d} x_1 \mathrm{d} s, \label{Rem_t4_t2}
\end{align}
where calculations as those for \eqref{Rem_t2} give
\begin{align*}
	\eqref{Rem_t4_t1} \lesssim &\, t^{\frac 12} e^{\frac M2} \left\| \langle x_1 \rangle \nabla \partial_i h^{ij}\left(\phi,\overline{\phi}\right) \right\|_{L^\infty}\left(\sum_{\lambda} \lambda^{ 2s_2} \sup_\tau \left\|  P_\lambda \phi^{\tau }\right\|_{L^{2}(\mathbb{R}^d)}^2\right )^{\frac 12}\\
 &\cdot\left(\sum_{\lambda} \lambda^{ 2s_2}\sup_\tau \left\| e^{M \frac{x_1}{2\langle x_1\rangle} } \frac{\partial_k \psi^{\tau_1} }{\langle x_1\rangle }  \right\|_{L^2_{t,x}}^2\right)^{\frac 12}\\
	\lesssim & \,t^{\frac 12} e^{\frac M2} \left(\left\|   \partial_y h \right\|_{L^{\infty}}+\left\|   \partial_{\overline{y}} h \right\|_{L^{\infty} } \right) \left\| \langle x_1\rangle \langle\nabla\rangle^2 \phi \right\|_{L^{\infty}(\mathbb{R}^d)}   \\
  &\cdot\left(\sup_\tau \sum_{\lambda} \lambda^{ 2s_2} \left\|  P_\lambda \phi^\tau\right\|_{L^{2}(\mathbb{R}^d)}^2\right)^{\frac 12} \left(\sum_{\lambda} \lambda^{ 2s_2}\sup_\tau \left\| e^{M \frac{x_1}{2\langle x_1\rangle} } \frac{\partial_k \psi^\tau }{\langle x_1\rangle }  \right\|_{L^2_{t,x}}^2\right)^{\frac 12}\\
	\lesssim&\, t^{\frac 12} e^{\frac M2} M_1^4 e^{2M}\\
	\lesssim&\, e^{2M}
\end{align*}
and 
\begin{align*}
	\eqref{Rem_t4_t2} \lesssim &\left\| \langle x_1 \rangle^2 \nabla  h^{ij}\left(\phi,\overline{\phi}\right) \right\|_{L^\infty}  \left(\sum_{\lambda} \left\| e^{M \frac{x_1}{2\langle x_1\rangle} } \frac{ \lambda^{ s_2-1}\partial_{i} \partial_j  \phi }{\langle x_1\rangle }  \right\|_{L^2_{t,x}}^2\right)^{\frac 12} \left(\sum_{\lambda} \lambda^{ 2s_2}\left\| e^{M \frac{x_1}{2\langle x_1\rangle} } \frac{\partial_1 \psi }{\langle x_1\rangle }  \right\|_{L^2_{t,x}}^2\right)^{\frac 12} \\
	\lesssim &  \left\|   \langle x_1 \rangle^2 \phi \right\|_{H^{s_2-\frac 32}}   \sum_{\lambda} \lambda^{ 2s_2}\sup_{\tau}\left\|e^{M \frac{x_1}{2\langle x_1\rangle} }  \frac{\partial_1 \psi^\tau  }{\langle x_1\rangle }  \right\|_{L^2_{t,x}}^2 \\
	\lesssim & M_1^2   \sum_{\lambda} \lambda^{ 2s_2}\sup_{\tau}\left\|e^{M \frac{x_1}{2\langle x_1\rangle} }  \frac{\partial_1 \psi^\tau  }{\langle x_1\rangle }  \right\|_{L^2_{t,x}}^2.
\end{align*}

While for the low frequency terms \eqref{Rem_t1}, we can use \eqref{def_t1}, \eqref{def_t2} and assumption \eqref{bstrap_ass1}-\eqref{bstrap_ass5} to obtain
\begin{align*}
	\eqref{Rem_t1}\lesssim &\, t^{\frac 12} e^{\frac M2} \left\|\langle x_1\rangle \phi \right\|_{L^{\infty}(\mathbb{R}^d)} \left(\sum_{\lambda} \lambda^{ 2s_2}\left\|  P_{\lambda,e_1} P_{\leq 1}  \phi\right\|_{L^{2}(\mathbb{R}^d)}^2 \right)^{\frac 12}\left(\sum_{\lambda} \lambda^{ 2s_2}\left\| e^{M \frac{x_1}{2\langle x_1\rangle} } \frac{\partial_1 \psi }{\langle x_1\rangle }  \right\|_{L^2_{t,x}}^2\right)^{\frac 12} \\
	\lesssim &\, t^{\frac 12} e^{\frac M2}    \left\|  \phi \langle x_1 \rangle\right\|_{H^{s_2-\frac 52}}    \left(\sum_{\lambda} \lambda^{ 2s_2}\left\|  P_{\lambda,e_1}    \phi\right\|_{L^{2}(\mathbb{R}^d)}^2 \right)^{\frac 12} \left(\sum_{\lambda} \lambda^{ 2s_2}\left\| e^{M \frac{x_1}{2\langle x_1\rangle} } \frac{\partial_1 \psi }{\langle x_1\rangle }  \right\|_{L^2_{t,x}}^2\right)^{\frac 12}  \\
	\lesssim & \,t^{\frac 12} e^{\frac M2}   M_1^4 e^{2M}\\
	\lesssim&\, e^{2M}.
\end{align*}

Collecting  all the terms above, we can use \eqref{def_t1}  and   \eqref{def_t2}  to  obtain   \eqref{bstrap_re3} for   $ e_1 $:
\begin{align} \label{big_est_mom}
	&\sum_{\lambda} \lambda^{ 2s_2}   \sup_{\tau}\int_0^t   \int_{\mathbb{R}^{d}}  \left|	e^{M \frac{x_1}{2\langle x_1\rangle} }\cdot \frac{\partial_1 \psi^\tau }{\langle x_1\rangle}\right|^2   \mathrm{d} x  \mathrm{d} s  \nonumber\\
	\leq 	&\,\left( 2M-M_1^2- \left( t^{\frac 18}+ \eta+  \eta^{\frac 13}  M_1^{\frac 23 }\right)  \right)  \sum_{\lambda} \lambda^{ 2s_2}    \sup_{\tau}\int_0^t   \int_{\mathbb{R}^{d}}  \left|	e^{M \frac{x_1}{2\langle x_1\rangle} }\cdot \frac{\partial_1 \psi^\tau }{\langle x_1\rangle}\right|^2   \mathrm{d} x  \mathrm{d} s \nonumber \\
	\leq  &\,  Ce^{2M},  
\end{align}
where $ \psi=P_{\lambda, e_1}  \phi   $ and we set $ \eta,t $ such that $ 2M-M_1^2- \left( t^{\frac 18}+ \eta+  \eta^{\frac 13}  M_1^{\frac 23 }\right)  \geq 1.  $

\subsection{Energy estimates} \label{s_big.2}
We begin with the  estimate of \eqref{bstrap_re1}, which is straightforward.  
Multiplying \eqref{PqNLS} with $ P_{\lambda, \varpi_m}   \overline{\phi} $, taking the imaginary part,  integrating on $ [0,t]\times \mathbb{R}^d $, multiplying with $\lambda^{2s_2}$, summing over $\lambda \geq 1$ and $m=1\dots, K$, we have
\begin{align*}
\left\|\phi \right\|_{H^{s_2}}^2\lesssim &\,\sum_{m=1}^K\sum_{\lambda\geq 1}	\lambda^{2s_2} \left\|P_{\lambda, \varpi_m}  \phi \right\|_{L^{2}(\mathbb{R}^d)}^2\\
=& \sum_{m=1}^K\sum_{\lambda\geq 1}	\lambda^{2s_2} \left\|P_{\lambda, \varpi_m}  \phi_0 \right\|_{L^{2}(\mathbb{R}^d)}^2\\
&+\sum_{m=1}^K \sum_{\lambda\geq 1}	\lambda^{2s_2} \int_0^t \int_{\mathbb{R}^d}  \mathrm{Im} \left(   \mathcal{N}_1 P_{\lambda, \varpi_m}  \overline{\phi}\right)  \mathrm{d} x \mathrm{d} s\nonumber\\
	\lesssim &\,\left\|\phi_0 \right\|_{H^{s_2}}^2+  t^{\frac12}  \left( \sum_{m=1}^K\sum_{\lambda\geq 1}	\lambda^{2s_2} \left\|P_{\lambda, \varpi_m}  \phi \right\|_{L^{2}(\mathbb{R}^d)}^2\right)^{\frac 12} 
 \left(\sum_{m=1}^K\sum_{\lambda\geq 1}\left\|\mathcal{N}_1\right\|_{L^2_{t,x}}^2\right)^{\frac 12}\\
 \lesssim &\,M_0^2+  t^{\frac12}   \epsilon^{-(d-1)}  \left\|\phi \right\|_{H^{s_2}}
 \left( \sum_{\lambda\geq 1}\left\|\mathcal{N}_1\right\|_{L^2_{t,x}}^2\right)^{\frac 12} \\
 \lesssim &\,M_0^2 + t^{\frac12}  \epsilon^{-(d-1)}   M_0^2 M_1 e^{2M}\\
 \lesssim &\, M_0^2,
\end{align*}
where we used  $K = C \epsilon^{-(d-1)}$, \eqref{orth_relat}, \eqref{def_t1},  and $\left\|\mathcal{N}_1\right\|_{L^2_{t,x}} $ can be estimated  as the nonlinear  term \eqref{big_me_t5}.   We  omit the details.

Also, the \eqref{bstrap_re5} can be estimated directly by \eqref{bstrap_ass5} and \eqref{def_t2}:
\begin{align*}
	\left\|\phi\right\|_{H^{s_2+\frac 12}}^2\lesssim& \sum_{m=1}^K	\sum_{\lambda} \lambda^{2s_2+1} \sup_{\tau}\left\| P_{\lambda, \varpi_m}  \phi^\tau\right\|_{L^2(\mathbb{R}^d)}^2 \\
	\lesssim &\, K M_1^4\\
	\lesssim & \,\epsilon^{-(d-1)} M_1^4 \\
	\lesssim &\, \epsilon^{-d}.
\end{align*}

Next, we estimate \eqref{bstrap_re4}. Again, we only consider the case of \( \varpi_m=e_1 \), as the result for any \( \varpi_m \) with \( m = 1, \dots, K \) follows analogously by applying a rotation.
Recall  the \eqref{PqNLS}, i.e.
\begin{equation*}  
	\begin{cases}
		\sqrt{-1}  \partial_t \psi + \partial_i\left( g^{ij}\left(\phi,\overline{\phi}\right) \partial_j \psi \right)=\sum_{\mu\sim \lambda} a \cdot      P_\mu \nabla \psi + \sum_{\mu\sim \lambda} b\cdot     P_\mu \nabla\overline{ \psi }+\mathcal{R}\\
		\psi (0,x)= P_{\lambda, e_1 } \phi_0 ,
	\end{cases}
\end{equation*}
where $\psi=P_{\lambda, e_1}  \phi $ and  $\mathcal{R} $ is defined as \eqref{remainderterm}.
Inspired by Kenig-Ponce-Vega \cite{kenig2004cauchy}, we will eliminate the part of $ \sum_{\mu\sim \lambda} a_1 \cdot P_\mu \partial_1 \psi $  in $ \mathcal{N}_1 $  by pseudo-differential technique.
Denote $ U:= e^{q (x,D)} \psi $, where $ q (x,D)$ is zeroth order pseudo-differential operator.
Then we  can apply  $e^{q (x,D)} $ to \eqref{PqNLS} to obtain 
\begin{align} \label{eq_U}
	\begin{cases}
		\sqrt{-1}  \partial_t U + \partial_i\left( g^{ij}\left(\phi,\overline{\phi}\right) \partial_j U \right)=\mathcal{N}_2+\mathcal{N}_3\\
		U (0,x)=e^{q (x,D)} \psi_0 ,
	\end{cases}
\end{align}
where 
\begin{align*}
	\mathcal{N}_2:= &-[e^{q (x,D)}, \partial_i  (g^{ij}\left(\phi,\overline{\phi}\right) \partial_j)] \psi + e^{q (x,D)} \left(\sum_{\mu\sim \lambda} a \cdot P_\mu \nabla \psi \right)+ \sum_{\mu\sim \lambda} b\cdot  P_\mu \nabla \overline{U},\\
	\mathcal{N}_3:=& \sum_{\mu\sim \lambda}  [e^{q (x,D)}, b  P_\mu \nabla] \overline{\psi}+ e^{q (x,D)} \mathcal{R}  .
\end{align*}

By pseudo-differential techniques in \cite{chen2006}, 
we take 
\begin{align*}
	\{q(x,\xi ), g^{ij}\left(\phi,\overline{\phi}\right) \xi_i\xi_j \}=   \sum_{\mu\sim \lambda} a \cdot\xi  P_\mu(\xi ).   
\end{align*}
Since  the $ x_1 $ is the dominate direction, we can only consider 
\begin{align*}
	\partial_{\xi_1 } q(x,\xi_1 )\cdot\xi_1^2 \partial_{x_1}  g^{11}\left(\phi,\overline{\phi}\right) -2  \partial_{x_1 } q(x,\xi_1 )\cdot g^{11}\left(\phi,\overline{\phi}\right)\xi_1
	=&\,\{q(x,\xi_1 ), g^{11}\left(\phi,\overline{\phi}\right) \xi_1^2 \}\\
	= &  \sum_{\mu\sim \lambda} a_1 \cdot\xi_1  P_\mu(\xi_1 ).  
\end{align*}
Therefore,
\begin{align*}
	\partial_{\xi_1 } q(x,\xi_1 )\cdot\xi_1 \partial_{x_1}  g^{11}\left(\phi,\overline{\phi}\right) -2  \partial_{x_1 } q(x,\xi_1 )\cdot g^{11}\left(\phi,\overline{\phi}\right) 
	=&  a_1  \sum_{\mu\sim \lambda}  P_\mu(\xi_1 ).  \end{align*}
We define the characteristic
\begin{align} \label{ode1}
	\frac{\mathrm{d}}{\mathrm{d} x_1} \xi_1 =-\frac{\xi_1 \partial_{x_1}  g^{11}\left(\phi,\overline{\phi}\right) }{2 g^{11}\left(\phi,\overline{\phi}\right) }.
\end{align}
Then 
\begin{align} \label{ode2}
	\frac{\mathrm{d}}{\mathrm{d} x_1} q(x_1)=-\frac{ a_1  \sum_{\mu\sim \lambda}  P_\mu(\xi_1 )   }{2 g^{11}\left(\phi,\overline{\phi}\right) }.
\end{align}
By the results in \cite{ta1985boundary}, the ordinary differential equations \eqref{ode1} and \eqref{ode2} admit solutions. This allows us to eliminate the term  $ \sum_{\mu\sim \lambda} a_1 \cdot P_\mu \partial_1 \psi  $, yielding
\begin{align*}
	\mathcal{N}_2= \mathcal{R}_1+ \sum_{\mu\sim \lambda} b\cdot  P_\mu \nabla \overline{U},
\end{align*}
where $ \mathcal{R}_1 $ is lower order terms can be estimated without using the momentum estimates:
\begin{align*}
	\mathcal{R}_1 =-\sum_{ i\neq 1\text{ or } j\neq 1}	[e^{q (x,D_1)}, \partial_i  (g^{ij}\left(\phi,\overline{\phi}\right) \partial_j)] \psi + \sum_{ i\neq 1} e^{q (x,D_1)} \left(\sum_{\mu\sim \lambda} a_i  P_\mu \partial_i \psi \right) +   q_0(x,D) U,
\end{align*}
and  $ q_0(x,D)  $ is zeroth order pseudo-differential operator.


Multiplying \eqref{eq_U} with $ \overline{U} $, frequency $  \lambda^{2s_2+1} $ and constant $ e^{M} $, taking the imaginary part and integrating on $ [0,t]\times \mathbb{R}^d $, we have  
\begin{align}
	e^{M} \lambda^{2s_2+1}	\|U\|_{L^{2}(\mathbb{R}^d)}^2= & \,	e^{M} \lambda^{2s_2+1}	\|U_0\|_{L^{2}(\mathbb{R}^d)}^2\nonumber\\
	&+e^{M}\lambda^{2s_2+1}\int_0^t \int_{\mathbb{R}^d}  \mathrm{Im} \left(   \mathcal{R}_1\overline{U}\right)  \mathrm{d} x \mathrm{d} s \label{Big_en_maint1}\\
	&+ e^{M}\lambda^{2s_2+1} \sum_{\mu\sim \lambda}\int_0^t \int_{\mathbb{R}^d}  \mathrm{Im} \left(   b\cdot  P_\mu \nabla \overline{U}~\overline{U}\right)  \mathrm{d} x \mathrm{d} s\label{Big_en_maint2}\\
	&+e^{M} \lambda^{2s_2+1}\int_0^t \int_{\mathbb{R}^d}  \mathrm{Im} \left(   \mathcal{N}_3\overline{U}\right)  \mathrm{d} x \mathrm{d} s.\label{Big_en_maint3}
\end{align}

For \eqref{Big_en_maint2}, we have
\begin{align}
	\eqref{Big_en_maint2}	=& \, e^{M} \lambda^{2s_2+1} \sum_{\mu\sim \lambda} \int_0^t \int_{\mathbb{R}^d}  \mathrm{Im} \left( (b\cdot \nabla  P_\mu^{\frac 12}\overline{U} )~ P_\mu^{\frac 12}\overline{U}  \right)  \mathrm{d} x \mathrm{d} s\nonumber \\
	&+e^{M} \lambda^{2s_2+1} \sum_{\mu\sim \lambda} \int_0^t \int_{\mathbb{R}^d}  \mathrm{Im} \left(    ( [b , \nabla P_\mu^{\frac 12} ] \overline{U} )P_\mu^{\frac 12}\overline{U}  \right)  \mathrm{d} x \mathrm{d} s\nonumber\\
	=&- e^{M}\lambda^{2s_2+1}  \sum_{\mu\sim \lambda} \frac 12 \int_0^t \int_{\mathbb{R}^d}  \mathrm{Im} \left(    \mathrm{div}b \cdot (P_\mu^{\frac 12}\overline{U}) ^2  \right)  \mathrm{d} x \mathrm{d} s\label{Big_en_maint2_1}\\
	&+  e^{M} \lambda^{2s_2+1}\sum_{\mu\sim \lambda} \int_0^t \int_{\mathbb{R}^d}  \mathrm{Im} \left(    ( [b , \nabla P_\mu^{\frac 12} ] \overline{U} )P_\mu^{\frac 12}\overline{U}  \right)  \mathrm{d} x \mathrm{d} s. \label{Big_en_maint2_2}
\end{align}
Observe that $ P_\mu^{\frac 12}  e^{q (x,D)} $ is a zeroth-order order pseudo-differential oprator. Thus, for \eqref{Big_en_maint2_1}, we apply \eqref{est_transf_0_order_P} and \eqref{est_transf_k2} to obtain
\begin{align*}
	\eqref{Big_en_maint2_1}\lesssim &  \sum_{\mu\sim \lambda}  t^{\frac12} e^{\frac 32M}\left\|\langle x_k\rangle \mathrm{div}b  \right\|_{L^{\infty}(\mathbb{R}^d)} \cdot \sup_{t }\left\| \lambda^{s_2}\overline{U}\right\|_{L^{2}(\mathbb{R}^d)}   \left\|e^{M \frac{x_k}{2\langle x_k\rangle} }  \frac{\lambda^{s_2+1}P_\mu^{\frac 12}  e^{q (x,D)} \psi}{\langle x_k\rangle }  \right\|_{L^2_{t,x}} \\
	\lesssim & \,t^{\frac12} e^{\frac 32M} M_1^2 \cdot \sup_{t }\left\| \lambda^{s_2} \psi\right\|_{L^{2}(\mathbb{R}^d)}  \sup_{\tau}   \left\| e^{M \frac{x_k}{2\langle x_1\rangle} }  \frac{\lambda^{s_2+1} \psi^{\tau }}{\langle x_1\rangle } \right\|_{L^2_{t,x}}\\
	\lesssim & \,t^{\frac12} e^{\frac 32M} M_1^2  \cdot \sup_{t }\left\| \lambda^{s_2} \psi\right\|_{L^{2}(\mathbb{R}^d)}  \sup_{\tau}   \left\| e^{M \frac{x_k}{2\langle x_1\rangle} }  \frac{\lambda^{s_2+1} \psi^{\tau }}{\langle x_1\rangle } \right\|_{L^2_{t,x}} \\
	\lesssim & \,t^{\frac12} e^{\frac 32M} M_1^2 \cdot \sup_{t }\left\| \lambda^{s_2} \psi\right\|_{L^{2}(\mathbb{R}^d)}  \sup_{\tau}   \left\| e^{M \frac{x_1}{2\langle x_1\rangle} }  \frac{\lambda^{s_2}  \partial_1\psi^{\tau }}{\langle x_1\rangle } \right\|_{L^2_{t,x}},
\end{align*}
where    $ s_2>\frac d2 +3 $ and $\left\|\langle x_k\rangle \mathrm{div}b  \right\|_{L^{\infty}(\mathbb{R}^d)} $ can be estimated as $\left\|\langle x_1\rangle \nabla a  \right\|_{L^{\infty}(\mathbb{R}^d)} $ in  \eqref{Rem_t2}.
 Similarly for	\eqref{Big_en_maint2_2}, we can also use \eqref{est_transf_0_order_P} to obtain
\begin{align*}
	\eqref{Big_en_maint2_2} \lesssim &\sum_{\mu\sim \lambda}  t^{\frac12} e^{\frac 32M}\left\|\langle x_k\rangle  \nabla b  \right\|_{L^{\infty}(\mathbb{R}^d)} \cdot \sup_{t }\left\| \lambda^{s_2}\overline{U}\right\|_{L^{2}(\mathbb{R}^d)}   \left\|e^{M \frac{x_k}{2\langle x_k\rangle} }  \frac{\lambda^{s_2+1}P_\mu^{\frac 12}  e^{q (x,D)} \psi}{\langle x_k\rangle }  \right\|_{L^2_{t,x}} \\
	\lesssim &\, t^{\frac12} e^{\frac 32M} M_1^2  \cdot \sup_{t }\left\| \lambda^{s_2} \psi\right\|_{L^{2}(\mathbb{R}^d)}  \sup_{\tau}   \left\| e^{M \frac{x_k}{2\langle x_1\rangle} }  \frac{\lambda^{s_2} \partial_k \psi^{\tau }}{\langle x_1\rangle } \right\|_{L^2_{t,x}}.
\end{align*}

As regards  \eqref{Big_en_maint1}, we have
\begin{align}
	\eqref{Big_en_maint1}=&\,	e^{M}\lambda^{2s_2+1} \sum_{ i\neq 1\text{ or } j\neq 1}	\int_0^t \int_{\mathbb{R}^d}  \mathrm{Im} \left(  [e^{q (x,D_1)}, \partial_i  (g^{ij}\left(\phi,\overline{\phi}\right) \partial_j)] \psi \overline{U}\right)  \mathrm{d} x \mathrm{d} s \label{Big_en_maint1_1} \\
	&+e^{M}\lambda^{2s_2+1}\int_0^t \int_{\mathbb{R}^d}  \mathrm{Im} \left(   \sum_{ i\neq 1} e^{q (x,D_1)} \left(\sum_{\mu\sim \lambda} a_i  P_\mu \partial_i \psi \right) \overline{U}\right)  \mathrm{d} x \mathrm{d} s \label{Big_en_maint1_2} \\
	&+e^{M}\lambda^{2s_2+1}\int_0^t \int_{\mathbb{R}^d}  \mathrm{Im} \left(    q_0(x,D) U\overline{U}\right)  \mathrm{d} x \mathrm{d} s.  \label{Big_en_maint1_3}
\end{align}
Since \( q_0(x,D) \) in \eqref{Big_en_maint1_3} is a zeroth-order pseudo-differential operator, this term can be estimated similarly to \eqref{Big_en_maint1_1}. We now analyze \eqref{Big_en_maint1_1} in detail.
For   $ i\neq 1 $, we have
\begin{align*}
	[e^{q (x,D_1)}, \partial_i  (g^{ij}\left(\phi,\overline{\phi}\right) \partial_j)] \psi&=   \frac{1}{ (1+ |x_1| ^2)^{\frac 12} }q_1(x,D)  (\frac{1}{ (1+ |x_1| ^2)^{\frac 12} }e^{q (x,D_1)} \psi) +q_2(x,D)   \psi\\
	&=  \frac{1}{ (1+ |x_1| ^2)^{\frac 12} }q_1(x,D)( \frac{1}{ (1+ |x_1| ^2)^{\frac 12} }U) +q_2(x,D)   \psi,\\
\end{align*}
where  
\begin{align*}
	q_1(x,\xi)=( 1+ |x_1| ^2 )\cdot \{q(x,\xi_1),  g^{ij}\left(\phi,\overline{\phi}\right)\xi_i \xi_j  \}
\end{align*}
is first order pseudo-differential operator and $ q_2(x,D) $ is  zeroth order pseudo-differential operator. Thus,
\begin{align*}
	\eqref{Big_en_maint1_1}= &\,e^{M}\lambda^{2s_2+1}  	\int_0^t \int_{\mathbb{R}^d}  \mathrm{Im}   \left( \frac{1}{( 1+ |x_1| ^2 )^{\frac 12}} q_1(x,D)  ( \frac{1}{( 1+ |x_1| ^2 )^{\frac 12}}   U)\cdot \overline{U}\right)  \mathrm{d} x \mathrm{d} s \\
	&+e^{M}\lambda^{2s_2+1}  	\int_0^t \int_{\mathbb{R}^d}  \mathrm{Im} \left(  q_2(x,D)   \psi \cdot  \overline{U}\right)  \mathrm{d} x \mathrm{d} s \\
	\lesssim & \,e^{M}  \sum_{ i\neq 1 }	 \left\|   \frac{ \lambda^{s_2+1}  \psi }{\langle x_1\rangle } \right\|_{L^2_{t,x}}  \left\|  \frac{ \lambda^{s_2}  \partial_i  \psi }{\langle x_1\rangle } \right\|_{L^2_{t,x}}\\
	&\,+ t^{\frac 12} e^{\frac 32M}  \left\|\lambda^{s_2} q_2(x,D)   \psi\right\|_{L^2} \left\| e^{M \frac{x_k}{2\langle x_1\rangle} }  \frac{ \lambda^{s_2+1}  \psi }{\langle x_1\rangle } \right\|_{L^2_{t,x}}  \\
	\lesssim &\, e^{2M}  \sum_{ i\neq 1 }	 \left\| e^{M \frac{x_k}{2\langle x_1\rangle} }  \frac{ \lambda^{s_2+1}  \psi }{\langle x_1\rangle } \right\|_{L^2_{t,x}}  \left\| e^{M \frac{x_k}{2\langle x_1\rangle} }  \frac{ \lambda^{s_2}  \partial_i  \psi }{\langle x_1\rangle } \right\|_{L^2_{t,x}} \\
	&\,+ t^{\frac 12} e^{\frac 32M}  \left\|\lambda^{s_2}   \psi\right\|_{L^2} \left\| e^{M \frac{x_k}{2\langle x_1\rangle} }  \frac{ \lambda^{s_2+1}  \psi }{\langle x_1\rangle } \right\|_{L^2_{t,x}}  \\
	\lesssim & \, \epsilon e^{2M}  \sup_{\tau}	  \left\| e^{M \frac{x_k}{2\langle x_1\rangle} }  \frac{ \lambda^{s_2}  \partial_1  \psi^{\tau} }{\langle x_1\rangle } \right\|_{L^2_{t,x}}^2 \\
	&\,+ t^{\frac 12} e^{\frac 32M}  \left\|\lambda^{s_2}   \psi\right\|_{L^2} \left\| e^{M \frac{x_k}{2\langle x_1\rangle} }  \frac{ \lambda^{s_2+1}  \psi }{\langle x_1\rangle } \right\|_{L^2_{t,x}}.
\end{align*}

For \eqref{Big_en_maint1_2},  we can use integration by parts, \eqref{est_transf_0_order_P} , \eqref{est_transf_k} and \eqref{est_transf_k2} to obtain
\begin{align*}
	\eqref{Big_en_maint1_2} = &\,e^{M}\lambda^{2s_2+1}\int_0^t \int_{\mathbb{R}^d}  \mathrm{Im} \left(   \sum_{ i\neq 1} \sum_{\mu\sim \lambda} a_i  P_\mu \partial_i \psi \cdot  e^{q (x,D_1)} \overline{U}\right)  \mathrm{d} x \mathrm{d} s\\
	\lesssim& 	  \sum_{ i\neq 1}   e^{ 2M}\left\|\langle x_1\rangle^2  a  \right\|_{L^{\infty}(\mathbb{R}^d)} \cdot  \left\|e^{M \frac{x_k}{2\langle x_k\rangle} }  \frac{\lambda^{s_2}  \partial_i \psi}{\langle x_1\rangle }  \right\|_{L^2_{t,x}}   \left\|e^{M \frac{x_k}{2\langle x_k\rangle} }  \frac{\lambda^{s_2+1}P_\mu^{\frac 12}  e^{q (x,D_1)} \overline{U}}{\langle x_k\rangle }  \right\|_{L^2_{t,x}} \\
	\lesssim &\,\, \epsilon e^{ 2M} M_1^2 \sup_{\tau}   \left\| e^{M \frac{x_k}{2\langle x_1\rangle} }  \frac{\lambda^{s_2} \partial_k \psi^{\tau }}{\langle x_1\rangle } \right\|_{L^2_{t,x}}  \sup_{\tau^\prime}   \left\| e^{M \frac{x_k}{2\langle x_1\rangle} }  \frac{\lambda^{s_2+1} \psi^{\tau^\prime }}{\langle x_1\rangle } \right\|_{L^2_{t,x}}\\
	\lesssim &\,\, \epsilon e^{ 2M} M_1^2  \sup_{\tau}   \left\| e^{M \frac{x_k}{2\langle x_1\rangle} }  \frac{\lambda^{s_2} \partial_1 \psi^{\tau }}{\langle x_1\rangle } \right\|_{L^2_{t,x}}^2.  
\end{align*}
Recall that in \eqref{est_transf_k}, when \( i \neq k \),  we have  a  $ \epsilon  $ in the right hand side.

Next for  \eqref{Big_en_maint3}, we write it into
\begin{align}
	\eqref{Big_en_maint3}=&\sum_{\mu\sim \lambda}	e^{M} \lambda^{2s_2+1}\int_0^t \int_{\mathbb{R}^d}  \mathrm{Im} \left(     [e^{q (x,D)}, b  P_\mu \nabla] \overline{\psi}\overline{U}\right)  \mathrm{d} x \mathrm{d} s \label{Big_en_maint3_1}\\
	&	+e^{M} \lambda^{2s_2+1}\int_0^t \int_{\mathbb{R}^d}  \mathrm{Im} \left(   ( e^{q (x,D)}\mathcal{R}) \overline{U}\right)  \mathrm{d} x \mathrm{d} s.\label{Big_en_maint3_2}
\end{align}
By   Lemma \ref{lem_com} and similar analysis of $\left\|\langle x_1\rangle \nabla a  \right\|_{L^{\infty}(\mathbb{R}^d)} $ in  \eqref{Rem_t2}, we have
\begin{align*}
	\eqref{Big_en_maint3_1}\lesssim &\,  t^{\frac12} e^{\frac 32M}\left\|\langle x_1\rangle  \nabla  b \right\|_{L^{\infty}(\mathbb{R}^d)} \cdot \sup_{t }\left\| \lambda^{s_2} P_\lambda \psi \right\|_{L^{2}(\mathbb{R}^d)}   \left\|e^{M \frac{x_1}{2\langle x_1\rangle} }  \frac{\lambda^{s_2+1} \overline{U}}{\langle x_1\rangle }  \right\|_{L^2_{t,x}}\\
	\lesssim &\, t^{\frac12} e^{M}M_1^2  \sup_{t }\left\| \lambda^{s_2} \psi\right\|_{L^{2}(\mathbb{R}^d)}  \sup_{\tau}   \left\| e^{M \frac{x_1}{2\langle x_1\rangle} }  \frac{\lambda^{s_2}  \partial_1\psi^{\tau }}{\langle x_1\rangle } \right\|_{L^2_{t,x}}\\
	\lesssim &  \,t^{\frac12} e^{M} M_1^2  \sup_{t }\left\| \lambda^{s_2} \psi\right\|_{L^{2}(\mathbb{R}^d)}  \sup_{\tau}   \left\| e^{M \frac{x_1}{2\langle x_1\rangle} }  \frac{\lambda^{s_2}  \partial_1\psi^{\tau }}{\langle x_1\rangle } \right\|_{L^2_{t,x}}
\end{align*}
Similar as the analysis of \eqref{big_me_t6_t3}, we can obtain 
\begin{align*}
	\eqref{Big_en_maint3_2}\lesssim   t^{\frac 12} e^{\frac 32 M}   M_1^2 \left\|\lambda^{s_2}  P_\lambda   \phi\right\|_{L^{2}(\mathbb{R}^d)} \left\| e^{M \frac{x_1}{2\langle x_1\rangle} } \frac{\lambda^{s_2}\partial_1 \psi }{\langle x_1\rangle }  \right\|_{L^2_{t,x}}.
\end{align*}

By combining the estimates of  \eqref{Big_en_maint1}-\eqref{Big_en_maint3},  \eqref{bstrap_ass1}-\eqref{bstrap_ass5} and summing over $ \lambda $,   we can use  \eqref{def_t1}  and \eqref{def_t2} to obtain \eqref{bstrap_re4} for $ e_1$: 
\begin{align*}
	&	\sum_{\lambda} \lambda^{2s_2+1} \sup_{\tau}\left\| P_{\lambda, e_1}  \phi^\tau\right\|_{L^2(\mathbb{R}^d)}^2\\
	\leq  &     \sum_{\lambda} \lambda^{2s_2+1}		\|U\|_{L^{2}(\mathbb{R}^d)}\nonumber\\
	\lesssim   & \sum_{\lambda} \lambda^{2s_2+1}		\|U_0\|_{L^{2}(\mathbb{R}^d)} \nonumber\\
	&+t^{\frac12} e^{\frac 32M} M_1^2 \cdot \sup_{t }\left(\sum_{\lambda}\left\| \lambda^{s_2} \psi\right\|_{L^{2}(\mathbb{R}^d)}^2 \right)^{\frac 12} \left(\sup_{\tau}  \sum_{\lambda} \left\| e^{M \frac{x_1}{2\langle x_1\rangle} }  \frac{\lambda^{s_2} \partial_1 \psi^{\tau }}{\langle x_1\rangle } \right\|_{L^2_{t,x}}^2\right)^{\frac 12}\nonumber\\
	&+\epsilon e^{ 2M} M_1^2  \sup_{\tau}  \sum_{\lambda}   \left\| e^{M \frac{x_k}{2\langle x_1\rangle} }  \frac{\lambda^{s_2} \partial_1 \psi^{\tau }}{\langle x_1\rangle } \right\|_{L^2_{t,x}}^2 \nonumber\\
	\lesssim & \,   \|\phi_0\|_{H^{s_2+\frac 12}}^2+t^{\frac 12 }  e^{\frac 32 M}  M_1^4 e^{2M}  + \epsilon e^{ 2M} M_1^2  e^{4M}  \nonumber\\
	\lesssim &\, M_1^2,\nonumber
\end{align*}
where  $ U= e^{q (x,D)} \psi $ and $ q (x,D)$ is zeroth order pseudo-differential operator.

	Now we are going to prove \eqref{big_w_est4}. As \eqref{PqNLS}, applying $ P_\lambda, \lambda \geq 1 $  on \eqref{qNLS}, we have  
	\begin{align} \label{PqNLS_without}
		\sqrt{-1}  P_{\lambda} \phi_t+\partial_i\left( g^{ij}\left(\phi,\overline{\phi}\right)   \partial_j (P_{\lambda} \phi)\right)=\mathcal{N}_4,
	\end{align}
	where 
	\begin{align*}
		\mathcal{N}_4:=&\sum_{\mu\sim \lambda} a \cdot  P_{\lambda}   P_\mu \nabla \phi + \sum_{\mu\sim \lambda} b\cdot  P_{\lambda}   P_\mu \nabla\overline{ \phi }+\mathcal{R}_2, \\
		\mathcal{R}_2:=& P_{\lambda}  F(\phi, \overline{\phi}, P_{\leq 1} \nabla \phi,  P_{\leq 1} \nabla \overline{\phi}) + \sum_{\mu\sim \lambda} [  P_{\lambda},a  ]  P_\mu \nabla \phi + \sum_{\mu}  [P_{\lambda},b ]  P_\mu \nabla\overline{ \phi } \nonumber\\
		&-\partial_i\left(\left[P_{\lambda}, h^{ij}\left(\phi,\overline{\phi}\right) \right]  \partial_j  \phi \right).   \nonumber
	\end{align*}
	Multiplying \eqref{PqNLS_without} by \( x_k^2 \), we derive
	\begin{align}   \label{eq_big_x2}
		&	\sqrt{-1}x_k^2P_{\lambda}\phi_{ t}+\sum_{i,j}\partial_i\left( g^{ij}   \partial_j \left(x_k^2P_{\lambda}\phi\right)\right)\\
		&=4 x_k \partial_k  P_{\lambda}\phi +2 x_k h^{kj}  \partial_j P_{\lambda}\phi+2 x_k h^{ik} \partial_i P_{\lambda}\phi +2 g^{kk}P_{\lambda}\phi +2 x_k \partial_i h^{ik}P_{\lambda}\phi +x^2_k \cdot\mathcal{N}_4,\nonumber
	\end{align}
	where
	\begin{align*}
		&	\partial_i\left( g^{ij} \partial_j P_{\lambda}\phi\right) \cdot x^2_k= \partial_i\left(  x^2_k g^{ij} \partial_j P_{\lambda}\phi\right)  -2 g^{kk}_0x_k \partial_k  P_{\lambda}\phi -2 x_k h^{kj} \partial_j P_{\lambda}\phi\\ 
		&=\partial_i\left( g^{ij} \partial_j\left(P_{\lambda}\phi x^2_k\right)\right)   -4 g^{kk}_0x_k \partial_k  P_{\lambda}\phi -2 x_k h^{kj} \partial_j P_{\lambda}\phi-2 x_k h^{ik} \partial_i P_{\lambda}\phi-2 g^{kk}P_{\lambda}\phi -2 x_k \partial_i h^{ik}P_{\lambda}\phi.
	\end{align*}
	Multiplying \eqref{eq_big_x2} with $x^2_k P_{\lambda} \overline{ \phi }   $, taking the imaginary part, integrating on $ [0,t]\times \mathbb{R}^d $, multiplying by $ \lambda^{2(s_2-2)} $, summing over $ \lambda\geq 1 $ and $k=1,\dots, d$, we have
	\begin{align}
		\sum_{k=1}^d\sum_{\lambda\geq 1} \lambda^{2(s_2-2)}&\, \int_{\mathbb{R}^{d}} \left| x_k^2P_{\lambda}\phi\right|^2 \mathrm{d} x  \lesssim   \sum_{k=1}^d\sum_{\lambda\geq 1} \lambda^{2(s_2-2)} \int_{\mathbb{R}^{d}} \left| x_k^2P_{\lambda}\phi_0\right|^2 \mathrm{d} x  \nonumber\\
		&+\sum_{k=1}^d\sum_{\lambda\geq 1} \lambda^{2(s_2-2)} \int_0^t \int_{\mathbb{R}^{d}} x^2_k \cdot\mathcal{N}_4 \cdot x^2_k P_{\lambda} \overline{ \phi }  \mathrm{d} x \mathrm{d} s \label{B_we_est_t1}\\
		&+\sum_{k=1}^d\sum_{\lambda\geq 1} \lambda^{2(s_2-2)} \int_0^t \int_{\mathbb{R}^{d}}  (4 x_k \partial_k  P_{\lambda}\phi +2 x_k h^{kj}  \partial_j P_{\lambda}\phi\nonumber\\
		&+2 x_k h^{ik} \partial_i P_{\lambda}\phi +2 g^{kk}P_{\lambda}\phi +2 x_k \partial_i h^{ik}P_{\lambda}\phi)\cdot x^2_k P_{\lambda} \overline{ \phi }  \mathrm{d} x \mathrm{d} s. \label{B_we_est_t2}
	\end{align}
	Since
	\begin{align*}
		\left\|( \sum_{\mu\sim \lambda} [  P_{\lambda},a  ]  P_\mu \nabla \phi) x^2_k P_{\lambda} \overline{ \phi } \right\|_{L^1} \lesssim & \left\|\nabla a\right\|_{L^\infty} \sup_\tau\left\| P_\lambda \phi^{\tau}\right\|_{L^2}  \left\|x^2_k P_{\lambda} \overline{ \phi }  \right\|_{L^2},\\
		\left\|(\partial_i\left(\left[P_{\lambda}, h^{ij}\left(\phi,\overline{\phi}\right) \right]  \partial_j  \phi \right)) \cdot (x^2_k P_{\lambda} \overline{ \phi } )\right\|_{L^1} \lesssim &   \left\|\left(\left[P_{\lambda}, \partial_ih^{ij}\left(\phi,\overline{\phi}\right) \right]  \partial_j  \phi \right) \cdot (x^2_k P_{\lambda} \overline{ \phi } )\right\|_{L^1}\\
		&+  \left\|\left(\left[P_{\lambda}, h^{ij}\left(\phi,\overline{\phi}\right) \right] \partial_i \partial_j  \phi \right) \cdot (x^2_k P_{\lambda} \overline{ \phi } )\right\|_{L^1}\\
		\lesssim &  \left\|\nabla \partial_ih^{ij}\ \right\|_{L^\infty}  \sup_{\tau} \left\|P_{\lambda}  \phi^\tau \right\|_{L^2} \left\|x^2_k P_{\lambda} \phi \right\|_{L^2}\\
		&+\left\|\nabla  h^{ij}\ \right\|_{L^\infty}  \sup_{\tau} \left\| \lambda P_{\lambda}  \phi^\tau \right\|_{L^2} \left\|x^2_k P_{\lambda} \phi \right\|_{L^2}
	\end{align*}
	and 
	\begin{align*}
		\left\|P_{\lambda}  F(\phi, \overline{\phi}, P_{\leq 1} \nabla \phi,  P_{\leq 1} \nabla \overline{\phi}) x^2_k P_{\lambda} \overline{ \phi } \right\|_{L^1}\lesssim \left\|\phi \right\|_{L^\infty} \left\|P_\lambda  \phi   \right\|_{L^2}
		\left\|x^2_k P_{\lambda} \phi \right\|_{L^2},
	\end{align*}
	we have
	\begin{align*} 
		\eqref{B_we_est_t1} \leq & \, \sum_{k=1}^d\sum_{\lambda\geq 1} \lambda^{2(s_2-2)}  C t    \left\| x_k^2 P_{\lambda}\phi\right\|_{L^{2}(\mathbb{R}^d)}  \left(\left\| x_k^2  a\cdot  P_{\lambda}  \nabla \phi \right\|_{L^{2}(\mathbb{R}^d)} + \left\|x_k^2  b\cdot  P_{\lambda}  \nabla \phi \right\|_{L^{2}(\mathbb{R}^d)}\right) \\
		&+ \sum_{k=1}^d\sum_{\lambda\geq 1} \lambda^{2(s_2-2)} C t\left\|x_k^2 P_{\lambda}\phi \cdot 
		x_k^2  \mathcal{R}_2\right\|_{L^{1}(\mathbb{R}^d)}\\
		\leq &  \, \sum_{k=1}^d\sum_{\lambda\geq 1} \lambda^{2(s_2-2)}  C t    \left\| x_k^2 P_{\lambda}\phi\right\|_{L^{2}(\mathbb{R}^d)} ( \left\| x_k^2 a \right\|_{L^\infty } + \left\| x_k^2 b \right\|_{L^\infty } )\left\| P_{\lambda}  \nabla \phi \right\|_{L^{2}(\mathbb{R}^d)}  \nonumber\\
		&+ \sum_{k=1}^d\sum_{\lambda\geq 1} \lambda^{2(s_2-2)} Ct   \left\|\nabla a\right\|_{L^\infty} \sup_\tau\left\| P_\lambda \phi^{\tau}\right\|_{L^2}  \left\|x^2_k P_{\lambda} \overline{ \phi }  \right\|_{L^2}\\
		&+ \sum_{k=1}^d\sum_{\lambda\geq 1} \lambda^{2(s_2-2)} Ct( \left\|\nabla \partial_ih^{ij}\ \right\|_{L^\infty}  \sup_{\tau} \left\|P_{\lambda}  \phi^\tau \right\|_{L^2} \left\|x^2_k P_{\lambda} \phi \right\|_{L^2}\\
		&+ \sum_{k=1}^d\sum_{\lambda\geq 1} \lambda^{2(s_2-2)} Ct\left\|\nabla  h^{ij}\ \right\|_{L^\infty}  \sup_{\tau} \left\| \lambda P_{\lambda}  \phi^\tau \right\|_{L^2} \left\|x^2_k P_{\lambda} \phi \right\|_{L^2})\\
		&+ \sum_{k=1}^d\sum_{\lambda\geq 1} \lambda^{2(s_2-2)} Ct \left\|\phi \right\|_{L^\infty} \left\|P_\lambda  \phi   \right\|_{L^2}
		\left\|x^2_k P_{\lambda} \phi \right\|_{L^2}\\
		\leq& \,  Ct\left( \sum_{k=1}^d\sum_{\lambda\geq 1} \lambda^{2(s_2-2)} \int_{\mathbb{R}^{d}} \left| x_k^2P_{\lambda}\phi\right|^2 \mathrm{d} x \right)^{\frac 12}  M_1^2\left(\sum_{k=1}^d\sum_{\lambda\geq 1} \lambda^{2(s_2-2)} \int_{\mathbb{R}^{d}} \left| P_{\lambda}\phi\right|^2 \mathrm{d} x \right)^{\frac 12}\\
		\leq &\, \frac{1}{100}   \sum_{k=1}^d\sum_{\lambda\geq 1} \lambda^{2(s_2-2)} \int_{\mathbb{R}^{d}} \left| x_k^2P_{\lambda}\phi\right|^2 \mathrm{d} x+ Ct  M_1^4    \left\|\phi\right\|_{H^{s_2}}^2\\
		\leq &\, \frac{1}{100}   \sum_{k=1}^d\sum_{\lambda\geq 1} \lambda^{2(s_2-2)} \int_{\mathbb{R}^{d}} \left| x_k^2P_{\lambda}\phi\right|^2 \mathrm{d} x+Ct M_1^8,
	\end{align*}
	where we used Lemma \ref{Lem_phi_infty}  and similar analysis of $\left\|\langle x_1\rangle \nabla a  \right\|_{L^{\infty}(\mathbb{R}^d)} $ in  \eqref{Rem_t2} to estimate terms containing compositions of $a,b, h$.
	Similarly for \eqref{B_we_est_t2}, we have
	\begin{align*}
		\eqref{B_we_est_t2} \leq& \, C t \cdot \sum_{k=1}^d\sum_{\lambda\geq 1} \lambda^{2(s_2-2)}  \sum_{i=1}^d \left\| x^2_k P_{\lambda}\phi  \right\|_{L^{2}(\mathbb{R}^d)} \left\|x_k \partial_i P_{\lambda}\phi \right\|_{L^{2}(\mathbb{R}^d)} M_1^2\\
		&+Ct \cdot \sum_{k=1}^d\sum_{\lambda\geq 1} \lambda^{2(s_2-2)}   \left\| x^2_k P_{\lambda}\phi  \right\|_{L^{2}(\mathbb{R}^d)} \left\| \langle x \rangle  P_{\lambda}\phi \right\|_{L^{2}(\mathbb{R}^d)} M_1^2\\
		\leq&\, Ct  \sum_{k=1}^d\sum_{\lambda\geq 1} \lambda^{2(s_2-2)}   \sum_{i=1}^d \left\| x^2_k P_{\lambda}\phi  \right\|_{L^{2}(\mathbb{R}^d)}^{\frac 32} \left\|\partial_{i} \partial_i P_{\lambda}\phi \right\|_{L^2 } M_1^2\\
		&+Ct \cdot  \sum_{k=1}^d\sum_{\lambda\geq 1} \lambda^{2(s_2-2)}    \left\| \langle x \rangle^2  P_{\lambda}\phi \right\|_{L^{2}(\mathbb{R}^d)}^{\frac 32}   \left\|   P_{\lambda}\phi \right\|_{L^{2}(\mathbb{R}^d)}^{\frac 12} M_1^2\\
		\leq & \,	\frac{1}{100}   \sum_{k=1}^d\sum_{\lambda\geq 1} \lambda^{2(s_2-2)} \int_{\mathbb{R}^{d}} \left| x_k^2P_{\lambda}\phi\right|^2 \mathrm{d} x+ Ct     \left\|\phi\right\|_{H^{s_2}}^2 M_1^4\\
		\leq &\, \frac{1}{100}   \sum_{k=1}^d\sum_{\lambda\geq 1} \lambda^{2(s_2-2)} \int_{\mathbb{R}^{d}} \left| x_k^2P_{\lambda}\phi\right|^2 \mathrm{d} x+Ct M_1^8.
	\end{align*}
	Combing the estimates of 	\eqref{B_we_est_t1} and 	\eqref{B_we_est_t2}, we can use \eqref{def_t1} and \eqref{def_t2} to obtain
	\begin{align}  \label{Big_wt_est}
		\sum_{k=1}^d	\sum_{\lambda\geq 1} \lambda^{2(s_2-2)} \int_{\mathbb{R}^{d}} \left| x_k^2P_{\lambda}\phi\right|^2 \mathrm{d} x  \lesssim &\, \sum_{\lambda\geq 1} \lambda^{2(s_2-2)} \int_{\mathbb{R}^{d}} \left| x_k^2P_{\lambda}\phi_0\right|^2 \mathrm{d} x  +t M_1^8 \\
		\lesssim& \,M_1^2, \nonumber
	\end{align} 
	which combing with \eqref{bstrap_re1} gives \eqref{big_w_est4}. Note that $\frac{1}{100}   \sum_{k=1}^d\sum_{\lambda\geq 1} \lambda^{2(s_2-2)} \int_{\mathbb{R}^{d}} \left| x_k^2P_{\lambda}\phi\right|^2 \mathrm{d} x$ in the above estimates has been absorbed  and
	\begin{align*}
		\sum_{\lambda\geq 1} \lambda^{2(s_2-2)} \int_{\mathbb{R}^{d}} \left| x_k^2P_{\lambda}\phi_0\right|^2 \mathrm{d} x  \leq& \,  \sum_{\lambda\geq 1} \lambda^{2(s_2-2)} \int_{\mathbb{R}^{d}} \left|P_{\lambda} (x_k^2\phi_0)\right|^2 \mathrm{d} x + \sum_{\lambda\geq 1} \lambda^{2(s_2-2)} \int_{\mathbb{R}^{d}} \left|  [P_{\lambda},x_k^2]\phi_0\right|^2 \mathrm{d} x  \\
		\lesssim &\,   \left\| \langle x\rangle ^2 \phi_0 \right\|_{H^{s_2-2}}+ \sum_{\lambda\geq 1} \left\|\langle x\rangle \lambda^{s_2-3}P_\lambda^\prime \phi_0\right\|_{L^{2}(\mathbb{R}^d)}^2 \\
		\lesssim &\, \left\| \langle x\rangle ^2 \phi_0 \right\|_{H^{s_2-2}}+ \sum_{\lambda\geq 1} \left\| \lambda^{s_2-3}P_\lambda^\prime(\langle x\rangle  \phi_0)\right\|_{L^{2}(\mathbb{R}^d)}^2\\
		&\,+  \sum_{\lambda\geq 1} \left\| \lambda^{s_2-3} [ P_\lambda^\prime,\langle x\rangle ] \phi_0\right\|_{L^{2}(\mathbb{R}^d)}^2 \\
		\lesssim& \,  \left\| \langle x\rangle ^2 \phi_0 \right\|_{H^{s_2-2}} + \sum_{\lambda\geq 1} \left\| \lambda^{s_2-4} P_\lambda^{\prime \prime}  \phi_0\right\|_{L^{2}(\mathbb{R}^d)}^2 \\
		\lesssim&\, M_1^2.
	\end{align*}

	\subsection{Smallness of \texorpdfstring{$w$}{w}} \label{s_big.3}
	
	We derive the equation for $w$ by subtracting the projected linear equation $P_{\lambda}$\eqref{eq_v} from \eqref{PqNLS_without}:
	\begin{equation}\label{eq_w0} 
		\begin{cases}
			\sum_{\mu} a \cdot  P_{\lambda}   P_\mu \nabla \phi + \sum_{\mu} b\cdot  P_{\lambda }   P_\mu \nabla\overline{ \phi }+\mathcal{R}_2\\
			~~~=\sqrt{-1}  P_{\lambda }   w_t+\partial_i\left( g^{ij}\left(\phi,\overline{\phi}\right)   \partial_j (P_{\lambda } \phi)\right)- \Delta  P_{\lambda }  v \\
			~~~ =\sqrt{-1}  P_{\lambda }  w_t+ \partial_i\left( g^{ij}\left(\phi,\overline{\phi}\right) \partial_j  ( P_{\lambda }  w)\right)+ \partial_i\left( (g^{ij}\left(\phi,\overline{\phi}\right) -g_0^{i j}) \partial_j  (P_{\lambda }  v)\right) \\
			w(0,x)=0.
		\end{cases}	
	\end{equation}
	or equivalently
	\begin{equation} \label{eq_w}
		\begin{cases}
			\sqrt{-1}  \partial_t  P_{\lambda }  w+ \partial_i\left( g^{ij}\left(\phi,\overline{\phi}\right) \partial_j  P_{\lambda }  w\right)=\mathcal{N}_5\\
			w(0,x)=0,
		\end{cases}
	\end{equation}
	where
	\begin{align*}
		\mathcal{N}_5:=	&\sum_{\mu\sim \lambda} a \cdot  P_{\lambda }   P_\mu \nabla \phi + \sum_{\mu\sim \lambda} b\cdot  P_{\lambda }   P_\mu \nabla\overline{ \phi }+\mathcal{R}_2 - \partial_i\left( h^{ij}\left(\phi,\overline{\phi}\right)   \partial_j  P_{\lambda }  v).\right.
	\end{align*}
	
	To estimate $w$, we multiply \eqref{eq_w} by $P_{\lambda} \overline{w}$, take the imaginary part, integrate over $[0,t] \times \mathbb{R}^d$, multiply by $\lambda^{2(s_2-1)}$, and sum over $\lambda \geq 1$. This gives
	\begin{align}
		\left\| w\right\|^2_{H^{s_2-1}(\mathbb{R}^d)}\sim   \sum_{\lambda \geq 1} \lambda^{2 (s_2-1)}	\left\|	 P_{\lambda }  w\right\|_{L^2}^2 = &  \,\sum_{\lambda \geq 1} \lambda^{2(s_2-1)}\int_0^t \int_{\mathbb{R}^{d}}  \mathrm{Im} (\mathcal{N}_5P_{\lambda }  \overline{w} ) \mathrm{d} x\mathrm{d} s \nonumber\\
		\lesssim &\, t^{\frac 12} \left(\sum_{\lambda \geq 1} \left\|\lambda^{ 2 (s_2-1)}P_{\lambda }  w\right\|_{L^2(\mathbb{R}^d)}^2 \right )^{\frac 12} \left(\sum_{\lambda \geq 1} \lambda^{ 2 (s_2-1)}\left\|\mathcal{N}_5\right\|_{L^2_{t,x}}^2\right)^{\frac12 }\nonumber\\
		\lesssim &\, t^{\frac 58}
		\left(\sum_{\lambda \geq 1} \lambda^{ 2 (s_2-1)}\left\|\mathcal{N}_5\right\|_{L^2_{t,x}}^2\right)^{\frac12 }\nonumber,
	\end{align}
	where
	\begin{align}
		\sum_{\lambda \geq 1} \lambda^{ 2s_2-2}\left\|\mathcal{N}_5\right\|_{L^2_{t,x}}^2
		\lesssim &\,  \sum_{\lambda \geq 1} \lambda^{ 2 s_2-2}\sum_{\mu\sim \lambda}
		\left\|a \cdot    P_\mu \nabla \phi\right\|_{L^2_{t,x}}\label{w_t1}^2 \\
		&+ \sum_{\lambda \geq 1} \lambda^{ 2s_2-2} \sum_{\mu\sim \lambda}
		\left\| b\cdot     P_\mu \nabla\overline{ \phi }\phi\right\|_{L^2_{t,x}}^2\label{w_t2}\\
		&+ \sum_{\lambda \geq 1} \lambda^{ 2s_2-2}
		\left\|\mathcal{R}\right\|_{L^2_{t,x}}^2\label{w_t3}\\
		&+ \sum_{\lambda \geq 1} \lambda^{ 2s_2-2}
		\left\|\partial_i\left( h^{ij}\left(\phi,\overline{\phi}\right)   \partial_j  (P_{\lambda }  v)\right)\right\|_{L^2_{t,x}}^2. \label{w_t4}
	\end{align}
	Note that $ w(0,x)=0. $
	By \eqref{bstrap_ass1} and Lemma \ref{Lem_phi_infty}, we have
	\begin{align*}
		\eqref{w_t1}\lesssim &\, \left\|  a\right\|_{L^\infty}^2  \sum_{k=1}^d  \sum_{\lambda \geq 1} \lambda^{ 2s_2-2}  \left\| \partial_k  P_{\lambda } \phi   \right\|_{L^2_{t,x}}^2 \\
		\lesssim &\,  t \left\|  \langle \nabla \rangle  \phi \right\|_{H^{s_2-3}}^2   \left\|  \phi \right\|_{H^ {s_2}}^2 \\
		\lesssim &\,   t M_0^4
	\end{align*}
	and \eqref{w_t2} can be estimated similarly.  For \eqref{w_t3}, we decompose:
	\begin{align}
		\eqref{w_t3} = 	&\,2\sum_{\lambda \geq 1} \lambda^{ 2s_2-2}
		\left\| \sum_{\mu\sim \lambda} [  P_{\lambda, e_1},a  ]  P_\mu \nabla \phi\right\|_{L^2_{t,x}}^2\label{w_t3_t1}\\
		&+2\sum_{\lambda \geq 1} \lambda^{ 2s_2-2}	\left\|  \sum_{\mu}  [P_{\lambda, e_1},b ]  P_\mu \nabla\overline{ \phi } \right\|_{L^2_{t,x}}^2 \label{w_t3_t2}\\
		&+2\sum_{\lambda \geq 1} \lambda^{ 2s_2-2}	\left\|  \partial_i\left(\left[P_{\lambda, e_1}, h^{ij}\left(\phi,\overline{\phi}\right) \right]  \partial_j  \phi \right)  \right\|_{L^2_{t,x}}^2
		\label{w_t3_t3}\\
		&+ 2\sum_{\lambda \geq 1} \lambda^{ 2s_2-2} \left\| P_{\lambda, e_1}  F(\phi, \overline{\phi}, P_{\leq 1} \nabla \phi,  P_{\leq 1} \nabla \overline{\phi})\right\|_{L^2_{t,x}}^2.
		\label{w_t3_t4}
	\end{align}
	For \eqref{w_t3_t1}, we have
	\begin{align*}
		\eqref{w_t3_t1} \lesssim &  \left\|  \nabla a \right\|_{L^{\infty}(\mathbb{R}^d)}^2   \sum_{k=1}^d\sum_{\lambda \geq 1} \lambda^{ 2s_2-2} \left\|  \partial_k  P_\lambda \phi   \right\|_{L^2_{t,x}}^2 \\
		\lesssim &  \,  t \left\|  \langle \nabla \rangle  \phi \right\|_{H^{s_2-2}}^2   \left\|  \phi \right\|_{H^ {s_2}}^2 \\
		\lesssim &\,   t M_0^4,
	\end{align*}
	and 	\eqref{w_t3_t3} can be estimated similarly. Similarly, by \eqref{bstrap_ass1}, 
	\begin{align*}
		\eqref{w_t3_t3} \lesssim &    \sum_{\lambda \geq 1} \lambda^{ 2s_2-2}  \left\|  \left[P_{\lambda },\partial_i h^{ij}\left(\phi,\overline{\phi}\right) \right]  \partial_j  \phi  \right\|_{L^2_{t,x}}^2\\
		&+\sum_{\lambda \geq 1} \lambda^{ 2s_2-2}  \left\| \left[P_{\lambda },h^{ij}\left(\phi,\overline{\phi}\right) \right] \partial_i  \partial_j  \phi  \right\|_{L^2_{t,x}}^2\\
		\lesssim&\, t  \left\|   \nabla \partial_i h^{ij}\left(\phi,\overline{\phi}\right) \right\|_{L^\infty}^2  \sum_{\lambda \geq 1} \lambda^{ 2s_2-4}\left\|  P_\lambda  \partial_j \phi \right\|_{L^2 (\mathbb{R}^d)}^2 \\
		&+ t  \left\|   \nabla  h^{ij}\left(\phi,\overline{\phi}\right) \right\|_{L^\infty}^2  \sum_{\lambda \geq 1} \lambda^{ 2s_2-4}\left\|  P_\lambda \partial_i    \partial_j \phi \right\|_{L^2 (\mathbb{R}^d)}^2 \\
		\lesssim &\,	t  \left\| \langle \nabla \rangle^2 \phi \right\|_{H^{s_2-3}}^2 \left\|\phi \right\|_{H^{s_2}}^2  \\
		\lesssim&\,    t M_0^4,
	\end{align*}
	and
	\begin{align*}
		\eqref{w_t3_t4} \lesssim &\,t\left\|  \phi \right\|_{L^{\infty}(\mathbb{R}^d)}^2  \sum_{\lambda \geq 1} \lambda^{ 2s_2}\left\|  P_{\lambda,e_1} P_{\leq 1}  \phi\right\|_{L^{2}(\mathbb{R}^d)}^2  \\
		\lesssim &\,t\left\|  \phi \right\|_{H^{s_2-\frac 52}(\mathbb{R}^d)}^2   \left\|    \phi\right\|_{H^{s_2}(\mathbb{R}^d)}^2 \\
		\lesssim&\, t M_0^8.
	\end{align*}
	
	Lastly for \eqref{w_t4}, we split it as
	\begin{align*}
		\eqref{w_t4}	\lesssim &\sum_{\lambda \geq 1} \lambda^{ 2s_2-2}
		\left\|\partial_i h^{ij}\left(\phi,\overline{\phi}\right)   \partial_j  (P_{\lambda }  v) \right\|_{L^2_{t,x}}^2\\
		&+\sum_{\lambda \geq 1} \lambda^{ 2s_2-2}
		\left\| h^{ij}\left(\phi,\overline{\phi}\right)   \partial_i\partial_j  (P_{\lambda }  v) \right\|_{L^2_{t,x}}^2\\
		\lesssim &  \,t \cdot  \left\|   \partial_i h^{ij}\left(\phi,\overline{\phi}\right) \right\|_{L^\infty}   \sum_{\lambda \geq 1} \lambda^{ 2s_2-2}
		\left\|  \partial_j   P_{\lambda }  v   \right\|_{L^2 (\mathbb{R}^d)}^2\\
		&+  \sum_j  \left\| \langle x_j \rangle  \partial_i h^{ij}\left(\phi,\overline{\phi}\right) \right\|_{L^\infty}   \sum_{\lambda \geq 1} \lambda^{ 2s_2-2}
		\left\|  \frac { \partial_i\partial_j  (P_{\lambda }  v) }{\langle x_j \rangle }\right\|_{L^2_{t,x}}^2\\
		\lesssim & \, t \cdot  \left\|    \phi  \right\|_{H^{s_2-2}}   
		\left\|  \phi_0   \right\|_{H^{s_2} }^2 +\left\|  \langle x  \rangle \phi \right\|_{H^{s_2-2}}  \sum_j   \lambda^{ 2s_2} 	\left\|  \frac { \partial_j  (P_{\lambda }  v) }{\langle x_j \rangle }\right\|_{L^2_{t,x}}^2 \\
		\lesssim &  \,tM_0^4 +  M_1^4,
	\end{align*}
	where we can use the same way as Section \ref{s3} to obtain momentum estimates for the linear Schr\"odinger equation \eqref{eq_v} as follows:
	\begin{align*}
		\sum_j   \lambda^{ 2s_2} 	\left\|  \frac { \partial_j  (P_{\lambda }  v) }{\langle x_j \rangle }\right\|_{L^2_{t,x}}^2 \lesssim \left\| \phi_0\right\|_{H^{s_2+\frac 12}}^2.
	\end{align*}

	If $ t\lesssim     e^{-40M }$, then 
	\begin{align*}
		t^{\frac 58} M_1^2    e^{4M } \lesssim&\, t^{\frac12} e^{- 5M }  M_1^2    e^{4M }  \\
		\lesssim &\,  t^{\frac12} e^{- M }  M_1^{2}   \\
		\lesssim &\,  t^{\frac 12},
	\end{align*}
	where $ M_1\ll M. $
	Thus, by combing the estimates above, we obtain
	\begin{align*}
		\left\| w\right\|^2_{H^{s_2-1}(\mathbb{R}^d)} \lesssim& \,t^{\frac 58}
		\left(\sum_{\lambda \geq 1} \lambda^{ 2s_2-2}\left\|\mathcal{N}_5\right\|_{L^2_{t,x}}^2\right)^{\frac12 }\\
		\lesssim & \,t^{\frac 12}.
	\end{align*}

	We now turn to the weighted estimate $\left\| \langle x \rangle^2 w(t,\cdot) \right\|_{H^{s_2-3}} \lesssim t^{1/4}$ in \eqref{bstrap_re2}. Multiplying the weight $x_k^2$ to \eqref{eq_w} gives:
	\begin{align}   \label{w_x2}
		&	\sqrt{-1} \partial_t   ( x_k^2 P_{\lambda }  w )+\sum_{i,j}\partial_i\left( g^{ij}(\phi,\overline{\phi })   \partial_j \left(x_k^2  P_{\lambda }  w\right)\right)\\
		&=4 x_k \partial_k  P_{\lambda }  w +2 x_k h^{kj}  \partial_j P_{\lambda }  w+2 x_k h^{ik} \partial_i P_{\lambda }  w +2 g^{kk}P_{\lambda }  w +2 x_k \partial_i h^{ik}P_{\lambda }  w +x^2_k \cdot\mathcal{N}_5,\nonumber
	\end{align}
	where
	\begin{align*}
		&	\partial_i\left( g^{ij} \partial_j P_{\lambda }  w\right) \cdot x^2_k= \partial_i\left(  x^2_k g^{ij} \partial_j P_{\lambda }  w\right)  -2 g^{kk}_0x_k \partial_k  P_{\lambda }  w -2 x_k h^{kj} \partial_j P_{\lambda }  w\\ 
		=&\,\partial_i\left( g^{ij} \partial_j\left(P_{\lambda }  w x^2_k\right)\right)   -4 g^{kk}_0x_k \partial_k  P_{\lambda }  w -2 x_k h^{kj} \partial_j P_{\lambda }  w\\
		&-2 x_k h^{ik} \partial_i P_{\lambda }  w-2 g^{kk}P_{\lambda }  w -2 x_k \partial_i h^{ik}P_{\lambda }  w.
	\end{align*}
	
	Multiplying \eqref{w_x2} with $x^2_k P_{\lambda }  \overline{w}   $, taking the imaginary part and integrating on $ [0,t]\times \mathbb{R}^d $, summing over $k=1,\dots,d$ and $\lambda\geq 1$, we have
	\begin{align}
		& \sum_{k=1}^d \sum_{\lambda \geq 1} \lambda^{2 s_2-6 }\int_{\mathbb{R}^{d}} \left| x_k^2P_{\lambda }  w\right|^2 \mathrm{d} x  \lesssim   \sum_{k=1}^d   \sum_{\lambda \geq 1} \lambda^{2 s_2-6 }\int_0^t \int_{\mathbb{R}^{d}} x^2_k \cdot\mathcal{N}_5 \cdot x^2_k P_{\lambda }  \overline{w}  \mathrm{d} x \mathrm{d} s \label{w_we_est_t1}\\
		&+ \sum_{k=1}^d \sum_{\lambda \geq 1} \lambda^{2 s_2-6 }\int_0^t \int_{\mathbb{R}^{d}}  (4 x_k \partial_k  P_{\lambda }  w +2 x_k h^{kj}  \partial_j P_{\lambda }  w+2 x_k h^{ik} \partial_i P_{\lambda }  w +2 g^{kk}P_{\lambda }  w \nonumber\\
		&+2 x_k \partial_i h^{ik}P_{\lambda }  w)\cdot x^2_k P_{\lambda }  \overline{w}  \mathrm{d} x \mathrm{d} s. \label{w_we_est_t2}
	\end{align}
	For  \eqref{w_we_est_t1}, we have
	\begin{align*} 
		&\sum_{k=1}^d  \sum_{\lambda \geq 1} \lambda^{2 s_2-6 } \left\| x_k^2 P_{\lambda }  w\right\|_{L^{2}(\mathbb{R}^d)}^2\\
		\leq & \, C t  \left(\sum_{k=1}^d  \sum_{\lambda \geq 1} \lambda^{2 s_2-6 } \left\| x_k^2 P_{\lambda }  w\right\|_{L^{2}(\mathbb{R}^d)}^2\right)^{\frac 12}   \left( \sum_{k=1}^d \sum_{\lambda \geq 1} \lambda^{2 s_2-6 }    \left\|x^2_k \cdot\mathcal{N}_5 \right\|_{L^2(\mathbb{R}^2)}^2 \right)^{\frac 12}\\
		\leq & \frac{1}{100} \sum_{k=1}^d  \sum_{\lambda \geq 1} \lambda^{2 s_2-6 } \left\| x_k^2 P_{\lambda }  w\right\|_{L^{2}(\mathbb{R}^d)}^2 +Ct^2\sum_{k=1}^d \sum_{\lambda \geq 1} \lambda^{2 s_2-6 }    \left\|x^2_k \cdot\mathcal{N}_5 \right\|_{L^2(\mathbb{R}^2)}^2,
	\end{align*}
	where the second term in the right hand side can be estimated as \eqref{B_we_est_t1}. 
	
	As for \eqref{w_we_est_t2}, we have
	\begin{align*}
		\eqref{w_we_est_t2} \lesssim & \, t   \left(\sum_{k=1}^d  \sum_{\lambda \geq 1} \lambda^{2 s_2-6 } \left\| x_k^2 P_{\lambda }  w\right\|_{L^{2}(\mathbb{R}^d)}^2\right)^{\frac 12}  \cdot \left( \sum_{i=1}^d \sum_{k=1}^d  \sum_{\lambda \geq 1} \lambda^{2 s_2-6 } \left\|x_k \partial_i P_{\lambda }  w \right\|_{L^{2}(\mathbb{R}^d)}^2 \right)^{\frac 12}\left(1+ \left\|h\right\|_{L^\infty } \right)\\
		&+t \cdot   \left(\sum_{k=1}^d  \sum_{\lambda \geq 1} \lambda^{2 s_2-6 } \left\| x_k^2 P_{\lambda }  w\right\|_{L^{2}(\mathbb{R}^d)}^2\right)^{\frac 12}   \left(    \sum_{\lambda \geq 1} \lambda^{2 s_2-6 } \left\| \langle x \rangle  P_{\lambda }  w \right\|_{L^{2}(\mathbb{R}^d)}^2 \right)^{\frac 12} \left( 1+\left\| \langle \nabla \rangle  h  \right\|_{L^{\infty}(\mathbb{R}^d)}\right)\\
		\lesssim & \, t   \left(\sum_{k=1}^d  \sum_{\lambda \geq 1} \lambda^{2 s_2-6 } \left\| x_k^2 P_{\lambda }  w\right\|_{L^{2}(\mathbb{R}^d)}^2\right)^{\frac 34}  \cdot \left( \sum_{i=1}^d   \sum_{\lambda \geq 1} \lambda^{2 s_2-6 } \left\|  \partial_i P_{\lambda }  w \right\|_{H^{1}(\mathbb{R}^d)}^2 \right)^{\frac 12}\left(1+ \left\|\phi \right\|_{H^{s_2 -3} } \right)\\
		&+t \cdot   \left(\sum_{k=1}^d  \sum_{\lambda \geq 1} \lambda^{2 s_2-6 } \left\| x_k^2 P_{\lambda }  w\right\|_{L^{2}(\mathbb{R}^d)}^2\right)^{\frac 34}   \left(    \sum_{\lambda \geq 1} \lambda^{2 s_2-6 } \left\|   P_{\lambda }  w \right\|_{L^{2}(\mathbb{R}^d)}^2 \right)^{\frac 14} \left( 1+\left\|     \phi  \right\|_{H^{s_2-2}(\mathbb{R}^d)}\right)\\
		\lesssim	& \,   \frac{1}{100} \sum_{k=1}^d  \sum_{\lambda \geq 1} \lambda^{2 s_2-6 } \left\| x_k^2 P_{\lambda }  w\right\|_{L^{2}(\mathbb{R}^d)}^2 +Ct^4 M_0^{10}. 
	\end{align*}
	
	Combing the estimates of \eqref{w_we_est_t1} and \eqref{w_we_est_t2}, we have 
	\begin{align*}
		\sum_{k=1}^d  \sum_{\lambda \geq 1} \lambda^{2 s_2-6 } \left\| x_k^2 P_{\lambda }  w\right\|_{L^{2}(\mathbb{R}^d)}^2 \lesssim t^{\frac 12},
	\end{align*}
	where $\frac{1}{100} \sum_{k=1}^d  \sum_{\lambda \geq 1} \lambda^{2 s_2-6 } \left\| x_k^2 P_{\lambda }  w\right\|_{L^{2}(\mathbb{R}^d)}^2 $  in the right hand side of  \eqref{w_we_est_t1} and \eqref{w_we_est_t2} has been absorbed. Thus
	\begin{align*}
		\sum_{k=1}^d\left\| x_k^2   w\right\|_{H^{s_2-3}(\mathbb{R}^d)} \lesssim & 	\sum_{k=1}^d  \sum_{\lambda \geq 1} \lambda^{2 s_2-6 } \left\| x_k^2 P_{\lambda }  w\right\|_{L^{2}(\mathbb{R}^d)}^2  +	\sum_{k=1}^d  \sum_{\lambda \geq 1} \lambda^{2 s_2-6 } \left\| [P_{\lambda },x_k^2  ] w\right\|_{L^{2}(\mathbb{R}^d)}^2\\
		\lesssim& \,t^{\frac 12}+  	\sum_{k=1}^d  \sum_{\lambda \geq 1} \lambda^{2 s_2-8 } \left\|  P_{\lambda}^\prime  w\right\|_{L^{2}(\mathbb{R}^d)}^2\\
		\lesssim& \,t^{\frac 12} +\left\|w\right\|_{H^{s_2-1}}^2 \\
		\lesssim &\,t^{\frac 12},
	\end{align*}
	which combining with $\left\| w\right\|^2_{H^{s_2-1}(\mathbb{R}^d)} \lesssim t^{\frac 12}$ gives $\left\| \langle x \rangle^2   w\right\|_{H^{s_2-3}(\mathbb{R}^d)} \lesssim t^{\frac 12}.$ 
	
	By a standard bootstrap argument, we conclude that the improved bounds \eqref{bstrap_re1}--\eqref{bstrap_re5} hold. The proof of Theorem \ref{thm_bigdata} is now completed via the standard artificial viscosity method, which follows the small data case closely. We omit the  details.

\section{Small data and cubic interactions} \label{s4}

As the small data  quadratic interaction problem in Section~\ref{s3}, we first define the following space-time norm for the cubic nonlinearity in \eqref{qNLS}:
\begin{align*}
	&W(t)=W_{s_3}(t)
:=\sum_{k=1}^d\sum_{|\beta|\leq s_3-1} \sum_{|\alpha|=s_3}\int_0^t \int_{\mathbb{R}} \|\Lambda_k^{\frac 12}\phi_{\beta}(x_k,\cdot)\|_{L^2(\mathbb{R}^{d-1})}^2  \|\partial_k \phi_{\alpha}(x_k,\cdot)\|_{L^2(\mathbb{R}^{d-1})}^2 \mathrm{d} x_k \mathrm{d} s,\nonumber
\end{align*}
where $ s_3>\frac d2+1 $ and $ \Lambda_k= \left(1-\sum_{i=1,i\neq k}^d \partial_{ii}\right)^{\frac12} $.

For simplicity, we restrict to the elliptic case $  \left(g_0^{ij}\right) = \left(	\delta_{i j}\right)  =I_d  $, since the ultrahyperbolic case \eqref{g0} can be treated similarly as those in Section~\ref{s3}. Also,  if $ |\phi|+ |\nabla\phi| $ is small, we may neglect error terms, and deduce from Taylor expansion and definition  \eqref{cgF} of $ g, F $ that
\begin{align*}
	\left| F\left(\phi,\overline{\phi},\nabla\phi, \nabla \overline{\phi}\right)\right|& \sim |\phi|^3+|\nabla\phi|^3,\\
	\left| F_z\right|+\left| F_{\overline{z}}\right|& \sim |\phi|^2+|\nabla\phi|^2,\\
|	g^{ij}| &\sim  \delta_{ij}+|\phi|^2. 
\end{align*}
  
Note for $\mathbb{R}^{d-1}= \mathbb{R}^{d-1}_{\widehat{x}_k},$  where $ \hat{x}_k:= (x_1,\dots,x_{k-1},x_{k+1},\dots,x_d ) $ and for $ s_3>\frac d2+1 $, we have
\begin{align*}
	\| \phi(x_k,\cdot)\|_{H^{s_3-\frac 12}(\mathbb{R}^{d-1})}\leq	\sum_{|\beta|\leq s_3-1}  \| \Lambda_k^{\frac 12}\phi_{\beta}(x_k,\cdot)\|_{L^2(\mathbb{R}^{d-1})},
\end{align*}
where $ \phi_{\beta}=\partial_{\beta} \phi  $ and $ \partial_{\beta} $  may include $ \partial_k $.
It follows that
\begin{align} \label{cF}
	&\|F_z(x_k,\cdot)\|_{L^\infty(\mathbb{R}^{d-1})} +\|F_{\overline{z}}(x_k,\cdot)\|_{L^\infty(\mathbb{R}^{d-1})}\nonumber\\
	\lesssim &\,\|\phi(x_k,\cdot)\|_{H^{s_3-\frac 32}(\mathbb{R}^{d-1})}^2+\|\nabla\phi(x_k,\cdot)\|_{H^{s_3-\frac 32}(\mathbb{R}^{d-1})}^2 \\
	\lesssim &\,\sum_{|\beta|\leq s_3-1}  \| \Lambda_k^{\frac 12}\phi_{\beta}(x_k,\cdot)\|_{L^2(\mathbb{R}^{d-1})}^2 \nonumber
\end{align}
and
\begin{align} \label{cg}
	\|g^{ij}\|_{L^\infty(\mathbb{R}^d)}\leq&\, 1+\|h^{ij}\|_{L^\infty(\mathbb{R}^d)}\\
	\lesssim &\,1+\|\phi\|_{H^{s_3-1}(\mathbb{R}^d)}^2,\nonumber\\
	\left\|	\partial_k g^{ij} (\phi,\overline{\phi})(x_k,\cdot)\right\|_{L^\infty(\mathbb{R}^{d-1})}=&\,\left\|	\partial_k h^{ij} (\phi,\overline{\phi})(x_k,\cdot)\right\|_{L^\infty(\mathbb{R}^{d-1})}\label{cpg}\\
	=&\,\left\|  h^{ij}_z \partial_k \phi+  h^{ij}_{\overline{z}} \partial_k \overline{\phi} \right\|_{L^\infty(\mathbb{R}^{d-1})}\nonumber\\
	\lesssim &\,\sum_{|\beta|\leq s_3-1}  \| \Lambda_k^{\frac 12}\phi_{\beta}(x_k,\cdot)\|_{L^2(\mathbb{R}^{d-1})}^2,\nonumber
\end{align}
provided  $\frac{d}{2}+1<  s_3 $ and $\beta \in \mathbb{N}^d_0$. Similarly, for $ \frac 32 <l $ we have
\begin{align*} 
	\|F_z\|_{L^\infty(\mathbb{R}_{x_k})}+\|F_{\overline{z}}\|_{L^\infty(\mathbb{R}_{x_k})}+\|\partial h^{ij}\|_{L^\infty(\mathbb{R}_{x_k})} \lesssim   \sum_{i=1}^d\|\partial_i \phi \|_{H^{l-1}((\mathbb{R}_{x_k})}^2.
\end{align*}

\subsection{Momentum type estimates} \label{s4.1}

As Lemma \ref{leminter1} in Section \ref{s3}, a key technique in our analysis is to halve the derivative order for terms without time integration in momentum type estimates. To this end, we introduce the following lemma and present the proof  in Appendix~\ref{appa} for reader's convenience. 
\begin{lem} \label{lemhalve}
	If real function $ v(x_k) $ and complex function  $ u(x), w(x) $ are smooth, where $ x_k \in \mathbb{R}, x\in \mathbb{R}^d $,  then 
	\begin{align}  \label{lemhalve1}
		&\left|\int_{\mathbb{R}}  \int_{\mathbb{R}^{d-1}} w \partial_k u  \mathrm{d} \hat{x}_k \int_{-\infty}^{x_k} v(y_k) \mathrm{d} y_k  \mathrm{d} x_k\right|\\
		\lesssim &\, \left\|   D_k^{\frac 12}u\right\|_{L^2(\mathbb{R}^d)}\left\|   D_k^{\frac 12}w\right\|_{L^2(\mathbb{R}^d)} \int_{ \mathbb{R}} |v(x_k) |\mathrm{d} x_k +  \left\|   D_k^{\frac 12}u\right\|_{L^2(\mathbb{R}^d)} \left\|   w\right\|_{L^2(\mathbb{R}^d)} \left\| v \right\|_{L^2(\mathbb{R}_{x_k})}.\nonumber
	\end{align}
\end{lem}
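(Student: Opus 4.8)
The plan is to reduce \eqref{lemhalve1} to a fractional Leibniz/commutator estimate in the single variable $x_k$. First I would set $V(x_k):=\int_{-\infty}^{x_k} v(y_k)\,\mathrm{d}y_k$, so that the left hand side of \eqref{lemhalve1} is exactly $\big|\int_{\mathbb{R}^d} wV\,\partial_k u\,\mathrm{d}x\big|$, with $\partial_k V=v$, $V$ depending only on $x_k$, and $\|V\|_{L^\infty(\mathbb{R}_{x_k})}\le\int_{\mathbb{R}}|v|\,\mathrm{d}x_k$; by density it suffices to treat Schwartz $u,w,v$. The key device is the Fourier factorization in $x_k$: since the symbol of $\partial_k$ is $\sqrt{-1}\,\xi_k=\big(\sqrt{-1}\,\mathrm{sgn}(\xi_k)\big)\,|\xi_k|^{\frac12}\,|\xi_k|^{\frac12}$, one has $\partial_k u=D_k^{\frac12}\big(\sqrt{-1}\,R_k D_k^{\frac12}u\big)$, where $R_k$ is the Fourier multiplier in $x_k$ with symbol $\mathrm{sgn}(\xi_k)$, an $L^2(\mathbb{R}^d)$-isometry commuting with $D_k$. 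Using that $D_k^{\frac12}$ is self-adjoint for the bilinear pairing $\int fg\,\mathrm{d}x$ (its symbol $|\xi_k|^{\frac12}$ being real and even), I would move one half-derivative off $u$ onto $wV$ and apply Cauchy--Schwarz:
\begin{align*}
	\left|\int_{\mathbb{R}^d} wV\,\partial_k u\,\mathrm{d}x\right|=\left|\int_{\mathbb{R}^d} D_k^{\frac12}(wV)\cdot\sqrt{-1}\,R_k D_k^{\frac12}u\,\mathrm{d}x\right|\le \big\|D_k^{\frac12}(wV)\big\|_{L^2(\mathbb{R}^d)}\big\|D_k^{\frac12}u\big\|_{L^2(\mathbb{R}^d)}.
\end{align*}

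Next I would estimate $\|D_k^{\frac12}(wV)\|_{L^2}$ by splitting $D_k^{\frac12}(wV)=V\,D_k^{\frac12}w+\big[D_k^{\frac12},V\big]w$. The first piece is immediate, $\|V D_k^{\frac12}w\|_{L^2}\le\|V\|_{L^\infty(\mathbb{R}_{x_k})}\|D_k^{\frac12}w\|_{L^2}\le\big(\int_{\mathbb{R}}|v|\,\mathrm{d}x_k\big)\|D_k^{\frac12}w\|_{L^2}$, and produces the first term on the right of \eqref{lemhalve1}. For the commutator I would freeze $\hat x_k$ and apply \eqref{lemc3_1} with $d\leftarrow1$, $s=\tfrac12$, $p=2$ in the one variable $x_k$ (valid since $V=V(x_k)$ and $w(\cdot,\hat x_k)$ is a function of $x_k$ alone), giving $\big\|[D_k^{\frac12},V]w(\cdot,\hat x_k)\big\|_{L^2(\mathbb{R}_{x_k})}\lesssim\|w(\cdot,\hat x_k)\|_{L^2(\mathbb{R}_{x_k})}\,\|D_k^{\frac12}V\|_{\mathrm{BMO}(\mathbb{R}_{x_k})}$, where $\|D_k^{\frac12}V\|_{\mathrm{BMO}(\mathbb{R}_{x_k})}$ is independent of $\hat x_k$; squaring and integrating over $\hat x_k\in\mathbb{R}^{d-1}$ yields $\big\|[D_k^{\frac12},V]w\big\|_{L^2(\mathbb{R}^d)}\lesssim\|w\|_{L^2(\mathbb{R}^d)}\,\|D_k^{\frac12}V\|_{\mathrm{BMO}(\mathbb{R}_{x_k})}$.

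The crux — and the place where the derivative is genuinely ``halved'' — is to bound $\|D_k^{\frac12}V\|_{\mathrm{BMO}(\mathbb{R}_{x_k})}$ by $\|v\|_{L^2(\mathbb{R}_{x_k})}$. For this I would invoke Lemma \ref{lem4} in dimension one, $\|D_k^{\frac12}V\|_{\mathrm{BMO}(\mathbb{R})}\lesssim\|D_k^{\frac12}V\|_{\dot H^{\frac12}(\mathbb{R})}$, followed by Plancherel, $\|D_k^{\frac12}V\|_{\dot H^{\frac12}(\mathbb{R})}^2=\int_{\mathbb{R}}|\xi_k|^2\,|\widehat V(\xi_k)|^2\,\mathrm{d}\xi_k=\|\partial_k V\|_{L^2(\mathbb{R})}^2=\|v\|_{L^2(\mathbb{R})}^2$; this uses the critical embedding $\dot H^{\frac12}(\mathbb{R})\hookrightarrow\mathrm{BMO}(\mathbb{R})$. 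Combining the three steps gives \eqref{lemhalve1} with exactly the two stated terms. I expect this last estimate to be the main obstacle: one cannot afford to lose a derivative when passing from $V$ to $v$, and the naive substitutes fail — $D_k^{\frac12}V\in L^\infty(\mathbb{R}_{x_k})$ is false for merely $v\in L^2$ (since $\dot H^{\frac12}(\mathbb{R})\not\hookrightarrow L^\infty$), while using the interpolation estimate \eqref{inter1} would only yield the non-matching product $\|v\|_{L^1}^{1/2}\|v\|_{\dot H^{1/2}}^{1/2}$. The only minor technical point is that $D_k^{\frac12}V=-\sqrt{-1}\,R_kD_k^{-\frac12}v$ is a genuine locally integrable (indeed bounded continuous) function when $v$ is Schwartz, so that Lemma \ref{lem4} applies, and this survives the passage to the limit in the density argument.
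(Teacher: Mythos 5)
Your proof is correct and follows essentially the same route as the paper's: fractional integration by parts in $x_k$ to move a half-derivative onto $wV$, the splitting $D_k^{\frac12}(wV)=V D_k^{\frac12}w+[D_k^{\frac12},V]w$ with the $L^\infty$ bound $\|V\|_{L^\infty}\le\int|v|$ for the first piece, the commutator estimate \eqref{lemc3_1} for the second, and the bound $\|D_k^{\frac12}V\|_{\mathrm{BMO}(\mathbb{R}_{x_k})}\lesssim\|v\|_{L^2(\mathbb{R}_{x_k})}$ via the Hilbert transform identity and the one-dimensional embedding $\dot H^{\frac12}\hookrightarrow\mathrm{BMO}$ of Lemma \ref{lem4}. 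The only cosmetic difference is that you phrase the integration by parts through the factorization $\partial_k=D_k^{\frac12}(\sqrt{-1}\,R_kD_k^{\frac12})$ rather than pairing $D_k^{\frac12}(wV)$ against $D_k^{-\frac12}\partial_k u$, which is the same computation.
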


We also examine  how to use \eqref{qNLS} to present the $ \int_{-\infty}^{x_k} \int_{\mathbb{R}^{d-1}}  \mathrm{Re}\left( \Lambda_k^{\frac 12}\phi_{\beta}\Lambda_k^{\frac 12}\partial_t\overline{\phi}_{\beta }\right)\mathrm{d} \hat{x}_k \mathrm{d} y_k,$ which arises in momentum-type estimates.  Recall  \eqref{qNLSb}:
\begin{align*}  
	&\sqrt{-1}\phi_{\beta t}+\partial_i\left( g^{ij} \phi_{\beta j}\right)=\mathcal{N}_\beta,\\
	&\mathcal{N}_\beta=F_z\nabla \phi_\beta+F_{\overline{z}}\nabla \overline{\phi}_\beta+\partial h^{ij}\partial_{ij } \phi_{\beta -1}+G_\beta.
\end{align*}
Multiplying \eqref{qNLSb} by $ \Lambda_k \overline{\phi}_\beta $  and taking the imaginary part shows
\begin{align*} 
	&\mathrm{Re}\left(   \partial_t \phi_{\beta }  \Lambda_k\overline{\phi}_{\beta } \right)=\mathrm{Im}\left(  	\sqrt{-1} \partial_t  \phi_{\beta }  \Lambda_k\overline{\phi}_{\beta } \right)	\nonumber\\
	=&\,- \mathrm{Im} \left(  \partial_i  \left(g^{ij} \phi_{\beta j}\right)\Lambda_k\overline{\phi}_{\beta }\right)+\mathrm{Im} \left(   \mathcal{N}_\beta \Lambda_k \overline{\phi}_{\beta }\right) 	\\
	=&\,-  \mathrm{Im}\partial_i\left(  \left(g^{ij} \phi_{\beta j}\right)\Lambda_k\overline{\phi}_{\beta }\right)+\mathrm{Im} \left(  g^{ij} \phi_{\beta j}\Lambda_k\overline{\phi}_{\beta i}\right) +\mathrm{Im} \left(  \mathcal{N}_\beta \Lambda_k \overline{\phi}_{\beta }\right),	\nonumber
\end{align*}
which integrating on $ (-\infty,x_k)\times\mathbb{R}^{d-1} $ yields
\begin{align} \label{Hs0_bec}	
	&	\frac 12 \frac{\partial }{\partial t} \int_{-\infty}^{x_k} \int_{\mathbb{R}^{d-1}} \left| \Lambda_k^{\frac 12} \phi_{\beta } \right|^2\mathrm{d} \hat{x}_k \mathrm{d} y_k=\int_{-\infty}^{x_k} \int_{\mathbb{R}^{d-1}} \mathrm{Re}\left(   \partial_t \Lambda_k^{\frac 12} \phi_{\beta }  \Lambda_k^{\frac 12}\overline{\phi}_{\beta }  \right) \mathrm{d} \hat{x}_k \mathrm{d} y_k\nonumber\\
	=&\,\int_{-\infty}^{x_k} \int_{\mathbb{R}^{d-1}}  \mathrm{Re}\left(   \partial_t \phi_{\beta }  \Lambda_k\overline{\phi}_{\beta } \right) \mathrm{d} \hat{x}_k \mathrm{d} y_k\\
	=&\, -\mathrm{Im}	\int_{\mathbb{R}^{d-1}} \left(   g^{kj} \phi_{\beta j} \Lambda_k\overline{\phi}_{\beta }\right) (x_k,\cdot) \mathrm{d} \hat{x}_k+	\int_{-\infty}^{x_k} \int_{\mathbb{R}^{d-1}}   \mathrm{Im} \left(  g^{ij} \phi_{\beta j}\Lambda_k\overline{\phi}_{\beta i}\right)\mathrm{d} \hat{x}_k \mathrm{d} y_k\nonumber \\
	&+\int_{-\infty}^{x_k} \int_{\mathbb{R}^{d-1}}  \mathrm{Im} \left(  \mathcal{N}_\beta \Lambda_k\overline{\phi}_{\beta }\right)\mathrm{d} \hat{x}_k \mathrm{d} y_k\nonumber\\
	=&\, -\mathrm{Im}	\int_{\mathbb{R}^{d-1}} \left(   g^{kj} \Lambda_k^{\frac 12}\phi_{\beta j} \Lambda_k^{\frac 12}\overline{\phi}_{\beta }\right) (x_k,\cdot) \mathrm{d} \hat{x}_k-\mathrm{Im}	\int_{\mathbb{R}^{d-1}} \left( \left[\Lambda_k^{\frac12},h^{kj}\right]\phi_{\beta j}\Lambda_k^{\frac 12}\overline{\phi}_{\beta }\right) (x_k,\cdot) \mathrm{d} \hat{x}_k \nonumber \\
	&+	\int_{-\infty}^{x_k} \int_{\mathbb{R}^{d-1}}   \mathrm{Im} \left(  h^{ij} \phi_{\beta j}\Lambda_k\overline{\phi}_{\beta i}\right)\mathrm{d} \hat{x}_k \mathrm{d} y_k+\int_{-\infty}^{x_k} \int_{\mathbb{R}^{d-1}}  \mathrm{Im} \left(  \mathcal{N}_\beta \Lambda_k\overline{\phi}_{\beta }\right)\mathrm{d} \hat{x}_k \mathrm{d} y_k.\nonumber
\end{align}
Note that the $g^{ij}=\delta_{ij}+h^{ij}  $  implies
\begin{align*}
	&\int_{-\infty}^{x_k} \int_{\mathbb{R}^{d-1}}   \mathrm{Im} \left(  g^{ij} \phi_{\beta j}\Lambda_k\overline{\phi}_{\beta i}\right)\mathrm{d} \hat{x}_k \mathrm{d} y_k\nonumber\\
	=&\, \sum_{i=1}^d \int_{-\infty}^{x_k} \int_{\mathbb{R}^{d-1}}   \mathrm{Im} \left(   \phi_{\beta i}\Lambda_k\overline{\phi}_{\beta i}\right)\mathrm{d} \hat{x}_k \mathrm{d} y_k+\int_{-\infty}^{x_k} \int_{\mathbb{R}^{d-1}}   \mathrm{Im} \left(  h^{ij} \phi_{\beta j}\Lambda_k\overline{\phi}_{\beta i}\right)\mathrm{d} \hat{x}_k \mathrm{d} y_k\nonumber\\
	=&\, \sum_{i=1}^d  \int_{-\infty}^{x_k} \int_{\mathbb{R}^{d-1}}   \mathrm{Im} \left(   \Lambda_k^{\frac12}\phi_{\beta i}\Lambda_k^{\frac12}\overline{\phi}_{\beta i}\right)\mathrm{d} \hat{x}_k \mathrm{d} y_k+\int_{-\infty}^{x_k} \int_{\mathbb{R}^{d-1}}   \mathrm{Im} \left(  h^{ij} \phi_{\beta j}\Lambda_k\overline{\phi}_{\beta i}\right)\mathrm{d} \hat{x}_k \mathrm{d} y_k\nonumber\\
	=&\,\int_{-\infty}^{x_k} \int_{\mathbb{R}^{d-1}}   \mathrm{Im} \left(  h^{ij} \phi_{\beta j}\Lambda_k\overline{\phi}_{\beta i}\right)\mathrm{d} \hat{x}_k \mathrm{d} y_k.
\end{align*}

We now derive the momentum-type estimate. As in the definition of $ W(t) $, we set $ s_3=|\alpha| $ in this subsection. Multiplying \eqref{consm2} by $ \int_{-\infty}^{x_k} \|\Lambda_k^{\frac 12}\phi_{\beta}(y_k,\cdot)\|^2_{L^2(\mathbb{R}^{d-1})} \mathrm{d} y_k $ and integrating it on  $ [0,t]\times\mathbb{R} $ indicates
\begin{align} \label{cener_c1_1}
	&\, -\int_0^t\int_{\mathbb{R}}  \partial_t\left(\int_{\mathbb{R}^{d-1}}  \mathrm{Im}\left( \phi_{\alpha } \partial_k \overline{\phi}_\alpha \right) \mathrm{d} \hat{x}_k\right) \int_{-\infty}^{x_k} \|\Lambda_k^{\frac 12}\phi_{\beta}(y_k,\cdot)\|^2_{L^2(\mathbb{R}^{d-1})}\mathrm{d} y_k  \mathrm{d} x_k \mathrm{d} s\nonumber\\
	&\,- \int_0^t \int_{\mathbb{R}}   	\int_{-\infty}^{x_k} \|\Lambda_k^{\frac 12}\phi_{\beta}(y_k,\cdot)\|^2_{L^2(\mathbb{R}^{d-1})}\mathrm{d} y_k \partial_{kk} \int_{\mathbb{R}^{d-1}} \mathrm{Re}\left( \phi_{\alpha }   g^{kj} \overline{\phi}_{\alpha j}\right) \mathrm{d} \hat{x}_k   \mathrm{d} x_k \mathrm{d} s\nonumber \\
	&+2  \int_0^t\int_{\mathbb{R}}    	\int_{-\infty}^{x_k} \|\Lambda_k^{\frac 12}\phi_{\beta}(y_k,\cdot)\|^2_{L^2(\mathbb{R}^{d-1})}\mathrm{d} y_k \partial_k \int_{\mathbb{R}^{d-1}} \mathrm{Re}\left(\partial_k \phi_{\alpha }  g^{kj} \overline{\phi}_{\alpha j}\right) \mathrm{d} \hat{x}_k  \mathrm{d} x_k \mathrm{d} s \nonumber\\
	&+ \int_0^t	\int_{\mathbb{R}} \int_{-\infty}^{x_k} \|\Lambda_k^{\frac 12}\phi_{\beta}(y_k,\cdot)\|^2_{L^2(\mathbb{R}^{d-1})}\mathrm{d} y_k\int_{\mathbb{R}^{d-1}}  \mathrm{Re}\left(\partial_{i} \phi_{\alpha }  \partial_k h^{ij} \partial_j \overline{\phi}_{\alpha }\right) \mathrm{d} \hat{x}_k  \mathrm{d} x_k \mathrm{d} s \\
	=&\,- \int_0^t	\int_{\mathbb{R}}\int_{-\infty}^{x_k} \|\Lambda_k^{\frac 12}\phi_{\beta}(y_k,\cdot)\|^2_{L^2(\mathbb{R}^{d-1})}\mathrm{d} y_k\nonumber\\
	&\cdot \partial_k\int_{\mathbb{R}^{d-1}}   \mathrm{Re}  \left\{ \phi_{\alpha } \left(\overline{F}_z\nabla \overline{\phi}_\alpha+\overline{F}_{\overline{z}}\nabla \phi_\alpha+\partial h^{ij}\partial_{ij } \overline{\phi}_{\alpha -1}+\overline{G}\right)  \right\} \mathrm{d} \hat{x}_k  \mathrm{d} x_k \mathrm{d} s\nonumber\\
	&+2\int_0^t	\int_{\mathbb{R}} \int_{-\infty}^{x_k} \|\Lambda_k^{\frac 12}\phi_{\beta}(y_k,\cdot)\|^2_{L^2(\mathbb{R}^{d-1})}\mathrm{d} y_k\nonumber\\
	&\cdot\int_{\mathbb{R}^{d-1}} \mathrm{Re}\left\{\left( F_z\nabla \phi_\alpha+F_{\overline{z}}\nabla \overline{\phi}_\alpha+\partial h^{ij}\partial_{ij } \phi_{\alpha -1}+G\right) \partial_k \overline{\phi}_\alpha \right\}  \mathrm{d} \hat{x}_k  \mathrm{d} x_k \mathrm{d} s\nonumber.
\end{align}
It follows from   $ g^{ij}=\delta_{ij}+h^{ij}$  that
\begin{align} \label{cener_c1_2}
	&\,-  \int_0^t\int_{\mathbb{R}}    	2 \partial_k \int_{\mathbb{R}^{d-1}} \mathrm{Re}\left(\partial_k \phi_{\alpha }  g^{kj} \overline{\phi}_{\alpha j}\right) \mathrm{d} \hat{x}_k\int_{-\infty}^{x_k} \|\Lambda_k^{\frac 12}\phi_{\beta}(y_k,\cdot)\|^2_{L^2(\mathbb{R}^{d-1})} \mathrm{d} y_k  \mathrm{d} x_k \mathrm{d} s \nonumber\\
	=&\,  \int_0^t\int_{\mathbb{R}}   	2  \int_{\mathbb{R}^{d-1}} \left|\partial_k \phi_{\alpha } \right|^2 \mathrm{d} \hat{x}_k\|\Lambda_k^{\frac 12}\phi_{\beta}(x_k,\cdot)\|^2_{L^2(\mathbb{R}^{d-1})}   \mathrm{d} x_k \mathrm{d} s\\
	&+ \int_0^t\int_{\mathbb{R}}   	2  \int_{\mathbb{R}^{d-1}} \mathrm{Re}\left(\partial_k \phi_{\alpha }  h^{kj} \overline{\phi}_{\alpha j}\right) \mathrm{d} \hat{x}_k\|\Lambda_k^{\frac 12}\phi_{\beta}(x_k,\cdot)\|^2_{L^2(\mathbb{R}^{d-1})}   \mathrm{d} x_k \mathrm{d} s,\nonumber
\end{align}
where the second term on the right-hand side is bounded by
\begin{align} \label{cener_c1_2_0}
	& \int_0^t\int_{\mathbb{R}}   	2  \int_{\mathbb{R}^{d-1}} \mathrm{Re}\left(\partial_k \phi_{\alpha }  h^{kj} \overline{\phi}_{\alpha j}\right) \mathrm{d} \hat{x}_k\|\Lambda_k^{\frac 12}\phi_{\beta}(x_k,\cdot)\|^2_{L^2(\mathbb{R}^{d-1})}   \mathrm{d} x_k \mathrm{d} s\nonumber\\ 
	\lesssim &\,	\|h^{kj}\|_{L^\infty} \sum_{j}^d \int_0^t\int_{\mathbb{R}} \left\| \partial_i\phi_{\alpha }\right\|_{L^2(\mathbb{R}^{d-1})}^2 \|\Lambda_k^{\frac 12}\phi_{\beta}(x_k,\cdot)\|^2_{L^2(\mathbb{R}^{d-1})}   \mathrm{d} x_k \mathrm{d} s\\
	\lesssim &\,\|\phi\|_{H^{|\alpha|}(\mathbb{R}^d)}^2\cdot W (t).\nonumber
\end{align}

Using \eqref{cg}, integration by parts, and H\"older's inequality, we have
\begin{align} \label{cener_c1_3}
	& \int_0^t \int_{\mathbb{R}}   	\partial_{kk} \int_{\mathbb{R}^{d-1}} \mathrm{Re}\left( \phi_{\alpha }   g^{kj} \overline{\phi}_{\alpha j}\right) \mathrm{d} \hat{x}_k \int_{-\infty}^{x_k} \|\Lambda_k^{\frac 12}\phi_{\beta}(y_k,\cdot)\|^2_{L^2(\mathbb{R}^{d-1})}\mathrm{d} y_k  \mathrm{d} x_k \mathrm{d} s \nonumber\\
	=&\,- \int_0^t\int_{\mathbb{R}}   	\partial_{k} \int_{\mathbb{R}^{d-1}} \mathrm{Re}\left( \phi_{\alpha }   g^{kj} \overline{\phi}_{\alpha j}\right) \mathrm{d} \hat{x}_k  \|\Lambda_k^{\frac 12}\phi_{\beta}(x_k,\cdot)\|^2_{L^2(\mathbb{R}^{d-1})}  \mathrm{d} x_k  \mathrm{d} s \nonumber\\
	=&\,2 \int_0^t\int_{\mathbb{R}}   \int_{\mathbb{R}^{d-1}} \mathrm{Re}\left( \phi_{\alpha }   g^{kj} \overline{\phi}_{\alpha j}\right) \mathrm{d} \hat{x}_k \int_{\mathbb{R}^{d-1}}\mathrm{Re}\left(\Lambda_k^{\frac 12}\phi_{\beta} \partial_k \Lambda_k^{\frac 12}\overline{\phi}_{\beta} \right) \mathrm{d} x_k \mathrm{d} s\nonumber\\
	\lesssim &\,\|g^{kj}\|_{L^\infty}  \int_0^t\int_{\mathbb{R}}  \left\| \phi_{\alpha }\right\|_{L^2(\mathbb{R}^{d-1})}  \left\| \partial_j \phi_{\alpha }\right\|_{L^2(\mathbb{R}^{d-1})}\|\Lambda_k^{\frac 12}\partial_k\phi_{\beta}(x_k,\cdot)\|_{L^2(\mathbb{R}^{d-1})} \|\Lambda_k^{\frac 12}\phi_{\beta}(x_k,\cdot)\|_{L^2(\mathbb{R}^{d-1})}  \mathrm{d} x_k \mathrm{d} s\nonumber\\
	\leq&\,\delta\|g^{kj}\|_{L^\infty} \int_0^t\int_{\mathbb{R}}  \left\| \partial_j \phi_{\alpha }\right\|_{L^2(\mathbb{R}^{d-1})}^2 \|\Lambda_k^{\frac 12}\phi_{\beta}(x_k,\cdot)\|^2_{L^2(\mathbb{R}^{d-1})}   \mathrm{d} x_k \mathrm{d} s\\
	&+ C(\delta)\|g^{kj}\|_{L^\infty} \int_0^t\int_{\mathbb{R}}  \left\| \phi_{\alpha }\right\|_{L^2(\mathbb{R}^{d-1})}^2  \|\Lambda_k^{\frac 12}\partial_k \phi_{\beta}(x_k,\cdot)\|^2_{L^2(\mathbb{R}^{d-1})}  \mathrm{d} x_k \mathrm{d} s\nonumber\\
\leq&\, \delta \int_0^t\int_{\mathbb{R}}  \left\|\partial_j  \phi_{\alpha }\right\|_{L^2(\mathbb{R}^{d-1})}^2 \|\Lambda_k^{\frac 12}\phi_{\beta}(x_k,\cdot)\|^2_{L^2(\mathbb{R}^{d-1})}   \mathrm{d} x_k \mathrm{d} s\nonumber\\
	&+\delta C \|\phi\|_{H^{s_3}(\mathbb{R}^d)}^2\cdot  \int_0^t\int_{\mathbb{R}}  \left\| \partial_j \phi_{\alpha }\right\|_{L^2(\mathbb{R}^{d-1})}^2 \|\Lambda_k^{\frac 12}\phi_{\beta}(x_k,\cdot)\|^2_{L^2(\mathbb{R}^{d-1})}   \mathrm{d} x_k \mathrm{d} s\nonumber\\
	&+ C(\delta) \cdot t\cdot\|\phi\|_{H^{s_3+\frac 12}}^4+ C(\delta) \cdot t\cdot\|\phi\|_{H^{s_3+\frac 12}}^6 \nonumber\\
\leq&\,  \delta\cdot W (t)+\delta \cdot C\|\phi\|_{H^{s_3}(\mathbb{R}^d)}^2\cdot W (t)+ C(\delta) \cdot t\cdot\left(1+\|\phi\|_{H^{s_3+\frac 12}}^2\right)\|\phi\|_{H^{s_3+\frac 12}}^4 \nonumber
\end{align}
for $|\beta| \leq s_3-1 $ and $ \frac{d}{2}+1<s_3=|\alpha| $.

Also, one can use \eqref{cF}, \eqref{cpg} and H\"older's inequality to deduce
\begin{align}   \label{cener_c1_5}
	&\int_0^t	\int_{\mathbb{R}} \int_{-\infty}^{x_k} \|\Lambda_k^{\frac 12}\phi_{\beta}(y_k,\cdot)\|^2_{L^2(\mathbb{R}^{d-1})}\mathrm{d} y_k\nonumber\\
 &\cdot\int_{\mathbb{R}^{d-1}} \mathrm{Re}\left\{\left( F_z\nabla \phi_\alpha+F_{\overline{z}}\nabla \overline{\phi}_\alpha+\partial h^{ij}\partial_{ij } \phi_{\alpha -1}\right) \partial_k \overline{\phi}_\alpha \right\}  \mathrm{d} \hat{x}_k  \mathrm{d} x_k \mathrm{d} s\nonumber\\
	\lesssim &\,\|\phi\|_{H^{s_3}(\mathbb{R}^d)}^2 \int_0^t	\int_{\mathbb{R}}  \left( 	\|F_z\|_{L^\infty(\mathbb{R}^{d-1})}+\left\|	\partial h^{ij} \right\|_{L^\infty(\mathbb{R}^{d-1})}\right) (x_k)\|\phi_{\alpha +1} (x_k,\cdot)\|^2_{L^{2}(\mathbb{R}^{d-1})}\mathrm{d} x_k \mathrm{d} s\\
	\lesssim &\,\|\phi\|_{H^{s_3}(\mathbb{R}^d)}^2  \cdot \sum_{|\beta|\leq s_3-1}  \int_0^t	\int_{\mathbb{R}}   \| \Lambda_k^{\frac 12}\phi_{\beta}(x_k,\cdot)\|_{L^2(\mathbb{R}^{d-1})}^2  \|\phi_{\alpha +1} (x_k,\cdot)\|^2_{L^{2}(\mathbb{R}^{d-1})}\mathrm{d} x_k \mathrm{d} s\nonumber\\
	\lesssim &\,\|\phi\|_{H^{s_3}(\mathbb{R}^d)}^2\cdot\mathfrak{W} (t)\nonumber
\end{align}
and bound
\begin{align*}
	\int_0^t	\int_{\mathbb{R}} \int_{-\infty}^{x_k} \|\Lambda_k^{\frac 12}\phi_{\beta}(y_k,\cdot)\|^2_{L^2(\mathbb{R}^{d-1})}\mathrm{d} y_k\int_{\mathbb{R}^{d-1}}  \mathrm{Re}\left(\partial_{i} \phi_{\alpha }  \partial_k h^{ij} \partial_j \overline{\phi}_{\alpha }\right) \mathrm{d} \hat{x}_k  \mathrm{d} x_k \mathrm{d} s 
\end{align*}  
in the same manner.

Noticing  that the highest order derivatives for $ \phi $ of terms in  $ G $ is less than  $ |\alpha|$,  we can bound the remainder terms containing $ G $ by $ \|\phi\|_{H^{s_3 +\frac 12}} $ directly. Indeed, the Lemma \ref{lemhalve}  shows
\begin{align} \label{cener_c1_5_1}
	&\int_0^t\int_{\mathbb{R}}  \int_{\mathbb{R}^{d-1}}  \mathrm{Re}\left( G \partial_k \overline{\phi}_\alpha \right) \mathrm{d} \hat{x}_k \int_{-\infty}^{x_k} \|\Lambda_k^{\frac 12}\phi_{\beta}(y_k,\cdot)\|^2_{L^2(\mathbb{R}^{d-1})} \mathrm{d} y_k  \mathrm{d} x_k \mathrm{d} s\nonumber\\
	\lesssim &\,\int_0^t \left\|   D_k^{\frac 12}\phi_\alpha\right\|_{L^2(\mathbb{R}^d)} \left\|   D_k^{\frac 12}G\right\|_{L^2(\mathbb{R}^d)}  \int_{-\infty}^{\infty} \|\Lambda_k^{\frac 12}\phi_{\beta}(x_k,\cdot)\|^2_{L^2(\mathbb{R}^{d-1})}\mathrm{d} x_k \mathrm{d} s\nonumber\\
	&+\int_0^t \left\|   D_k^{\frac 12}\phi_{\alpha}\right\|_{L^2(\mathbb{R}^d)} \left\|   G\right\|_{L^2(\mathbb{R}^d)} \left\| \|\Lambda_k^{\frac 12}\phi_{\beta}(x_k,\cdot)\|^2_{L^2(\mathbb{R}^{d-1})} \right\|_{L^2(\mathbb{R}_{x_k})}\mathrm{d} s\nonumber\\
	\lesssim &\,t\cdot\left\|   D_k^{\frac 12}\phi_\alpha\right\|_{L^2(\mathbb{R}^d)} \left\|   D_k^{\frac 12}G\right\|_{L^2(\mathbb{R}^d)} \|\phi\|^2_{H^{|\beta|+\frac 12}(\mathbb{R}^{d})} \\
	&+t \cdot\left\|   D_k^{\frac 12}\phi_{\alpha}\right\|_{L^2(\mathbb{R}^d)} \left\|   G\right\|_{L^2(\mathbb{R}^d)}  \left\| \| \Lambda_k^{\frac 12}\phi_\beta(\cdot,\hat{x})\|_{L^4(\mathbb{R}_{x_k})} \right\|_{L^{2}(\mathbb{R}^{d-1})}^2\nonumber\\
	\lesssim &\,t\cdot\left\|   D_k^{\frac 12}\phi_\alpha\right\|_{L^2(\mathbb{R}^d)} \left\|   D_k^{\frac 12}G\right\|_{L^2(\mathbb{R}^d)} \|\phi\|^2_{H^{ |\beta|+\frac 12}(\mathbb{R}^{d})} \nonumber\\
	&+t \cdot\left\|   D_k^{\frac 12}\phi_{\alpha}\right\|_{L^2(\mathbb{R}^d)} \left\|   G\right\|_{L^2(\mathbb{R}^d)}\left\| \phi \right\|^2_{H^{|\beta|+\frac 34}(\mathbb{R}^{d})}\nonumber\\
	\lesssim &\,t\cdot \|\phi\|^3_{H^{s_3+\frac 12}(\mathbb{R}^{d})} \left(\left\|   D_k^{\frac 12}G\right\|_{L^2(\mathbb{R}^d)}+\left\|  G\right\|_{L^2(\mathbb{R}^d)}\right), \nonumber
\end{align}
where Minkowski's inequality is used in the second step and Sobolev embedding in the third. A similar analysis to \eqref{qG} gives
\begin{align}   \label{cG}
&\max \left\{	\left\|     G \right\|_{L^2}, \left\|  D_k^{\frac 12}   G \right\|_{L^2} \right \}\\
\lesssim &\, \left\|  \phi\right\|_{W^{q_1,|\alpha^3|+\frac32}(\mathbb{R}^d)}  \left\|  \phi\right\|_{W^{q_2,|\alpha^4|+1}(\mathbb{R}^d)}  \left\|  \phi\right\|_{W^{q_3,|\alpha^5|+1}(\mathbb{R}^d)}\nonumber\\
	\lesssim &\,\|\phi\|_{H^{|\alpha|+\frac12}}^3,\nonumber
\end{align}
provided 
\begin{align*}  
&	\left|\alpha^i\right|\leq \left|\alpha\right|-1,~~i=3,4,5, \qquad  \left|\alpha^3\right|+\left|\alpha^4\right|+\left|\alpha^5\right|= \left|\alpha\right|,\\
	&	\sum_{i=1}^3 \frac{1}{q_i}=\frac 12, i=3,4,5,\quad\frac{1}{q_3}\geq \frac 12-\frac{|\alpha|-|\alpha_3|-1}{d},\\
	&\frac{1}{q_j}\geq \frac 12-\frac{|\alpha|-|\alpha_j|-\frac12}{d},j=4,5\nonumber
\end{align*}
which implies  $s_3= |\alpha|\geq \frac{d}{2}+1  $. 
Combining \eqref{cG}  with \eqref{cener_c1_5_1}, we obtain 
\begin{align} \label{cener_c1_5_G}
	& \int_0^t\int_{\mathbb{R}}  \int_{\mathbb{R}^{d-1}}  \mathrm{Re}\left( G \partial_k \overline{\phi}_\alpha \right) \mathrm{d} \hat{x}_k \int_{-\infty}^{x_k} \|\Lambda_k^{\frac 12}\phi_{\beta}(y_k,\cdot)\|^2_{L^2(\mathbb{R}^{d-1})} \mathrm{d} y_k  \mathrm{d} x_k \mathrm{d} s\nonumber\\
	\lesssim &\, t\cdot\left\|   D_k^{\frac 12}\phi_\alpha\right\|_{L^2(\mathbb{R}^d)}\|\phi\|_{H^{|\alpha|+\frac12}(\mathbb{R}^{d})}^3 \|\phi\|^2_{H^{|\alpha|}(\mathbb{R}^{d})}\\
	\lesssim &\,t\cdot\|\phi\|_{H^{|\alpha|+\frac12}(\mathbb{R}^{d})}^6.\nonumber
\end{align}

For the term without time integration in \eqref{cener_c1_1}, we apply Lemma~\ref{lemhalve} to halve the derivative:
\begin{align} \label{cener_c1_4_1}
	&\int_{\mathbb{R}}  \int_{\mathbb{R}^{d-1}}  \mathrm{Im}\left( \phi_{\alpha } \partial_k \overline{\phi}_\alpha \right) \mathrm{d} \hat{x}_k \int_{-\infty}^{x_k} \|\Lambda_k^{\frac 12}\phi_{\beta}(x_k,\cdot)\|^2_{L^2(\mathbb{R}^{d-1})} \mathrm{d} y_k  \mathrm{d} x_k \nonumber\\
	\lesssim &\,\left\|   D_k^{\frac 12}\phi_\alpha\right\|_{L^2(\mathbb{R}^d)}^2 \int_{-\infty}^{\infty} \|\Lambda_k^{\frac 12}\phi_{\beta}(x_k,\cdot)\|^2_{L^2(\mathbb{R}^{d-1})}\mathrm{d} x_k\nonumber\\
	&+\left\|   D_k^{\frac 12}\phi_{\alpha}\right\|_{L^2(\mathbb{R}^d)} \left\|   \phi_{\alpha}\right\|_{L^2(\mathbb{R}^d)} \left\| \|\Lambda_k^{\frac 12}\phi_{\beta}(x_k,\cdot)\|^2_{L^2(\mathbb{R}^{d-1})}\right\|_{L^2(\mathbb{R}_{x_k})}\\
	\lesssim &\,\left\|   D_k^{\frac 12}\phi_\alpha\right\|_{L^2(\mathbb{R}^d)}^2 \|\phi\|_{H^{|\alpha|}(\mathbb{R}^d)}^2\nonumber\\
	&+\left\|   D_k^{\frac 12}\phi_{\alpha}\right\|_{L^2(\mathbb{R}^d)} \left\|   \phi_{\alpha}\right\|_{L^2(\mathbb{R}^d)}\left\| \|\Lambda_k^{\frac 12}\phi_{\beta}(\cdot,\hat{x}_k)\|_{L^4(\mathbb{R}_{x_k})} \right\|_{L^{2}(\mathbb{R}^{d-1})}^2\nonumber\\
	\lesssim &\,\left\|   D_k^{\frac 12}\phi_\alpha\right\|_{L^2(\mathbb{R}^d)}^2 \|\phi\|_{H^{|\alpha|}(\mathbb{R}^d)}^2\nonumber+\left\|   D_k^{\frac 12}\phi_{\alpha}\right\|_{L^2(\mathbb{R}^d)} \left\|   \phi_{\alpha}\right\|_{L^2(\mathbb{R}^d)}\left\| \|\Lambda_k^{\frac 12}\phi_{\beta}(\cdot,\hat{x}_k)\|_{H^{\frac14}(\mathbb{R}_{x_k})} \right\|_{L^{2}(\mathbb{R}^{d-1})}^2\nonumber\\
	\lesssim &\,\|\phi\|^4_{H^{s_3+\frac 12}(\mathbb{R}^{d})},\nonumber
\end{align}
where Minkowski's inequality is used in the second step and the Sobolev embedding $ W^{\frac14,2}(\mathbb{R}) \hookrightarrow L^4(\mathbb{R}) $  used in the third.

As \eqref{cener_c1_3} for $ \frac{d}{2}+1<s_3 $, we have
\begin{align} \label{cener_c1_4_2}
	&\int_0^t \int_{\mathbb{R}}  \mathrm{Im}	\int_{\mathbb{R}^{d-1}} \left(  g^{kj} \Lambda_k^{\frac 12}\phi_{\beta j} \Lambda_k^{\frac 12}\overline{\phi}_{\beta }\right) (x_k,\cdot) \mathrm{d} \hat{x}_k\int_{\mathbb{R}^{d-1}}  \mathrm{Im}\left( \phi_{\alpha } \partial_k \overline{\phi}_\alpha \right) \mathrm{d} \hat{x}_k  \mathrm{d} x_k \mathrm{d} s\nonumber\\
	\lesssim &\,  \|g^{kj}\|_{L^\infty(\mathbb{R}^d)}\int_0^t\int_{\mathbb{R}}  \left\| \Lambda_k^{\frac 12}\partial\phi_{\beta }\right\|_{L^2(\mathbb{R}^{d-1})} \|\Lambda_k^{\frac 12}\phi_{\beta}(x_k,\cdot)\|_{L^2(\mathbb{R}^{d-1})}  \left\| \partial_k \phi_{\alpha }\right\|_{L^2(\mathbb{R}^{d-1})} \|\phi_\alpha(x_k,\cdot)\|_{L^2(\mathbb{R}^{d-1})} \mathrm{d} x_k \mathrm{d} s\nonumber\\
\leq&\, \delta\cdot W (t)+\delta \cdot C\|\phi\|_{H^{s_3}(\mathbb{R}^d)}^2\cdot W (t)+ C(\delta) \cdot t\cdot\left(1+\|\phi\|_{H^{s_3+\frac 12}}^2\right)\|\phi\|_{H^{s_3+\frac 12}}^4
\end{align}
and can use \eqref{lemc3_2} to obtain
\begin{align} \label{cener_c1_4_3}
	&\int_0^t \int_{\mathbb{R}}  \mathrm{Im}	\int_{\mathbb{R}^{d-1}} \left( \left[\Lambda_k^{\frac12},h^{kj}\right]\phi_{\beta j}\Lambda_k^{\frac 12}\overline{\phi}_{\beta }\right) (x_k,\cdot) \mathrm{d} \hat{x}_k\int_{\mathbb{R}^{d-1}}  \mathrm{Im}\left( \phi_{\alpha } \partial_k \overline{\phi}_\alpha \right) \mathrm{d} \hat{x}_k  \mathrm{d} x_k \mathrm{d} s\nonumber\\
	\lesssim &\, \int_0^t\int_{\mathbb{R}}  \left\|  \left[\Lambda_k^{\frac12},h^{kj}\right]\phi_{\beta j}\right\|_{L^2(\mathbb{R}^{d-1})} \|\Lambda_k^{\frac 12}\phi_{\beta}(x_k,\cdot)\|_{L^2(\mathbb{R}^{d-1})}  \left\| \partial_k \phi_{\alpha }\right\|_{L^2(\mathbb{R}^{d-1})} \|\phi_\alpha(x_k,\cdot)\|_{L^2(\mathbb{R}^{d-1})} \mathrm{d} x_k \mathrm{d} s\nonumber\\
\leq&\,  \delta \int_0^t\int_{\mathbb{R}}  \|\Lambda_k^{\frac 12}\phi_{\beta}(x_k,\cdot)\|_{L^2(\mathbb{R}^{d-1})}^2  \left\| \partial_k \phi_{\alpha }\right\|_{L^2(\mathbb{R}^{d-1})}^2\mathrm{d} x_k \mathrm{d} s \\		
	&+C(\delta)	\cdot t\cdot \|\Lambda_k ^{\frac 12}h^{kj}\|_{L^\infty(\mathbb{R}^d)}^2 \left\| \partial\phi_{\beta }\right\|_{L^2(\mathbb{R}^{d})}^2  \|\phi_\alpha(x_k,\cdot)\|_{L^2(\mathbb{R}^{d})}^2\nonumber\\
\leq&\,\delta \cdot W(t)+C(\delta)	\cdot t\cdot \|\phi\|_{H^{s_3+\frac 12}}^6 . \nonumber
\end{align}

Similar to \eqref{cener_c1_4_1},  it follows from Lemma \ref{lemhalve}  and  Minkowski's inequality that
\begin{align}  \label{cener_c1_4_4}
	&\int_0^t \int_{\mathbb{R}}   \int_{-\infty}^{x_k} \int_{\mathbb{R}^{d-1}}   \mathrm{Im} \left(  h^{ij} \phi_{\beta j}\Lambda_k\overline{\phi}_{\beta i}\right)\mathrm{d} \hat{x}_k \mathrm{d} y_k\int_{\mathbb{R}^{d-1}}  \mathrm{Im}\left( \phi_{\alpha } \partial_k \overline{\phi}_\alpha \right) \mathrm{d} \hat{x}_k  \mathrm{d} x_k \mathrm{d} s\nonumber\\
	\lesssim &\,\left\|   D_k^{\frac 12}\phi_\alpha\right\|_{L^2(\mathbb{R}^d)}^2 \int_0^t \int_{\mathbb{R}_{x_k}}\left|    \int_{\mathbb{R}^{d-1}}   \mathrm{Im} \left(  h^{ij} \phi_{\beta j}\Lambda_k\overline{\phi}_{\beta i}\right)\mathrm{d} \hat{x}_k \right|\mathrm{d} x_k\mathrm{d} s\nonumber\\
	&+\left\|   D_k^{\frac 12}\phi_{\alpha}\right\|_{L^2(\mathbb{R}^d)} \left\|   \phi_{\alpha}\right\|_{L^2(\mathbb{R}^d)} \int_0^t\left\| \int_{\mathbb{R}^{d-1}}   \mathrm{Im} \left(  h^{ij} \phi_{\beta j}\Lambda_k\overline{\phi}_{\beta i}\right)\mathrm{d} \hat{x}_k  \right\|_{L^2(\mathbb{R}_{x_k})}\mathrm{d} s\nonumber\\
	\lesssim &\,\left\|   D_k^{\frac 12}\phi_\alpha\right\|_{L^2(\mathbb{R}^d)}^2 \int_0^t \int_{\mathbb{R}_{x_k}} \left\| h^{ij}\right\|_{L^\infty (\mathbb{R}^{d-1})} \left\| \phi_{\beta j}\right\|_{L^2(\mathbb{R}^{d-1})} \left\| \Lambda_k^{\frac 12} \phi_{\beta i} \right\|_{L^2 (\mathbb{R}^{d-1})} \mathrm{d} x_k\mathrm{d} s\\
	&+\left\|   D_k^{\frac 12}\phi_{\alpha}\right\|_{L^2(\mathbb{R}^d)} \left\|   \phi_{\alpha}\right\|_{L^2(\mathbb{R}^d)} \int_0^t \int_{\mathbb{R}^{d-1}}    \left\|  h^{ij} \phi_{\beta j}\Lambda_k\overline{\phi}_{\beta i} \right\|_{L^2(\mathbb{R}_{x_k})}\mathrm{d} \hat{x}_k \mathrm{d} s\nonumber\\
	\lesssim &\,\left\|   D_k^{\frac 12}\phi_\alpha\right\|_{L^2(\mathbb{R}^d)}^2 \cdot \sum_{|\beta|\leq s_3-1}  \int_0^t	\int_{\mathbb{R}}   \| \Lambda_k^{\frac 12}\phi_{\beta}(x_k,\cdot)\|_{L^2(\mathbb{R}^{d-1})}^2  \|\phi (x_k,\cdot)\|^2_{H^{s_3+1}(\mathbb{R}^{d-1})}\mathrm{d} x_k \mathrm{d} s\nonumber\\
	&+\left\|   D_k^{\frac 12}\phi_{\alpha}\right\|_{L^2(\mathbb{R}^d)} \left\|   \phi_{\alpha}\right\|_{L^2(\mathbb{R}^d)} \cdot W(t)\nonumber\\
	\lesssim &\,\|\phi\|_{H^{s_3+\frac 12}(\mathbb{R}^d)}^2\cdot W (t),\nonumber
\end{align}
where 
\begin{align*}
	&\int_0^t \int_{\mathbb{R}^{d-1}}    \left\|  h^{ij} \phi_{\beta j}\Lambda_k\overline{\phi}_{\beta i} \right\|_{L^2(\mathbb{R}_{x_k})}\mathrm{d} \hat{x}_k \mathrm{d} s\\
	\lesssim &\,\int_0^t \int_{\mathbb{R}^{d-1}}    \left\|  h^{ij} \right\|_{L^\infty(\mathbb{R}_{x_k})} \left\|  \phi_{\beta j} \right\|_{L^\infty(\mathbb{R}_{x_k})} \left\|  \Lambda_k\overline{\phi}_{\beta i}\right\|_{L^2(\mathbb{R}_{x_k})}\mathrm{d} \hat{x}_k \mathrm{d} s\\
	\lesssim&\, \int_0^t \int_{\mathbb{R}^{d-1}}    \left\| \phi \right\|_{H^
		{l_1}(\mathbb{R}_{x_k})}^2 \left\|  \phi_{\beta j} \right\|_{H^{\frac34}(\mathbb{R}_{x_k})} \left\|  \Lambda_k\overline{\phi}_{\beta i}\right\|_{L^2(\mathbb{R}_{x_k})}\mathrm{d} \hat{x}_k \mathrm{d} s\\
	\lesssim&\, \sum_{|\beta|\leq s_3-1}  \int_0^t	\int_{\mathbb{R}}   \| \Lambda_l^{\frac 12}\phi_{\beta}(x_l,\cdot)\|_{L^2(\mathbb{R}^{d-1})}^2  \|\phi (x_l,\cdot)\|^2_{H^{s_3+1}(\mathbb{R}^{d-1})}\mathrm{d} x_l \mathrm{d} s\\
	\lesssim &\,W(t)
\end{align*}
for $ l\neq k,  1/2<l_1. $
Noticing the highest order derivative in $ \mathcal{N}_\beta \Lambda_k\overline{\phi}_{\beta } $ is less than $ s_3 $, we can derive similarly as those in \eqref{cener_c1_4_4} and  \eqref{cG} to obtain
\begin{align} \label{cener_c1_4_5}
	&\int_0^t \int_{\mathbb{R}}  \int_{-\infty}^{x_k} \int_{\mathbb{R}^{d-1}}  \mathrm{Im} \left(  \mathcal{N}_\beta \Lambda_k\overline{\phi}_{\beta }\right)\mathrm{d} \hat{x}_k \mathrm{d} y_k \int_{\mathbb{R}^{d-1}}  \mathrm{Im}\left( \phi_{\alpha } \partial_k \overline{\phi}_\alpha \right) \mathrm{d} \hat{x}_k  \mathrm{d} x_k \mathrm{d} s \\
	\lesssim &\,\|\phi\|_{H^{s_3+\frac 12}(\mathbb{R}^d)}^2\cdot W (t)+t\cdot\|\phi\|_{H^{|\alpha|+\frac12}(\mathbb{R}^{d})}^6.\nonumber
\end{align}

Combining with \eqref{cener_c1_4_1}-\eqref{cener_c1_4_5}, and using integration by parts, Hölder's inequality, and \eqref{Hs0_bec}, we obtain
\begin{align} \label{cener_c1_4}
	&		 \int_0^t\int_{\mathbb{R}}  \partial_t\int_{\mathbb{R}^{d-1}}  \mathrm{Im}\left( \phi_{\alpha } \partial_k \overline{\phi}_\alpha \right) \mathrm{d} \hat{x}_k \int_{-\infty}^{x_k} \|\Lambda_k^{\frac 12}\phi_{\beta}(y_k,\cdot)\|^2_{L^2(\mathbb{R}^{d-1})}\mathrm{d} y_k  \mathrm{d} x_k \mathrm{d} s\nonumber\\
	=&\, \left. \int_{\mathbb{R}} \int_{\mathbb{R}^{d-1}}  \mathrm{Im}\left( \phi_{\alpha } \partial_k \overline{\phi}_\alpha \right) \mathrm{d} \hat{x}_k \int_{-\infty}^{x_k} \|\Lambda_k^{\frac 12}\phi_{\beta}(y_k,\cdot)\|^2_{L^2(\mathbb{R}^{d-1})}\mathrm{d} y_k  \mathrm{d} x_k \right|_{0}^t\nonumber\\
	&\,-2\int_0^t \int_{\mathbb{R}}  \int_{\mathbb{R}^{d-1}}  \mathrm{Im}\left( \phi_{\alpha } \partial_k \overline{\phi}_\alpha \right) \mathrm{d} \hat{x}_k \int_{-\infty}^{x_k} \int_{\mathbb{R}^{d-1}}  \mathrm{Re}\left( \Lambda_k^{\frac 12}\phi_{\beta}\Lambda_k^{\frac 12}\partial_t\overline{\phi}_{\beta }\right)\mathrm{d} \hat{x}_k \mathrm{d} y_k  \mathrm{d} x_k \mathrm{d} s\\
	\leq\,&  C\left. \|\phi\|^4_{H^{s_3+\frac 12}(\mathbb{R}^{d})}\right|_{0}^t+ 2\delta\cdot W (t)+\delta \cdot C\|\phi\|_{H^{s_3}(\mathbb{R}^d)}^2\cdot W (t)\nonumber\\
	& + C(\delta) \cdot\left(1+\|\phi\|_{H^{s_3+\frac 12}}^2\right)\|\phi\|_{H^{s_3+\frac 12}}^4.\nonumber
\end{align}

The remaining terms can be estimated similarly. From \eqref{cpg}, it is straightforward to see
\begin{align} \label{cener_c1_6}
	& \int_0^t	\int_{\mathbb{R}} \int_{-\infty}^{x_k} \|\Lambda_k^{\frac 12}\phi_{\beta}(y_k,\cdot)\|^2_{L^2(\mathbb{R}^{d-1})}\mathrm{d} y_k\int_{\mathbb{R}^{d-1}}  \mathrm{Re}\left(\partial_{i} \phi_{\alpha }  \partial_k h^{ij} \partial_j \overline{\phi}_{\alpha }\right) \mathrm{d} \hat{x}_k  \mathrm{d} x_k \mathrm{d} s\nonumber\\
	\lesssim &\,   \sup_t  \|\phi\|^2_{H^{|\alpha|}(\mathbb{R}^{d})}  \int_0^t	\int_{\mathbb{R}} \left\|  \partial_k h^{ij} \right\|_{L^\infty(\mathbb{R}^{d-1})}\int_{\mathbb{R}^{d-1}}  \left|\partial_{i} \phi_{\alpha }  \partial_j \overline{\phi}_{\alpha }\right| \mathrm{d} \hat{x}_k  \mathrm{d} x_k \mathrm{d} s\\
	\lesssim &\, \sup_t  \|\phi\|^2_{H^{|\alpha|}(\mathbb{R}^{d})}\int_0^t	\int_{\mathbb{R}} \left(  \|\phi(x_k,\cdot)\|^2_{H^{|\alpha|}(\mathbb{R}^{d-1})}+\|\partial_k \phi(x_k,\cdot)\|_{H^{|\alpha|-1}(\mathbb{R}^{d-1})}^2\right) \| \nabla\phi_{\alpha}\|^2_{L^2(\mathbb{R}^d)} \mathrm{d} x_k \mathrm{d} s\nonumber\\
	\lesssim &\,\left\|  \phi\right\|_{H^{s_3+\frac 12}(\mathbb{R}^d)}^2 \cdot W (t),\nonumber
\end{align}
while a calculation similar to \eqref{cG} and \eqref{cener_c1_5} shows
\begin{align} \label{cener_c1_7}
	&	- \int_0^t	\int_{\mathbb{R}}\int_{-\infty}^{x_k} \|\Lambda_k^{\frac 12}\phi_{\beta}(y_k,\cdot)\|^2_{L^2(\mathbb{R}^{d-1})}\mathrm{d} y_k\nonumber\\
	&\cdot \partial_k\int_{\mathbb{R}^{d-1}}   \mathrm{Re}  \left\{ \phi_{\alpha } \left(\overline{F}_z\nabla \overline{\phi}_\alpha+\overline{F}_{\overline{z}}\nabla \phi_\alpha+\partial h^{ij}\partial_{ij } \overline{\phi}_{\alpha -1}+\overline{G}\right)  \right\} \mathrm{d} \hat{x}_k  \mathrm{d} x_k \mathrm{d} s\\
	=&\,	\int_0^t	\int_{\mathbb{R}} \|\Lambda_k^{\frac 12}\phi_{\beta}(x_k,\cdot)\|^2_{L^2(\mathbb{R}^{d-1})} \int_{\mathbb{R}^{d-1}}   \mathrm{Re}  \left\{\phi_{\alpha }\overline{G}+ \phi_{\alpha } \left(\overline{F}_z\nabla \overline{\phi}_\alpha+\overline{F}_{\overline{z}}\nabla \phi_\alpha+\partial h^{ij}\partial_{ij } \overline{\phi}_{\alpha -1}\right)  \right\} \mathrm{d} \hat{x}_k  \mathrm{d} x_k \mathrm{d} s\nonumber\\
	\lesssim &\,t\cdot\|\phi\|_{H^{s_3+\frac12}(\mathbb{R}^{d})}^6+\left( \|F_z\|_{L^\infty}+\|\partial h^{ij}\|_{L^\infty}\right) \nonumber\\
	&\cdot\int_0^t	\int_{\mathbb{R}} 	\|\Lambda_k^{\frac 12}\phi_{\beta}(x_k,\cdot)\|^2_{L^2(\mathbb{R}^{d-1})} \|\phi_{\alpha +1} (x_k,\cdot)\|_{L^{2}(\mathbb{R}^{d-1})}\|\phi_{\alpha } (x_k,\cdot)\|_{L^{2}(\mathbb{R}^{d-1})}\mathrm{d} x_k \mathrm{d} s\nonumber\\
	\lesssim &\,t\cdot\|\phi\|_{H^{s_3+\frac12}(\mathbb{R}^{d})}^6+ \left\|  \phi\right\|_{H^{s_3+\frac 12}(\mathbb{R}^d)}^2 \cdot W (t)\nonumber
\end{align}
for $ s_3=|\alpha|\geq \frac{d}{2}+1  $.

Setting $ \delta=\frac{1}{20d} $,
 summing of \eqref{cener_c1_1} over $ k=1,\dots, d $ and over $ |\beta|\leq s_3-1 $, combining with \eqref{cener_c1_2}-\eqref{cener_c1_5}, \eqref{cener_c1_5_G}, \eqref{cener_c1_4}-\eqref{cener_c1_7}, we  arrive at
\begin{align}  \label{cenerM}
	W(t) 
	\lesssim &\,  \|\phi_0\|^4_{H^{s_3+\frac 12}(\mathbb{R}^{d})} +  (1+t)\cdot\left(1+\|\phi\|_{H^{s_3+\frac 12}}^2\right)\|\phi\|_{H^{s_3+\frac 12}}^4+\left\|  \phi\right\|_{H^{s_3+\frac 12}(\mathbb{R}^d)}^2 \cdot W (t),
\end{align}
where the $ \delta \cdot W(t) $ in \eqref{cener_c1_4_2}, \eqref{cener_c1_4_3} and \eqref{cener_c1_4} have been absorbed by left hand side of \eqref{cenerM}.

\subsection{Energy estimates and proof of main results}  \label{s4.2}

Just as \eqref{qener1}, \eqref{qener2}, we can 
 we derive energy estimates by  by multiplying  \eqref{qNLS1}
by
$ \overline{\phi}_{\alpha } $, taking the imaginary part,
integrating it on $  [0,t]\times\mathbb{R}^d  $ and  summing  over $ |\alpha| $ from 0 to $ s_3$.   This yields
\begin{align} \label{cener1}
	\| \phi\|_{H^{ s_3 }(\mathbb{R}^d)}^2\lesssim &\, 		\| \phi_0\|_{H^{s_3 }(\mathbb{R}^d)}^2\nonumber\\
	&+\sum_{|\alpha|=0}^{s_3}\int_0^t\int_{\mathbb{R}^d} \left| \mathrm{Im} \left[  \left(F_z\nabla \phi_\alpha+F_{\overline{z}}\nabla \overline{\phi}_\alpha+\partial h^{ij}\partial_{ij } \phi_{\alpha -1}+G\right)\overline{\phi}_{\alpha }\right]\right| \mathrm{d} x\mathrm{d} s\\
	\lesssim &\,\| \phi_0\|_{H^{s_3 }(\mathbb{R}^d)}^2+W(t)+t\cdot\|\phi\|_{H^{|\alpha|+\frac12}(\mathbb{R}^{d})}^4. \nonumber
\end{align}
In fact,  the estimate follows via an analysis similar to \eqref{cG}.  For $ |\alpha| =s_3 $, we use \eqref{cF} to compute directly
\begin{align}  \label{cener1_c1}
	&\int_0^t\int_{\mathbb{R}^d} \left| \mathrm{Im} \left[  \left(F_z\nabla \phi_\alpha+F_{\overline{z}}\nabla \overline{\phi}_\alpha+\partial h^{ij}\partial_{ij } \phi_{\alpha -1}\right)\overline{\phi}_{\alpha }\right]\right| \mathrm{d} x\mathrm{d} s\\
	\lesssim &\,\int_0^t	\int_{\mathbb{R}}  \left( 	\|F_z\|_{L^\infty(\mathbb{R}^{d-1})}+\left\|	\partial h^{ij} \right\|_{L^\infty(\mathbb{R}^{d-1})}\right) (x_k)\|\phi_{\alpha +1} (x_k,\cdot)\|_{L^{2}(\mathbb{R}^{d-1})} \|\phi_{\alpha } (x_k,\cdot)\|_{L^{2}(\mathbb{R}^{d-1})}\mathrm{d} x_k \mathrm{d} s\nonumber\\
	\lesssim &\,\int_0^t	\int_{\mathbb{R}}  \left(  \|\phi(x_k,\cdot)\|_{H^{s_3-\frac 12}(\mathbb{R}^{d-1})}^2+\|\partial_k \phi(x_k,\cdot)\|_{H^{s_3-\frac 32}(\mathbb{R}^{d-1})}^2\right) \nonumber\\
	&\cdot\|\phi_{\alpha +1} (x_k,\cdot)\|_{L^{2}(\mathbb{R}^{d-1})}\|\phi_{\alpha } (x_k,\cdot)\|_{L^{2}(\mathbb{R}^{d-1})}\mathrm{d} x_k \mathrm{d} s \nonumber\\
	\lesssim &\,W(t),\nonumber
\end{align}
while  \eqref{cG}  implies 
\begin{align} \label{cener1_c2}
	\left|\int_0^t\int_{\mathbb{R}^d}  G\overline{\phi}_{\alpha } \mathrm{d} x\mathrm{d} s\right|\lesssim t\cdot\|\phi\|_{H^{|\alpha|+\frac12}(\mathbb{R}^{d})}^4.
\end{align}

Next, we estimate $ 		\|D^{\frac 12}\phi_\alpha \|_{L^{2}(\mathbb{R}^{d})}  $ for $ |\alpha|=s_3. $
Multiplying  \eqref{qNLS1} by
$D_k \overline{\phi}_{\alpha } $ 	and taking the imaginary part gives
\begin{align}  \label{ccons3}
	&\mathrm{Im} \left(  \left(F_z\nabla \phi_\alpha+F_{\overline{z}}\nabla \overline{\phi}_\alpha+\partial h^{ij}\partial_{ij } \phi_{\alpha -1}+G\right)D_k\overline{\phi}_{\alpha } \right)\nonumber\\
	=&\,\mathrm{Re}\left( \partial_t\phi_{\alpha } D_k\overline{\phi}_{\alpha }  \right)+ \mathrm{Im} \left(  \partial_i\left(   g^{ij} \partial_j \phi_{\alpha }\right)D_k \overline{\phi}_{\alpha }\right)\nonumber\\
	=&\, \mathrm{Re}\left( \partial_t\phi_{\alpha } D_k\overline{\phi}_{\alpha }  \right)+ \mathrm{Im} \partial_i\left(   g^{ij} \partial_j \phi_{\alpha }D_k \overline{\phi}_{\alpha }\right)- \mathrm{Im} \left(   \partial_j \phi_{\alpha } g^{ij} D_k \overline{\phi}_{\alpha i }\right)\nonumber\\
	=&\, \mathrm{Re}\left( \partial_t\phi_{\alpha } D_k \overline{\phi}_{\alpha }  \right)+ \mathrm{Im} \partial_i\left(  \left(   g^{ij} \partial_j \phi_{\alpha }\right) D_k \overline{\phi}_{\alpha }\right)\\
	&- \mathrm{Im} \left(   \partial_j \phi_{\alpha } \left[ h^{ij}  D_k^{\frac12}, D_k^{\frac12} \right] \overline{\phi}_{\alpha i }\right)-\mathrm{Im} \left(   \partial_j \phi_{\alpha } D_k^{\frac12}( g^{ij} D_k^{\frac12}  \overline{\phi}_{\alpha i })\right),\nonumber
\end{align}	
where 
\begin{align} \label{ccons3_c1}
	&	\mathrm{Im} \left(   \partial_j \phi_{\alpha }  g^{ij}  D_k  \overline{\phi}_{\alpha i }\right)\nonumber\\
	=&\, \mathrm{Im} \left(   \partial_j \phi_{\alpha } \left[ h^{ij}  D_k^{\frac12}, D_k^{\frac12} \right] \overline{\phi}_{\alpha i }\right)+\mathrm{Im} \left(   \partial_j \phi_{\alpha } D_k^{\frac12}( g^{ij} D_k^{\frac12}  \overline{\phi}_{\alpha i })\right).
\end{align}
It follows from \eqref{lemc1_3} that
\begin{align} \label{ccons3_c2}
	&\int_0^t \int_{\mathbb{R}^d}  \mathrm{Im} \left(   \partial_j \phi_{\alpha } \left[ h^{ij}  D_k^{\frac12}, D_k^{\frac12} \right] \overline{\phi}_{\alpha i }\right)\mathrm{d} x \mathrm{d} s\nonumber\\
	\lesssim &\,\int_0^t \int_{\mathbb{R}^{d-1}} \|\nabla\phi_{\alpha}\|_{L^{2}(\mathbb{R}_{x_k})}  \left\| \left[ h^{ij}  D_k^{\frac12}, D_k^{\frac12} \right] \overline{\phi}_{\alpha i }\right\|_{L^2(\mathbb{R}_{x_k})} \mathrm{d} \hat{x}_k \mathrm{d} s\\
	\lesssim &\,\int_0^t \int_{\mathbb{R}^{d-1}} \|\nabla\phi_{\alpha}\|_{L^{2}(\mathbb{R}_{x_k})}^2   \|\partial_k h^{ij}\|_{H^{ \mathscr{S}_1}(\mathbb{R}_{x_k})} \mathrm{d} \hat{x}_k\nonumber\\
	\lesssim &\,\int_0^t \int_{\mathbb{R}^{d-1}} \|\nabla\phi_{\alpha}\|_{L^{2}(\mathbb{R}_{x_k})}^2    \|\phi\|_{H^{ \mathscr{S}_1+1}(\mathbb{R}_{x_k})}^2 \mathrm{d} \hat{x}_k\nonumber\\
	\lesssim &\,W (t), \nonumber
\end{align}
where
\begin{align*}
	\left\|\partial_k h^{ij}\right\|_{H^{ \mathscr{S}_1}(\mathbb{R}_{x_k})}\lesssim &\, \left\|\phi\partial_k \phi\right\|_{H^{ \mathscr{S}_1}(\mathbb{R}_{x_k})}\\
	\lesssim &\,\left\|\phi \right\|_{L^\infty(\mathbb{R}_{x_k})} \ \left\|\langle D_k\rangle^{\mathscr{S}_1}\partial_k \phi\right\|_{L^{2 }(\mathbb{R}_{x_k})}+\left\|\langle D_k\rangle^{\mathscr{S}_1}\phi \right\|_{L^2(\mathbb{R}_{x_k})} \ \left\|\partial_k \phi\right\|_{L^{\infty }(\mathbb{R}_{x_k})}\\
	\lesssim &\, \|\phi\|_{H^{ \mathscr{S}_1+1}(\mathbb{R}_{x_k})}^2
\end{align*}
for $  \frac{1}{2}<\mathscr{S}_1$  and $ 1+\mathscr{S}_1\leq s_3-\frac 12 $.   Note that $ d\geq 2 $ implies $ \frac{d}{2}+1<s_3 $.

From \eqref{cF} and \eqref{cpg}, we have
\begin{align} \label{ccons3_c3}
	& \int_0^t \int_{\mathbb{R}^d} \left|  \left(F_z\nabla \phi_\alpha+F_{\overline{z}}\nabla \overline{\phi}_\alpha+\partial h^{ij}\partial_{ij } \phi_{\alpha -1}\right)D_k\overline{\phi}_{\alpha } \right| \mathrm{d} x \mathrm{d} s\\
	\lesssim &\,\int_0^t \int_{\mathbb{R}_{x_l}} \left(\|\Lambda_k^{\frac 12}\phi_{\beta}(x_k,\cdot)\|^2_{L^2(\mathbb{R}^{d-1}_{\hat{x}_l})}+\|\phi(x_k,\cdot)\|_{H^{s_0}(\mathbb{R}^{d-1}_{\hat{x}_l})}\right)\cdot\| \nabla \phi_{\alpha}\|_{L^2(\mathbb{R}^{d-1})}\| D_k \phi_{\alpha}\|_{L^2(\mathbb{R}^{d-1}_{\hat{x}_l})} \mathrm{d} x_l\mathrm{d}s  \nonumber\\
	\lesssim &\,W(t),\nonumber 
\end{align}
for  $  l\neq k  $,
using the fact that
\begin{align*}
	\| D_k \phi_{\alpha}\|_{L^2(\mathbb{R}^{d-1}_{\hat{x}_l})} \lesssim 	\| \partial_k \phi_{\alpha}\|_{L^2(\mathbb{R}^{d-1}_{\hat{x}_l})},\quad  \qquad\text{if}~~ l\neq k,
\end{align*}
while the  analysis  similar to \eqref{cG}  gives
\begin{align} \label{ccons3_c4}
	\left| \int_0^t \int_{\mathbb{R}^d}   G D_k\overline{\phi}_{\alpha }  \mathrm{d} x \mathrm{d} s \right|= 	\left| \int_0^t \int_{\mathbb{R}^d}   D_k^{\frac 12} G D_k^{\frac 12}\overline{\phi}_{\alpha }  \mathrm{d} x \mathrm{d} s \right|\lesssim t\cdot\|\phi\|_{H^{|\alpha|+\frac12}(\mathbb{R}^{d})}^4.
\end{align}
Noticing that the symmetry of $ g^{ij} $ shows
\begin{align*} 
	\sum_{i,j}\int_{\mathbb{R}^{d}}\mathrm{Im} \left(   \partial_j \phi_{\alpha } D_k^{\frac12}( g^{ij} D_k^{\frac12}  \overline{\phi}_{\alpha i })\right) \mathrm{d}x=\sum_{i,j}	\int_{\mathbb{R}^{d}}\mathrm{Im} \left( D_k^{\frac 12}\overline{\phi}_{\alpha i }  g^{ij} D_k^{\frac 12} \partial_j \phi_{\alpha }\right) \mathrm{d}x=0,
\end{align*}
we can integrate \eqref{ccons3} on  $  [0,t]\times\mathbb{R}^d  $, take the imaginary part and combine \eqref{ccons3_c1}, \eqref{ccons3_c2}, \eqref{ccons3_c3}, \eqref{ccons3_c4} to obtain
\begin{align*} 
	&	\| D_k^{\frac12}\phi_\alpha\|_{L^2}^2 \lesssim 	\| D_k^{\frac12}\partial^\alpha\phi_0\|_{L^2}^2+ W(t)
\end{align*}
which combining with \eqref{cener1} implies
\begin{align}\label{cener3}
	&	\|\phi\|_{H^{s_3+\frac12}(\mathbb{R}^{d})}^2\lesssim 	\|\phi_0\|_{H^{s_3+\frac12}(\mathbb{R}^{d})}^2+ W(t)+t\cdot\|\phi\|_{H^{|\alpha|+\frac12}(\mathbb{R}^{d})}^4.
\end{align}

Collecting \eqref{cenerM}, \eqref{cener1}, and \eqref{cener3}, we conclude
\begin{align*}
	\|\phi\|_{H^{s_3+\frac12}(\mathbb{R}^{d})}^2+W(t) \lesssim &\,\|\phi_0\|_{H^{s_3+\frac12}(\mathbb{R}^{d})}^2+  \|\phi_0\|^4_{H^{s_3+\frac 12}(\mathbb{R}^{d})}
	\\
	&+( 1+t)\cdot\left(1+\|\phi\|_{H^{s_3+\frac 12}}^2\right)\|\phi\|_{H^{s_3+\frac 12}}^4+\left\|  \phi\right\|_{H^{s_3+\frac 12}(\mathbb{R}^d)}^2 \cdot W (t).\nonumber
\end{align*}
A bootstrap argument then shows that for $ t\leq 1 $ and $ \|\phi_0\|_{H^{s_3+\frac12}(\mathbb{R}^{d})} $	 small enough, we have
\begin{align*} 
	\|\phi\|_{H^{s_3+\frac12}(\mathbb{R}^{d})}^2+	W(t) \leq C\left(\|\phi_0\|_{H^{s_3+\frac12}(\mathbb{R}^{d})}\right).
\end{align*}


\begin{proof}[\textbf{Proof of Theorem \ref{thm3}}]
As in the proof of Theorem~\ref{thm2}, we employ the artificial viscosity method with the approximate equation \eqref{aqNLS}. Recall \eqref{aqNLS1}:
	\begin{align*}  
		\sqrt{-1}\phi_{\alpha t}+	\varepsilon\sqrt{-1}\Delta^2 \phi_\alpha+\partial_i\left( g^{ij} \partial_j \phi_{\alpha }\right)=F_z\nabla \phi_\alpha+F_{\overline{z}}\nabla \overline{\phi}_\alpha+\partial h^{ij}\partial_{ij } \phi_{\alpha -1}+G.
	\end{align*}
Analogous to to \eqref{cener1} and \eqref{cener3},
we derive the energy estimates
	\begin{align*}
		&	\| \phi\|_{H^{s_3 }(\mathbb{R}^d)}^2+\varepsilon  \sum_{i,j=1}^d\int_0^t\left\| \partial_{ij}\phi\right\|_{H^{s_3}(\mathbb{R}^d)}^2 \mathrm{d} s\\
		\lesssim &\, 		\| \phi_0\|_{H^{s_3 }(\mathbb{R}^d)}^2+\sum_{|\alpha|=0}^{s_3}\int_0^t\int_{\mathbb{R}^d} \left| \mathrm{Im} \left[  \left(F_z\nabla \phi_\alpha+F_{\overline{z}}\nabla \overline{\phi}_\alpha+\partial h^{ij}\partial_{ij } \phi_{\alpha -1}+G\right)\overline{\phi}_{\alpha }\right]\right| \mathrm{d} x\mathrm{d} s\\
		\lesssim &\,\| \phi_0\|_{H^{s_3 }(\mathbb{R}^d)}^2+W(t)+t\cdot\|\phi\|_{H^{s_3+\frac12}(\mathbb{R}^{d})}^6 \nonumber
	\end{align*}
	and 
	\begin{align} \label{acener3}
		\|\phi\|_{H^{s_3+\frac12}(\mathbb{R}^{d})}^2+\varepsilon  \sum_{i,j=1}^d\int_0^t\left\| \partial_{ij}\phi\right\|_{H^{s_3+\frac 12}(\mathbb{R}^d)}^2 \mathrm{d} s\lesssim 	\|\phi_0\|_{H^{s_3+\frac12}(\mathbb{R}^{d})}^2+ W(t)+t\cdot\|\phi\|_{H^{s_3+\frac12}(\mathbb{R}^{d})}^6.
	\end{align}

	Similar to \eqref{cener_c1_4_1}, we verify
	\begin{align*}
		&\varepsilon \left| \int_0^t	\int_{\mathbb{R}}\int_{-\infty}^{x_k} \|\Lambda_k^{\frac 12}\phi_{\beta}(y_k,\cdot)\|^2_{L^2(\mathbb{R}^{d-1})} \mathrm{d} y_k   \cdot\mathrm{Im}\int_{\mathbb{R}^{d-1}} \left(\partial_{kjj}\phi_{\alpha } \partial_{kk}\overline{\phi}_\alpha\right) \mathrm{d} \hat{x}_k\mathrm{d} x_k\mathrm{d} s\right|\\
		\lesssim &\,\varepsilon \int_0^t \left\|   D_k^{\frac 12}  \partial_{kk}\phi_\alpha\right\|_{L^2(\mathbb{R}^d)}  \left\|   D_k^{\frac 12} \partial_{jj}\phi_\alpha\right\|_{L^2(\mathbb{R}^d)} \|\Lambda_k^{\frac 12}\phi_{\beta}\|_{L^{2}(\mathbb{R}^d)}^2 \mathrm{d} s\nonumber\\
		&+\varepsilon\int_0^t \left\|    \partial_{kk}\phi_{\alpha}\right\|_{L^2(\mathbb{R}^d)}   \left\|   D_k^{\frac 12} \partial_{jj}\phi_\alpha\right\|_{L^2(\mathbb{R}^d)} \left\| \|\Lambda_k^{\frac 12}\phi_{\beta}(\cdot,\hat{x}_k)\|_{H^{\frac12}(\mathbb{R}_{x_k})} \right\|_{L^{2}(\mathbb{R}^{d-1})}^2 \mathrm{d} s\nonumber\\
		\lesssim &\,\varepsilon \left\| \Lambda_k^{\frac 12}\phi_{\beta}\right\|_{H^{\frac12}(\mathbb{R}^d)}^2  \sum_{i,j=1}^d\int_0^t\left\| \partial_{ij}\phi\right\|_{H^{s_3+\frac 12}(\mathbb{R}^d)}^2 \mathrm{d} s.
	\end{align*}
An analysis of \eqref{aconsm} analogous to \eqref{cenerM} then gives
	\begin{align*}
		&W(t) \lesssim   \|\phi_0\|^4_{H^{s_3+\frac 12}(\mathbb{R}^{d})} +  (1+t)\cdot\left(1+\|\phi\|_{H^{s_3+\frac 12}}^2\right)\|\phi\|_{H^{s_3+\frac 12}}^4+\left\|  \phi\right\|_{H^{s_3+\frac 12}(\mathbb{R}^d)}^2 \cdot W (t)\\
		&+\varepsilon \left\| \Lambda_k^{\frac 12}\phi_{\beta}\right\|_{H^{\frac12}(\mathbb{R}^d)}^2\int_0^t\left\| \nabla\phi\right\|_{H^{s_3+\frac 12}}^2 \mathrm{d} s. \nonumber
	\end{align*}
Combining this with \eqref{acener3}, we obtain
	\begin{align*}
		&W(t) +\|\phi\|_{H^{s_3+\frac12}(\mathbb{R}^{d})}^2+\varepsilon  \sum_{i,j=1}^d\int_0^t\left\| \partial_{ij}\phi\right\|_{H^{s_3+\frac 12}(\mathbb{R}^d)}^2 \mathrm{d} s\lesssim  \|\phi_0\|_{H^{s_3+\frac12}(\mathbb{R}^{d})}^2+\|\phi_0\|_{H^{s_3+\frac12}(\mathbb{R}^{d})}^4\nonumber\\
		&+(1+t)\cdot\left(1+\|\phi\|_{H^{s_3+\frac 12}}^2\right)\|\phi\|_{H^{s_3+\frac 12}}^4+\left\|  \phi\right\|_{H^{s_3+\frac 12}(\mathbb{R}^d)}^2 \cdot W (t)\nonumber\\
		&+\varepsilon  \left\|  \phi\right\|_{H^{s_3+\frac 12}(\mathbb{R}^d)}^2 \int_0^t\left\| \nabla\phi\right\|_{H^{s_3+\frac 12}}^2 \mathrm{d} s.
	\end{align*}
	For $ \|\phi_0\|_{H^{s_3+\frac12}(\mathbb{R}^{d})} $ small enough, we conclude that for \(t \in [0,1]\) and \(\frac{d+3}{2} < s_3 + \frac12\),
	\begin{align*} 
		W(t) +\|\phi\|_{H^{s_3+\frac12}(\mathbb{R}^{d})}^2+\varepsilon  \sum_{i,j=1}^d\int_0^t\left\| \partial_{ij}\phi\right\|_{H^{s_3+\frac 12}(\mathbb{R}^d)}^2 \mathrm{d} s \leq C\left(\|\phi_0\|_{H^{s_3+\frac12}(\mathbb{R}^{d})}\right).
	\end{align*}
	The existence of a solution follows by an argument similar to that in the proof of Theorem~\ref{thm2}.
	 
	 The proof of the uniqueness and continuity of the solution  is also similar with the quadratic interaction problem except the following points.  The weight to multiply by \eqref{consm1-2} is still
 $$ \int_{-\infty}^{x_k} \|\Lambda_k^{\frac 12}v_{\beta}(y_k,\cdot)\|^2_{L^2(\mathbb{R}^{d-1})} \mathrm{d} y_k  $$ and the space time norms for $ v(t) =\phi_1 -\phi_2$ is
	 \begin{align*}
	 	&W_v(t)	
	 	:=\sum_{k=1}^d\sum_{|\beta|\leq s_3-1} \int_0^t \int_{\mathbb{R}}  \| \Lambda_k^{\frac 12}v_{\beta}(x_k,\cdot)\|_{L^2(\mathbb{R}^{d-1})}^2  \|\partial_k v (x_k,\cdot)\|_{L^2(\mathbb{R}^{d-1})}^2 \mathrm{d} x_k \mathrm{d} s,
	 \end{align*}
	 and  
	 the solutions satisfy
	 \begin{align*}
	 	\label{csol1-2}
	 	&	\|\phi_l\|_{H^{s_3+\frac12}(\mathbb{R}^{d})}^2+	W(\phi_l)(t) \leq C\left(\|\phi_{l0}\|_{H^{s_3+\frac12}(\mathbb{R}^{d})}\right)\ll 1,~~l=1,2.
	 \end{align*}
	 Since 
	 \begin{align*}
	 	\max\{ \|h^{ij}_y\|_{L^{\infty}}, \|h^{ij}_{\overline{y}}\|_{L^{\infty}}\} \lesssim  \| \phi_1\|_{L^{\infty}}+\| \phi_2\|_{L^{\infty}},
	 \end{align*}
we have
	 \begin{align}
	 	&\int_0^t \int_{\mathbb{R}}\int_{-\infty}^{x_k} \|\Lambda_k^{\frac 12}v_{\beta}(y_k,\cdot)\|^2_{L^2(\mathbb{R}^{d-1})} \mathrm{d} y_k	\int_{\mathbb{R}^{d-1}} \mathrm{Re} \left(\partial_i\left( (g^{ij}\left(\phi_1,\overline{\phi}_1\right)-g^{ij}\left(\phi_2,\overline{\phi}_2\right)) \partial_j\phi_2\right)\partial_k \overline{v} \right)   \mathrm{d} \hat{x}  \mathrm{d} x_k \mathrm{d}s\nonumber\\
	 	=&\,\mathrm{Re}  \int_0^t \int_{\mathbb{R}} \int_{-\infty}^{x_k} \|\Lambda_k^{\frac 12}v_{\beta}(y_k,\cdot)\|^2_{L^2(\mathbb{R}^{d-1})} \mathrm{d} y_k	\int_{\mathbb{R}^{d-1}} \partial_i\left( h^{ij}_y  \left(\phi_1 -  \phi_2 \right)+h^{ij}_{\overline{y}} (\overline{\phi}_1-\overline{\phi}_2)\right) \partial_j\phi_2\partial_k \overline{v}  \mathrm{d} \hat{x}  \mathrm{d} x_k \mathrm{d}s \nonumber\\
	 	&+ \mathrm{Re}\int_0^t \int_{\mathbb{R}}  \int_{-\infty}^{x_k} \|\Lambda_k^{\frac 12}v_{\beta}(y_k,\cdot)\|^2_{L^2(\mathbb{R}^{d-1})} \mathrm{d} y_k	\int_{\mathbb{R}^{d-1}}   \left( h^{ij}_y  \left(\phi_1 -  \phi_2 \right)+h^{ij}_{\overline{y}} (\overline{\phi}_1-\overline{\phi}_2)\right)\partial_{ij}\phi_2\partial_k \overline{v}   \mathrm{d} \hat{x}  \mathrm{d} x_k \mathrm{d}s\nonumber \\
	 	\lesssim &\,\left( \left\|  \phi_1\right\|_{H^{s_3+\frac 12}(\mathbb{R}^d)}^2 + \left\|  \phi_2\right\|_{H^{s_3+\frac 12}(\mathbb{R}^d)}^2 \right) \cdot W_v(t)\\
	 	&+ \left( \left\|  \phi_1\right\|_{H^{s_3+\frac 12}(\mathbb{R}^d)}^2 + \left\|  \phi_2\right\|_{H^{s_3+\frac 12}(\mathbb{R}^d)}^2 \right) \cdot t^{\frac 12}\| \partial_{ij}\phi_2\|_{L^2(\mathbb{R}^d)} \|h^{ij}_y\|_{L^{\infty}(\mathbb{R}^d)} \nonumber\\
	 	&\cdot\left(\int_0^t \int_{\mathbb{R}}    \|\phi_1 -  \phi_2 (x_k,\cdot)\|_{L^\infty(\mathbb{R}^{d-1})}^2   \|\partial_k v (x_k,\cdot)\|_{L^2(\mathbb{R}^{d-1})}^2 \mathrm{d} x_k \mathrm{d} s  \right)\nonumber\\
	 	\lesssim &\,\left( \left\|  \phi_1\right\|_{H^{s_3+\frac 12}(\mathbb{R}^d)}^2 + \left\|  \phi_2\right\|_{H^{s_3+\frac 12}(\mathbb{R}^d)}^2 \right) \cdot W_v(t)+t^{\frac 12}\left( \left\|  \phi_1\right\|_{H^{s_3+\frac 12}(\mathbb{R}^d)}^2 + \left\|  \phi_2\right\|_{H^{s_3+\frac 12}(\mathbb{R}^d)}^2 \right)^2 \cdot W_v^{\frac 12}(t).\nonumber
	 \end{align}		
\end{proof}

	\begin{appendices} 
		\section{Proof of some preliminary lemmas}	\label{appa}
		\begin{proof}[\textbf{Proof of the Lemma \ref{lemc1}}]
			We begin by establishing the following elementary inequalities:	
			\begin{align}
				\left||\xi|^s-|\eta|^s\right| \lesssim &\,\frac{|\xi-\eta|}{\left(|\xi|+|\eta|\right)^{1-s}} \label{propc1_2}\\
				\lesssim &\, \frac{|\xi-\eta|}{|\eta|^{1-s}}  \label{propc1_3}
			\end{align}
			for $ s\leq 1. $ The \eqref{propc1_3}  is clear since  $ 1-s\geq 0. $ To prove  \eqref{propc1_2}, assume$ |\eta| \leq |\xi|  $. Using
			$$ \left(|\xi|+|\eta|\right)^{1-s} \sim |\xi|^{1-s}+|\eta|^{1-s},  $$
		we reduce \eqref{propc1_2} to showing
			\begin{align}
				|\xi|^s-|\eta|^s \lesssim &\,\frac{|\xi|-|\eta|}{|\xi|^{1-s}+|\eta|^{1-s}} \label{propc1_2.1}
			\end{align}
			which is equivalent to
			\begin{align} \label{propc1_2.2}
				|\xi|-|\eta| +|\xi|^s |\eta|^{1-s}-|\xi|^{1-s}|\eta|^s \lesssim |\xi|-|\eta|.
			\end{align}
			If $ s\leq \frac 12 $, then $ 1-2s\geq 0 $ and  
			\begin{align*}
				|\xi|^s |\eta|^{1-s}-|\xi|^{1-s}|\eta|^s= |\xi|^s |\eta|^s \left( |\eta|^{1-2s}-|\xi|^{1-2s}\right) \leq 0
			\end{align*}
			and it follows that \eqref{propc1_2.2}, \eqref{propc1_2.1}, \eqref{propc1_2} hold.  If $ \frac 12 <s\leq 1  $, setting $ s=2s_4 $, we have   $ s_4\leq \frac 12 $ and 
			\begin{align}
				|\xi|^s-|\eta|^s =&\,\left(	|\xi|^{s_4}+|\eta|^{s_4} \right)\left(	|\xi|^{s_4}-|\eta|^{s_4} \right)\nonumber\\
				\lesssim &\, \left(|\xi|+|\eta|\right)^{s_4}\frac{|\xi-\eta|}{\left(|\xi|+|\eta|\right)^{1-s_4}}\lesssim\frac{|\xi-\eta|}{\left(|\xi|+|\eta|\right)^{1-s}}. \nonumber
			\end{align}

	To verify  \eqref{lemc1_1},   we compute
			\begin{align} \label{propc1_4}
				&\left|\mathcal{F}\left\{ D^s\left(g \partial_j u \right)-g \partial_j \left(D^s u\right) \right\}\right|\\
				=&\,\left|	\left|\xi\right|^s \int_{\mathbb{R}^d}	\hat{g} (\xi-\eta) \eta_j  \hat{u}(\eta)\mathrm{d} \eta-\int_{\mathbb{R}^d} 	\hat{g} (\xi-\eta) \eta_j |\eta|^s \hat{u}(\eta)\mathrm{d} \eta\right| \nonumber\\
				=&\, \left|\int_{\mathbb{R}^d} \left( \left|\xi\right|^s-\left|\eta\right|^s\right)	\hat{g} (\xi-\eta) \eta_j  \hat{u}(\eta)\mathrm{d} \eta \right|\nonumber \\
				\lesssim &\,\int_{\mathbb{R}^d} \left|\xi-\eta\right|	\left|\hat{g} (\xi-\eta)\right| \left|\frac{\eta_j|\eta|^s}{|\eta|}  \hat{u}(\eta)\right|\mathrm{d} \eta, \nonumber 
			\end{align}
			where   \eqref{propc1_3} is  used in the last line.
		For $ \mathscr{S}>d/2 $,  the  Young's and H\"older's inequalities  indicate \eqref{lemc1_1}:
			\begin{align*}
				\left\| \eqref{propc1_4}\right\|_{L^2}\lesssim &\,\left\||\cdot|\hat{g}(\cdot)\right\|_{L^1}  \|u\|_{H^s}\\
				\lesssim &\,\left\|\frac{1}{\left(1+|\cdot|\right)^{\mathscr{S}}}\right\|_{L^2(\mathbb{R}^d)}  \|\nabla g\|_{H^{\mathscr{S}}}\|u\|_{H^s}\nonumber\\
				\leqslant&\, c \|\nabla g\|_{H^{\mathscr{S}}}\|u\|_{H^s}.\nonumber
			\end{align*}

		A similar argument gives \eqref{lemc1_2}. Indeed,	
			\begin{align*}
				&\left|\mathcal{F}\left\{ D^{s}\left(g D^{l} u \right)-g D^{s+l} u\right\}\right|\\
				=&\,\left|	\left|\xi\right|^{s} \int_{\mathbb{R}^d}	\hat{g} (\xi-\eta) |\eta|^{l} \hat{u}(\eta)\mathrm{d} \eta-\int_{\mathbb{R}^d} 	\hat{g} (\xi-\eta)  |\eta|^{s+l} \hat{u}(\eta)\mathrm{d} \eta\right| \nonumber\\
				=&\, \left|\int_{\mathbb{R}^d} \left( \left|\xi\right|^{s}-\left|\eta\right|^{s}\right)	\hat{g} (\xi-\eta) |\eta|^{l} \hat{u}(\eta)\mathrm{d} \eta \right|\nonumber \\
				\lesssim &\,\int_{\mathbb{R}^d} \left|\xi-\eta\right|	\left|\hat{g} (\xi-\eta)\right| \left|\frac{ \left|\eta\right|^{l}}{|\eta|^{1-s}}  \hat{u}(\eta)\right|\mathrm{d} \eta \nonumber \\
				\leqslant&\, c \|\nabla g\|_{H^{\mathscr{S}}}\|u\|_{L^2}\nonumber
			\end{align*}
due to $ s+l= 1. $
		\end{proof}
	
	\begin{proof} [\textbf{Proof of the Lemma \ref{leminter1}}]
		
	Using the well-known embeddings
		\begin{align*}
			&\dot{B}_{\infty, 1}^0\left(\mathbb{R}^d\right) \hookrightarrow \mathrm{BMO}(\mathbb{R}^d)\hookrightarrow \dot{B}_{\infty, \infty}^0\left(\mathbb{R}^d\right), 
		\end{align*}
		and the  interpolation inequality
		\begin{align*}
			\|u\|_{\dot{B}_{\infty, 1}^{\theta l_1+(1-\theta) l_2}} \lesssim  \|u\|_{\dot{B}_{\infty, \infty}^{l_1}}^\theta\|u\|_{\dot{B}_{\infty, \infty}^{l_2}}^{1-\theta}
		\end{align*}
		for  $ l_1<l_2 $ and $ \theta \in (0,1) $, 
		we obtain
		\begin{align*}
			\left\| D^{\frac12} f \right\|_{\mathrm{BMO}(\mathbb{R}^d)}\lesssim &\, 	\left\| D^{\frac12 } f \right\|_{ \dot{B}_{\infty, 1}^0\left(\mathbb{R}^d\right)} \\
			\lesssim &\, 	\left\|  f \right\|_{ \dot{B}_{\infty, \infty}^0\left(\mathbb{R}^d\right)}^{\frac12 }	\left\| \nabla  f \right\|_{ \dot{B}_{\infty, \infty}^0\left(\mathbb{R}^d\right)}^{\frac12 }\\
			\lesssim &\, 	\left\|  f \right\|_{\mathrm{BMO}(\mathbb{R}^d)}^{\frac12 }	\left\| \nabla  f \right\|_{ \mathrm{BMO}(\mathbb{R}^d)}^{\frac12 },
		\end{align*}
where $ \nabla f $ is equivalent to $ Df $ in Besov spaces.		
	\end{proof}

		\begin{proof} [\textbf{Proof of the Lemma \ref{lemx} }] Since \eqref{lemx_2} follows directly from \eqref{lemx_1}, it suffices to prove
			\begin{align}  \label{lem3_2}
				\int_{\mathbb{R}^d} |x_k|^{2N_1}|\phi_\gamma|^2 \mathrm{d} x\lesssim \left\|\phi\right\|_{H^{|\gamma|+1}(\mathbb{R}^d)}^{\mathscr{S}_1} \left\||x|^N \phi\right\|_{L^2(\mathbb{R}^d)}^{\mathscr{S}_2}
			\end{align}
		for $ k=1,\ldots,d, $ where $ \mathscr{S}_1+\mathscr{S}_2=2 $.
			Integration by parts gives
			\begin{align*}
				\int_{\mathbb{R}^d} |x_k|^{2N_1}|\phi_{x_k}|^2 \mathrm{d} x=&\,		\int_{\mathbb{R}^d} |x_k|^{2N_1}\phi_{x_k} \overline{\phi}_{x_k} \mathrm{d} x\\
				=&\,-	\int_{\mathbb{R}^d} 2N_1|x_k|^{2N_1-1}\phi \overline{\phi}_{x_k} \mathrm{d} x-	\int_{\mathbb{R}^d} |x_k|^{2N_1}\phi \overline{\phi}_{x_kx_k} \mathrm{d} x\\
				\lesssim &\,\left\| x_k^{2N_1}\phi \right\|_{L^2}^{\frac{2N_1-1}{2N_1}}\left\|\phi \right\|_{H^2}^{\frac{2N_1+1}{2N_1}}+ \left\| x_k^{2N_1}\phi \right\|_{L^2}\left\|\phi \right\|_{H^2}\\
				\lesssim&\max\left\{\left\| |x|^{2N_1}\phi \right\|_{L^2}^{\frac{2N_1-1}{2N_1}}\left\|\phi \right\|_{H^2}^{\frac{2N_1+1}{2N_1}}, ~\left\| |x|^{2N_1}\phi \right\|_{L^2}\left\|\phi \right\|_{H^2} \right\}
			\end{align*}
			where 
			\begin{align*}
				\left\| \int_{\mathbb{R}^d} 2N_1|x_k|^{2N_1-1}\phi \overline{\phi}_{x_k} \mathrm{d} x\right\|\lesssim &\,\left(\int_{\mathbb{R}^{d}} |x_k|^{4N_1}|\phi|^2\right)^{\frac{2N_1-1}{4N_1}} \left( \int_{\mathbb{R}^{d}} |\phi|^2\right)^{\frac{1}{4N_1}}\left( \int_{\mathbb{R}^{d}} |\phi_{x_k}|^2\right)^{\frac{1}{2}}\\
				\lesssim &\,\left\| x_k^{2N_1}\phi \right\|_{L^2}^{\frac{2N_1-1}{2N_1}}\left\|\phi \right\|_{H^2}^{\frac{2N_1+1}{2N_1}}.
			\end{align*}	
		For $ l\neq k $,
			\begin{align*}
				\int_{\mathbb{R}^d} |x_k|^{2N_1}|\phi_{x_l}|^2 \mathrm{d} x\lesssim &\, \left\| x_k^{2N_1}\phi \right\|_{L^2}\left\|\phi_{x_lx_l} \right\|_{L^2}
			\end{align*}
			is straightforward.  This establishes \eqref{lem3_2} for $ |\gamma|=1. $    Similarly, the case   \eqref{lem3_2} for $ |\gamma|>1 $ follows by induction. 
		\end{proof}

\begin{proof} [\textbf{Proof of Lemma \ref{lem_transf_k}}]
	The proofs of \eqref{est_transf_0_order_P} and \eqref{est_transf_k2} closely resemble that of \eqref{est_transf_k}, given the analogous behavior of the pseudo-differential operator  $ q(x,D) $ and that $ \lambda $ acts similarly with $ P_\lambda\partial_k  $ in frequency space. Thus, we confine ourselves to analyzing \eqref{est_transf_k}.	
	
	Suppose  that $ q(\xi ) \in \mathcal{S}(\mathbb{R}^d)  $ satisfies $ \mathrm{Supp}~ q(\xi )= \{ \xi |1/4\leq |\xi| \leq 2, \xi \in \mathbb{R}^d\}$ and $ q(\xi)=1  $ if $ 1/2\leq |\xi| \leq 1 $.   Then we have
	\begin{align*}
		\mathcal{F} ( \partial_j \psi^\tau ) =   \sqrt{-1}\xi_j 	\mathcal{F}   (P_{\lambda} \chi_1(D)  w^\tau ) =&\sqrt{-1} q\left(\frac{\xi}{\lambda} \right) \cdot  \xi_j  \chi_1 \left( \xi \right)\mathcal{F}  (P_{\lambda} w^\tau )\\
		=&\epsilon q\left(\frac{\xi}{\lambda} \right) \frac{\xi_j}{\epsilon\xi_ 1} \chi_1 \left( \xi \right) \cdot \sqrt{-1}   \xi_1  \chi_1 \left( \xi \right)\mathcal{F}  (P_{\lambda} w^\tau ).
	\end{align*}
	Let  $ Q(x) =\mathcal{F}^{-1}  \left(q\left( \xi\right)  \frac{\xi_j}{\epsilon \xi_ 1} \chi_1 \left( \xi \right)\right)$ and 	$ Q_\lambda= \lambda^d Q(\lambda x ) $. Then   $ \int_{\mathbb{R}^d} | Q(x)| \mathrm{d} x \leq C$. Since $ \chi_1 \left( \xi \right)= \chi_1 \left( \xi /\lambda\right)$, we have   $  Q_\lambda (x) = \mathcal{F}^{-1}  \left(q\left(\frac{\xi}{\lambda} \right) \frac{\xi_j}{\epsilon\xi_ 1} \chi_1 \left( \xi \right)\right)$
	and 
	\begin{align*}
		\partial_j \psi^{\tau}=& \epsilon  \partial_1 \psi^{\tau} *Q_\lambda\\
		=& \epsilon \int_{\mathbb{R}^{d}}\partial_1 \psi^{\tau+\tau^\prime} Q_\lambda(\tau^\prime)  \mathrm{d} \tau^\prime.
	\end{align*}
	
	By Minkowski's inequality, we have
	\begin{align*}
		\left\| h (x) \partial_j \psi_\alpha^\tau \right\|_{L^{2}(\mathbb{R}^d)} \leq& \, \epsilon\int_{\mathbb{R}^{d}}  \left\|  h (x)  \partial_1 \psi^{\tau+\tau^\prime} Q_\lambda(\tau^\prime) \right\|_{L^{2}(\mathbb{R}^d)}  \mathrm{d} \tau^\prime\\
		\leq & \,\epsilon\int_{\mathbb{R}^{d}}  \left| Q_\lambda(\tau^\prime)\right|   \left\|  h (x)  \partial_1 \psi^{\tau-\tau^\prime} \right\|_{L^{2}(\mathbb{R}^d)} \mathrm{d} \tau^\prime\\
		\leq &\, \epsilon\sup_{\tau^\prime}  \left\|  h (x)  \partial_1 \psi^{\tau+\tau^\prime} \right\|_{L^{2}(\mathbb{R}^d)} \cdot  \int_{\mathbb{R}^{d}}  \left| Q_\lambda(\tau^\prime)\right|    \mathrm{d} \tau^\prime\\
		=& \,\epsilon \left\|Q\right\|_{L^1(\mathbb{R}^d)} \cdot \sup_{\tau}  \left\|  h (x)  \partial_1 \psi^{\tau} \right\|_{L^{2}(\mathbb{R}^d)},
	\end{align*}
	which can imply \eqref{est_transf_k} if we set $ c_1=  \left\|Q\right\|_{L^1(\mathbb{R}^d)}^2.  $

\end{proof}

	\begin{proof} [\textbf{Proof of Lemma \ref{lemhalve}}]
	By fractional integration by parts, we compute
		\begin{align} \label{lemhalvec1}
			&\int_{\mathbb{R}}  \int_{\mathbb{R}^{d-1}} w\partial_k u  \mathrm{d} \hat{x}_k \int_{-\infty}^{x_k} v(y_k)\mathrm{d} y_k  \mathrm{d} x_k \nonumber\\
			=&\, \int_{\mathbb{R}^{d-1}}	\int_{\mathbb{R}}   w\partial_k u \int_{-\infty}^{x_k} v(y_k) \mathrm{d} y_k  \mathrm{d} x_k  \mathrm{d} \hat{x}_k\nonumber\\
			=&	\, \int_{\mathbb{R}^{d-1}}\int_{\mathbb{R}}    D_k^{\frac 12} \left( w\int_{-\infty}^{x_k} v(y_k) \mathrm{d} y_k  \right) D_k^{-\frac 12}\partial_k u  \mathrm{d} x_k  \mathrm{d} \hat{x}_k\\
			\lesssim &\,\int_{\mathbb{R}^{d-1}} \left\| \int_{-\infty}^{x_k} v(y_k) \mathrm{d} y_k   D_k^{\frac 12} w\right\|_{L^2(\mathbb{R}_{x_k})} \left\|  D_k^{-\frac 12}\partial_k u \right\|_{L^2(\mathbb{R}_{x_k})}\mathrm{d} \hat{x}_k\nonumber\\
			&+\int_{\mathbb{R}^{d-1}} \left\|\left[D_k^{\frac 12},\int_{-\infty}^{x_k} v(y_k) \mathrm{d} y_k \right]w \right\|_{L^2(\mathbb{R}_{x_k})} \left\|  D_k^{-\frac 12}\partial_k u \right\|_{L^2(\mathbb{R}_{x_k})}\mathrm{d} \hat{x}_k\nonumber\\
			\lesssim &\,\left\|   D_k^{\frac 12}w\right\|_{L^2(\mathbb{R}^d)}\left\|   D_k^{\frac 12}u\right\|_{L^2(\mathbb{R}^d)} \int_{-\infty}^{\infty} |v(x_k) |\mathrm{d} x_k \nonumber\\
			&+\int_{\mathbb{R}^{d-1}} \left\|   D_k^{\frac 12}u(\cdot)\right\|_{L^2(\mathbb{R}_{x_k})} \left\|\left[D_k^{\frac 12},\int_{-\infty}^{x_k} v(y_k) \mathrm{d} y_k \right]w(\cdot) \right\|_{L^2(\mathbb{R}_{x_k})}\mathrm{d} \hat{x}_k.\nonumber
		\end{align}
	Recall the properties of the Hilbert transform $ \mathcal{H} $: $ \mathcal{H}^2=-I, \mathcal{F}\left(\mathcal{H}(f)\right)=- i \mathrm{sgn} \xi \cdot\mathcal{F}(f)(\xi)$ and for $ 1<p<\infty $,  there exists a positive constant $C_p$ such that
		\begin{align}
			\|\mathcal{H}(f)\|_{L^p(\mathbb{R})} \leq C_p\|f\|_{L^p(\mathbb{R})}; \label{Hilt}
		\end{align}
		see  \cite{grafakos2008classical}.  Then for a smooth function $ f $,
		$ D_k^{\frac12} f=- D_k^{-\frac 12}\mathcal{H}^2D_k f=- D_k^{-\frac 12} \mathcal{H} \partial_k f$. Using \eqref{lemc3_1}, \eqref{lem4_1}, and \eqref{Hilt}, we obtain
		\begin{align} \label{lemhalvec2}
			&  \left\|\left[D_k^{\frac 12},\int_{-\infty}^{x_k} v(y_k) \mathrm{d} y_k \right]w(\cdot) \right\|_{L^2(\mathbb{R}_{x_k})}\nonumber\\
			\lesssim &\, \left\|w(\hat{x}_k)\right\|_{L^2(\mathbb{R}_{x_k})}  \left\|D_k^{\frac 12}\int_{-\infty}^{x_k} v(y_k) \mathrm{d} y_k  \right\|_{\mathrm{BMO}(\mathbb{R}_{x_k})}\nonumber\\
			\lesssim &\,\left\|w(\hat{x}_k) \right\|_{L^2(\mathbb{R}_{x_k})}  \left\|D_k^{-\frac 12} \mathcal{H}v(x_k) \right\|_{\mathrm{BMO}(\mathbb{R}_{x_k})}\\
			\lesssim &\,\left\|w(\hat{x}_k) \right\|_{L^2(\mathbb{R}_{x_k})}  \left\| v \right\|_{L^2(\mathbb{R}_{x_k})}. \nonumber
		\end{align}
		Combing \eqref{lemhalvec2} with \eqref{lemhalvec1}, we can use H\"older's inequality to yield  \eqref{lemhalve1}.	
	\end{proof}

	\end{appendices}

\section*{Acknowledgements}
Y. Zhou was supported in part by the NSFC
(Nos. 12171097, 12571231) and Shanghai Key Laboratory for Contemporary Applied Mathematics, Fudan University.
 J. Shao  was supported in part by the China Postdoctoral Science Foundation (No. 2023M730700).
The author J. Shao would like to  thank   Dr. Zhaonan Luo for the helpful discussions on Lemma  \ref{leminter1}. 
	
	\begin{center}
		\bibliographystyle{plain}

	\end{center}

\end{document}